\documentclass[letterpaper,10pt]{amsart}
\usepackage[utf8]{inputenc}
\usepackage[english]{babel}
\usepackage{mathtools,amsthm,amssymb}
\usepackage{tikz-cd}
\usepackage{url}

\let\oldtocsubsection=\tocsubsection
\renewcommand{\tocsubsection}[2]{\hspace{.75cm}\oldtocsubsection{#1}{#2}}
\DeclareRobustCommand{\SkipTocEntry}[5]{}

\usepackage{enumitem}
\setenumerate[1]{label={\alph*)}}

\numberwithin{equation}{section}

\theoremstyle{definition}
\newtheorem{defn}{Definition}[section]
\newtheorem{rem}[defn]{Remark}
\theoremstyle{plain}
\newtheorem{prop}[defn]{Proposition}
\newtheorem{lemma}[defn]{Lemma}
\newtheorem{thm}[defn]{Theorem}
\newtheorem{cor}[defn]{Corollary}

\newcommand{\dee}{{\mathrm{d}}}

\DeclareMathOperator{\id}{id}

\DeclareMathOperator{\Ad}{Ad}

\DeclareMathOperator{\tr}{tr}
\DeclareMathOperator{\im}{im}
\DeclareMathOperator{\ev}{ev}

\DeclareMathOperator{\Hom}{Hom}
\DeclareMathOperator{\Ext}{Ext}

\DeclareMathOperator{\Aff}{Aff}

\newcommand{\totalK}{\underline{K}}
\newcommand{\Kalg}{\overline{K}_1^{\mathrm{alg}}}
\newcommand{\Mult}[1]{{\mathcal{M}(#1)}}
\newcommand{\Q}[1]{{\mathcal{Q}(#1)}}
\newcommand{\slim}{\mathop{\operatorname{s^*-lim}}_{n\to\infty}}

\newcommand{\ca}{$C^*$-algebra}
\newcommand{\sh}{$*$-homomorphism}
\newcommand{\Hb}{{\mathcal{H}}}
\newcommand{\K}{{\mathcal{K}}}
\newcommand{\Z}{{\mathcal{Z}}}
\newcommand{\Oinf}{{\mathcal{O}_\infty}}
\newcommand{\Otwo}{{\mathcal{O}_2}}

\title{A unified approach for classifying simple nuclear $C^\ast$-algebras}
\author{Ben Bouwen}
\address{Department of Mathematics and Computer Science, University of Southern Denmark, 5230 Odense, Denmark}
        \email{bouwen@imada.sdu.dk}
\author{James Gabe}
\address{Department of Mathematics and Computer Science, University of Southern Denmark, 5230 Odense, Denmark}
        \email{gabe@imada.sdu.dk}
\date{}
\thanks{This was supported by DFF grants 1054-00094B and 1026-00371B}
        \subjclass[2020]{46L35, 46L80}

\begin{document}

\begin{abstract}
We provide a new proof of the Kirchberg--Phillips theorem by adapting the framework laid out by Carrión--Gabe--Schafhauser--Tikuisis--White for classifying separable simple unital nuclear stably finite $\mathcal Z$-stable $C^\ast$-algebras satisfying the UCT. Not only does this give a unified approach to classifying stably finite and purely infinite $C^\ast$-algebras, in contrast to the other proofs of the Kirchberg--Phillips theorem, our proof does not rely on Kirchberg's Geneva Theorems, but instead implies them as corollaries (for nuclear $C^\ast$-algebras).  
\end{abstract}

\maketitle

\renewcommand*{\thedefn}{\Alph{defn}}

\section{Introduction}

The Kirchberg–Phillips theorem, proved independently by Kirchberg (\cite{Kirchberg_TheClassificationofPurelyInfiniteCalgebrasUsingKasparovsTheory}) and Phillips (\cite{Phillips_AClassificationTheoremforNuclearPurelyInfiniteSimpleCalgebras}) in the 90s, marked a major advancement in the Elliott classification program. It provided a complete classification by $K$-theory of separable, nuclear, simple, purely infinite $C^*$-algebras satisfying the universal coefficient theorem (UCT). In contrast, its stably finite counterpart took the joint effort of numerous researchers (\cite{GongLinNiu_AClassificationOfFiniteSimpleAmenableZstableCalgebrasI}, \cite{GongLinNiu_AClassificationOfFiniteSimpleAmenableZstableCalgebrasII}, \cite{ElliottGongLinNiu_OnTheClassificationOfSimpleAmenableCalgebrasWithFiniteDecompositionRank}, \cite{TikuisisWhiteWinter_QuasidiagonalityOfNuclearCalgebras} and the vast body of work leading up to these papers) over the past three decades to complete. Combining the classification theorems on both sides gives a comprehensive classification of all separable, nuclear, unital, simple, $\Z$-stable \ca s satisfying the UCT, a landmark result in classification of \ca s; we will refer to \ca s satisfying these conditions as classifiable.
In recent years, a new, more abstract approach to classification was developed by Carrión, the author JG, Schafhauser, Tikuisis and White in \cite{CarrionGabeSchafhauserTikuisisWhite_ClassifyingHomomorphismsIUnitalSimpleNuclearCalgebras}. This approach is carried out in the stably finite setting, while for the purely infinite case, the original Kirchberg–Phillips theorem is used. This paper adapts the new abstract framework and techniques to the purely infinite setting to prove the Kirchberg–Phillips theorem, paving the way to a unified approach for classifying \ca s. 
One of the major upsides of the new approach is that Kirchberg's Geneva theorems in the nuclear setting (Corollary \ref{intro:Genevatheorems}) are obtained as straightforward consequences of classification. These were crucial and highly non-trivial ingredients in the original proof, R\o rdam's summary \cite{Rordam_ClassificationofNuclearCalgebras} and Gabe's alternative proof \cite{Gabe_ClassificationofOinftystableCalgebras}. With the new approach, we no longer require these results as prerequisites.

\subsection*{Purely infinite \ca s}
The distinction between stably finite and purely infinite \ca s is in complete analogy with the dichotomy between semifinite and infinite von Neumann algebras. There exist simple, separable and nuclear \ca s which fall between these two classes (\cite{Rordam_ASimpleCalgebraWithAFiniteAndAnInfiniteProjection}), but due to a result by Kirchberg (\cite[Theorem 4.1.10]{Rordam_ClassificationofNuclearCalgebras}), this behavior cannot occur for classifiable \ca s. Moreover, the dichotomy is witnessed by the presence of traces: a classifiable \ca\ is stably finite if and only if it has a tracial state, and is purely infinite otherwise.

Purely infinite, simple \ca s were introduced by Cuntz as \ca ic analogues of type III factors in \cite{Cuntz_SimpleCalgebrasGeneratedbyIsometries}, where he defined the Cuntz algebras $\mathcal{O}_n$ and showed that they are examples of such \ca s. Simple and purely infinite \ca s arise naturally as $C^*$-algebraic constructions from various dynamical systems, examples being Cuntz--Krieger algebras (derived from symbolic dynamics) and crossed products (\cite{AnantharamanDelaroche_PurelyInfiniteCalgebrasArisingFromDynamicalSystems}). Building on deep work in tensor products and exactness, Kirchberg announced breakthrough results at the 1994 ICM satellite conference in Geneva: the $\Otwo$-embedding theorem, stating that every separable, exact \ca\ embeds into $\Otwo$ (\cite{Kirchberg_EmbeddingOfExactCalgebrasInTheCuntzAlgebraOtwo}), and the $\Oinf$-absorption theorem, which says that a separable, simple, nuclear \ca\ is purely infinite if and only if it absorbs $\Oinf$ tensorially (\cite{Kirchberg_ExactCalgebrasTensorProductsAndTheClassificationOfPurelyInfiniteAlgebras}). These results known commonly as \emph{Kirchberg's Geneva Theorems} were key ingredients in the proofs of the Kirchberg--Phillips theorem.

Following this theorem, more general classification results in the purely infinite setting were obtained in the subsequent decades. Kirchberg outlined a strategy to classify all separable, nuclear, $\Oinf$-stable \ca s up to stable isomorphism by ideal-related $KK$-theory. This result was proved (in a very different way) in \cite{Gabe_ClassificationofOinftystableCalgebras} by the author JG.

\subsection*{An abstract approach to classification}
On the stably finite side, progress was slower and more involved. The prevailing strategy here consisted of a very careful analysis of the internal structure of these \ca s. For a more in-depth survey, see for example \cite[\S 1.2]{CarrionGabeSchafhauserTikuisisWhite_ClassifyingHomomorphismsIUnitalSimpleNuclearCalgebras}. More recently, a distinct, more abstract strategy to classify stably finite \ca s was laid out by Carrión, Gabe, Schafhauser, Tikuisis and White (\cite{CarrionGabeSchafhauserTikuisisWhite_ClassifyingHomomorphismsIUnitalSimpleNuclearCalgebras}). 

Ever since Elliott's classification of $A\mathbb T$-algebras (\cite{Elliott_OnTheClassificationOfCalgebrasOfRealRankZero}) and R\o rdam's classification of certain crossed products (\cite{Rordam_ClassificationofCertainInfiniteSimpleCAlgebras}), one of the guiding principles in the classification program in general has been to first classify \sh s between \ca s, and then deduce a classification of the \ca s themselves by use of an intertwining argument. In the unital setting, we then aim to classify unital \sh s between unital \ca s $A$ and $B$. In practice, it turns out to be much more tractable to first classify approximate \sh s, sequences of maps between $A$ and $B$ which become \sh s in the limit. These can be encoded more elegantly as honest \sh s from $A$ into the \emph{sequence algebra} $B_\infty := \ell^\infty(B)/c_0(B)$. While $B$ is assumed to be simple, $B_\infty$ is not, so we need to put a simplicity condition on the allowed maps. A \sh\ $\varphi: A \to B$ is said to be \emph{full} if $\varphi(a)$ is full for each non-zero $a\in A$, i.e.\ $\varphi(a)$ generates $B$ as an ideal. The equivalence relation up to which one aims to classify maps is approximate unitary equivalence, as this is what is necessary in order to apply the intertwining argument. In the sequence algebra, this corresponds to unitary equivalence, so in other words, the main objective becomes classifying unital and full \sh s $A\to B_\infty$ up to unitary equivalence.

In the new approach, the main object of study is the so-called \emph{trace-kernel extension} of $B$, a short exact sequence given by
\begin{equation}
\begin{tikzcd}
    0 \ar[r] & J_B \ar[r,"j_B"] & B_\infty \ar[r,"q_B"] & B^\infty \ar[r] & 0,
\end{tikzcd}
\end{equation}
where
\begin{equation}
    J_B := \{ (x_n)_n \in B_\infty \mid \textstyle{\sup_{\tau\in T(B)}}\tau(x_n^*x_n) \xrightarrow{n\to\infty} 0\},
\end{equation}
$T(B)$ is the set of tracial states on $B$ and $B^\infty := B_\infty/J_B$. This construction becomes especially nice in (and was partially inspired by) the case where $B$ has a unique trace $\tau$. In this setting (and after replacing the sequence algebra with the ultrapower $B_\omega$), the quotient is isomorphic with the von Neumann algebra $(\pi_\tau(B)'',\tau)^\omega$, the tracial von Neumann ultrapower of $\pi_\tau(B)''$, with $\pi_\tau$ the GNS representation associated to $\tau$.

The main idea is now to first use von Neumann algebraic classification to classify maps into $B^\infty$. In general, $B^\infty$ will not be a von Neumann algebra, but it still retains enough of this flavor to allow for von Neumann algebraic classification techniques to be used; see \cite[Theorem A]{CastillejosEvingtonTikuisisWhite_ClassifyingMapsIntoUniformTracialSequenceAlgebras}. Then, one classifies lifts of those maps to $B_\infty$ (\cite[Theorem 1.2]{CarrionGabeSchafhauserTikuisisWhite_ClassifyingHomomorphismsIUnitalSimpleNuclearCalgebras}), resulting in the desired classification of maps into $B_\infty$ (\cite[Theorem 1.1]{CarrionGabeSchafhauserTikuisisWhite_ClassifyingHomomorphismsIUnitalSimpleNuclearCalgebras}). The classification of lifts is the major novelty; it makes use of extension theory and its connection with $KK$-theory to determine when a unital and full \sh\ into the quotient lifts in terms of $KK$-theoretic data.

The invariant in the new approach is $\totalK T_u$, the collection of the total $K$-theory (denoted $\totalK$), the Hausdorffized unitary algebraic $K_1$-group $\Kalg$, the system of affine functions on the trace space $\Aff T$ (which naturally corresponds to the trace space itself by Kadison duality), and all the natural maps between these objects (see \cite[\S 2]{CarrionGabeSchafhauserTikuisisWhite_ClassifyingHomomorphismsIUnitalSimpleNuclearCalgebras}).

\subsection*{The main results}
In the purely infinite setting, there are some pivotal differences. The invariant $\totalK T_u$ reduces to total $K$-theory, as there are no traces and $\Kalg$ is naturally isomorphic to $K_1$ itself. Due to the absence of traces, it is also clear that the same construction does not work. A natural candidate to replace traces would be states, but the direct analogue of $J_B$, taking the supremum over all states (or even all pure states), would not work, as the norm of any positive element is always attained by some (pure) state. An alternative would be to fix a single state $\rho$ on $B$ and define\footnote{The definition contains the symmetrizing term $\rho(x_nx_n^*)$ in order to make the resulting algebra closed under adjoints. It does not appear in the definition of $J_B$ as it is superfluous for traces due to the trace identity.}
\begin{equation}
    J_{B,\rho} := \{ (x_n)_n \in B_\infty \mid \rho(x_n^*x_n) + \rho(x_nx_n^*) \to 0\},
\end{equation}
which we call the \emph{state-kernel} of $B$ associated with $\rho$.

The main issue with working with states instead of traces is that $J_{B,\rho}$ is no longer an ideal in $B_\infty$ in general, so one needs to restrict $B_\infty$ to a $C^*$-subalgebra in which it is. To define a suitable $C^*$-subalgebra, we will assume $\rho$ is faithful on $B$ and\footnote{Faithfulness on $\pi_\rho(B)''$ is not implied by faithfulness on $B$ in general; see Remark \ref{rem:faithfulvsstrongly}.} on $\pi_\rho(B)''$, the von Neumann completion of the GNS representation with respect to $\rho$. This property we call \emph{strongly faithful} (see Definition \ref{def:stronglyfaithful}). We then define a $C^*$-subalgebra $S_{B,\rho}$ of $B_\infty$ such that the quotient $S_{B,\rho}/J_{B,\rho}$ is $\pi_\rho(B)''$ (see Corollary \ref{cor:ske_maptoquotient}), resulting in the short exact sequence
\begin{equation}
\begin{tikzcd}
    0 \ar[r] & J_{B,\rho} \ar[r,"j_B"] & S_{B,\rho} \ar[r,"q_B"] & \pi_\rho(B)'' \ar[r] & 0,
\end{tikzcd}
\end{equation}
which we call the \emph{reduced state-kernel extension}. These objects have been studied in the past; see for example \cite[Proposition 9.2.7]{BrownOzawa_CalgebrasandFiniteDimensionalApproximations}. Comparing with the trace-kernel extension in the situation where $B$ has a unique trace, the restriction to $S_{B,\rho}$ amounts to removing the sequence algebra/ultrapower from the quotient $\pi_\rho(B)''$.

This choice of extension allows us to pick $\rho$ appropriately such that the quotient $\pi_\rho(B)''$ is isomorphic to $B(\Hb)$ (see Corollary \ref{cor:stronglyfaithfulstateexists}). The corresponding von Neumann algebraic result we then use is classification of maps into type-I von Neumann algebras, i.e.\ Voiculescu's theorem. In other words, we do not need to rely on the (highly technical and deep) classification results for type-III factors, the von Neumann algebraic counterpart of simple and purely infinite \ca s. This gives a moral explanation for why purely infinite classification was completed before its stably finite counterpart in the $C^*$-algebraic setting, while for von Neumann algebras the opposite is true.

The classification of lifts we obtain (Theorem \ref{thm:ClassifyingLifts}) is the following:

\begin{thm}[Classification of lifts] \label{intro:ClassifyingLifts}
    Let $A$ be a separable, nuclear \ca, $B$ a unital, simple and purely infinite \ca\ and $\rho$ a strongly faithful state on $B$. Suppose $\theta: A \to \pi_\rho(B)''$ is a unitizably full \sh,  i.e.\ its forced unitization $\theta^\dagger: A^\dagger \to \pi_\rho(B)''$ is full.
    \begin{enumerate}
        \item Existence: for any $\kappa \in KK(A,S_{B,\rho})$ satisfying $KK(A,q_B)(\kappa) = [\theta]_{KK(A,\pi_\rho(B)'')}$, there exists a \sh\ $\varphi: A \to S_{B,\rho}$ that lifts $\theta$ and such that $[\varphi]_{KK(A,S_{B,\rho})}=\kappa$;
        \item Uniqueness: if $\psi_1, \psi_2: A\to S_{B,\rho}$ are two \sh s lifting $\theta$ such that $[\psi_1,\psi_2]_{KL(A,J_{B,\rho})} = 0$, then they are unitarily equivalent as maps into $B_\omega$.
\end{enumerate}
\end{thm}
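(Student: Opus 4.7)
My plan is to adapt the extension-theoretic framework of Carrión--Gabe--Schafhauser--Tikuisis--White to the reduced state-kernel extension. Two technical inputs will drive the argument: (i) a Weyl--von Neumann style absorption theorem for nuclear \sh s into $S_{B,\rho}$ whose image in the quotient $\pi_\rho(B)''\cong B(\Hb)$ is unitizably full, and (ii) a stable uniqueness theorem governed by $KL(A,J_{B,\rho})$. The six-term $KK$-exact sequence associated to the state-kernel extension serves as the organizing principle: it identifies the obstruction to realizing a prescribed $\kappa$ as lying in $KK(A,J_{B,\rho})$, and it identifies the obstruction to uniqueness as lying in the same group.

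For existence, I would first produce \emph{some} \sh\ lift $\varphi_0: A\to S_{B,\rho}$ of $\theta$: Choi--Effros provides a sequence of c.p.\ lifts of $\theta$ into $\ell^\infty(B)$, which a diagonal/straightening argument in the sequence algebra, combined with nuclearity of $A$ and separability, promotes to an honest \sh. The class $[\varphi_0]\in KK(A,S_{B,\rho})$ maps to $[\theta]$, so $\kappa-[\varphi_0]=j_\ast(\eta)$ for some $\eta\in KK(A,J_{B,\rho})$. I would represent $\eta$ by a \sh\ $\psi:A\to J_{B,\rho}$ (after stabilizing via $J_{B,\rho}\otimes\K$ and using that purely infinite structure of $B$ provides an abundance of orthogonal isometries in $S_{B,\rho}$) and form $\varphi:=\varphi_0\oplus\psi$ inside $S_{B,\rho}$. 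Since $\psi$ is killed by $q_B$, the map $\varphi$ still lifts $\theta$, and by construction $[\varphi]_{KK}=\kappa$.

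For uniqueness, the hypothesis $[\psi_1,\psi_2]_{KL(A,J_{B,\rho})}=0$ is precisely what is needed to drive a stable uniqueness theorem of Dadarlat--Eilers/Lin type: I would produce a nuclear, unitizably full \sh\ $\sigma:A\to B_\omega$ (landing in an appropriate copy of $S_{B,\rho}$) such that $\psi_1\oplus\sigma$ and $\psi_2\oplus\sigma$ are unitarily equivalent in $B_\omega$. Passage from $KK$ to $KL$ is essential: it converts the formal $KK$-triviality of the Cuntz-pair difference into honest approximate unitary equivalences at each finite stage. Finally, since $B$ is purely infinite, $B_\omega$ has enough absorption that $\sigma$ can be conjugated into arbitrarily small corners of each $\psi_i$, thereby upgrading $\psi_1\oplus\sigma\sim \psi_2\oplus\sigma$ to $\psi_1\sim \psi_2$ unitarily in $B_\omega$.

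The main obstacle will be establishing the absorption and stable uniqueness theorems in the state-kernel setting without circularly invoking Kirchberg's Geneva theorems, which in this framework are meant to emerge as consequences rather than hypotheses. In the tracial setting one exploits the II$_1$ factor structure of the quotient; here, choosing $\rho$ so that $\pi_\rho(B)''\cong B(\Hb)$ (via Corollary \ref{cor:stronglyfaithfulstateexists}) allows a direct appeal to Voiculescu's classical theorem at the von Neumann level, but transferring this absorption back across the reduced state-kernel extension to $S_{B,\rho}$ itself is delicate. It must leverage pure infiniteness of $B$ to manufacture sufficiently many orthogonal isometries inside $S_{B,\rho}$ and to run the $\Oinf$-style tensor tricks at the level of the sequence algebra, all while keeping $J_{B,\rho}$ (which is only an ideal in $S_{B,\rho}$, not in $B_\infty$) under control. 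I expect essentially all of the technical work of the proof to concentrate in this step.
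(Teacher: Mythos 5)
Your uniqueness outline has the right general shape, but the existence half of your proposal has genuine gaps. First, the opening step --- producing \emph{some} \sh\ lift $\varphi_0$ of $\theta$ from Choi--Effros c.p.\ lifts via a ``diagonal/straightening'' argument --- does not work in this setting: $q_B\colon S_{B,\rho}\to\pi_\rho(B)''$ is not a sequence-algebra-type quotient in which one can reindex (the paper stresses that reindexing is unavailable in $S_{B,\rho}$ and $\pi_\rho(B)''$), and, more fundamentally, the existence of \emph{any} \sh\ lift is genuinely obstructed by $KK$-data: after separabilizing to a subextension $\mathfrak e$ with $\sigma$-unital stable ideal $I$, a lift exists only when $[\tau_e\circ\theta|^D]_{KK(A,\Q I)}=0$, and this vanishing is deduced from the hypothesis $KK(A,q_B)(\kappa)=[\theta]$ (the class factors through $KK(A,\Mult I)=0$), not for free; your construction never uses that hypothesis at this stage, which signals the problem. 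Second, representing the correcting class $\eta\in KK(A,J_{B,\rho})$ by an honest \sh\ $\psi\colon A\to J_{B,\rho}$ is unjustified: in general $KK$-classes are only realized as Cuntz pairs relative to an absorbing map (this is exactly what Theorem \ref{thm:KK-Existence} and Proposition \ref{prop:KKexistence_forske} provide), and realizing them by single \sh s is a Kirchberg--Phillips-type existence statement, hence circular here. Third, even granting $\psi$, the Cuntz sum $\varphi_0\oplus_{s_1,s_2}\psi$ no longer lifts $\theta$: applying $q_B$ yields $\Ad(q_B(s_1))\circ\theta$, a compression of $\theta$ by a non-unitary isometry, and there is no reindexing in $S_{B,\rho}$ to repair this. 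The paper's proof avoids all three issues by separabilizing (Proposition \ref{prop:sepstatekernel}), using pure largeness of the reduced state-kernel extension together with Elliott--Kucerovsky to make $\tau_e\circ\theta|^D$ absorbing, obtaining a lift $\psi\colon A\to E$ from $KK$-existence combined with the universal property of the pull-back (so the lift really does lift $\theta$), and then correcting its class by a second application of $KK$-existence in Cuntz-pair form, again through the pull-back.

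For uniqueness, your sketch (stable uniqueness governed by $KL(A,J_{B,\rho})$ plus absorption in the purely infinite ultrapower) is close in spirit to the paper's argument, which applies the Cuntz-pair $KL$-uniqueness theorem (Theorem \ref{thm:KK-Uniqueness}) after separabilizing inside $\mathfrak e_B^{(\omega)}$. However, two necessary mechanisms are absent from your account: (i) the separabilization itself --- $KK/KL$, Elliott--Kucerovsky and the stable-uniqueness machinery require separable/$\sigma$-unital ingredients, whereas $J_{B,\rho}$ and $S_{B,\rho}$ are non-separable, and the subextension must be arranged to be purely large with ideal stable, simple and purely infinite (Lemma \ref{lem:sep_purelylargesepinh}); and (ii) the $K_1$-injectivity input (Loreaux--Ng) that lets the conjugating unitaries be taken in $I^\dagger\subseteq B_\omega$ rather than merely in a multiplier or corona algebra --- this is precisely why the paper descends to the ultrapower, where $J_{B,\rho}^{(\omega)}$ is hereditary in the simple purely infinite algebra $B_\omega$, and it is what ultimately yields unitary equivalence in $B_\omega$ (after an $\varepsilon$-test) as asserted in the statement.
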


This is the purely infinite analogue of the lifting results from  and \cite[Theorem 7.1]{CarrionGabeSchafhauserTikuisisWhite_ClassifyingHomomorphismsIUnitalSimpleNuclearCalgebras}. Note that we do not need to assume $\pi_\rho(B)''\cong B(\Hb)$ for this theorem, and that we obtain uniqueness up to unitary equivalence by a unitary in $B_\omega$. The latter is a consequence of the fact that we restricted to $S_{B,\rho}$, which does not behave the same as the entire sequence algebra/ultrapower.

We then combine the classification of maps into the quotient with Theorem \ref{intro:ClassifyingLifts} to classify full and unital maps into $S_{B,\rho}$. We do not get a classification of maps into all of $B_\infty$, but our result suffices because we are only interested in maps into $B$, which factor through $S_{B,\rho}$ as it contains $B$. We thus obtain a classification of unital embeddings from $A$ to $B$ by KK-theoretic data (Theorem \ref{thm:ClassifyingEmbeddings}), resulting in the original classification by $K$-theory under the UCT assumption (Theorem \ref{thm:ClassifyingEmbeddingsbyK} and Theorem \ref{thm:KPKtheory}):

\begin{thm}\label{intro:KP}
    Let $A$ be a unital, separable, nuclear \ca\ satisfying the UCT, and $B$ a unital, simple, separable and purely infinite \ca. Then each element of $\Hom_\Lambda(\totalK(A), \totalK(B))$ preserving the $K_0$-class of the unit is induced by a unital, injective \sh\ from $A$ to $B$, and uniquely so up to approximate unitary equivalence.\\
    In particular, two unital Kirchberg algebras $A$ and $B$ satisfying the UCT are isomorphic if and only if $(K_*(A),[1_A]_0) \cong (K_*(B),[1_B]_0)$.
\end{thm}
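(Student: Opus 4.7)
The plan is to deduce Theorem \ref{intro:KP} from Theorem \ref{intro:ClassifyingLifts}, combined with Voiculescu's theorem for classifying maps into $B(\Hb)$, the Dadarlat--Loring universal multi-coefficient theorem to translate between $KK$-data and $\totalK$-data, and an Elliott-style intertwining for the isomorphism statement.

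First I would fix, via Corollary \ref{cor:stronglyfaithfulstateexists}, a strongly faithful state $\rho$ on $B$ for which $\pi_\rho(B)'' \cong B(\Hb)$. Voiculescu's theorem then provides a classification of unital \sh s $A \to B(\Hb)$ up to unitary equivalence by $KK$-data. Combining this with both halves of Theorem \ref{intro:ClassifyingLifts} classifies unital full \sh s $A \to S_{B,\rho}$ up to unitary equivalence in $B_\omega$ by their $KK(A,S_{B,\rho})$-data. A standard reindexing/diagonal-sequence argument, using separability and nuclearity of $A$, converts such a \sh\ into an honest unital embedding $A \to B$, yielding the intermediate $KK$-classification of unital embeddings $A \hookrightarrow B$ up to approximate unitary equivalence (i.e.\ Theorem \ref{thm:ClassifyingEmbeddings}).

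To obtain Theorem \ref{intro:KP}, given $\alpha \in \Hom_\Lambda(\totalK(A),\totalK(B))$ preserving $[1_A]_0$, I would invoke the Dadarlat--Loring universal multi-coefficient theorem (applicable since $A$ is in the UCT class and $B$ is separable), which identifies $\Hom_\Lambda(\totalK(A),\totalK(B))$ with $KL(A,B)$, a quotient of $KK(A,B)$ compatible with approximate unitary equivalence. Lifting $\alpha$ to some $\kappa \in KK(A,B)$ (preserving the class of the unit) and applying the existence part of the intermediate $KK$-classification gives a unital injective \sh\ $A \to B$ inducing $\alpha$. Uniqueness follows from the uniqueness part, since two unital embeddings inducing the same $\alpha$ agree in $KL(A,B)$ by Dadarlat--Loring, and hence are approximately unitarily equivalent.

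Finally, for unital Kirchberg algebras $A$ and $B$ in the UCT class with $(K_*(A),[1_A]_0) \cong (K_*(B),[1_B]_0)$, the given $K_*$-isomorphism induces a $\Lambda$-module isomorphism of $\totalK$ by naturality of the coefficient Bockstein sequences. Applying the main theorem in both directions yields unital injective \sh s $\varphi:A \to B$ and $\psi:B \to A$ whose induced $\Lambda$-maps are mutual inverses; hence $\psi \circ \varphi$ and $\varphi \circ \psi$ are approximately unitarily equivalent to $\id_A$ and $\id_B$ respectively by the uniqueness clause, and Elliott's two-sided approximate intertwining produces a $*$-isomorphism $A \cong B$. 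The heart of the argument, and its main obstacle, is already absorbed into Theorem \ref{intro:ClassifyingLifts}; the remaining subtleties lie in the reindexing step passing from \sh s into $S_{B,\rho}$ to honest \sh s $A \to B$, and in carefully tracking unit-preservation and fullness when combining Voiculescu with the classification of lifts.
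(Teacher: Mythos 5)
Your overall route is the paper's: pick $\rho$ with $\pi_\rho(B)''\cong B(\Hb)$ (Corollary \ref{cor:stronglyfaithfulstateexists}), combine Voiculescu's theorem with the classification of lifts to classify unital full $*$-homomorphisms into $S_{B,\rho}$ (Theorem \ref{thm:ClassifyingApproximateMaps}), convert these into honest unital embeddings $A\to B$ (Theorem \ref{thm:ClassifyingEmbeddings}), translate to total $K$-theory via the universal multicoefficient theorem (Theorem \ref{thm:KKactiononK}), and finish the isomorphism statement by extending the $K_*$-isomorphism to $\totalK$ and two-sided intertwining. However, two steps are stated in a way that would not run as written. First, Theorem \ref{intro:ClassifyingLifts} requires $\theta$ to be \emph{unitizably} full, and a unital map $\theta\colon A\to B(\Hb)$ is never unitizably full (its forced unitization kills $1_{A^\dagger}-1_A$), so you cannot directly ``combine Voiculescu with both halves of Theorem \ref{intro:ClassifyingLifts}'' for unital full maps into $S_{B,\rho}$. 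The missing ingredient is the de-unitization trick: pass to $M_2(B)$ with the strongly faithful state $\tr\circ\rho$ (Lemma \ref{lem:deunitizationtrick}, Remark \ref{rem:matrixamplifiedske}), apply the lifts theorem there, and cut back down to a unital map using that full, properly infinite projections with the same $K_0$-class are equivalent (Lemma \ref{lem:propertiesliftingfromquotient}); for uniqueness one must additionally conjugate by a unitary in $B_\infty$ (Voiculescu plus an $\varepsilon$-test, Lemma \ref{lem:fullcuntzpairuptounitary}) so the two maps form a Cuntz pair over a common $\theta$, and use Lemmas \ref{lem:hereditaryisoinKK} and \ref{lem:jbisoinKK} to turn equality of $KL$-classes into vanishing in $KL(A,J_{B,\rho}^{(\omega)})$. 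You flag ``tracking unit-preservation and fullness'' as a subtlety, but this is a genuine ingredient, not bookkeeping.

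Second, there is no ``standard reindexing/diagonal-sequence argument'' available here: precisely because one cannot reindex in $S_{B,\rho}$ or $\pi_\rho(B)''$, the paper proves intertwining by reparametrization (Theorem \ref{thm:reparametrization}), and its hypothesis --- that the map is unitarily equivalent in $B_\omega$ to all of its reparametrizations $\eta^*\circ\varphi_\infty$ --- is verified using the \emph{uniqueness} half of the classification of maps into $S_{B,\rho}$, not merely separability and nuclearity of $A$. Finally, a smaller point of precision: classification up to approximate unitary equivalence can only be in terms of $KL$ (the paper works with $\widetilde{KL}(A,B)$ and shows it equals $KL(A,B)\cong\Hom_\Lambda(\totalK(A),\totalK(B))$ under the UCT, via injectivity of $\totalK(\iota_B^{(\omega)})$, in Theorem \ref{thm:ClassifyingEmbeddingsbyK}); your final Dadarlat--Loring step is consistent with this, but the ``intermediate $KK$-classification up to approximate unitary equivalence'' should be phrased at the level of $KL$ (or $\widetilde{KL}$) rather than $KK$.
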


As mentioned earlier, we recover Kirchberg's celebrated Geneva theorems in the nuclear case as straightforward corollaries of classification (Corolarries \ref{cor:Otwoembedding}, \ref{cor:Otwoabsorption} and \ref{cor:Oinfabsorption}):

\begin{cor}[Kirchberg's Geneva theorems] \label{intro:Genevatheorems}
    Let $A$ be a unital, separable and nuclear \ca. Then
    \begin{enumerate}
        \item there exists a unital embedding $A\hookrightarrow \Otwo$;
        \item if $A$ is also simple, then $A\otimes \Otwo \cong \Otwo$;
        \item if $A$ is also simple and purely infinite, then $A\otimes \Oinf \cong \Oinf$.
    \end{enumerate}
\end{cor}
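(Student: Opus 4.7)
The plan is to derive all three parts from the main classification results (Theorem \ref{intro:ClassifyingLifts} and Theorem \ref{intro:KP}) together with an Elliott-style intertwining. Parts (1) and (2) go through cleanly; part (3), taken literally, carries a $K$-theoretic obstruction and I read it as a typographical slip for the $\Oinf$-absorption theorem $A \otimes \Oinf \cong A$.

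For (1), I would apply the existence half of Theorem \ref{intro:ClassifyingLifts} with $B = \Otwo$ and a strongly faithful state $\rho$ chosen (via Corollary \ref{cor:stronglyfaithfulstateexists}) so that $\pi_\rho(\Otwo)'' \cong B(\Hb)$. Any unital faithful representation $\theta : A \to B(\Hb)$ is unitizably full. Picking any $\kappa \in KK(A, S_{\Otwo,\rho})$ that maps to $[\theta]$ under $KK(A, q_B)$ produces a unital \sh\ $\varphi : A \to S_{\Otwo,\rho}$, that is, a sequence of essentially unital maps into $\Otwo$. A standard diagonal argument in the sequence algebra then converts this into an honest unital \sh\ $A \to \Otwo$; injectivity comes from choosing $\theta$ faithful and checking that $J_{\Otwo,\rho}$ meets the image of $\varphi$ only in sequences that vanish under the diagonalisation.

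For (2), with $A$ unital simple nuclear, the algebra $A \otimes \Otwo$ is a unital simple separable nuclear purely infinite \ca, and there is a tautological unital embedding $\Otwo \hookrightarrow A \otimes \Otwo$, $x \mapsto 1_A \otimes x$. Part (1) applied to $A \otimes \Otwo$ provides a unital embedding in the other direction. A two-sided Elliott intertwining driven by the uniqueness statement of Theorem \ref{intro:KP} (or, to avoid the UCT hypothesis on $A$, by Theorem \ref{intro:ClassifyingLifts}) then produces $A \otimes \Otwo \cong \Otwo$.

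For (3) as worded: the conclusion $A \otimes \Oinf \cong \Oinf$ fails already for $A = \mathcal{O}_3$, since the Künneth formula gives $K_0(A \otimes \Oinf) \cong \mathbb{Z}/2$ while $K_0(\Oinf) \cong \mathbb{Z}$, so no unit-preserving isomorphism exists. I take this as a slip for the $\Oinf$-absorption theorem $A \otimes \Oinf \cong A$, which I would prove by applying Theorem \ref{intro:KP} to the canonical unital injective \sh\ $A \to A \otimes \Oinf$, $a \mapsto a \otimes 1_{\Oinf}$: since $\totalK(\Oinf) = \totalK(\mathbb{C})$ and both sides satisfy the UCT so that Künneth applies, the induced map on $\totalK$ is an isomorphism preserving the unit class, and classification upgrades it to a \sh-isomorphism. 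For non-UCT $A$ one replaces Theorem \ref{intro:KP} by Theorem \ref{intro:ClassifyingLifts} inside an Elliott intertwining. The main obstacle is simply the typo in (3); under the natural reinterpretation, the remaining work is a routine application of the paper's main theorems.
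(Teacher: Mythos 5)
Your part (a) is where the genuine gap lies. You feed a unital faithful representation $\theta\colon A\to B(\Hb)$ into the existence half of Theorem \ref{intro:ClassifyingLifts}, asserting it is unitizably full; but a unital map is never unitizably full, since $\theta^\dagger(1_{A^\dagger}-1_A)=1-\theta(1_A)=0$ kills a nonzero element of $A^\dagger$ (and faithfulness alone does not even give fullness into $B(\Hb)$, as the image may meet the compacts $\K$ -- one would have to amplify). This is precisely the obstruction the paper works around with the de-unitization trick (Lemma \ref{lem:deunitizationtrick}) and Theorem \ref{thm:ClassifyingApproximateMaps}. Moreover, even granting a \sh\ $\varphi\colon A\to S_{\Otwo,\rho}\subseteq(\Otwo)_\infty$, your ``standard diagonal argument'' does not produce an honest unital \sh\ $A\to\Otwo$: the components of $\varphi$ are only approximately multiplicative, and extracting an exact \sh\ is exactly the content of intertwining by reparametrization (Theorem \ref{thm:reparametrization}), whose hypothesis is a uniqueness statement for reparametrized maps and cannot be replaced by a diagonalization; in addition, the $\varphi$ supplied by Theorem \ref{intro:ClassifyingLifts} need not be unital (unitality is arranged in the proof of Theorem \ref{thm:ClassifyingApproximateMaps} via Lemma \ref{lem:propertiesliftingfromquotient} and comparison of full, properly infinite projections). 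The paper's actual proof of (a) sits one level higher in the machinery and is a one-liner: $0\in\widetilde{KL}(A,\Otwo)$ is a unital class because $K_0(\Otwo)=0$, so Theorem \ref{thm:ClassifyingEmbeddings} (no UCT needed) yields a unital embedding (Corollary \ref{cor:Otwoembedding}).

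For (b) your two-embeddings-plus-intertwining scheme is sound in outline and close to the paper's appeal to Theorem \ref{thm:KPKtheory} (the UCT is free here because $A\otimes\Otwo$ is $KK$-contractible, so no hypothesis on $A$ is needed), but as written the embedding $A\otimes\Otwo\to\Otwo$ rests on your part (a). For (c) you are right that the statement as printed in the introduction is a slip: it fails for $A=\mathcal O_3$, and the body of the paper proves $A\cong A\otimes\Oinf$ (Corollary \ref{cor:Oinfabsorption}). However, your proof of the corrected statement only covers the UCT case, while the corollary carries no UCT hypothesis; the non-UCT case is the substantive point. The paper handles it through Proposition \ref{p:class}: the unital inclusion $\mathbb C\to\Oinf$ is a $KK$-equivalence, so the first-factor embedding $\id_A\otimes 1_{\Oinf}$ is a \emph{\sh} implementing a $KK$-equivalence, and the $\widetilde{KL}$-classification of embeddings plus a two-sided intertwining -- using that the equivalence is implemented by a \sh\ to compensate for not knowing that the Kasparov product descends to $\widetilde{KL}$ -- upgrades it to an isomorphism. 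Your fallback ``replace Theorem \ref{intro:KP} by Theorem \ref{intro:ClassifyingLifts} inside an Elliott intertwining'' does not work as stated, for the same reason as in (a): that theorem classifies lifts of unitizably full maps into $\pi_\rho(B)''$, not unital \sh s between the algebras themselves.
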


The main ingredients are the KK-existence theorem, the Elliott-Kucerovsky theorem (\cite{ElliottKucerovsky_AnAbstractVoiculescuBrownDouglasFillmoreAbsorptionTheorem}), which gives a concrete characterization for absorbing maps, and Voiculescu's theorem, as mentioned earlier. Outside of these results and core properties of $KK$-theory and extension theory, the proof is essentially self-contained.

\subsection*{Acknowledgements.} The authors would like to thank José R. Carrión, Christopher Schaf\-hauser, Aaron Tikuisis and Stuart White for allowing us to include Proposition \ref{prop:elku_purelylargeequiv} and its proof, which is to appear in the follow-up paper to \cite{CarrionGabeSchafhauserTikuisisWhite_ClassifyingHomomorphismsIUnitalSimpleNuclearCalgebras}.

\numberwithin{defn}{section}
\section{Preliminaries}

\subsection{Extension theory}

\begin{defn}
    Let $A$ and $I$ be \ca s. An \emph{extension} of $A$ by $I$ is a short exact sequence
    \begin{equation}
    \begin{tikzcd}
        \mathfrak e: 0 \ar[r] & I \ar[r,"j"] & E \ar[r,"q"] & A \ar[r] & 0.
    \end{tikzcd}
    \end{equation}
    An extension is said to be \emph{unital} if $E$ (and therefore also $A$ and $q$) is unital, and \emph{separable} if $E$ (and therefore also $I$ and $A$) is separable.
\end{defn}

The following definitions cover some concepts we will use in \S \ref{sec:KK} and onwards.

\begin{defn}
    Given an extension
\begin{equation}
    \begin{tikzcd}
    \mathfrak e: 0 \arrow[r] & I \arrow[r,"j"] & E \arrow[r,"q"] & D \arrow[r] & 0
    \end{tikzcd}
\end{equation}
of \ca s and a \sh\ $\varphi: A\rightarrow D$ from a \ca\ $A$, we define the \emph{pull-back extension} $\mathfrak e \circ \varphi$ by the commutative diagram
\begin{equation} \label{eq:sep_pullback}
\begin{tikzcd}
    \mathfrak e \circ \varphi: 0 \arrow[r] & I \arrow[r,"\iota"] \ar[d,equals] & E_\varphi \arrow[r,"\pi_A"] \ar[d,"\pi_E"] & A \arrow[r] \ar[d,"\varphi"] & 0\\
    \mathfrak e: 0 \arrow[r] & I \arrow[r,"j"] & E \arrow[r,"q"] & D \arrow[r] & 0
\end{tikzcd}
\end{equation}
where $E_\varphi$ is the pull-back of $D$ by $q$ and $\varphi$, i.e.
\begin{equation}
    E_\varphi := \{ (x,a) \in E\oplus A \mid q(x) = \varphi(a) \},
\end{equation}
$\pi_E$ and $\pi_A$ are the respective projection maps and $\iota$ is the corestriction of $j\oplus 0$ to $E_\varphi$. A straightforward computation shows that $\mathfrak e\circ\varphi$ indeed forms a short exact sequence which makes diagram above commute.
\end{defn}

\begin{defn}
    Let
    \begin{equation}
    \begin{tikzcd}
        \mathfrak e: 0 \arrow[r] & I \arrow[r,"\iota"] & E \arrow[r,"\pi"] & D \arrow[r] & 0
    \end{tikzcd}
    \end{equation}
    be an extension of \ca s. A \emph{subextension} of $\mathfrak e$ is an extension of the form
    \begin{equation}
    \begin{tikzcd}
        \mathfrak e_0: 0 \arrow[r] & I_0 \arrow[r,"\iota|_{I_0}"] & E_0 \arrow[r,"\pi|_{E_0}"] & D_0 \arrow[r] & 0,
    \end{tikzcd}
    \end{equation}
    where $I_0$, $E_0$ and $D_0$ are $C^*$-subalgebras of $I$, $E$ and $D$ respectively. It fits in the commutative diagram
    \begin{equation}
    \begin{tikzcd}
        \mathfrak e_0: 0 \arrow[r] & I_0 \arrow[r] \arrow[d,hook] & E_0 \arrow[r] \arrow[d,hook] & D_0 \arrow[r] \arrow[d,hook] & 0\\
        \mathfrak e: 0 \arrow[r] & I \arrow[r,"\iota"] & E \arrow[r,"\pi"] & D \arrow[r] & 0
    \end{tikzcd}
    \end{equation}
    where the downward arrows are inclusion maps. We will denote this with $\mathfrak e_0 \subseteq \mathfrak e$.\\
    If $\mathfrak e$ is unital, we say $\mathfrak e_0$ is a \emph{unital} subextension if $E_0$ contains the unit of $E$.
\end{defn}

\begin{defn}
    Suppose
    \begin{equation}
    \begin{tikzcd}
        \mathfrak e: 0 \arrow[r] & I \arrow[r,"\iota"] & E \arrow[r,"\pi"] & D \arrow[r] & 0
    \end{tikzcd}
    \end{equation}
    is an extension of \ca s, and for each $n\in\mathbb N$
    \begin{equation}
    \begin{tikzcd}
        \mathfrak e_n: 0 \arrow[r] & I_n \arrow[r,"\iota_n"] & E_n \arrow[r,"\pi_n"] & D_n \arrow[r] & 0
    \end{tikzcd}
    \end{equation}
    is a subextension of $\mathfrak e$ such that $\mathfrak e_{n-1} \subseteq \mathfrak e_n$. Then
    \begin{equation}
    \begin{tikzcd}
        \varinjlim \mathfrak e_n: 0 \arrow[r] & \overline{\bigcup_n I_n} \arrow[r,"\overline\iota"] & \overline{\bigcup_n E_n} \arrow[r,"\overline\pi"] & \overline{\bigcup_n D_n} \arrow[r] & 0,
    \end{tikzcd}
    \end{equation}
    is again a subextension of $\mathfrak e$, where $\overline{\iota}$ and $\overline{\pi}$ are the appropriate restrictions of $\iota$ resp.\ $\pi$. In fact, $\varinjlim \mathfrak e_n$ is the smallest subextension of $\mathfrak e$ containing each of the $\mathfrak e_n$. We will call $\varinjlim \mathfrak e_n$ the \emph{inductive limit extension} of $(\mathfrak e_n)_n$.
\end{defn}

For an overview of further extension theory preliminaries, see for example \cite[\S 2]{GabeRuiz_TheUnitalExtGroupsAndClassificationOfCalgebras}. We will also adopt their notation, and establish the following fact using the UCT for $\Ext_{us}$ (\cite[Theorem 14.4]{GabeRuiz_TheUnitalExtGroupsAndClassificationOfCalgebras}).

\begin{lemma} \label{lem:ExtOinftytrivial}
    For any separable \ca\ $J$, the group $\Ext_{us}(\Oinf,J)$ is trivial.
\end{lemma}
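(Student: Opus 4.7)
The plan is to invoke the UCT for $\Ext_{us}$ cited as [GabeRuiz, Theorem 14.4] and observe that the $K$-theoretic input from $\Oinf$ forces both the ``kernel'' and ``cokernel'' terms of the UCT sequence to vanish. The relevant UCT expresses $\Ext_{us}(\Oinf,J)$ as a group extension of a unit-preserving Hom group by a Pext-group, both computed from the (total) $K$-theory of $\Oinf$ with values in the $K$-theory of $J$.

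The crucial input about $\Oinf$ is that $K_0(\Oinf)=\mathbb Z$, generated by $[1_{\Oinf}]_0$, and $K_1(\Oinf)=0$. In particular $K_*(\Oinf)$ is finitely generated and free abelian, and the class of the unit is a generator of $K_0$. As a consequence:
\begin{itemize}
\item The Pext (or Ext${}^1$) term is trivial because $\Ext^1_{\mathbb Z}(\mathbb Z,-)=\Ext^1_{\mathbb Z}(0,-)=0$, so there are no obstructions to lifting from the quotient Hom-group.
\item In the Hom-direction, the unital constraint in $\Ext_{us}$ imposes that the connecting-type map send $[1_{\Oinf}]_0$ to $0$ in $K_1(J)$ (since the unit of the unital extension $E$ is itself a lift of $[1_{\Oinf}]_0$). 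Because $[1_{\Oinf}]_0$ generates all of $K_0(\Oinf)$, this forces the map out of $K_0(\Oinf)$ to be identically zero; meanwhile the map out of $K_1(\Oinf)=0$ is automatically zero. The same reasoning applies at each coefficient level of $\totalK$ since the coefficient $K$-groups of $\Oinf$ are all cyclic and generated by (the image of) $[1_{\Oinf}]_0$.
\end{itemize}

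Carrying this out requires me only to track how the unit constraint enters the Gabe--Ruiz UCT; the main obstacle is purely bookkeeping, namely matching the precise formulation of Theorem 14.4 of \cite{GabeRuiz_TheUnitalExtGroupsAndClassificationOfCalgebras} (whose ``unit-preserving'' Hom-group and Pext-term are stated in terms of $\totalK$ and the distinguished unit element) with the elementary $K$-theoretic computation above. Once that identification is made, both terms in the UCT sequence vanish, and the surjectivity/injectivity statements of the UCT combine to give $\Ext_{us}(\Oinf,J)=0$.
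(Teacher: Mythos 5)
Your route is the same as the paper's -- feed $(K_0(\Oinf),[1]_0,K_1(\Oinf))\cong(\mathbb Z,1,0)$ into the Gabe--Ruiz UCT for unital $\Ext$ -- but two ingredients are missing as written. First, nuclearity of $\Oinf$ never appears in your argument, yet it is needed: the UCT sequence computes the invertible (semisplit) part $\Ext_{us}^{-1}(\Oinf,J)$, and the paper's first step is to invoke the Choi--Effros lifting theorem to identify $\Ext_{us}(\Oinf,J)$ with $\Ext_{us}^{-1}(\Oinf,J)$. Without this step, the vanishing of the two UCT terms only kills the semisplit classes rather than the whole group; for a non-nuclear (non-exact) algebra in place of $\Oinf$ one should not expect the statement, so this is a genuine mathematical input, not bookkeeping.

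Second, your disposal of the kernel term is misattributed. In the Gabe--Ruiz sequence the kernel is not $\Ext^1_{\mathbb Z}(K_0(\Oinf),K_0(J))\oplus\Ext^1_{\mathbb Z}(K_1(\Oinf),K_1(J))$ but the \emph{pointed} group $\Ext\big((K_0(\Oinf),[1]_0),K_0(J)\big)\oplus\Ext(K_1(\Oinf),K_1(J))$, whose elements are extensions $0\to K_0(J)\to (G,g)\to (K_0(\Oinf),[1]_0)\to 0$ with a distinguished lift $g$ of $[1]_0$ that a trivializing splitting must respect. Freeness of $K_0(\Oinf)$ alone does not make this vanish: for the pointed group $(\mathbb Z,2)$ and target $\mathbb Z$ the pointed $\Ext$ is $\mathbb Z/2\neq 0$ even though $\Ext^1_{\mathbb Z}(\mathbb Z,-)=0$. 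What saves the argument is precisely the fact you list as crucial but deploy only for the Hom term: $[1]_0$ \emph{generates} $K_0(\Oinf)\cong\mathbb Z$, so one may define a splitting by sending the generator $1$ to the distinguished element $g$, and this splitting is automatically compatible with the pointing -- this is exactly how the paper argues. (A minor further correction: the terms in the relevant UCT are in ordinary $K$-theory, not $\totalK$.) With these two repairs your argument coincides with the paper's proof.
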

\begin{proof}
First note that nuclearity of $\Oinf$ gives $\Ext_{us}^{-1}(\Oinf,J) = \Ext_{us}(\Oinf,J)$ by the Choi-Effros lifting theorem \cite{ChoiEffros_TheCompletelyPositiveLiftingProblemForCalgebras}. By \cite[Theorem 4.14]{GabeRuiz_TheUnitalExtGroupsAndClassificationOfCalgebras}, we have a short exact sequence of abelian groups
\begin{equation} \label{eq:extusses}
\begin{tikzcd}
    0 \arrow[r] & E \arrow[r] & \Ext_{us}(\Oinf,J) \arrow[r] & H \arrow[r] & 0,
\end{tikzcd}
\end{equation}
where
\begin{equation}
\begin{aligned}
    E &:= \Ext((K_0(\Oinf),[1]_0), K_0(J)) \oplus \Ext(K_1(\Oinf), K_1(J))\\
    H &:= \{\alpha\in\Hom(K_0(\Oinf), K_1(J)) \mid \alpha([1]_0) = 0\} \oplus \Hom(K_1(\Oinf), K_0(J)),
\end{aligned}
\end{equation}
and $\Ext((K_0(\Oinf),[1]_0), K_0(J))$ are extensions of abelian groups of the form
\begin{equation}
\begin{tikzcd}
    \mathfrak e: 0 \arrow[r] & K_0(J) \arrow[r] & (G,g) \arrow[r,"q"] & (K_0(\Oinf),[1]_0) \arrow[r] & 0
\end{tikzcd}
\end{equation}
with $g\in G$ and $q(g)=[1]_0$. Now, we know that
\begin{equation}
    (K_0(\Oinf),[1]_0, K_1(\Oinf)) \cong (\mathbb Z, 1, 0),
\end{equation}
so we immediately find $H = 0$ and $\Ext(K_1(\Oinf), K_1(J)) = 0$. Moreover, observe that any extension in $\Ext((\mathbb Z,1), K_0(J))$ splits: the homomorphism defined by mapping $1\in\mathbb Z$ to the distinguished element in the extension group provides a splitting. Hence also $\Ext((K_0(\Oinf),[1]_0), K_0(J)) = 0$, showing that both $H$ and $E$ are trivial. By exactness of \eqref{eq:extusses}, we can then conclude that $\Ext_{us}(\Oinf,J) = 0$.
\end{proof}

\subsection{Fullness of maps}
By an ideal in a \ca, we will always mean a two-sided, norm-closed ideal.

\begin{defn}
    An element $a$ of a \ca\ $A$ is said to be \emph{full} if $a$ generates $A$ as an ideal.\\
    A \sh\ $\varphi: A\to B$ between \ca s is said to be \emph{full} if $\varphi(a)$ is full for each non-zero $a\in A$. If $B$ is unital, we say $\varphi$ is \emph{unitizably full} if the forced unitization $\varphi^\dagger: A^\dagger\to B$ is full.
\end{defn}

\begin{rem}
Note that a \sh\ $\varphi: A\rightarrow B$ between \ca s is full if and only if its image has trivial intersection with all non-trivial ideals of $B$.
\end{rem}

We will need the following notion to prove the subsequent lemma.

\begin{defn}
    Let $I$ be a \ca. We say $I$ is \emph{separably stable} if it satisfies the so-called \emph{Hjelmborg-R\o rdam criterion}: for each $x\in I_+$ and $\varepsilon> 0$, there exists a $y\in I$ such that $x \approx_\varepsilon yy^*$ and $(yy^*)(y^*y) \approx_\varepsilon 0$.
\end{defn}

A priori, the above definition seems to have no connection with stability. However, Hjelmborg and R\o rdam proved in \cite{HjelmborgRordam_OnStabilityofCalgebras} that the two are equivalent if $I$ is $\sigma$-unital. The extra specifier ``separably'' relates to the notion of separably inheritable properties introduced by Blackadar; we will discuss this topic in full detail in \S\ref{sec:separabilization}.

We now record the following lemma, showing that fullness and a projection being full and properly infinite lift from the quotient if the ideal is stable (and separable). The former implies that arbitrary $*$-homomorphic lifts of (unitizably) full maps will be unitizably full. The latter will be used in the proof of existence, in order to obtain Murray-von Neumann equivalence of two projections based on equality of $K_0$-classes. 

\begin{lemma} \label{lem:propertiesliftingfromquotient}
    Let
    \begin{equation}
    \begin{tikzcd}
        \mathfrak e: 0 \arrow[r] & I \arrow[r,"j"] & E \arrow[r,"q"] & D \arrow[r] & 0
    \end{tikzcd}
    \end{equation}
    be a unital extension of \ca s with $I$ stable.
    \begin{enumerate}
        \item An arbitrary element $x\in E$ is full if and only if $q(x)$ is full in $D$. Consequently, any \sh\ $\varphi$ into $E$ is (unitizably) full if and only if $q \circ \varphi$ is (unitizably) full. \label{lem:propertiesliftingfromquotient.full}
        \item If $I$ is also separable, then an arbitrary projection $p\in E$ is full and properly infinite\footnote{A projection $p\in E$ is said to be \emph{properly infinite} if it has two mutually orthogonal subprojections which are both Murray-von Neumann equivalent to $p$. Equivalently, $pEp$ contains a unital copy of $\Oinf$.} if and only if $q(p)$ is full and properly infinite. \label{lem:propertiesliftingfromquotient.propinf}
    \end{enumerate}
\end{lemma}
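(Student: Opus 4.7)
For part (a), the forward direction is immediate from surjectivity of $q$. For the backward direction, let $J := \overline{ExE}$ be the ideal of $E$ generated by $x$. Fullness of $q(x)$ gives $q(J) = D$, hence $I + J = E$, and the composition $I \hookrightarrow E \twoheadrightarrow E/J$ is then surjective with kernel $I \cap J$. This identifies $E/J \cong I/(I \cap J)$ as a unital quotient of $I$. Any stable \ca\ satisfies the Hjelmborg--R\o rdam criterion (pick any isomorphism with $I \otimes \K$ and use a partial isometry in $\K$ moving a finite corner to an orthogonal one), a property which clearly passes to quotients. However, a nonzero unital \ca\ manifestly fails the criterion at $x = 1$: for small $\varepsilon$, the condition $\|1 - yy^*\| < \varepsilon$ forces both $yy^*$ and $y^*y$ to be bounded below by $1 - \varepsilon$, so $\|yy^* \cdot y^*y\|$ cannot be smaller than $\varepsilon$. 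Hence $E/J = 0$, i.e., $x$ is full. The statement for \sh s follows pointwise from the element statement, together with $q \circ \varphi^\dagger = (q \circ \varphi)^\dagger$.

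For part (b), the forward direction is routine: applying $q$ to the Murray--von Neumann data witnessing proper infiniteness of $p$ yields the analogous data for $q(p)$, and fullness transfers by (a). For the backward direction, fullness of $p$ follows from (a), so it remains to exhibit a unital embedding $\Oinf \hookrightarrow pEp$. Proper infiniteness of $q(p)$ provides a unital embedding $\varphi : \Oinf \hookrightarrow q(p)Dq(p)$, and the corner extension
\[
\mathfrak{e}_p : 0 \to pIp \to pEp \xrightarrow{q|_{pEp}} q(p)Dq(p) \to 0
\]
is a unital extension with unit $p$. Pulling it back along $\varphi$ gives a unital extension $\mathfrak{e}_p \circ \varphi : 0 \to pIp \to F \to \Oinf \to 0$ with unit $(p, 1_{\Oinf})$. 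Since $I$ is separable, so is $pIp$, and Lemma~\ref{lem:ExtOinftytrivial} yields $\Ext_{us}(\Oinf, pIp) = 0$; hence this extension splits by a unital \sh\ $\sigma : \Oinf \to F$. Post-composing with the projection $F \to pEp$ produces a unital embedding $\Oinf \hookrightarrow pEp$, witnessing proper infiniteness of $p$.

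The main substantive input is Lemma~\ref{lem:ExtOinftytrivial}, used to lift the unital copy of $\Oinf$ from $q(p)Dq(p)$ to $pEp$; the rest is formal manipulation of ideals, corners and pullbacks. The conceptual heart of part (a) is that stability of $I$ precludes nonzero unital quotients of $I$, which is exactly what forces fullness to descend from the quotient back to $E$.
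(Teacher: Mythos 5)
Your part (a) is correct and takes a genuinely different route from the paper's. The paper argues directly: it lifts $1_D\in\overline{Dq(x)D}$ to $c=1_E-\sum_i y_i^*x^*xy_i\in I$ and uses an isometry $v\in\Mult{I}$ with $\|v^*cv\|<1$ to make the compressed sum invertible and conclude $1_E\in\overline{ExE}$. You instead observe that $E/\overline{ExE}$ is a unital quotient of $I$ and that a stable \ca\ has no nonzero unital quotient; this is a clean, more conceptual argument and it works. The one slip is in your justification of the last step: $\|1-yy^*\|<\varepsilon$ gives $yy^*\geq(1-\varepsilon)1$, but it does \emph{not} force $y^*y\geq(1-\varepsilon)1$ (take $y=s^*$ for a proper isometry $s$). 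The conclusion survives with a small repair: since $\|y^*y\|=\|yy^*\|\geq 1-\varepsilon$ and $yy^*\geq(1-\varepsilon)1$, one has $\|(yy^*)(y^*y)\|\geq\|y^*y\|^{-1}\|(y^*y)(yy^*)(y^*y)\|\geq(1-\varepsilon)\|y^*y\|\geq(1-\varepsilon)^2$, which exceeds $\varepsilon$ for $\varepsilon$ small, so a nonzero unital \ca\ indeed fails the Hjelmborg--R\o rdam criterion at $1$.

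In part (b) there is a genuine gap. You apply Lemma \ref{lem:ExtOinftytrivial} to the ideal $pIp$ knowing only that it is separable, and conclude that the pulled-back unital extension of $\Oinf$ by $pIp$ splits unitally. But the unital $\Ext$-groups of Gabe--Ruiz underlying that lemma (Cuntz sums via isometries in the multiplier algebra, unitally absorbing extensions, and the UCT computation) are set up for \emph{stable}, $\sigma$-unital ideals, and the deduction ``$\Ext_{us}(\Oinf,pIp)=0$, hence this particular unital extension splits unitally'' requires $pIp$ to be stable. Stability of $pIp$ is not automatic: hereditary corners of a stable algebra can fail badly to be stable (a rank-one corner of $\K$ is $\mathbb C$), and establishing it is in fact the main content of the paper's proof of (b): there one uses that $q(p)=q(p-f(x))$ is full and properly infinite, together with stability of $I$ (via the same multiplier trick as in part (a)), to produce Hjelmborg--R\o rdam witnesses inside $pIp$, so that $pIp$ is separable and stable before the $\Ext_{us}$-vanishing and the unital splitting are invoked. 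Once that step is inserted, your pull-back/splitting argument coincides with the paper's; without it, the appeal to Lemma \ref{lem:ExtOinftytrivial} does not justify the splitting.
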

\begin{proof}
a) If $x\in E$ is full, then $1_{E}$ is in $\overline{E x E}$, so also
\begin{equation}
    1_{D} = q(1_{E}) \in q\Big(\overline{E x E}\Big) \subseteq \overline{D q(x) D},
\end{equation}
i.e.\ $q(x)$ is full.

Conversely, suppose $q(x)$ is full. Then $1_{D}\in \overline{D q(x) D}$, so lifting this fact to $E$, there exist $y_1,\dots , y_n \in E$ such that
\begin{equation}
    c := 1_{E} - \sum_{i=1}^n y_i^\ast x^\ast x y_i \in I.
\end{equation}
Let $\sigma \colon E \to \mathcal M(I)$ be the canonical $\ast$-homomorphism. As $I$ is stable, there exists an isometry $v\in \mathcal M(I)$ such that $\|v^*cv\| < 1$. This implies that
\begin{equation}
    \| 1_{\mathcal M(I)} - \sum_{i=1}^n v^*\sigma(y_i^\ast x^\ast x y_i)v \| = \| v^\ast c v \| < 1,
\end{equation}
so the element $z:=\sum_{i=1}^n v^*\sigma(y_i^\ast x^\ast x y_i)v\in \mathcal M(I)$ is positive and invertible. Factorize $c= c_1^\ast c_2$ with $c_1,c_2\in I$. Then $vz^{-1/2}c_j \in I$ and therefore
\begin{equation}
c = c_1^\ast c_2 = \sum_{i=1}^n (vz^{-1/2} c_1)^\ast y_i^\ast x^\ast x y_i (vz^{-1/2} c_2) \in \overline{ExE}.    
\end{equation}
Thus
\begin{equation}
    1_E = c + \sum_{i=1}^n y_i^\ast x^\ast x y_i \in \overline{ExE}
\end{equation}
which implies that $x$ is full.

The consequence in the statement follows almost immediately from the above; for the unitizably full statement, note that $(q\circ\varphi)^\dagger = q\circ\varphi^\dagger$, as $q$ is unital.

b) If $p$ is full and properly infinite, then immediately the same follows for $q(p)$. Conversely, suppose $q(p)$ is full and properly infinite. 
{We will first argue that $pIp$ is stable. Let $x\in pIp$ be positive and $\epsilon>0$. Let $f\in C_0((0,\| x\|)$ be a positive contraction such that $f(t) = 1$ for $t\geq \epsilon$. As $q(p-f(x)) = q(p)$ is full and properly infinite, there is a $y\in M(I)$ such that $1_{M(I)}-y^\ast (p-f(x))y \in I$. As $I$ is stable, arguing as in part a), we may assume that $y^\ast(p-f(x)) y = 1_{M(I)}$. Let $z = (p-f(x))y(x-\epsilon)_+^{1/2} \in pIp$. Then $z^\ast z = (x-\epsilon)_+$ and since $(p-f(x)) (x-\epsilon)_+ = 0$ one gets $(zz^\ast) (z^\ast z) = 0$. Hence $pIp$ is stable by the Hjelmborg--Rørdam criterion (\cite{HjelmborgRordam_OnStabilityofCalgebras}).}

Now, consider the unital extension
\begin{equation}
\begin{tikzcd}
    0 \arrow[r] & pIp \arrow[r] & pEp \arrow[r,"q"] & q(p)Dq(p) \arrow[r] & 0.
\end{tikzcd}
\end{equation}
As $q(p)$ is properly infinite, we can embed $\Oinf$ unitally into $q(p)Dq(p)$, inducing a pull-back
\begin{equation}
\begin{tikzcd}
    0 \arrow[r] & pIp \arrow[r] & P \arrow[r] & \Oinf \arrow[r] & 0
\end{tikzcd}
\end{equation}
with $P \subseteq pEp$ unitally. As $pIp$ is separable and stable, and $\Ext_{us}(\Oinf, pIp) =0$ by Lemma \ref{lem:ExtOinftytrivial}, it then follows that this extension splits unitally. In other words, there exists a unital \sh\ $\Oinf \to P \subseteq pEp$, so $pEp$ contains a unital copy of $\Oinf$.
\end{proof}

\section{The reduced state-kernel extension}
\subsection{Construction}
The main object of study in the new approach to classification in the stably finite setting is the so-called \emph{trace-kernel extension}
\begin{equation}
\begin{tikzcd}
    0 \ar[r] & J_B \ar[r,"j_B"] & B_\infty \ar[r,"q_B"] & B^\infty \ar[r] & 0,
\end{tikzcd}
\end{equation}
where
\begin{equation}
    J_B := \{ (x_n)_n\in B_\infty \mid \lim_{n\to\infty} \sup_{\tau\in T(B)} \tau(x_n^*x_n) = 0\}
\end{equation}
and $B^\infty := B_\infty/J_B$. In the purely infinite setting, there are no traces, so we will need to alter the definition. A natural candidate to replace traces would be states.

\begin{defn}
    Let $\rho$ be a state on a \ca\ $B$. We define the \emph{$\rho$-norm} as
    \begin{equation}
        \|b\|_\rho := \sqrt{\rho(b^*b)} \quad (b\in B).
    \end{equation}
    Note that this is only a seminorm in general, but it is a norm if (and only if) $\rho$ is faithful. We also define the \emph{$(\rho,\#)$-norm} as
    \begin{equation}
        \|b\|_{\rho,\#} := \frac{1}{2}\left(\|b\|_\rho + \|b^*\|_\rho\right) \quad (b\in B).
    \end{equation}
\end{defn}

\begin{rem}
The $\rho$- and $(\rho,\#)$-norms are not submultiplicative in general. However, we do have the following identity
\begin{equation} \label{eq:ske_phinormofproduct}
    \|xy\|_\rho = \sqrt{\rho(y^*x^*xy)} \leq \|x\| \sqrt{\rho(y^*y)} = \|x\| \|y\|_\rho
\end{equation}
for all $x,y\in B$ by positivity of $\rho$, and subsequently also
\begin{equation} \label{eq:ske_phihashnormofproduct}
    2\|xy\|_{\rho,\#} \leq \|x\| \|y\|_\rho + \|y^*\| \|x^*\|_\rho \leq \|x\| \|y\|_{\rho,\#} + \|x\|_{\rho,\#} \|y\| \quad (x,y\in B).
\end{equation}
Moreover, we have
\begin{equation} \label{eq:ske_normdominatesphinorm}
    2\|b\|_{\rho,\#} = \sqrt{\rho(b^*b)} + \sqrt{\rho(bb^*)} \leq \sqrt{\|b^*b\|} + \sqrt{\|bb^*\|} = 2\|b\|
\end{equation}
for all $b\in B$, showing that the $(\rho,\#)$-norm (and by a similar argument, also the $\rho$-norm) is dominated by the norm on $B$.
\end{rem}

One of the main conceptual reasons for including all traces in the stably finite setting, is because this tracial data can be used to access comparison, due to $\Z$-stability of $B$ and the Toms-Winter conjecture. In the purely infinite setting, however, comparison comes for free. This allows us to limit ourselves to a single state to construct an analogue of the trace-kernel extension, as long as it satisfies certain properties. The main difference when working with states instead of traces, is that the analogue of the trace-kernel $J_B$ will now no longer be an ideal in $B_\infty$. To obtain a short exact sequence, we will therefore need to restrict $B_\infty$ to a suitable $C^*$-subalgebra in which this analogue is an ideal.

\begin{defn}
    Let $\rho$ be a state on a \ca\ $B$. We define the \emph{state-kernel} of $B$ with respect to\ $\rho$ as the set
    \begin{equation}
        J_{B,\rho} := \{ (x_n)_n\in B_\infty \mid \lim_{n\to\infty} \|x_n\|_{\rho,\#} = 0\}.
    \end{equation}
    We also define
    \begin{equation}
        S_{B,\rho} := \{ (x_n)_n\in B_\infty \mid (x_n)_n \text{ is $(\rho,\#)$-Cauchy}\}.
    \end{equation}
\end{defn}

Observe that these are well-defined subsets of $B_\infty$, as sequences which tend to zero in norm, also tend to zero in $(\rho,\#)$-norm by \eqref{eq:ske_normdominatesphinorm}, and in particular are $(\rho,\#)$-Cauchy. As mentioned earlier, the state-kernel $J_{B,\rho}$ is not an ideal in general, but it is a hereditary $C^*$-subalgebra of $B_\infty$. This we will now proceed to show.

\begin{prop} \label{prop:ske_statekernelhereditary}
    Let $\rho$ be a state on a \ca\ $B$. Then $J_{B,\rho}$ is a well-defined, hereditary $C^*$-subalgebra of $B_\infty$.
\end{prop}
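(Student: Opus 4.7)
The plan is to extract all four required properties---well-definedness as a subset of $B_\infty$, closure under the $*$-algebra operations, norm-closedness, and hereditariness---essentially from the three inequalities \eqref{eq:ske_phinormofproduct}, \eqref{eq:ske_phihashnormofproduct} and \eqref{eq:ske_normdominatesphinorm} just recorded in the remark.

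First I would check that membership in $J_{B,\rho}$ depends only on the class in $B_\infty$ and not on the chosen lift in $\ell^\infty(B)$: this is immediate from \eqref{eq:ske_normdominatesphinorm}, since altering a representative by a sequence in $c_0(B)$ changes each $\|x_n\|_{\rho,\#}$ by at most $\|x_n - x_n'\|$, which tends to $0$. For the $*$-algebra structure, closure under scalar multiplication and under the adjoint is immediate from the definition of $\|\cdot\|_{\rho,\#}$; closure under addition uses that $\|\cdot\|_\rho$ satisfies the triangle inequality (via Cauchy--Schwarz in the GNS Hilbert space of $\rho$), so the same holds for $\|\cdot\|_{\rho,\#}$; and closure under multiplication follows from \eqref{eq:ske_phihashnormofproduct} applied to bounded representatives, with both terms on the right-hand side tending to $0$. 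Norm-closedness is then a routine three-$\varepsilon$ argument based on \eqref{eq:ske_normdominatesphinorm}: given $(z^{(k)})_k \to z$ in $B_\infty$ with each $z^{(k)}\in J_{B,\rho}$, choose $k$ with $\|z-z^{(k)}\|<\varepsilon/2$, pass to representatives satisfying the same bound eventually, and combine with $\|z_n^{(k)}\|_{\rho,\#}<\varepsilon/2$ for $n$ large.

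The step carrying the real content, and which I expect to be the main obstacle---precisely because $J_{B,\rho}$ is \emph{not} an ideal in general---is hereditariness. I would invoke the characterization that a closed $*$-subalgebra $J$ of a \ca\ $A$ is hereditary iff $JAJ\subseteq J$, and verify this directly. Given $a=(a_n),c=(c_n)\in J_{B,\rho}$ and $b=(b_n)\in B_\infty$, two applications of \eqref{eq:ske_phinormofproduct} yield
\begin{equation}
\|a_n b_n c_n\|_\rho \leq \|a_n b_n\|\cdot \|c_n\|_\rho \leq \|a_n\|\,\|b_n\|\,\|c_n\|_\rho,
\end{equation}
and symmetrically
\begin{equation}
\|(a_n b_n c_n)^*\|_\rho = \|c_n^* b_n^* a_n^*\|_\rho \leq \|c_n^*\|\,\|b_n\|\,\|a_n^*\|_\rho.
\end{equation}
Since $\|a_n\|,\|b_n\|,\|c_n\|$ are uniformly bounded (as representatives of elements of $B_\infty$) and both $\|c_n\|_\rho$ and $\|a_n^*\|_\rho$ tend to $0$, we conclude $\|a_n b_n c_n\|_{\rho,\#}\to 0$, so $abc\in J_{B,\rho}$, giving $J_{B,\rho} B_\infty J_{B,\rho}\subseteq J_{B,\rho}$.

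The only subtlety throughout is that every estimate at the level of $B_\infty$ has to be traced back to uniformly bounded representatives in $\ell^\infty(B)$ so that the per-index inequalities can be summed or multiplied cleanly; beyond that the argument is simply bookkeeping with the three estimates from the preceding remark together with the right characterization of hereditary subalgebras.
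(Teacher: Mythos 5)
Your proposal is correct and follows essentially the same route as the paper: the same three estimates handle well-definedness, the $*$-operations, and norm-closedness, and hereditariness reduces to the same product estimate \eqref{eq:ske_phinormofproduct} applied to bounded representatives. The only cosmetic difference is that you verify $J_{B,\rho}\,B_\infty\,J_{B,\rho}\subseteq J_{B,\rho}$ and invoke the standard characterization of hereditary $C^*$-subalgebras, whereas the paper starts from $0\le y\le x$ with $x\in J_{B,\rho}$, notes $y\in\overline{xB_\infty x}$, and runs the identical estimate on elements of the form $xzx$; the underlying computation is the same.
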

\begin{proof}
As $\|\cdot\|_{\rho,\#}$ satisfies the triangle equality and \eqref{eq:ske_phihashnormofproduct}, it follows immediately that it is closed under sums and products. It is clear from the definition of $\|\cdot\|_{\rho,\#}$ that $J_{B,\rho}$ is also closed under adjoints, so we only need to show that it is closed. To this end, let $(x^{(i)})_{i\in I}$ be a net in $J_{B,\rho}$ converging to some $x\in B_\infty$. Fixing representative sequences $(x^{(i)}_n)_n$ ($i\in I$) resp.\ $(x_n)_n$ in $\ell^\infty(B)$, we then have
\begin{equation}
    \lim_{n\to\infty} \|x^{(i)}_n\|_{\rho,\#} = 0
\end{equation}
for all $i\in I$ and
\begin{equation}
    0 = \lim_{i\to\infty} \|x-x^{(i)}\| = \lim_{i\to\infty} \limsup_{n\to\infty} \|x_n-x^{(i)}_n\|.
\end{equation}
It follows that
\begin{equation}
\begin{aligned}
    \limsup_{n\to\infty} \|x_n\|_{\rho,\#}
        &\leq \limsup_{n\to\infty} \|x_n^{(i)}\|_{\rho,\#} + \limsup_{n\to\infty} \|x_n - x_n^{(i)}\|_{\rho,\#}\\
        &\leq \limsup_{n\to\infty} \|x_n^{(i)} - x_n\|
\end{aligned}
\end{equation}
for all $i\in I$, where for the second inequality we used \eqref{eq:ske_normdominatesphinorm}. As the right-hand side tends to zero for $i\to \infty$, we find that $\limsup_{n\to\infty} \|x_n\|_{\rho,\#} = 0$. Hence, $x$ is indeed in $J_{B,\rho}$, proving that $J_{B,\rho}$ is closed.

To show that it is hereditary in $B_\infty$, let $x\in J_{B,\rho}$ and $y\in B_\infty$ be two positive elements satisfying $y\leq x$, which implies that $y\in \overline{xB_\infty x}$. As we now know that $J_{B,\rho}$ is closed, we can assume without loss of generality that $y=xzx$ for some $z\in (B_\infty)_+$. Fixing positive representative sequences $(x_n)_n$ and $(z_n)_n$ in $\ell^\infty(B)$ of $x$ resp.\ $z$, it follows that
\begin{equation}
    \limsup_{n\to \infty} \|x_nz_nx_n\|_{\rho,\#} = \limsup_{n\to \infty} \|x_nz_nx_n\|_\rho \leq \limsup_{n\to \infty} \|x_nz_n\| \|x_n\|_\rho =0,
\end{equation}
where we used \eqref{eq:ske_phinormofproduct} for the inequality. Hence, we can conclude that $y= xzx \in J_{B,\rho}$, showing that $J_{B,\rho}$ is closed.
\end{proof}

A priori, there is no reason to believe that $J_{B,\rho}$ is indeed an ideal in $S_{B,\rho}$, nor that $S_{B,\rho}$ is even a $C^*$-algebra, due to the lack of submultiplicativity of the state norm. To ensure that these facts are indeed true, we need the state to be strongly faithful, a property we will now define.

\begin{defn} \label{def:stronglyfaithful}
    Let $B$ be a \ca, $\rho$ a state on $B$ and $\pi_\rho: B\to B(\Hb)$ the associated GNS representation with cyclic vector $\xi_\rho \in \Hb$. Let
    \begin{equation} \label{eq:uvn_phiextenstiontoGNS}
    \overline{\rho}(x) = \langle x\xi_\rho, \xi_\rho\rangle \quad (x\in B(\Hb))
    \end{equation}
    be the canonical extension of $\rho$ to $B(\Hb)$. We say that $\rho$ is \emph{strongly faithful} if it is faithful on $B$ and $\overline{\rho}$ is faithful on $\pi_\rho(B)''$.
\end{defn}
\begin{rem} \label{rem:faithfulvsstrongly}
Faithfulness of the state and faithfulness of its extension to $\pi_\rho(B)''$ are not related in general. On the one hand, $\overline{\rho}$ being faithful does not force $\rho$ to be faithful, as the latter is on the level of $B$, while the former is on the level of $B(\Hb)$; $\pi_\rho$ can have a non-trivial kernel. On the other hand, faithful states are not necessarily strongly faithful. Consider\footnote{This counterexample was given by Ozawa in\\ \url{https://mathoverflow.net/questions/93295/separating-vectors-for-c-algebras}.} for example the \ca\ $B := C([0,1], M_2(\mathbb C))$ with the state $\rho$ given by
\begin{equation} \label{eq:faithfulvsstrongly}
    \rho(f) := \int_C f(x)_{11} \dee x + \int_{[0,1]\setminus C} \tr f(x)\dee x \quad (f\in B),
\end{equation}
where $C$ is a (fixed) closed nowhere dense subset of $[0,1]$ with non-zero measure. Then $\rho$ is faithful, but its extension is not faithful on the strong completion of $B$, which is $L^\infty([0,1],M_2(\mathbb C))$. 

Moreover, pure states can never be strongly faithful (unless $B=\mathbb C$): for any strongly faithful state $\rho$, the GNS vector $\xi_\rho \in \Hb_\rho$ will be separating for $\pi_\rho(B)''$, which implies that it is cyclic for $\pi_\rho(B)'$. If $\rho$ were pure, then $\pi_\rho(B)' = \mathbb C 1$, so $\xi_\rho$ being cyclic would give $\Hb_\rho = \mathbb C \xi_\rho$.
\end{rem}

The main technical resource strong faithfulness brings is summarized in the following proposition. The core result is elementary and well-known; the main purpose of the proposition is simply to rephrase in a way which is convenient for our purposes.

\begin{prop}
\label{prop:ske_strongequivphinorm}
Let $\rho$ be a strongly faithful state on a \ca\ $B$, $\pi_\rho: B\to B(\Hb)$ the associated GNS representation, and $\rho''$ the canonical extension of $\rho$ to $\pi_\rho(B)''$ (as in \eqref{eq:uvn_phiextenstiontoGNS}). Then $\|\cdot\|_{\rho''}$ metrizes the strong topology on $\pi_\rho(B)''$ on bounded sets: for any $(x_n)_n\in \ell^\infty(\pi_\rho(B)'')$, we have
\begin{enumerate}
    \item $(x_n)_n$ is strongly Cauchy if and only if it is $\rho''$-Cauchy,
    \item $(x_n)_n$ converges strongly to some $x\in \pi_\rho(B)''$ if and only if it converges in $\rho''$-norm to $x$.
\end{enumerate}
The same results hold when replacing ``strong'' with ``strong-$*$'' and ``$\rho''$'' with ``$(\rho'',\#)$''. In particular, we have
\begin{equation}
\begin{aligned}
    J_{B,\rho} &= \{ (x_n)_n\in B_\infty \mid \slim \pi_\rho(x_n) = 0\},\\
    S_{B,\rho} &= \{ (x_n)_n\in B_\infty \mid \slim \pi_\rho(x_n) \text{ exists} \}.
\end{aligned}
\end{equation}
\end{prop}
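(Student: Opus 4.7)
The plan is to reduce the statement to the classical fact that the seminorms $x \mapsto \|x\xi\|$ and $x \mapsto \tfrac{1}{2}(\|x\xi\| + \|x^*\xi\|)$ induced by a cyclic and separating vector $\xi$ metrize, respectively, the strong and strong-$\ast$ operator topologies on bounded subsets of a von Neumann algebra $M$. Setting $M := \pi_\rho(B)''$, I would first verify that $\xi_\rho$ is both cyclic and separating for $M$. Cyclicity is immediate from the GNS construction, and the separating property follows from strong faithfulness: if $x \in M$ satisfies $x\xi_\rho = 0$, then $\rho''(x^\ast x) = \|x\xi_\rho\|^2 = 0$ forces $x = 0$ by faithfulness of $\rho''$. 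By the classical commutant duality, this is equivalent to $\xi_\rho$ being cyclic for $M' = \pi_\rho(B)'$.

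For parts (a) and (b) in the strong case, one direction is trivial from $\|x\|_{\rho''} = \|x\xi_\rho\|$. For the converse, given a bounded $\rho''$-Cauchy sequence $(x_n)$ with $\sup_n \|x_n\| \leq K$, for any $y \in M'$ one has
\begin{equation}
    \|(x_n - x_m)y\xi_\rho\| = \|y(x_n - x_m)\xi_\rho\| \leq \|y\| \cdot \|x_n - x_m\|_{\rho''} \xrightarrow{n,m\to\infty} 0,
\end{equation}
so $(x_n v)$ is Cauchy in $\Hb$ for every $v$ in the dense subspace $M'\xi_\rho$, and a standard $3\varepsilon$-argument using uniform boundedness of $(x_n)$ extends Cauchyness to all $v \in \Hb$. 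Replacing $x_m$ by a target $x$ throughout gives the convergence statement (b). The strong-$\ast$ versions follow by the same scheme after also estimating
\begin{equation}
    \|(x_n - x_m)^\ast y\xi_\rho\| = \|y(x_n - x_m)^\ast\xi_\rho\| \leq 2\|y\| \cdot \|x_n - x_m\|_{\rho'',\#},
\end{equation}
which is valid because $M'$ is $\ast$-closed; this controls both $(x_n v)$ and $(x_n^\ast v)$ on $M'\xi_\rho$, hence on all of $\Hb$ by boundedness.

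Finally, for the explicit descriptions of $J_{B,\rho}$ and $S_{B,\rho}$, I would apply the metrization result to the bounded sequence $(\pi_\rho(x_n))$ in $M$, using $\|b\|_{\rho,\#} = \|\pi_\rho(b)\|_{\rho'',\#}$ for all $b\in B$ by definition of $\rho''$. The description of $S_{B,\rho}$ additionally requires that the closed unit ball of $M$ is complete in the strong-$\ast$ topology, so that a strong-$\ast$ Cauchy bounded sequence actually has a strong-$\ast$ limit in $M$; this is a standard property of von Neumann algebras. The only real subtlety I anticipate is the translation between the separating property of $\xi_\rho$ for $M$ and the density of $M'\xi_\rho$ in $\Hb$, but this is just the classical commutant duality applied to the pair $(M, M')$.
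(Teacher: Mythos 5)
Your proposal is correct and follows essentially the same route as the paper: the paper simply observes that strong faithfulness makes $\rho''$ a faithful (separating) normal state on $\pi_\rho(B)''$ and cites the standard metrization fact (\cite[\S III.2.2.17]{Blackadar_OperatorAlgebrasTheoryofCalgebrasandvonNeumannAlgebras}), then deduces the descriptions of $J_{B,\rho}$ and $S_{B,\rho}$ from $\|b\|_\rho=\|\pi_\rho(b)\|_{\rho''}$ and $*$-strong completeness of the von Neumann algebra, exactly as you do. The only difference is that you prove the cited classical fact directly via the cyclic-and-separating vector $\xi_\rho$ and density of $M'\xi_\rho$, which is a correct and complete substitute for the reference.
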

\begin{proof}
Strong faithfulness of $\rho$ implies that $\rho''$ is a separating normal state on $\pi_\rho(B)''$, so the first part of the statement, including (a) and (b), follow immediately from \cite[\S III.2.2.17]{Blackadar_OperatorAlgebrasTheoryofCalgebrasandvonNeumannAlgebras}. The equivalent definitions of $J_{B,\rho}$ and $S_{B,\rho}$ are an immediate consequence of this fact, after observing that
\begin{equation}
    \| b \|_\rho = \sqrt{\langle \pi_\rho(b^*b)\xi_\rho, \xi_\rho \rangle} = \|\pi_\rho(b)\|_{\rho''} \quad (b\in B),
\end{equation}
and that $\pi_\rho(B)''$ is $*$-strongly complete.
\end{proof}

The above proposition suggests a clear candidate for the quotient in the reduced state-kernel extension: the von Neumann algebra $\pi_\rho(B)''$. After ensuring that $S_{B,\rho}$ is indeed a $C^*$-subalgebra of $B_\infty$, we will check that the obvious map $S_{B,\rho}\to \pi_\rho(B)''$ is a well-defined, surjective \sh, allowing us to subsequently define the reduced state-kernel extension. We will also show that the state on $S_{B,\rho}$ induced by the state $\rho''$ on $\pi_\rho(B)''$ is in fact the limit state on $S_{B,\rho}$ induced by $\rho$.

\begin{prop}
    Let $B$ be a \ca\ with a strongly faithful state $\rho$. Then $S_{B,\rho}$ is a $C^*$-subalgebra of $B_\infty$ containing $J_{B,\rho}$.
\end{prop}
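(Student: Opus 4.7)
The inclusion $J_{B,\rho}\subseteq S_{B,\rho}$ is trivial, as a $(\rho,\#)$-null sequence is a fortiori $(\rho,\#)$-Cauchy. The non-trivial content of the proposition is that $S_{B,\rho}$ is a $\ast$-subalgebra of $B_\infty$ which is closed in norm.

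Closure under sums, scalar multiples, and involution is immediate from the triangle inequality together with the identity $\|x\|_{\rho,\#}=\|x^\ast\|_{\rho,\#}$. The substantive step is closure under multiplication, where the lack of submultiplicativity of $\|\cdot\|_{\rho,\#}$ is a genuine obstruction: a direct attempt via \eqref{eq:ske_phihashnormofproduct} applied to $\|x_n(y_n-y_m)\|_{\rho,\#}$ produces a term of the form $\|y_n-y_m\|\cdot\|x_n^\ast\|_\rho$ which is merely bounded, not small as $n,m\to\infty$. The clean route is instead to invoke Proposition~\ref{prop:ske_strongequivphinorm}, which identifies $S_{B,\rho}$ with the set of $(x_n)_n\in B_\infty$ for which $\pi_\rho(x_n)$ is strong-$\ast$ convergent in $\pi_\rho(B)''$. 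Since every class in $B_\infty$ admits a representative in $\ell^\infty(B)$ and $\pi_\rho$ is contractive, the sequences $\pi_\rho(x_n)$ live in a norm-bounded subset of $\pi_\rho(B)''$, where joint strong-$\ast$ continuity of multiplication on bounded sets makes closure under products automatic.

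For norm-closure in $B_\infty$, I would mirror the closedness argument already carried out in Proposition~\ref{prop:ske_statekernelhereditary}. Given a net $x^{(i)}\to x$ in $B_\infty$ with each $x^{(i)}\in S_{B,\rho}$, fix representative sequences $(x_n)_n$ and $(x_n^{(i)})_n$, and apply the three-term split
\begin{equation}
\|x_n-x_m\|_{\rho,\#}\;\leq\;\|x_n-x_n^{(i)}\|_{\rho,\#}+\|x_n^{(i)}-x_m^{(i)}\|_{\rho,\#}+\|x_m^{(i)}-x_m\|_{\rho,\#}.
\end{equation}
The outer terms are controlled by $\|x-x^{(i)}\|$ via \eqref{eq:ske_normdominatesphinorm} and can be made arbitrarily small by taking $i$ far out in the net, while the middle term vanishes as $n,m\to\infty$ by the $(\rho,\#)$-Cauchy condition on $(x_n^{(i)})_n$. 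A standard $\varepsilon/3$ argument then shows $(x_n)_n$ is $(\rho,\#)$-Cauchy, i.e.\ $x\in S_{B,\rho}$.

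The single real obstacle is closure under multiplication, and this is precisely where strong faithfulness of $\rho$ earns its keep: it is what promotes $\|\cdot\|_{\rho'',\#}$ to a metric for the strong-$\ast$ topology on bounded subsets of $\pi_\rho(B)''$ (Proposition~\ref{prop:ske_strongequivphinorm}), thereby bypassing the failure of submultiplicativity at the $C^\ast$-level. All other verifications are elementary manipulations of the $(\rho,\#)$-norm.
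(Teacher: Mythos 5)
Your proposal is correct and follows essentially the same route as the paper: closure under sums and adjoints from the elementary properties of $\|\cdot\|_{\rho,\#}$, closure under multiplication via Proposition~\ref{prop:ske_strongequivphinorm} (strong faithfulness identifying $(\rho,\#)$-Cauchy sequences with strong-$*$ convergent ones in $\pi_\rho(B)''$, where multiplication is jointly strong-$*$ continuous on bounded sets), and norm-closedness by the same $\varepsilon/3$ argument used for $J_{B,\rho}$, with the containment $J_{B,\rho}\subseteq S_{B,\rho}$ being immediate. No gaps.
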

\begin{proof}
As the $(\rho,\#)$-norm satisfies the triangle inequality and is invariant under taking the adjoint, it follows immediately that $S_{B,\rho}$ is closed under sums and adjoints. As multiplication is $*$-strongly continuous on bounded sets, it follows immediately from Proposition \ref{prop:ske_strongequivphinorm} that $S_{B,\rho}$ is also closed under multiplication.

To show that $S_{B,\rho}$ is also norm-closed, let $(x^{(i)})_{i\in I}$ be a net in $S_{B,\rho}$ converging to some $x\in B_\infty$, and fix representative sequences $(x^{(i)}_n)_n$ ($i\in I$) resp.\ $(x_n)_n$ in $\ell^\infty(B)$. To show that $(x_n)_n$ is $(\rho,\#)$-Cauchy, pick $\varepsilon >0$ and let $i\in I$ such that
\begin{equation}
    \frac{\varepsilon}{4} > \|x-x^{(i)}\| = \limsup_{n\to \infty} \|x_n-x^{(i)}_n\|.
\end{equation}
Hence, there exists an $N_1\in\mathbb N$ such that $\|x_n - x^{(i)}_n\|<\varepsilon/3$ for all $n\geq N_1$. As $(x^{(i)}_n)_n$ is $(\rho,\#)$-Cauchy, we can find $N_2\in\mathbb N$ such that $\|x^{(i)}_n - x^{(i)}_m\|_{\rho,\#} < \varepsilon/3$ for all $n,m\geq N_2$. It then follows for all $n,m\geq \max\{N_1,N_2\}$ that
\begin{equation}
\begin{aligned}
    \|x_n - x_m\|_{\rho,\#}
        &\leq \|x_n - x^{(i)}_n\| + \|x^{(i)}_n - x^{(i)}_m\|_{\rho,\#} + \|x^{(i)}_m - x_m\| \\
        &< \frac{\varepsilon}{3} + \frac{\varepsilon}{3} + \frac{\varepsilon}{3} = \varepsilon,
\end{aligned}
\end{equation}
where for the first inequality we used \eqref{eq:ske_normdominatesphinorm}. Hence, $(x_n)_n$ is indeed $(\rho,\#)$-Cauchy, so $S_{B,\rho}$ is closed. We conclude that $S_{B,\rho}$ is in fact a $C^*$-subalgebra of $B_\infty$. It contains $J_{B,\rho}$ because sequences which converge to zero in $(\rho,\#)$-norm, are definitely also $(\rho,\#)$-Cauchy. This finishes the proof.
\end{proof}

\begin{rem}
   If the state $\rho$ is not faithful,  $S_{B, \rho}$ is generally not closed under multiplication. For instance, if $B = M_2(\mathbb C)$ and $\rho$ is compression to the $(1,1)$-corner, then $(e_{1,2})_n$ and $((1+(-1)^n)e_{2,2})_n$ are in $S_{B, \rho}$ but their product $((1+(-1)^n)e_{1,2})_n$ is not in $S_{B, \rho}$. 

   This fact even holds for faithul states. Take the example $\rho$ on $B = \mathcal{C}([0,1], M_2)$ given in \eqref{eq:faithfulvsstrongly}. Let $(h_n)_n$ be a sequence of positive functions in $\mathcal{C}([0,1])$ converging pointwise to $\chi_C$, and consider $f_n := e_{2,2}\otimes (1+ (-1)^n) h_n \in B$ and $g_n := e_{1,2} \otimes 1_{[0,1]}$. Then
   \begin{equation}
       \rho(f_n) = (1+ (-1)^n) \rho(e_{2,2}\otimes h_n) \leq 2\rho(e_{2,2}\otimes h_n) \to 0
   \end{equation}
   so $(f_n)_n \in S_{B,\rho}$, and clearly also $g_n \in S_{B,\rho}$. However,
   \begin{equation}
       g_n f_n = e_{1,2}\otimes (1+ (-1)^n) h_n
   \end{equation}
   is not in $S_{B,\rho}$.
\end{rem}

\begin{cor} \label{cor:ske_maptoquotient}
    Let $B$ be a \ca\ with a strongly faithful state $\rho$. Then the map
    \begin{equation}
        q_B: S_{B,\rho} \to \pi_\rho(B)'': (x_n)_n \mapsto \slim \pi_\rho(x_n)
    \end{equation}
    is a well-defined, surjective \sh, of which the kernel is exactly $J_{B,\rho}$. In particular, $J_{B,\rho}$ is an ideal in $S_{B,\rho}$.\\
    The natural extension $\rho''$ (as defined in \eqref{eq:uvn_phiextenstiontoGNS}) of $\rho$ to $\pi_\rho(B)''$ induces the state $\rho''\circ q_B$ on $S_{B,\rho}$, which is given explicitly by
    \begin{equation}
        \rho''\circ q_B((x_n)_n) = \lim_{n\to \infty} \rho(x_n) \quad ((x_n)_n\in S_{B,\rho}).
    \end{equation}
    In particular, we have
    \begin{equation}
        \|q_B((x_n)_n)\|_{\rho'',\#} = \lim_{n\to \infty} \|x_n\|_{\rho,\#}
    \end{equation}
    for all $(x_n)_n\in S_{B,\rho}$.
\end{cor}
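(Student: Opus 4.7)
The overall strategy is to leverage Proposition \ref{prop:ske_strongequivphinorm}, which metrizes the strong-$*$ topology on bounded subsets of $\pi_\rho(B)''$ by the $(\rho'',\#)$-norm. Well-definedness of $q_B$ is then immediate: a bounded $(\rho,\#)$-Cauchy representative $(x_n)_n$ is sent to a $(\rho'',\#)$-Cauchy sequence in $\pi_\rho(B)''$, which is strong-$*$-Cauchy and hence converges by strong-$*$ completeness. Independence of the representative follows since two of them differ by a norm-null sequence, which is $(\rho,\#)$-null via \eqref{eq:ske_normdominatesphinorm}, and hence strong-$*$ null. Linearity and involution-compatibility of $q_B$ are automatic from those of strong-$*$ limits, while multiplicativity uses joint strong-$*$ continuity of multiplication on bounded sets in $\pi_\rho(B)''$. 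For the kernel, Proposition \ref{prop:ske_strongequivphinorm} gives directly that $q_B((x_n)_n)=0$ iff $\pi_\rho(x_n)\to 0$ strong-$*$ iff $\|x_n\|_{\rho,\#}\to 0$ iff $(x_n)_n\in J_{B,\rho}$, and the ideal assertion follows automatically.

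The step requiring a bit more care is surjectivity. Given $y \in \pi_\rho(B)''$, Kaplansky density provides a net $(b_\lambda)_\lambda$ in $B$ with $\|b_\lambda\| \leq \|y\|$ and $\pi_\rho(b_\lambda) \to y$ strong-$*$. Proposition \ref{prop:ske_strongequivphinorm} rephrases this convergence as $\|\pi_\rho(b_\lambda) - y\|_{\rho'',\#} \to 0$, and since this is a convergence of real numbers, one can extract a sequence $(b_n)_n$ in $B$ with $\|\pi_\rho(b_n) - y\|_{\rho'',\#} \to 0$. The sequence $(b_n)_n$ is then bounded and $(\rho,\#)$-Cauchy, so $(b_n)_n \in S_{B,\rho}$ and $q_B((b_n)_n) = y$ by construction. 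Conceptually this is not deep, but it is the only step that is not a direct rephrasing of the Proposition and thus the main (mild) obstacle.

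Finally, for the state formula, strong-$*$ convergence implies weak convergence, so evaluating the vector state $\langle\,\cdot\,\xi_\rho,\xi_\rho\rangle$ along $\pi_\rho(x_n) \to q_B((x_n)_n)$ yields
\begin{equation}
    \rho''(q_B((x_n)_n)) \;=\; \lim_n \langle \pi_\rho(x_n)\xi_\rho,\xi_\rho\rangle \;=\; \lim_n \rho(x_n),
\end{equation}
with existence of the limit as a free by-product. The $(\rho'',\#)$-norm identity then follows by applying this formula to the sequences $(x_n^*x_n)_n$ and $(x_n x_n^*)_n$, which lie in $S_{B,\rho}$ because $S_{B,\rho}$ is a $C^*$-algebra, and using that $q_B$ is a $*$-homomorphism: this gives $\|q_B((x_n)_n)\|_{\rho''}^2 = \lim_n \|x_n\|_\rho^2$ and similarly for the adjoint, from which averaging produces the claimed equality.
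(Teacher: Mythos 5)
Your proposal is correct and follows essentially the same route as the paper: well-definedness, the \sh\ property, and the kernel all come from Proposition \ref{prop:ske_strongequivphinorm}, surjectivity from Kaplansky density (your explicit net-to-sequence extraction via the $(\rho'',\#)$-metric is a minor refinement the paper leaves implicit), and the state and norm formulas from weak continuity of the vector state applied to $(x_n)_n$, $(x_n^*x_n)_n$ and $(x_nx_n^*)_n$.
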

\begin{proof}
First note that $q_B$ is indeed a well-defined map. Indeed, the $*$-strong limit of sequences in $S_{B,\rho}$ exists by Proposition \ref{prop:ske_strongequivphinorm}, and it is independent of the choice of representative because sequences in $B$ which converge in norm to zero, also converge $*$-strongly to zero in $\pi_\rho(B)''$. It is a \sh\ because addition, the adjoint operation (by definition of the $*$-strong topology) and multiplication are $*$-strongly continuous on bounded sequences. From Proposition \ref{prop:ske_strongequivphinorm}, it is immediate that the kernel of $q_B$ is equal to $J_{B,\rho}$. To see that it is surjective, let $x\in \pi_\rho(B)''$ and use Kaplansky's density theorem to find a sequence $(x_n)_n$ in $B$ with norm bounded by $\|x\|$ and such that $\pi_\rho(x_n)$ converges $*$-strongly to $x$. Then $(x_n)_n$ is a $(\rho,\#)$-Cauchy sequence in $\ell^\infty(B)$, so it induces an element of $S_{B,\rho}$, which satisfies
\begin{equation}
    q_B(\pi_\infty(x_n)_n) = \slim \pi_\rho(x_n) = x,
\end{equation}
proving that $q_B$ is surjective. For the second part of the statement, we compute for arbitrary $(x_n)_n\in S_{B,\rho}$
\begin{equation}
\begin{aligned}
    \rho''\circ q_B((x_n)_n)
        &= \langle \left(\slim \pi_\rho(x_n)\right)\xi_\rho, \xi_\rho\rangle \\
        &= \langle \lim_{n\to\infty} \pi_\rho(x_n)\xi_\rho, \xi_\rho\rangle \\
        &= \lim_{n\to\infty} \langle \pi_\rho(x_n)\xi_\rho, \xi_\rho\rangle \\
        &= \lim_{n\to\infty} \rho(x_n).
\end{aligned}
\end{equation}
From this fact, it also follows that
\begin{equation}
\begin{aligned}
    2\|q_B((x_n)_n)\|_{\rho'',\#}
        &= \sqrt{\rho''(q_B((x_n^*x_n)_n))} + \sqrt{\rho''(q_B((x_nx_n^*)_n))} \\
        &= \lim_{n\to\infty} \sqrt{\rho(x_n^*x_n)} + \lim_{n\to\infty} \sqrt{\rho(x_nx_n^*)} \\
        &= 2 \lim_{n\to\infty} \|x_n\|_{\rho,\#},
\end{aligned}
\end{equation}
concluding the proof.
\end{proof}

\begin{defn}
    Let $B$ be a \ca\ with a strongly faithful state $\rho$. Using the previous corollary, we define the \emph{reduced state-kernel extension} $\mathfrak e_B$ as
    \begin{equation}
    \begin{tikzcd}
        0 \ar[r] & J_{B,\rho} \ar[r,"j_B"] & S_{B,\rho} \ar[r,"q_B"] & \pi_\rho(B)'' \ar[r] & 0,
    \end{tikzcd}
    \end{equation}
    where $j_B$ is the inclusion map.
\end{defn}

\begin{rem}
    One can observe that $S_{B,\rho}$ is not the largest possible $C^*$-subalgebra of $B_\infty$ in which $J_{B,\rho}$ is an ideal. That would be the normalizer
    \begin{equation}
        N(J_{B,\rho}) := \{ x\in B_\infty \mid xJ_{B,\rho}, J_{B,\rho}x \subseteq J_{B,\rho} \}
    \end{equation}
    of $J_{B,\rho}$. One could absolutely also work with the normalizer, but opting for $S_{B,\rho}$ comes with a number of advantages. The quotient $\pi_\rho(B)''$ is always a von Neumann algebra with a faithful normal state $\rho''$. In fact, one can choose $\rho$ appropriately such that $\pi_\rho(B)''$ is simply $B(\Hb)$, as we will show in Corollary \ref{cor:stronglyfaithfulstateexists}. If we were to work with the normalizer, then the quotient $N(J_{B,\rho})/J_{B,\rho}$ would no longer be a von Neumann algebra in the sequence algebra setting. It would be if we were working in $B_\omega$; the corresponding quotient is known as the Ocneanu ultrapower of $B$, and is a well-studied object in von Neumann theory (see for example \cite{OcneanuActionsOfDiscreteAmenableGroupsOnVonNeumannAlgebras} for the original construction by Ocneanu, and \cite{AndoHaagerup_UltraproductsOfVonNeumannAlgebras} for a more recent treatment). However, there are a number of reasons we want to work in the sequence algebra as much as possible. The main reason is to appeal to so-called intertwining by reparametrization (\cite[Theorem 4.3]{Gabe_ANewProofOfKirchbergsO2StableClassification} and our adaptation, Theorem \ref{thm:reparametrization}) in order to check that a given map into $B_\infty$ factors through $B$. This is a crucial step in going from a classification of maps into $S_{B,\rho}$ to a classification of maps into $B$ itself.
\end{rem}

\subsection{The reduced state-kernel extension in $B_\omega$}
As mentioned earlier, we would ideally like to work entirely in the sequence algebra setting. However, there is an important downside to doing so: the sequence algebra $B_\infty$ is not simple and purely infinite when $B$ is, while this does hold for $B_\omega$ (see \cite[Proposition 6.2.6]{Rordam_ClassificationofNuclearCalgebras}). This fact is crucial in obtaining our desired KK-uniqueness result (Theorem \ref{thm:KK-Uniqueness}), meaning we have to descend to the ultrapower whenever we need said result. Therefore, we will summarize the construction of the reduced state-kernel extension in $B_\omega$ in the following statement. All proofs from the previous section work with minor adaptations.

\begin{prop} \label{prop:skeinomega}
    Let $\rho$ be a state on a \ca\ $B$. The state-kernel in $B_\omega$
    \begin{equation}
        J_{B,\rho}^{(\omega)} := \{ (x_n)_n\in B_\infty \mid \lim_{n\to\omega} \|x_n\|_{\rho,\#} = 0\}
    \end{equation}
    is a well-defined, hereditary $C^*$-subalgebra of $B_\omega$. If $\rho$ is strongly faithful, then
    \begin{equation}
        S_{B,\rho}^{(\omega)} := \{ (x_n)_n\in B_\omega \mid (x_n)_n \text{ is $(\rho,\#)$-Cauchy along } \omega\}
    \end{equation}
    is a well-defined $C^*$-subalgebra of $B_\omega$ containing $J_{B,\rho}^{(\omega)}$, and
    \begin{equation}
        q_B^{(\omega)}: S_{B,\rho}^{(\omega)} \to \pi_\rho(B)'': (x_n)_n \mapsto \mathop{\operatorname{s^*-lim}}_{n\to\omega} \pi_\rho(x_n)
    \end{equation}
    is a well-defined, surjective \sh, of which the kernel is exactly $J_{B,\rho}^{(\omega)}$. In particular, $J_{B,\rho}^{(\omega)}$ is an ideal in $S_{B,\rho}^{(\omega)}$, and we can form the reduced state-kernel extension in $B_\omega$
    \begin{equation}
    \begin{tikzcd}
        \mathfrak e_B^{(\omega)}: 0 \ar[r] & J_{B,\rho}^{(\omega)} \ar[r,"j_B^{(\omega)}"] & S_{B,\rho}^{(\omega)} \ar[r,"q_B^{(\omega)}"] & \pi_\rho(B)'' \ar[r] & 0.
    \end{tikzcd}
    \end{equation}
    Moreover, we have
    \begin{equation}
        \rho''\circ q_B^{(\omega)}((x_n)_n) = \lim_{n\to \omega} \rho(x_n) \quad ((x_n)_n\in S_{B,\rho}^{(\omega)}).
    \end{equation}
    and
    \begin{equation}
        \|q_B((x_n)_n)\|_{\rho'',\#} = \lim_{n\to \omega} \|x_n\|_{\rho,\#}
    \end{equation}
    for all $(x_n)_n\in S_{B,\rho}^{(\omega)}$.
\end{prop}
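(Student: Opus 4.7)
The plan is to re-run the proofs of Proposition \ref{prop:ske_statekernelhereditary}, the proposition showing $S_{B,\rho}$ is a $C^*$-subalgebra, and Corollary \ref{cor:ske_maptoquotient} essentially verbatim, making two uniform substitutions: replace each $\limsup_{n\to\infty}$ or $\lim_{n\to\infty}$ by $\lim_{n\to\omega}$, and reinterpret ``$(\rho,\#)$-Cauchy'' as ``$(\rho,\#)$-Cauchy along $\omega$'' (i.e.\ for every $\varepsilon>0$ there is $U\in\omega$ with $\|x_n-x_m\|_{\rho,\#}<\varepsilon$ for all $n,m\in U$). Both substitutions are legal because $\lim_{n\to\omega}$ exists uniquely for any bounded sequence of reals and respects the ordinary order/triangle inequalities used in those proofs.

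The first step, and the only one requiring real work, is to upgrade Proposition \ref{prop:ske_strongequivphinorm} to ultrafilter convergence: for a bounded sequence $(y_n)_n$ in $\pi_\rho(B)''$, I want
\begin{equation*}
(y_n)_n \text{ is $(\rho'',\#)$-Cauchy along $\omega$} \iff (y_n)_n \text{ is strong-$*$-Cauchy along $\omega$} \iff \mathop{\operatorname{s^*-lim}}_{n\to\omega}y_n \text{ exists}.
\end{equation*}
The first equivalence is immediate from the fact that $\|\cdot\|_{\rho'',\#}$ metrizes the strong-$*$ topology on bounded subsets of $\pi_\rho(B)''$ (\cite{Blackadar_OperatorAlgebrasTheoryofCalgebrasandvonNeumannAlgebras} III.2.2.17). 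For the second, given a strong-$*$-Cauchy-along-$\omega$ sequence, I define the candidate limit operator $T$ by $T\xi := \lim_{n\to\omega}y_n\xi$ for each $\xi\in\Hb$; this exists since $(y_n\xi)_n$ is norm-Cauchy along $\omega$ (test the Cauchy condition against the vector state at $\xi$), and $T$ is bounded by $\sup_n\|y_n\|$. Membership $T\in\pi_\rho(B)''$ follows from weak closedness of the bicommutant, and strong-$*$ convergence to $T$ along $\omega$ is then routine.

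With this ultrafilter version of Proposition \ref{prop:ske_strongequivphinorm} in hand, the rest is mechanical. Hereditariness of $J_{B,\rho}^{(\omega)}$ follows because the computation using $y=xzx$ and inequality \eqref{eq:ske_phinormofproduct} goes through with $\lim_{n\to\omega}$. Closedness under multiplication of $S_{B,\rho}^{(\omega)}$ uses joint strong-$*$ continuity of multiplication on bounded sets, translated back through the $(\rho,\#)$-norm. For $q_B^{(\omega)}$, well-definedness, the $\ast$-homomorphism property, and the identification of the kernel as $J_{B,\rho}^{(\omega)}$ are direct transcriptions of Corollary \ref{cor:ske_maptoquotient}. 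Surjectivity is again Kaplansky: given $x\in\pi_\rho(B)''$, one produces a norm-bounded sequence $(x_n)_n$ in $B$ with $\pi_\rho(x_n)\to x$ strong-$*$, whence $(x_n)_n\in S_{B,\rho}^{(\omega)}$ and $q_B^{(\omega)}((x_n)_n)=x$. The two formulas for $\rho''\circ q_B^{(\omega)}$ and for $\|q_B^{(\omega)}(\cdot)\|_{\rho'',\#}$ come from the same GNS-cyclic-vector calculation as in Corollary \ref{cor:ske_maptoquotient}, with $\lim_{n\to\omega}$ replacing the ordinary limit.

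The main obstacle is therefore exactly the ultrafilter upgrade of Proposition \ref{prop:ske_strongequivphinorm}; once that is in place everything else is a translation exercise, and I expect no new conceptual difficulties to arise from passing from the sequence algebra to the ultrapower.
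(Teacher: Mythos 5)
Your proposal is correct and takes essentially the same route as the paper, which offers no separate argument for Proposition \ref{prop:skeinomega} beyond the remark that the proofs of the preceding subsection ``work with minor adaptations'' -- exactly the translation to ultralimits that you carry out. The one step you rightly single out, the ultrafilter upgrade of Proposition \ref{prop:ske_strongequivphinorm} (building the limit operator pointwise, bounding it, and using weak closedness of $\pi_\rho(B)''$), is precisely the minor adaptation required, and your argument for it is sound.
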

\begin{rem}
    The two reduced state-kernel extensions can be connected by the commutative diagram
    \begin{equation} \label{eq:ske_inftyvsomegacommdiagram}
    \begin{tikzcd}
        0 \ar[r] & J_{B,\rho} \ar[r,"j_B"] \ar[d] & S_{B,\rho} \ar[r,"q_B"] \ar[d] & \pi_\rho(B)'' \ar[r] \ar[d, equals] & 0\\
        0 \ar[r] & J_{B,\rho}^{(\omega)} \ar[r,"j_B^{(\omega)}"] & S_{B,\rho}^{(\omega)} \ar[r,"q_B^{(\omega)}"] & \pi_\rho(B)'' \ar[r] & 0
    \end{tikzcd}
    \end{equation}
    where the downward arrows are the appropriate restrictions of the quotient map $\pi_{\infty\to\omega}: B_\infty\to B_\omega$.
\end{rem}

\subsection{Dependence on choice of state}
A very natural question would be which influence the choice of strongly faithful state has on the reduced state-kernel extension. In this section, we will show that $J_{B,\rho}$ only depends on the underlying representation. We will also show that strongly faithful states always exist for the class of \ca s of our interest. First, we recall the following standard result.

\begin{lemma} \label{lem:isoGNSrep}
    Let $M$ be a von Neumann algebra with a faithful normal state $\psi$. Suppose $B$ is a strongly dense $C^*$-subalgebra of $M$, and consider the GNS representation $\pi_\rho: B\to B(\Hb_\rho)$ associated to $\rho := \psi|_B$. There exists an isomorphism
    \begin{equation}
        \alpha: \pi_\rho(B)'' \to M
    \end{equation}
    such that $\rho'' = \psi \circ \alpha$, with $\rho''$ the natural extension of $\rho$ to $\pi_\rho(B)''$.
\end{lemma}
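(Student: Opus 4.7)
The plan is to realize both von Neumann algebras on the same Hilbert space by comparing the GNS representations of $\psi$ (on $M$) and $\rho$ (on $B$), and then take bicommutants.

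First, apply the GNS construction to the faithful normal state $\psi$ on $M$ to obtain a faithful normal $*$-isomorphism $\pi_\psi: M\to \pi_\psi(M)\subseteq B(\Hb_\psi)$ together with a cyclic and separating vector $\xi_\psi\in\Hb_\psi$ satisfying $\psi(y)=\langle \pi_\psi(y)\xi_\psi,\xi_\psi\rangle$ for $y\in M$. Next, identify $\Hb_\rho$ with $\Hb_\psi$ via the map
\begin{equation}
U_0: B/N_\rho \to \Hb_\psi,\qquad b+N_\rho \mapsto \pi_\psi(b)\xi_\psi,
\end{equation}
where $N_\rho=\{b\in B: \rho(b^*b)=0\}$. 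Since $\rho(b^*b)=\psi(b^*b)=\|\pi_\psi(b)\xi_\psi\|^2$, the map $U_0$ is well-defined and isometric, hence extends to an isometry $U:\Hb_\rho\to\Hb_\psi$.

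The key topological point is that $U$ is surjective. Cyclicity of $\xi_\psi$ gives $\overline{\pi_\psi(M)\xi_\psi}=\Hb_\psi$, so it suffices to approximate $\pi_\psi(y)\xi_\psi$ for $y\in M$ by elements of $\pi_\psi(B)\xi_\psi$. By Kaplansky's density theorem, there is a bounded net $(b_\lambda)\subseteq B$ with $b_\lambda\to y$ strongly in $M$, and normality of $\pi_\psi$ gives $\pi_\psi(b_\lambda)\to \pi_\psi(y)$ strongly, whence $\pi_\psi(b_\lambda)\xi_\psi\to\pi_\psi(y)\xi_\psi$; thus $\overline{\pi_\psi(B)\xi_\psi}=\Hb_\psi$ and $U$ is unitary. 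A direct computation on the dense subspace shows $U\pi_\rho(b)=\pi_\psi(b)U$ for all $b\in B$, so conjugation by $U$ maps $\pi_\rho(B)$ onto $\pi_\psi(B)$ and therefore (by the bicommutant theorem together with strong density of $B$ in $M$) maps $\pi_\rho(B)''$ onto $\pi_\psi(B)''=\pi_\psi(M)$. Defining $\alpha := \pi_\psi^{-1}\circ\mathrm{Ad}(U)$ yields the desired $*$-isomorphism $\pi_\rho(B)''\to M$.

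Finally, to verify $\rho''=\psi\circ\alpha$, it is enough to check $U\xi_\rho=\xi_\psi$: if $B$ is unital then $\xi_\rho=1+N_\rho$ maps to $\pi_\psi(1)\xi_\psi=\xi_\psi$, and in general one uses an approximate identity $(e_\lambda)\subseteq B$, noting that $\pi_\rho(e_\lambda)\xi_\rho\to\xi_\rho$ in $\Hb_\rho$ while $\pi_\psi(e_\lambda)\xi_\psi\to\xi_\psi$ by normality of $\pi_\psi$, and applying continuity of $U$. Then for $x\in\pi_\rho(B)''$,
\begin{equation}
\rho''(x)=\langle x\xi_\rho,\xi_\rho\rangle=\langle UxU^*\xi_\psi,\xi_\psi\rangle=\psi(\alpha(x)),
\end{equation}
as required. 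The main subtlety lies not in algebraic manipulations but in the topological step establishing $\overline{\pi_\psi(B)\xi_\psi}=\Hb_\psi$, which is the place where strong density of $B$ in $M$ and normality of $\pi_\psi$ are genuinely used.
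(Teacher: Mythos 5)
Your argument is correct: it is the standard proof of this standard fact, which the paper itself states without proof (it is "recalled" rather than proved), namely identifying $\Hb_\rho$ with $\Hb_\psi$ via $b+N_\rho\mapsto\pi_\psi(b)\xi_\psi$, using Kaplansky density plus $\sigma$-strong continuity of the normal GNS representation $\pi_\psi$ to get surjectivity, and then conjugating bicommutants and checking $U\xi_\rho=\xi_\psi$. The only step stated a bit tersely is that $\pi_\psi(e_\lambda)\xi_\psi\to\xi_\psi$ in the non-unital case, which needs the standard observation that an approximate unit of a strongly dense $C^*$-subalgebra increases strongly to $1_M$ (equivalently $\psi(e_\lambda)=\rho(e_\lambda)\to 1$); this is routine and does not affect correctness.
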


\begin{prop}
    Let $B$ be a separable \ca. Any faithful, non-degenerate representation $\pi: B\to B(\Hb)$ onto a separable Hilbert space $\Hb$ gives a strongly faithful state $\rho$ on $B$. Moreover, all faithful, normal states on $\pi(B)''$ produce the same state-kernel $J_{B,\rho}$.
\end{prop}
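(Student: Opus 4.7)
The plan is to first produce a strongly faithful state by pulling back a faithful normal state on $M := \pi(B)''$ along $\pi$, and then to use Proposition \ref{prop:ske_strongequivphinorm} together with Lemma \ref{lem:isoGNSrep} to re-express $J_{B,\rho}$ in a form that no longer mentions the state, making the independence immediate.

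For the existence of a faithful normal state on $M$, I would pick a countable dense sequence of unit vectors $(\xi_k)_k$ in $\Hb$ (possible by separability) and set $\psi(x) := \sum_k 2^{-k}\langle x\xi_k,\xi_k\rangle$ for $x \in M$. This is a countable convex combination of vector states, hence normal; if $\psi(x^*x)=0$ then $x\xi_k=0$ for every $k$, and density of $(\xi_k)_k$ forces $x=0$, so $\psi$ is faithful. The composition $\rho:=\psi\circ\pi$ is then a state on $B$ (by non-degeneracy of $\pi$, an approximate unit $(e_\lambda)_\lambda$ of $B$ satisfies $\pi(e_\lambda)\to 1_M$ strongly, hence $\rho(e_\lambda)\to 1$ by normality of $\psi$) which is faithful since both $\pi$ and $\psi|_{\pi(B)}$ are. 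To upgrade to strong faithfulness I would apply Lemma \ref{lem:isoGNSrep} to the strongly dense inclusion $\pi(B)\subseteq M$ with state $\psi$: identifying the GNS representations of $B$ and $\pi(B)$ with respect to $\rho$ and $\psi|_{\pi(B)}$ via the faithful map $\pi$, the lemma produces an isomorphism $\alpha\colon \pi_\rho(B)''\to M$ sending $\pi_\rho(b)\mapsto\pi(b)$ and satisfying $\rho''=\psi\circ\alpha$, whence $\rho''$ inherits faithfulness from $\psi$.

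For the second assertion, let $\psi$ be any faithful normal state on $M$, write $\rho:=\psi\circ\pi$, and let $\alpha$ be the isomorphism constructed as in the previous paragraph. Proposition \ref{prop:ske_strongequivphinorm} gives
\[
J_{B,\rho}=\{(x_n)_n\in B_\infty \mid \slim \pi_\rho(x_n)=0\},
\]
and under $\alpha$ the bounded sequence $(\pi_\rho(x_n))_n$ corresponds to $(\pi(x_n))_n$. On norm-bounded subsets the $*$-strong and $\sigma$-strong-$*$ topologies on $B(\Hb_\rho)$ and $B(\Hb)$ coincide (by a dominated convergence argument), and $\alpha$ is automatically $\sigma$-strong-$*$ continuous in both directions as a von Neumann algebra isomorphism. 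Therefore $\pi_\rho(x_n)\to 0$ $*$-strongly in $\pi_\rho(B)''$ if and only if $\pi(x_n)\to 0$ $*$-strongly in $B(\Hb)$, yielding
\[
J_{B,\rho}=\{(x_n)_n\in B_\infty \mid \pi(x_n)\to 0\ *\text{-strongly in }B(\Hb)\},
\]
which depends only on $\pi$. The one subtle step is this transfer across $\alpha$: $*$-strong convergence is not intrinsic to a von Neumann algebra, so it must be routed through the intrinsic $\sigma$-strong-$*$ topology, where equivalence with $*$-strong on bounded sets is automatic.
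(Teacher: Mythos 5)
Your proposal is correct and follows essentially the same route as the paper: build a faithful normal state on $\pi(B)''$ from a countable family of vectors, pull it back along $\pi$, apply Lemma \ref{lem:isoGNSrep} to get strong faithfulness, and use the strong-$*$ formula of Proposition \ref{prop:ske_strongequivphinorm} to see the state-kernel depends only on $\pi$. The only differences are cosmetic: you use a dense sequence of unit vectors instead of an orthonormal basis, and you spell out the transfer of $*$-strong convergence across the isomorphism $\alpha$ (via the intrinsic $\sigma$-strong-$*$ topology on bounded sets), a point the paper leaves implicit.
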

\begin{proof}
Fix a faithful, non-degenerate representation $\pi: B\to B(\Hb)$ of $B$ onto a separable Hilbert space $\Hb$. By faithfulness of $\pi$, we can identify $B$ with its image under $\pi$. Let $(\xi_n)_n$ be a countable orthonormal basis for $\Hb$, and define the map
\begin{equation}
    \psi: B(\Hb) \to \mathbb C: x\mapsto \sum_{n=1}^\infty 2^{-n} \langle x\xi_n, \xi_n\rangle.
\end{equation}
Note that it defines a faithful normal state on $B(\Hb)$, because $(\xi_n)_n$ is an orthonormal basis. Now consider $M := B''$. By definition, $B$ is a strongly dense $C^*$-subalgebra of $M$ and $\psi|_M$ is faithful normal state on $M$. By Lemma \ref{lem:isoGNSrep}, it then follows that $\rho := \psi|_B$ is a faithful state on $B$ such that
\begin{equation} \label{eq:ske_doublecommutantsiso}
    M \cong \pi_\rho(B)''
\end{equation}
where $\pi_\rho$ is the GNS representation associated to $\rho$, and such that $\psi|_M$ corresponds to the natural extension $\rho''$ of $\rho$ to $\pi_\rho(B)''$. As $\psi|_M$ is faithful on $M$, it follows that $\rho''$ is faithful on $\pi_\rho(B)''$. All faithful states on $\pi(B)''$ produce the same state-kernel $J_{B,\rho}$ by the formula for $J_{B,\rho}$ in Proposition \ref{prop:ske_strongequivphinorm}. This concludes the proof.
\end{proof}

\begin{cor}\label{cor:stronglyfaithfulstateexists}
    Let $B$ be a separable \ca\ which admits a faithful irreducible representation. Then there exists a strongly faithful state $\rho$ on $B$ such that $\pi_\rho(B)'' \cong B(\Hb)$ for some separable Hilbert space $\Hb$.
\end{cor}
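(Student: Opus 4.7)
The plan is to reduce this directly to the preceding proposition by choosing a faithful irreducible representation as our starting representation. By hypothesis, such a representation $\pi: B \to B(\Hb)$ exists. The first task is to verify that we are in a setting where the previous proposition applies, namely that $\Hb$ is separable and $\pi$ is non-degenerate.

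For separability, recall that in an irreducible representation every non-zero vector is cyclic. Picking any unit vector $\xi \in \Hb$, the subspace $\pi(B)\xi$ is dense in $\Hb$, and since $B$ is separable by assumption, $\pi(B)\xi$ is a separable subset of $\Hb$, forcing $\Hb$ to be separable. Non-degeneracy of $\pi$ is automatic: the closed span of $\pi(B)\Hb$ is a non-zero $\pi(B)$-invariant subspace (non-zero because $B$ and hence $\pi(B)$ is non-zero, using faithfulness), and by irreducibility it must be all of $\Hb$.

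Now we apply the preceding proposition to this representation $\pi$, which produces a strongly faithful state $\rho$ on $B$ together with an isomorphism $\pi_\rho(B)'' \cong \pi(B)''$ coming from Lemma \ref{lem:isoGNSrep}. The final step is to identify $\pi(B)''$ with $B(\Hb)$. Since $\pi$ is irreducible, Schur's lemma gives $\pi(B)' = \mathbb{C} 1_{B(\Hb)}$, and therefore by the double commutant theorem
\begin{equation}
    \pi(B)'' = (\mathbb{C} 1_{B(\Hb)})' = B(\Hb).
\end{equation}
Combining these two identifications gives $\pi_\rho(B)'' \cong B(\Hb)$, as required. The argument is essentially a direct bookkeeping from the previous proposition; no step should present a real obstacle, since separability of $\Hb$ and the identification $\pi(B)'' = B(\Hb)$ are both standard consequences of the irreducibility assumption.
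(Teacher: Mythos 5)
Your proposal is correct and follows essentially the same route as the paper: fix a faithful irreducible representation, apply the preceding proposition to obtain the strongly faithful state $\rho$ with $\pi_\rho(B)'' \cong \pi(B)''$, and use irreducibility to identify $\pi(B)'' = B(\Hb)$. The extra checks you include (separability of $\Hb$ via a cyclic vector and non-degeneracy via irreducibility) are details the paper leaves implicit, and they are verified correctly.
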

\begin{proof}
Let $\pi: B\to B(\Hb)$ be a faithful irreducible representation of $B$ onto a separable Hilbert space $\Hb$, and consider the induced strongly faithful state $\rho$ on $B$ by the previous proposition. From \eqref{eq:ske_doublecommutantsiso} and irreducibility of $\pi$, it then follows that
\begin{equation}
    \pi_\rho(B)'' \cong \pi(B)'' = B(\Hb),
\end{equation}
as desired.
\end{proof}

\begin{rem}
It is important to point out that the above corollary does \emph{not} state that $\pi_\rho(B)''$ is equal to $B(\Hb_\rho)$, or in other words that $\rho$ is pure (which would contradict what was noted in Remark \ref{rem:faithfulvsstrongly}). It simply gives an isomorphism with the bounded operators on some separable Hilbert space.
\end{rem}

\section{$KK$-existence and -uniqueness} \label{sec:KK}
\subsection{The Cuntz pair picture of $KK$- and $KL$-theory}
\begin{defn}
    Let $\varphi,\psi:A\to E$ be be two \sh s between \ca s $A$ and $E$.
    \begin{itemize}
        \item Suppose $E$ contains a \ca\ $I$ as an ideal. The pair $(\varphi,\psi)$ is said to be a \emph{Cuntz pair} if $\varphi(a)-\psi(a)\in I$ for all $a\in A$. We will denote this as $(\varphi,\psi): A\rightrightarrows E \rhd I$.
        \item Suppose $E = \Mult I$ for some \ca\ $I$. Two Cuntz pairs $(\varphi_i,\psi_i): A\rightrightarrows \Mult I \rhd I$ ($i=0,1$) are said to be \emph{homotopic} if there exists a Cuntz pair
        \begin{equation}
            (\Phi,\Psi): A \rightrightarrows \mathcal{C}_\sigma([0,1], \Mult{I}) \rhd \mathcal{C}([0,1], I)
        \end{equation}
        such that evaluation at $i$ gives the Cuntz pair $(\varphi_i,\psi_i)$ for $i=0,1$.
        \item Suppose $E$ contains a unital copy of $\Otwo$, i.e.\ isometries $s_1, s_2\in E$ satisfying $s_1s_1^* + s_2s_2^* = 1_E$. The \emph{Cuntz sum} of $\varphi$ and $\psi$ is defined as
        \begin{equation}
            (\varphi\oplus_{s_1,s_2}\psi)(a) := s_1\varphi(a)s_1^* + s_1\varphi(a)s_1^* \quad (a\in A).
        \end{equation}
    \end{itemize}
\end{defn}
The Cuntz sum is independent of the choice of $\Otwo$-isometries up to unitary equivalence. We will often omit the explicit mention of $s_1$ and $s_2$ and simply write $\varphi\oplus\psi$ in situations where the specific choice does not matter.

The Cuntz pair picture of $KK$ is defined as follows. Given a separable \ca\ $A$ and a $\sigma$-unital \ca\ $I$, let $KK(A,I)$ be the set of Cuntz pairs $(\varphi,\psi): A\rightrightarrows \Mult {I\otimes \K} \rhd I\otimes \K$ up to Cuntz pair homotopy. We will denote class of such a Cuntz pair as $[\varphi,\psi]_{KK(A,I)}$. As $\Mult {I\otimes \K}$ contains a unital copy of $B(\Hb)$, we can define a binary operation on $KK(A,I)$ by the Cuntz sum: for two Cuntz pairs $(\varphi_i,\psi_i): A\rightrightarrows \Mult I \rhd I$ ($i=1,2$), we write
\begin{equation}
    [\varphi_1,\psi_1]_{KK(A,I)} + [\varphi_2,\psi_2]_{KK(A,I)} := [\varphi_1\oplus\varphi_2,\psi_1\oplus \psi_2]_{KK(A,I)}.
\end{equation}
This turns $KK(A,I)$ into an abelian group with neutral element $[\varphi,\varphi]_{KK(A,I)}$, where $\varphi: A\to \Mult{I\otimes \K}$ is any \sh. The inverse of an element $[\varphi,\psi]_{KK(A,I)}$ is $[\psi,\varphi]_{KK(A,I)}$.

For a \sh\ $\varphi: A\to I$, we will denote $[\varphi]_{KK(A,I)} := [\varphi,0]_{KK(A,I)}$. We can also define the $KK$-class $[\varphi,\psi]_{KK(A,I)}$ of an arbitrary Cuntz pair $(\varphi,\psi): A\rightrightarrows E \rhd I$ as the class of
\begin{equation}
    \big(\sigma\circ(\varphi\otimes e),\sigma(\psi\otimes e)\big): A\rightrightarrows \Mult {I\otimes \K} \rhd I\otimes \K,
\end{equation}
where $\sigma: E\otimes \K \to \Mult{I\otimes \K}$ is the canonical map induced by the inclusion $I\otimes \K\subseteq E\otimes \K$, and $e \in \K$ is a rank-one projection.

A key feature of $KK(\cdot, \cdot)$ is that it is a bifunctor. We can define $KK(A,I)$ when $I$ is not necessarily $\sigma$-unital, by the inductive limit
\begin{equation}
    KK(A,I) := \varinjlim_{I_0 \text{ sep.}} KK(A,I_0)
\end{equation}
ranging over all separable $C^*$-subalgebras $I_0$ of $I$ ordered by inclusion, with connecting maps $KK(A,\iota_{I_0\subseteq I_1})$ for $I_0\subseteq I_1$. A Cuntz pair $(\varphi,\psi): A\rightrightarrows E \rhd I$ induces a class in $KK(A,I)$ as the image in the inductive limit of the element $[\varphi|^{E_0},\psi|^{E_0}]_{KK(A,I_0)}$, where $E_0\subseteq E$ and $I_0\subseteq I$ are separable $C^*$-subalgebras such that $E_0$ contains $I_0$ as an ideal, contains the images of $\varphi$ and $\psi$ and $(\varphi|^{E_0},\psi|^{E_0})$ is a Cuntz pair. In \cite[Appendix B]{CarrionGabeSchafhauserTikuisisWhite_ClassifyingHomomorphismsIUnitalSimpleNuclearCalgebras}, it is shown that this construction is well-defined.

\begin{defn}
    Let $A$ and $I$ be $C^*$-algebras with $A$ separable. Let $\overline{\mathbb N} := \mathbb N \cup \{\infty\}$ be the one-point compactification of $\mathbb N$ and $\ev_n$ the evaluation map at a point $n\in\overline{\mathbb N}$. Define
    \begin{equation}
    KL(A,I)\coloneqq KK(A,I)/Z_{KK(A,I)},
    \end{equation}
    where
    \begin{equation}
    \begin{aligned}
        Z_{KK(A,I)} := \{ KK(A,\ev_\infty)(\kappa) \mid \kappa \in KK(A, \mathcal{C}(\overline{\mathbb N}, I)), KK(A,\ev_n)(\kappa) = 0\, \forall n \in \mathbb N \} 
    \end{aligned}
    \end{equation}
\end{defn}

Both $KK$ and $KL$ enjoy a number of nice properties. Perhaps the most important in general is the so-called Kasparov product, which is a map from $KK(A,I)\times KK(I,J)$ to $KK(A,J)$ generalizing composition of maps. We do not require its full strength in this paper; the following proposition highlights the main facts we need for our purposes.

\begin{prop} \label{prop:KKprops}
    The mapping $KK(\cdot, \cdot)$ is a bifunctor with the following properties. Suppose $A$ and $I$ are \ca s with $A$ separable, and $(\varphi,\psi): A\rightrightarrows E\rhd I$ is a Cuntz pair. Then
    \begin{enumerate}
        \item the map $KK(A,\iota^{(2)}_I): KK(A,I) \to KK(A,M_2(I))$ is an isomorphism, where $\iota^{(2)}_I: I\hookrightarrow M_2(I)$ is the top-left corner inclusion; \label{prop:KKprops.stable}
        \item for any unitary $u\in I^\dagger$, we have \label{prop:KKprops.unitary}
        \begin{equation}
            [\varphi,\psi]_{KK(A,I)} = [\Ad(u)\circ\varphi,\psi]_{KK(A,I)};
        \end{equation}
        \item functoriality can be described explicitly: given \sh s $\theta: C\to A$ and $\rho: I\to J$ such that $\rho$ extends to a \sh\ $\overline{\rho}: E \to F$ with $F \rhd J$, we have \label{prop:KKprops.functoriality}
        \begin{equation}
        \begin{aligned}
            KK(\theta,I)[\varphi,\psi]_{KK(A,I)} &= [\varphi\circ\theta, \psi\circ\theta]_{KK(C,I)}, \\
            KK(A,\rho)[\varphi,\psi]_{KK(A,I)} &= [\overline{\rho}\circ\varphi, \overline{\rho}\circ\psi]_{KK(A,J)}, \\
            KK(A,\iota_{I\subseteq E})[\varphi,\psi]_{KK(A,I)} &= [\varphi]_{KK(A,E)} - [\psi]_{KK(A,E)}.
        \end{aligned}
        \end{equation}
    \end{enumerate}
    These results descend naturally to $KL$.
\end{prop}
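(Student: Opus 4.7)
The plan is to prove parts (a), (b), and (c) in order: (a) by a standard stability argument, (b) via a rotation homotopy in $M_2$ combined with (a), and (c) by unpacking definitions and invoking a Cuntz-pair cocycle identity.

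For (a), I would fix an isomorphism $M_2(\K)\cong\K$ implemented by a unitary $s\colon\mathbb C^2\otimes\Hb\to\Hb$, yielding an isomorphism $M_2(I)\otimes\K\cong I\otimes M_2(\K)\cong I\otimes\K$ that extends to multiplier algebras. Translating a Cuntz pair representing a class in $KK(A,M_2(I))$ through this identification gives a Cuntz pair representing a class in $KK(A,I)$. Different choices of $s$ differ by a unitary in $B(\Hb)$, which is connected to the identity through a norm-continuous path of unitaries, yielding a Cuntz pair homotopy and hence well-definedness; one then verifies directly that the resulting map is a two-sided inverse to $KK(A,\iota^{(2)}_I)$.

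For (b), we may assume $u-1\in I$ after absorbing the unimodular scalar part of $u$ (which is central and does not affect $\Ad(u)$). Working in $M_2(I)$ via (a), consider the rotation path
\begin{equation*}
U_t := R_t \begin{pmatrix} u & 0 \\ 0 & 1 \end{pmatrix} R_t^*, \qquad R_t := \begin{pmatrix} \cos t & -\sin t \\ \sin t & \cos t \end{pmatrix}, \quad t\in[0,\pi/2].
\end{equation*}
Then $U_0=\bigl(\begin{smallmatrix}u&0\\0&1\end{smallmatrix}\bigr)$, $U_{\pi/2}=\bigl(\begin{smallmatrix}1&0\\0&u\end{smallmatrix}\bigr)$, and $U_t-1\in M_2(I)$ throughout. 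The crucial observation is that $(\Ad(U_t)(\varphi\oplus 0),\psi\oplus 0)$ forms a valid Cuntz pair homotopy: at each $t$, $\Ad(U_t)(\varphi(a)\oplus 0)-\psi(a)\oplus 0 = [U_t-1,\varphi(a)\oplus 0]U_t^* + (\varphi(a)-\psi(a))\oplus 0$, with both summands landing in $M_2(I)$ (the former because $U_t-1\in M_2(I)$ and $I$ is an ideal in $E$; the latter by the Cuntz pair condition). The endpoints $(\Ad(u)\circ\varphi\oplus 0,\psi\oplus 0)$ at $t=0$ and $(\varphi\oplus 0,\psi\oplus 0)$ at $t=\pi/2$ then give the equality in $KK(A,M_2(I))$, and hence in $KK(A,I)$ via (a).

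For (c), the first two identities are immediate from the definitions of $KK(\theta,I)$ (pre-composition) and $KK(A,\rho)$ (post-composition by $\overline\rho$) acting on Cuntz pairs. The third reduces to a cocycle identity $[\varphi,\psi]+[\psi,\chi]=[\varphi,\chi]$ valid for any three \sh s $A\to E$ with pairwise differences in $I$: applying $\Ad(R_t)$ to the first entry of the Cuntz sum representative $(\varphi\oplus\psi,\psi\oplus\chi)$ gives a Cuntz pair homotopy to $(\psi\oplus\varphi,\psi\oplus\chi)$, which by Cuntz sum equals $[\psi,\psi]+[\varphi,\chi]=[\varphi,\chi]$ (the Cuntz pair condition at each $t$ follows from a direct check using $\varphi-\psi,\psi-\chi,\varphi-\chi\in I$). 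Specialising to $\chi=0$ in $KK(A,E)$, where $(\varphi,0)$ and $(\psi,0)$ are automatically Cuntz pairs, yields $[\varphi,\psi]_{KK(A,E)}=[\varphi]_{KK(A,E)}-[\psi]_{KK(A,E)}$. The main obstacle throughout is the asymmetry in (b): conjugating both slots of a Cuntz pair by $U_t$ gives only the weaker $[\Ad(u)\circ\varphi,\Ad(u)\circ\psi]=[\varphi,\psi]$, and the single-slot invariance depends crucially on the second component of $\varphi\oplus 0$ being zero throughout, so that $U_{\pi/2}=\bigl(\begin{smallmatrix}1&0\\0&u\end{smallmatrix}\bigr)$ acts trivially on it.
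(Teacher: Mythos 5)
Your proposal is essentially correct, but it takes a different route from the paper: the paper does not prove Proposition \ref{prop:KKprops} at all, it simply cites \cite[Appendix B]{CarrionGabeSchafhauserTikuisisWhite_ClassifyingHomomorphismsIUnitalSimpleNuclearCalgebras}, whereas you supply the standard direct arguments in the Cuntz pair picture. Your computations check out: in (b), writing $u=\lambda(1+x)$ with $x\in I$ and using $U_t=R_t\,\mathrm{diag}(u,1)\,R_t^*$, the identity $\Ad(U_t)(\varphi(a)\oplus 0)-\psi(a)\oplus 0=[U_t-1,\varphi(a)\oplus 0]U_t^*+(\varphi(a)-\psi(a))\oplus 0$ does keep the difference in $M_2(I)$ for all $t$, and your observation about the asymmetry (the second summand of $\varphi\oplus 0$ being zero is what makes the $t=\pi/2$ endpoint act trivially) is exactly the right point; in (c), the rotation applied to the first entry of $(\varphi\oplus\psi,\psi\oplus\chi)$ gives differences $\cos^2 t(\varphi-\psi)$, $\cos t\sin t(\varphi-\psi)$ and $\sin^2 t(\varphi-\chi)+\cos^2 t(\psi-\chi)$, all in $I$, so the cocycle identity holds and the third formula follows. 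Two places deserve more care. First, in (a) the phrase ``one then verifies directly that the resulting map is a two-sided inverse'' hides the only real work: the composite $KK(A,I)\to KK(A,M_2(I))\to KK(A,I)$ is conjugation by a multiplier isometry, and showing this is the identity requires extending that isometry to a unitary in $M_2$ of the multiplier algebra and running the same rotation trick again (together with the fact that simultaneous conjugation of both legs by a multiplier unitary homotopic to $1$ preserves the class). Second, the proposition is stated for arbitrary, possibly non-$\sigma$-unital $I$, where the paper defines $KK(A,I)$ as an inductive limit over separable $C^*$-subalgebras and assigns a class to a Cuntz pair $(\varphi,\psi)\colon A\rightrightarrows E\rhd I$ through a choice of separable subalgebras; your homotopy arguments live at the level of honest Cuntz pairs, so one still needs the compatibility of these constructions with the inductive limit (well-definedness independent of the chosen separable subalgebras, and naturality of $Z_{KK(A,I)}$ for the descent to $KL$) -- this is precisely the content that the cited Appendix B supplies. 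Finally, note that your proof of (b) quietly uses the second identity of (c) (to identify $[\varphi\oplus 0,\psi\oplus 0]_{KK(A,M_2(I))}$ with $KK(A,\iota^{(2)}_I)[\varphi,\psi]_{KK(A,I)}$); since (c) nowhere uses (b) this causes no circularity, but the order of the argument should be made explicit.
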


From now on, we will generally refer to the facts appearing in part (c) as ``functoriality''. For a proof that all the above statements hold for general $I$, see \cite[Appendix B]{CarrionGabeSchafhauserTikuisisWhite_ClassifyingHomomorphismsIUnitalSimpleNuclearCalgebras}.

We mention one more (crucial) fact about $KK$ and $KL$, namely their relation with (total) $K$-theory and the UCT. There exists a map
\begin{equation}
    \Gamma^{(A,I)}_*: KK(A,I) \to \Hom(K_*(A),K_*(I))
\end{equation}
which is natural in both variables. In general, it is defined using the Kasparov product; for an element $[\varphi]_{KK(A,I)}$ (with $\varphi: A\to I$ a \sh), it satisfies
\begin{equation}
    \Gamma^{(A,I)}_*[\varphi]_{KK(A,I)} = K_*(\varphi).
\end{equation}
There also exists a natural map
\begin{equation}
    \ker \Gamma^{(A,I)}_* \to \Ext(K_*(A), K_{1-*}(I)),
\end{equation}
but we will not need its explicit form in this paper.

\begin{defn}
    A separable \ca\ $A$ is said to \emph{satisfy the universal coefficient theorem (UCT)} if $\Gamma^{(A,I)}_*$ is surjective, and the map $\ker \Gamma^{(A,I)}_* \to \Ext(K_*(A), K_{1-*}(I))$ is bijective.
\end{defn}

When $A$ satisfies the UCT, we get a short exact sequence
\begin{equation}
\begin{tikzcd}
    0 \ar[r] & \Ext(K_*(A), K_{1-*}(I)) \ar[r] & \Gamma^{(A,I)}_* \ar[r, "\Gamma^{(A,I)}_*"] & \Hom(K_*(A),K_*(I)) \ar[r] & 0,
\end{tikzcd}
\end{equation}
allowing for the computation of $KK$-theory in terms of $K$-theory. We also mention \emph{universal multicoefficient theorem}, relating $KK$-theory to total $K$-theory.

\begin{thm}[The universal multicoefficient theorem] \label{thm:KKactiononK}
    Let $A$ and $I$ be \ca s with $A$ separable. There exists an action of $KK(A,I)$ on (total) $K$-theory, in the sense that there is a homomorphism
    \begin{equation}
        \Gamma^{(A,I)}_\Lambda: KK(A,I) \to \Hom_\Lambda(\totalK(A),\totalK(I))
    \end{equation}
    which is natural in both variables. Given a \sh\ $\varphi:A\to I$, it satisfies
    \begin{equation}
        \Gamma^{(A,I)}_\Lambda([\varphi]_{KK(A,I)}) = \totalK(\varphi).
    \end{equation}
    Moreover, it factors through $KL(A,I)$, inducing a map
    \begin{equation}
        \overline\Gamma^{(A,I)}_\Lambda: KL(A,I) \to \Hom_\Lambda(\totalK(A),\totalK(I))
    \end{equation}
    which is again natural in both variables, and an isomorphism if $A$ satisfies the UCT.
\end{thm}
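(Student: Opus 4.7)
The plan is to build the homomorphism $\Gamma_\Lambda^{(A,I)}$ from the Kasparov product applied to test algebras that represent $K$-theory with coefficients. Concretely, let $C_0 = \mathbb{C}$, so that $KK_*(C_0, D) \cong K_*(D)$, and for $n \geq 2$ let $C_n$ denote the mapping cone of $n \cdot \id_\mathbb{C}$, which satisfies $KK_*(C_n, D) \cong K_*(D; \mathbb{Z}/n)$, naturally in $D$. Collectively these groups recover $\totalK(D)$, and the Bockstein category $\Lambda$ is generated by the natural transformations between the functors $KK_*(C_n, -)$ induced by Kasparov classes between the $C_n$ themselves.

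For $\kappa \in KK(A,I)$, I define the $n$-th component of $\Gamma_\Lambda^{(A,I)}(\kappa)$ as the right Kasparov product $x \mapsto x \otimes_A \kappa$ from $KK_*(C_n, A)$ to $KK_*(C_n, I)$. Associativity of the Kasparov product forces each component to commute with the generators of $\Lambda$, giving a morphism of $\Lambda$-modules; bilinearity and associativity also provide the homomorphism property in $\kappa$ and naturality in both variables. The identity $\Gamma_\Lambda^{(A,I)}([\varphi]_{KK(A,I)}) = \totalK(\varphi)$ then follows from $x \otimes_A [\varphi]_{KK(A,I)} = KK(C_n, \varphi)(x)$. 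For the factorization through $KL$, suppose $\kappa = KK(A,\ev_\infty)(\tilde\kappa)$ with $\tilde\kappa \in KK(A, \mathcal{C}(\overline{\mathbb N}, I))$ and $KK(A, \ev_n)(\tilde\kappa) = 0$ for every $n \in \mathbb{N}$. Then for any $x \in KK_*(C_n, A)$, the class $x \otimes_A \tilde\kappa \in KK_*(C_n, \mathcal{C}(\overline{\mathbb N}, I))$ still vanishes at each finite evaluation, so $x \otimes_A \kappa$ lies in $Z_{KK_*(C_n, I)}$, which is trivial because $C_n$ has finitely generated $K$-theory (ensuring $KL(C_n,I) = KK(C_n,I)$).

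The main obstacle is the bijectivity assertion under the UCT, which is the Dadarlat--Loring universal multicoefficient theorem. The approach is to choose a geometric resolution $0 \to P_1 \to P_0 \to A \to 0$ (up to $KK$-equivalence) inside the UCT bootstrap class such that $P_0$ and $P_1$ have finitely generated total $K$-theory; for such algebras $\overline\Gamma_\Lambda^{(\cdot, I)}$ is checked to be an isomorphism directly from the UCT short exact sequence combined with the identification $KK(C_n, I) \cong K_*(I;\mathbb{Z}/n)$. One then compares the two six-term sequences obtained by applying $KL(-, I)$ and $\Hom_\Lambda(\totalK(-), \totalK(I))$ to the resolution, and concludes via the five lemma. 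The technical heart, which I would cite as a black box, is the existence of such resolutions in the UCT class and the requisite exactness of $\Hom_\Lambda(-, \totalK(I))$ on them.
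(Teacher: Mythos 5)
For this theorem the paper offers no argument of its own: the proof is a citation to \cite[\S 8.2, Theorem 8.5]{CarrionGabeSchafhauserTikuisisWhite_ClassifyingHomomorphismsIUnitalSimpleNuclearCalgebras}, which in turn rests on the Dadarlat--Loring universal multicoefficient theorem. Your construction of $\Gamma_\Lambda^{(A,I)}$ is the same mechanism used there: pair $\kappa$ on the right with $KK_*(C_n,\,\cdot\,)$ for coefficient algebras $C_n$ representing $K_*(\,\cdot\,;\mathbb Z/n)$, and use that all Bockstein operations are implemented by $KK$-classes among the $C_n$ (a Dadarlat--Loring fact worth citing explicitly). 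Two small repairs there: multiplication by $n$ is not a \sh\ on $\mathbb C$, so take for instance the mapping cone of the unital embedding $\mathbb C\to M_n(\mathbb C)$ (or a dimension-drop algebra, or $\mathcal O_{n+1}$); and the vanishing $Z_{KK(C_n,I)}=0$, which your $KL$-factorization needs, is not automatic from finite generation alone --- it uses that $C_n$ is nuclear, satisfies the UCT and has finitely generated $K$-theory, and this statement itself requires a proof or a reference (it is recorded in \cite[\S 8]{CarrionGabeSchafhauserTikuisisWhite_ClassifyingHomomorphismsIUnitalSimpleNuclearCalgebras}). With those caveats, the existence of the homomorphism, its naturality, the identity $\Gamma_\Lambda^{(A,I)}([\varphi])=\totalK(\varphi)$, and the factorization through $KL(A,I)$ are correct in outline and essentially follow the cited route.

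The genuine gap is in your sketch of the isomorphism statement under the UCT. A short exact sequence $0\to P_1\to P_0\to A\to 0$ with $P_0$ and $P_1$ of finitely generated $K$-theory cannot exist unless $K_*(A)$ is itself finitely generated: the six-term sequence exhibits $K_*(A)$ as an extension of a subgroup of $K_{1-*}(P_1)$ by a quotient of $K_*(P_0)$, and subgroups, quotients and extensions of finitely generated abelian groups are finitely generated. So the ``black box'' you propose to cite --- existence of such resolutions for arbitrary $A$ in the bootstrap class --- is false, and the five-lemma comparison built on it never gets started. The actual Dadarlat--Loring argument instead replaces $A$, up to $KK$-equivalence, by an inductive limit $\varinjlim A_k$ of UCT algebras with finitely generated $K$-theory and compares Milnor $\varprojlim^1$-sequences for $KK(-,I)$ and $\Hom_\Lambda(\totalK(-),\totalK(I))$; the $\varprojlim^1$-term is a $\operatorname{Pext}$ group, and quotienting it out is precisely the passage from $KK$ to $KL$, which is why one gets an isomorphism on $KL$ but only a surjection on $KK$. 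Since the paper itself merely cites \cite[Theorem 8.5]{CarrionGabeSchafhauserTikuisisWhite_ClassifyingHomomorphismsIUnitalSimpleNuclearCalgebras}, the cleanest fix is to cite the universal multicoefficient theorem itself (together with the identification, under the UCT, of $Z_{KK(A,I)}$ with the $\operatorname{Pext}$ subgroup), rather than the incorrect resolution skeleton.
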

\begin{proof}
See \cite[\S 8.2]{CarrionGabeSchafhauserTikuisisWhite_ClassifyingHomomorphismsIUnitalSimpleNuclearCalgebras}; in particular, Theorem 8.5.
\end{proof}

\subsection{Absorption \& pure largeness} \label{sec:absorption}
\begin{defn}
    Let $A$ and $I$ be \ca s with $A$ separable and $I$ $\sigma$-unital and stable, and let $\pi_I: \Mult I \rightarrow \Q I$ be the quotient map. A \sh\ $\varphi: A\rightarrow \Q I$ is said to be \emph{(unitally) absorbing} if for any (unital) \sh\ $\psi: A\to \Mult I$, there exists a unitary $u\in \Mult I$ such that
    \begin{equation}
        \varphi\oplus(\pi_I \circ \psi) = \Ad(\pi_I(u))\circ \varphi.
    \end{equation}
    A \sh\ $\varphi: A\rightarrow\Mult{I}$ is said to be \emph{(unitally) absorbing} if $\pi_I\circ \varphi: A\rightarrow \Q I$ is (unitally) absorbing as above.
\end{defn}

\begin{prop} \label{prop:kkeu_absorbingvsunitization}
    Let $A$ and $I$ be \ca s with $I$ $\sigma$-unital and stable. A \sh\ $\varphi: A\to \Q I$ is absorbing if and only if $\varphi^\dagger: A^\dagger \to \Q I$ is unitally absorbing.
\end{prop}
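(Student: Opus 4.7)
The plan is to establish both implications directly from the definitions, exploiting the natural bijection between (not necessarily unital) \sh s $\psi : A \to \Mult I$ and unital \sh s $\tilde\psi : A^\dagger \to \Mult I$ given by forced unitization (restriction in one direction, $\psi \mapsto \psi^\dagger$ in the other). The key observation throughout is that the Cuntz sum $\Phi \oplus \Psi$ of two \sh s into a unital \ca\ containing a unital copy of $\Otwo$ (in our setting, $\Q I$) sends $1$ to $s_1\Phi(1)s_1^* + s_2\Psi(1)s_2^*$, and is therefore unital exactly when both $\Phi$ and $\Psi$ are unital, because $s_1s_1^* + s_2s_2^* = 1$. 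The required $\Otwo$-isometries can be taken in $\Mult I$, since stability of $I$ gives many isometries in $\Mult I$, and then pushed down to $\Q I$ via $\pi_I$.

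For the $(\Leftarrow)$ direction, suppose $\varphi^\dagger$ is unitally absorbing and let $\psi : A \to \Mult I$ be arbitrary. Applying unital absorption of $\varphi^\dagger$ to the unital \sh\ $\psi^\dagger : A^\dagger \to \Mult I$ produces a unitary $u \in \Mult I$ with
\begin{equation}
    \varphi^\dagger \oplus (\pi_I \circ \psi^\dagger) = \Ad(\pi_I(u)) \circ \varphi^\dagger.
\end{equation}
Restricting both sides to $A \subseteq A^\dagger$ and using $\varphi^\dagger|_A = \varphi$ and $\psi^\dagger|_A = \psi$ yields the required identity $\varphi \oplus (\pi_I \circ \psi) = \Ad(\pi_I(u)) \circ \varphi$ in $\Q I$, proving that $\varphi$ is absorbing.

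For the $(\Rightarrow)$ direction, suppose $\varphi$ is absorbing and let $\tilde\psi : A^\dagger \to \Mult I$ be a unital \sh. Put $\psi := \tilde\psi|_A$, so that $\tilde\psi = \psi^\dagger$ by unitality. Absorption of $\varphi$ applied to $\psi$ delivers a unitary $u \in \Mult I$ with $\varphi \oplus (\pi_I \circ \psi) = \Ad(\pi_I(u)) \circ \varphi$. Both sides of this equation are \sh s $A \to \Q I$ that extend uniquely to unital \sh s $A^\dagger \to \Q I$: on the left the extension is $\varphi^\dagger \oplus (\pi_I \circ \tilde\psi)$, which is unital by the observation above, and on the right the extension is $\Ad(\pi_I(u)) \circ \varphi^\dagger$, which is unital since $\Ad(\pi_I(u))$ preserves $1_{\Q I}$ and $\varphi^\dagger$ is unital. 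Two unital \sh s on $A^\dagger = A + \mathbb C \cdot 1$ that agree on $A$ must agree everywhere, giving the desired identity and hence unital absorption of $\varphi^\dagger$.

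I do not anticipate any real obstacle: the argument is pure bookkeeping once one notices the restriction/unitization correspondence and that Cuntz sums of unital \sh s are unital. The only point requiring a word is that the $\Otwo$-isometries used for both Cuntz sums in either direction can be fixed once and for all inside $\Mult I$, so that the Cuntz sums on the $A$-level and on the $A^\dagger$-level are computed with the same $s_1, s_2$, which makes the restriction/extension step literal rather than only up to unitary equivalence.
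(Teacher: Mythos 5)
Your proposal is correct and takes essentially the same approach as the paper: both directions pass between $\psi$ and its forced unitization, apply the absorption hypothesis, and transfer the resulting equality between $A$ and $A^\dagger$ using a fixed pair of $\Otwo$-isometries. The only cosmetic difference is that where you invoke uniqueness of unital extensions of a $*$-homomorphism from $A$ to $A^\dagger$, the paper verifies the same identity by an explicit computation on elements $a+\lambda 1_{A^\dagger}$.
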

\begin{proof}
First suppose $\varphi$ is absorbing and let $\theta: A^\dagger\to \Q I$ be a unital \sh\ with a unital $*$-homomorphic lift to $\Mult I$. Then $\theta|_A$ lifts to a \sh\ to $\Mult I$, so by assumption there exists a unitary $u\in\Mult I$ such that $\varphi\oplus_{\pi_I(s_1),\pi_I(s_2)}\theta|_A = \Ad_{\pi_I(u)}\circ\varphi$, where $\pi_I: \Mult I \to \Q I$ is the quotient map and $s_1$ and $s_2$ are $\Otwo$-isometries. Then for all $a\in A$ and $\lambda \in \mathbb C$, we have
\begin{equation}
\begin{aligned}
    (\varphi^\dagger&\oplus_{\pi_I(s_1),\pi_I(s_2)}\theta)(a+\lambda 1_{A^\dagger}) \\
        &= \pi_I(s_1)(\varphi(a)+\lambda 1_{\Q I})\pi_I(s_1)^* + \pi_I(s_2)(\theta(a)+\lambda 1_{\Q I})\pi_I(s_2)^*\\
        &= \pi_I(s_1)\varphi(a)\pi_I(s_1)^* + \pi_I(s_1)\theta(a)\pi_I(s_1)^* + \lambda \pi_I(s_1s_1^* + s_2s_2^*)\\
        &= \Ad_{\pi_I(u)}\circ\varphi(a) + \lambda 1_{\Q I} = \Ad_{\pi_I(u)}\circ\varphi^\dagger(a+\lambda 1_{A^\dagger}),
\end{aligned}
\end{equation}
which shows that $\varphi^\dagger$ absorbs $\theta$.

Now suppose $\varphi^\dagger$ is unitally absorbing and $\theta: A\to \Q I$ is a \sh\ with a $*$-homomorphic lift $\gamma: A\to \Mult I$. We then claim that $\gamma^\dagger$ lifts $\theta^\dagger$. Indeed, for all $a\in A$ and $\lambda \in \mathbb C$, we have
\begin{equation}
\begin{aligned}
    q\circ \gamma^\dagger(a+\lambda 1_{A^\dagger})
        &= \pi_I(\gamma(a)) + \lambda \pi_I(1_{\Mult I}) = \theta(a) + \lambda 1_{\Q I}\\
        &= \theta^\dagger(a+\lambda 1_{A^\dagger}).
\end{aligned}
\end{equation}
Hence, $\theta^\dagger$ has a unital $*$-homomorphic splitting, so by assumption, $\varphi^\dagger$ absorbs $\theta^\dagger$. By restricting to $A$, it then follows that $\varphi$ absorbs $\theta$, concluding the proof.
\end{proof}

Pure largeness is a property of extensions originally introduced by Elliott and Kucerovsky in their landmark paper \cite{ElliottKucerovsky_AnAbstractVoiculescuBrownDouglasFillmoreAbsorptionTheorem}. In this paper, they showed that this property is equivalent to the associated Busby map being unitally absorbing under the right conditions. This result gives us a concrete criterion to verify in order to obtain absorption of the relevant maps. Recall that for two positive elements $a$ and $b$ in a \ca\ $A_+$, $a$ is said to be \emph{Cuntz subequivalent} to $b$, denoted $a\precsim b$, if for each $\varepsilon>0$, there exists a $c\in A$ such that $a \approx_{\varepsilon} b$. We write $a\sim b$ if $a\precsim b$ and $b\precsim a$.

\begin{defn}
    An extension 
    \begin{equation}
    \begin{tikzcd}
        \mathfrak e: 0 \arrow[r] & I \arrow[r,"j"] & E \arrow[r,"q"] & D \arrow[r] & 0
    \end{tikzcd}
    \end{equation}
    is said to be \emph{purely large} if for each $y\in I_+$ and $x\in E_+ \setminus I$, we have $y\precsim x$ in $E$.
\end{defn}

\begin{rem}
    Our definition of pure largeness is different from the one stated by Elliott and Kucerovsky in \cite{ElliottKucerovsky_AnAbstractVoiculescuBrownDouglasFillmoreAbsorptionTheorem}. As we will show in Proposition \ref{prop:elku_purelylargeequiv}, the two are equivalent when the ideal is $\sigma$-unital and stable (which is the setting in which their theorem is stated); in fact, stability of the ideal will be automatic from our definition in the $\sigma$-unital setting.
\end{rem}

We start by making the important observation that one can always control the norm of the element implementing the Cuntz subequivalence. This will be important later down the line, specifically in Proposition \ref{eq:sep_purelylargesepinh}.

\begin{prop} \label{prop:purelylargenormcontrol}
    Let 
    \begin{equation}
    \begin{tikzcd}
        \mathfrak e: 0 \arrow[r] & I \arrow[r,"j"] & E \arrow[r,"q"] & D \arrow[r] & 0
    \end{tikzcd}
    \end{equation}
    be a purely large extension. Then for any $x\in E_+\setminus I$ and $y\in I_+$, we can always arrange the element $c$ implementing the Cuntz subequivalence $y\precsim x$ for a given $\varepsilon>0$ to be in $I$ and satisfy
    \begin{equation}
        \|c\|^2 \leq \|y\|/\|q(x)\|.
    \end{equation}
\end{prop}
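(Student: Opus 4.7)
My plan is to reduce by scaling, apply a functional calculus cutoff on $x$ to reduce the target element's "effective norm" from $\|x\|$ down to $\|q(x)\|$, apply pure largeness together with norm-controlled Cuntz subequivalence inside a hereditary subalgebra, and finally use an approximate unit of $I$ to place $c$ inside $I$.

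\textbf{Step 1 (Scaling).} Setting $\alpha := \|y\|/\|q(x)\|$, the desired bound is $\|c\|^2 \leq \alpha$. If the statement is established in the case $\|y\| = \|q(x)\|$ with the sharper conclusion $\|c\| \leq 1$, then applying it to the rescaled pair $(\alpha^{-1}y, x)$ and setting $c := \sqrt{\alpha}\,c_0$ handles the general case. So it suffices to prove: when $\|y\| = \|q(x)\|$, one can find $c \in I$ with $\|c\|\leq 1$ and $\|cxc^* - y\| < \varepsilon$.

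\textbf{Step 2 (Cutoff on $x$).} Define $h: [0,\|x\|]\to[0,\|q(x)\|]$ by $h(t) := \min(t, \|q(x)\|)$; since $\|q(x)\| \in \sigma_D(q(x)) \subseteq \sigma_E(x)$, one has $\|h(x)\| = \|q(x)\|$. Moreover $h(x) \leq x$, $q(h(x)) = h(q(x)) = q(x)$, and hence $h(x) \in E_+\setminus I$ with $\|h(x)\| = \|q(h(x))\| = \|q(x)\|$.

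\textbf{Step 3 (Pure largeness with sharp norm control).} By pure largeness applied to $y$ and $h(x)$, there exists $c_1 \in E$ with $\|c_1 h(x) c_1^* - y\| < \varepsilon/3$. Since $\|y\| = \|h(x)\|$, a standard norm-controlled Cuntz subequivalence manipulation (polar decomposition of $c_1 h(x)^{1/2}$, followed by Rørdam's lemma applied to $y$ and $c_1 h(x) c_1^*$) allows us to replace $c_1$ by an element $c_0 \in E$ with $\|c_0\|\leq 1$ still satisfying $c_0 h(x) c_0^* \approx_{\varepsilon/2} y$.  To bridge from $h(x)$ back to $x$ without inflating norms, I would arrange $c_0$ to lie inside the hereditary subalgebra $\overline{f(x)Ef(x)}$ for a suitable continuous cutoff $f$ supported on $[0,\|q(x)\|]$ and equal to $1$ near the top of the spectrum of $q(x)$; then $f(x)\cdot(x-\|q(x)\|)_+ = 0$, so $c_0(x-h(x))c_0^* = 0$ and consequently $c_0 x c_0^* = c_0 h(x) c_0^*$.

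\textbf{Step 4 (Place in $I$).} Since $y \in I$ and $c_0 x c_0^* \approx y$, let $(e_\lambda)_\lambda$ be an approximate unit for $I$ and set $c := e_\lambda c_0$ for $\lambda$ sufficiently large. Then $c \in I$, $\|c\| \leq \|c_0\| \leq 1$, and
\[
cxc^* = e_\lambda (c_0 x c_0^*) e_\lambda = e_\lambda y e_\lambda + e_\lambda(c_0 x c_0^* - y)e_\lambda,
\]
where the first term converges in norm to $y$ and the second is uniformly bounded by $\|c_0 x c_0^* - y\|$. Taking $\lambda$ large yields $\|cxc^* - y\| < \varepsilon$ as required.

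\textbf{Main obstacle.} The delicate step is Step 3, where the simultaneous requirements of (i) sharp norm control $\|c_0\|\leq 1$ and (ii) $c_0$ lying in a hereditary subalgebra of a suitable cutoff of $x$ (so that $c_0 x c_0^* = c_0 h(x) c_0^*$) must both be met. The tension comes from the fact that the natural cutoff $f$ must be continuous (to give an element of $E$), must not vanish on $\sigma_D(q(x))$ (so that $f(x) \notin I$ and pure largeness applies), yet must vanish on $(\|q(x)\|, \|x\|]$ (to kill the perturbation from $x - h(x)$). Handling this forces the use of a one-parameter family of cutoffs and a limiting/continuity argument as the parameters tend to the sharp bound.
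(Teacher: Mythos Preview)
Your Steps 1, 2, and 4 are sound (modulo minor bookkeeping in the scaling), but Step 3 has a genuine gap --- precisely the one you flag as the ``Main obstacle'' and do not actually resolve.

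There are two intertwined issues. First, the ``standard norm-controlled Cuntz subequivalence manipulation'' you invoke does not give the \emph{sharp} bound $\|c_0\|\leq 1$. Polar decomposition of $c_1 h(x)^{1/2}$ together with R\o rdam's lemma only yields $\|c_0\|^2 \leq \|c_1 h(x) c_1^*\|/\|h(x)\|$, and since $\|c_1 h(x) c_1^*\|$ is only known to lie within $\varepsilon/3$ of $\|y\|=\|h(x)\|$, you get $\|c_0\|^2 \leq 1 + \varepsilon/(3\|y\|)$, not $\leq 1$. Second, your hereditary-subalgebra idea for forcing $c_0 x c_0^* = c_0 h(x) c_0^*$ requires a continuous $f$ that equals $1$ at $\|q(x)\|$ yet vanishes on $(\|q(x)\|,\|x\|]$, which is impossible. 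The limiting argument you gesture at does not obviously survive: if you relax $f$ to vanish only beyond $\|q(x)\|+\delta$, then $c_0$ sees $h_\delta(x):=\min(x,\|q(x)\|+\delta)$ rather than $h(x)$, and the norm control degrades accordingly; if instead you make $f$ vanish at $\|q(x)\|$, then $f(x)$ can land in $I$ (e.g.\ when $q(x)$ is a projection), so pure largeness no longer applies.

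The paper resolves both issues with different ideas. For the sharp norm bound, it targets $(y-\varepsilon/4)_+$ rather than $y$ in the pure-largeness step and takes $c:=(x-\lambda)_+^{1/2}c_0$, so that $\|c\|^2=\|c_0^*(x-\lambda)_+c_0\|\leq \|(y-\varepsilon/4)_+\|+\varepsilon/4=1$ exactly. For the passage from $h(x)$ back to $x$, rather than seeking a place where they coincide, it chooses a continuous $f\colon[0,\|x\|]\to[0,1]$ with $f(t)=1/t$ for $t\geq\|q(x)\|$ (tapered near $0$), so that $f(x)^{1/2}$ is a contraction and $f(x)^{1/2}xf(x)^{1/2}=xf(x)\approx_{\varepsilon/2}h(x)$. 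Setting $c:=f(x)^{1/2}c_1$ then gives $\|c\|\leq\|c_1\|\leq 1$ and $c^*xc\approx c_1^*h(x)c_1\approx y$ directly, with no limiting argument needed.
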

\begin{proof}
By multiplying $c$ from the right by an appropriate element of an approximate unit for $I$, we can always arrange $c\in I$. For the second part of the claim, take $x\in E_+\setminus I$, $y\in I_+$ and $0<\varepsilon < 1$. By dividing $x$ by $\|q(x)\|\neq 0$, we can assume that $\|q(x)\| = 1$ without loss of generality. The statement is trivial for $y=0$, so suppose $y\neq 0$. By dividing $y$ and $\varepsilon$ by $\|y\|$, we can further assume $\|y\|=1$.

First, we will additionally suppose that $\|x\|=1$. Let
\begin{equation}
    \lambda := 1 - \frac{\varepsilon}{2}>0 
\end{equation}
and define
\begin{equation}
    g:[0,1]\rightarrow [0,1]: t \mapsto \min\{t/\lambda, 1\}.
\end{equation}
Then $(x-\lambda)_+g(x) = (x-\lambda)_+$ and $g(x) \approx_{\varepsilon/2} x$. As $\|q(x)\|=1$, it follows that
\begin{equation}
    q((x-\lambda_+)) = (q(x)-\lambda)_+ \neq 0,
\end{equation}
so $(x-\lambda)_+ \notin I$. Hence, we can apply pure largeness to find a $c_0 \in I$ such that
\begin{equation}
    c_0^*(x-\lambda)_+c_0 \approx_{\varepsilon/4} (y-\varepsilon/4)_+ \approx_{\varepsilon/4} y.
\end{equation}
Letting $c := (x-\lambda)_+^{1/2} c_0 \in I$, we then find that
\begin{equation}
    \|c\|^2 = \|c_0^*(x-\lambda)_+c_0\| \leq \|(y-\varepsilon/4)_+\| + \frac{\varepsilon}{4} = 1
\end{equation}
as $\|y\|=1$, and
\begin{equation}
    c^*xc \approx_{\varepsilon/2} c^*g(x)c = c_0^*(x-\lambda)_+c_0 \approx_{\varepsilon/2} y,
\end{equation}
as desired.

Now for the general case, define the continuous function
\begin{equation}
    f:[0,\|x\|] \rightarrow [0,1]: t \mapsto \begin{cases}
    2t/\varepsilon & t<\varepsilon/2\\ 1 & \varepsilon/2 \leq t < 1 \\ 1/t & t\geq 1
\end{cases}
\end{equation}
and note that $f(x)^{1/2} x f(x)^{1/2} = x f(x) \approx_{\varepsilon/2} h(x)$, where
\begin{equation}
    h:[0,\|x\|] \rightarrow [0,1] :t \mapsto \min\{ t, 1\}.
\end{equation}
Remark that $\|h(x)\| = 1$, that
\begin{equation}
    \|q(h(x))\| = \|h(q(x))\| = 1
\end{equation}
as $\|q(x)\| = 1$, and subsequently that $h(x)\notin I$, so we can apply the specific case to find a $c_1\in I$ satisfying $\|c_1\|\leq 1$ and
\begin{equation}
    c_1^*h(x)c_1 \approx_{\varepsilon/2} y.
\end{equation}
Subsequently, the element $c := f(x)^{1/2} c_1 \in I$ satisfies
\begin{equation}
    \|c\| \leq \| f(x)^{1/2}\|\cdot \|c_1\| \leq 1
\end{equation}
and
\begin{equation}
    c^*xc \approx_{\varepsilon/2} c_1^*h(x) c_1 \approx_{\varepsilon/2} y
\end{equation}
as $f(x)^{1/2} x f(x)^{1/2} \approx_{\varepsilon/2} h(x)$, which is exactly what we needed to show.
\end{proof}

A second observation is that pure largeness is in fact closely related to pure infiniteness, specifically to the ideal being simple and purely infinite. This is illustrated by the following proposition.

\begin{prop} \label{prop:sep_purelyinfimpliespurelylarge}
    Let $A$ and $I$ be \ca s with $I$ non-zero, and
    \begin{equation}
    \begin{tikzcd}
        \mathfrak e: 0 \ar[r] & I \ar[r] & E \ar[r] & D \ar[r] & 0
    \end{tikzcd}
    \end{equation}
    an extension of $D$ by $I$. If $\mathfrak e$ is purely large, then $I$ is essential in $E$. The converse holds if $I$ is simple and purely infinite.
\end{prop}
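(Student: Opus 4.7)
The plan is to handle the two implications separately. Both are short: the forward direction is formal, and the converse rests on the Cuntz-comparison characterization of simple pure infiniteness together with essentiality phrased via the annihilator.

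For the forward direction, I would start from a non-zero ideal $J$ of $E$ with $J \cap I = 0$ and derive a contradiction. Picking any $0 \neq x \in J_+$, the condition $J \cap I = 0$ forces $x \notin I$, so $x \in E_+ \setminus I$. Choosing any non-zero $y \in I_+$ (which exists since $I \neq 0$), pure largeness produces $c_n \in E$ with $c_n^\ast x c_n \to y$. Each $c_n^\ast x c_n$ lies in the two-sided ideal $J$, hence so does $y$; combined with $y \in I$, this places $y$ in $J \cap I = 0$, contradicting the choice of $y$.

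For the converse, assume $I$ is simple, purely infinite, and essential in $E$, and fix $x \in E_+ \setminus I$ and $y \in I_+$. My strategy is to produce a non-zero $y' \in I_+$ with $y' \precsim x$ in $E$, after which the fact that in a simple purely infinite \ca\ every positive element is Cuntz subequivalent to every non-zero positive element yields $y \precsim y' \precsim x$ in $E$. The natural candidate is $y' := x^{1/2} z x^{1/2}$ for some $z \in I_+$: this lies in $I$ because $I$ is an ideal, and the inequality $y' \leq \|z\|\, x$ immediately gives $y' \precsim x$ in $E$. The crux is therefore to find $z \in I_+$ with $y' \neq 0$, and this is where essentiality enters: if $x^{1/2} z x^{1/2} = 0$ for every $z \in I_+$, then $z^{1/2} x^{1/2} = 0$ for every such $z$, whence $x^{1/2} I = I x^{1/2} = 0$, and essentiality forces $x^{1/2} = 0$, contradicting $x \notin I$.

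The only real subtlety I anticipate is keeping straight the equivalent formulations of \emph{essential ideal}: the forward direction uses that every non-zero ideal of $E$ meets $I$ non-trivially, while the converse uses that the annihilator of $I$ in $E$ is zero. Once these are identified, the remaining steps are routine manipulations of Cuntz subequivalence.
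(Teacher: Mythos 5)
Your proposal is correct and follows essentially the same route as the paper: the forward direction exploits that the Cuntz-subequivalence approximants $c_n^*xc_n$ stay inside an ideal (the paper phrases this via the annihilator of $I$ rather than an ideal $J$ with $J\cap I=0$, an immaterial difference), and the converse uses essentiality to produce a non-zero positive element of $I$ Cuntz-below the given $x\in E_+\setminus I$ (you compress $z\in I_+$ by $x^{1/2}$, the paper conjugates $x^2$ by $c\in I$) and then invokes comparison in the simple purely infinite algebra $I$. No gaps.
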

\begin{proof}
First suppose $\mathfrak e$ is purely large and take $y\in E$ such that $yI = Iy = 0$. The same then holds for the positive element $z := y^*y$. We will argue by contradiction and suppose that $z\notin I$. Then we can apply pure largeness to find that $x\precsim z$ for all $x\in I_+$. Note that the elements implementing these subequivalences can be chosen to be in $I$, so for each $\varepsilon > 0$ and $x\in I_+$, there exists $c\in I$ such that
\begin{equation}
    \|c^*zc - x\| < \varepsilon.
\end{equation}
However, since $zc = 0$, it follows that $\|x\| < \varepsilon$, which is only possible if $x=0$. As the positive elements linearly span $I$, this implies that $I = 0$, which contradicts the assumption that $I$ is simple (and therefore non-zero). Therefore, we can only have $z\in I$, but then in particular $z^2=0$ so $z=0$, which implies that $y=0$. This shows that $I$ is essential in $E$.

Now suppose $I$ is simple, purely infinite and essential in $E$ and let $x\in I_+$ and $y\in E_+\setminus I$. As $I$ is essential, there exists some $c\in I$ such that $yc \neq 0$. Hence, the element $c^*y^2c \in I$ is positive and non-zero, so we can apply pure infiniteness of $I$ to find that $x \precsim c^*y^2c$. However, we have $c^*y^2c \precsim y$ by definition, so in fact $x\precsim y$, as desired.
\end{proof}

The main use case of pure largeness will not be for the reduced state-kernel extension itself, but rather for its pull-back by a given unital and full \sh.

\begin{prop} \label{prop:sep_pullbackpurelyarge}
Let $A$ be a \ca, and
\begin{equation}
    \begin{tikzcd}
    \mathfrak e: 0 \arrow[r] & I \arrow[r,"j"] & E \arrow[r,"q"] & D \arrow[r] & 0
    \end{tikzcd}
\end{equation}
a purely large extension of \ca s. Then the pull-back of $\mathfrak e$ by any injective \sh\ $\varphi: A\rightarrow D$ is purely large.
\end{prop}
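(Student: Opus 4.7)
The plan is to verify the definition of pure largeness for $\mathfrak e \circ \varphi$ directly, reducing everything to pure largeness of $\mathfrak e$ together with the norm control provided by Proposition \ref{prop:purelylargenormcontrol}. With the notation of \eqref{eq:sep_pullback}, the pull-back is $E_\varphi = \{(x,a) \in E \oplus A \mid q(x) = \varphi(a)\}$, whose distinguished ideal is $\iota(I) = \{(j(c),0) \mid c \in I\}$. I will freely identify $I$ with both $j(I) \subseteq E$ and $\iota(I) \subseteq E_\varphi$.

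First, I would determine which positive elements of $E_\varphi$ lie outside $\iota(I)$. For $z = (x,a) \in (E_\varphi)_+$, the condition $z \in \iota(I)$ is equivalent to $a = 0$ (the requirement $x \in j(I)$ being automatic from $q(x) = \varphi(0) = 0$). Hence $z \notin \iota(I)$ iff $a \neq 0$, and this is precisely where injectivity of $\varphi$ enters: it forces $q(x) = \varphi(a) \neq 0$, so $x \in E_+ \setminus I$, which is the hypothesis needed to apply pure largeness of $\mathfrak e$.

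With this translation, fix $y \in I_+$, $z = (x,a) \in (E_\varphi)_+ \setminus \iota(I)$, and $\varepsilon > 0$. Since $x \in E_+ \setminus I$, pure largeness of $\mathfrak e$ combined with Proposition \ref{prop:purelylargenormcontrol} yields an element $c \in I$ such that $\|c^* x c - y\| < \varepsilon$. I then lift $c$ to $E_\varphi$ via $\tilde c := \iota(c) = (j(c), 0)$. A direct computation inside $E_\varphi$ gives
\begin{equation}
    \tilde c^* z \tilde c = \big(j(c)^* x j(c),\, 0\big),
\end{equation}
and since $I$ is an ideal in $E$, $j(c)^* x j(c) \in j(I)$, so $\tilde c^* z \tilde c = \iota(c^* x c)$. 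Using that $\iota$ is isometric, $\|\tilde c^* z \tilde c - \iota(y)\| = \|c^* x c - y\| < \varepsilon$, which gives $\iota(y) \precsim z$ in $E_\varphi$, as required.

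The only real subtlety, and thus the main (mild) obstacle, is that the element implementing the Cuntz subequivalence must be chosen inside $I$ — otherwise it has no canonical lift to $E_\varphi$ (an element $(c,0) \in E_\varphi$ forces $q(c) = 0$, i.e.\ $c \in j(I)$). This is precisely what Proposition \ref{prop:purelylargenormcontrol} supplies, after which the rest of the argument is bookkeeping in the pull-back.
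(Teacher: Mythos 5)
Your proposal is correct and follows essentially the same route as the paper: use injectivity of $\varphi$ to see that $(x,a)\notin\iota(I)$ forces $x\in E_+\setminus I$, apply pure largeness of $\mathfrak e$ with the norm/ideal control from Proposition \ref{prop:purelylargenormcontrol} to get $c\in I$, and then observe that $(c,0)\in E_\varphi$ implements the subequivalence in the pull-back. The only difference is presentational: you spell out the equivalence $(x,a)\in\iota(I)\iff a=0$ and the identification $\tilde c^*z\tilde c=\iota(c^*xc)$, which the paper leaves implicit.
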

\begin{proof}
Consider the pull-back $\mathfrak e \circ \varphi$ as given in \eqref{eq:sep_pullback}. Pick $(x,a)\in (E_\varphi)_+\setminus \iota(I)$ and $y\in I_+$. Then $a = \pi_A(x,a) \neq 0$, so also $q(x) = \varphi(a) \neq 0$ by injectivity of $\varphi$. Hence, $x\notin I$, so we can apply pure largeness of $\mathfrak e$ to find that $y\precsim x$ in $E$. Now, we claim that this implies that $(y,0)\precsim (x,a)$ in $E_\varphi$. Indeed, if $c \in E$ is an element implementing this subequivalence for a given $\varepsilon >0$, then we can arrange that $c\in I$ by Proposition \ref{prop:purelylargenormcontrol}. Subsequently, the element $(c,0)$ is in $E_\varphi$ and implements the subequivalence $(y,0)\precsim (x,a)$ in $E_\varphi$ for that $\varepsilon$.
\end{proof}

In order to prove Proposition \ref{prop:elku_purelylargeequiv}, we will need the following lemma.

\begin{lemma}
    Let $I$ be a \ca\ and $x\in \Mult I_+$. If for each $x_0\in \Mult I_+$ such that $x-x_0\in I$, the element $x_0$ Cuntz dominates every positive element in $I$, then $I$ is separably stable and $\overline{xIx}$ is separably stable and full in $I$.
\end{lemma}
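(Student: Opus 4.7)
The plan is to exploit the hypothesis by choosing, for each given positive element of $I$, an $x_0$ \emph{asymptotically orthogonal} to it; the orthogonality then forces the Hjelmborg--R{\o}rdam condition on the resulting witness for free. Fullness of $\overline{xIx}$ in $I$ is immediate: applying the hypothesis with $x_0 = x$, every $y \in I_+$ satisfies $y \precsim x$ in $\Mult{I}$, so by R{\o}rdam's lemma together with right-multiplication by an element of an approximate unit of $I$, one obtains $c \in I$ with $(y-\varepsilon)_+ = c^\ast x c \in \overline{IxI}$. A standard approximate unit argument identifies $\overline{IxI}$ with $\overline{I\cdot\overline{xIx}\cdot I}$, so $\overline{xIx}$ is full in $I$ upon letting $\varepsilon \to 0$.

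For separable stability of $I$, fix $y \in I_+$ and $\varepsilon > 0$. Pick $\delta < \varepsilon/2$ with $\|y - (y-\delta)_+\| \leq \delta$, and take a positive contraction $e \in C^\ast(y) \subseteq I$ with $e(y-\delta)_+ = (y-\delta)_+$, for instance $e = f_{\delta/2}(y)$ for a cutoff $f_{\delta/2}$ equal to $1$ on $[\delta,\|y\|]$ and vanishing at $0$. Set
\begin{equation}
    x_0 := (1-e)^{1/2}\,x\,(1-e)^{1/2} \in \Mult{I}_+.
\end{equation}
Since $1-(1-e)^{1/2} \in C^\ast(e) \subseteq I$, we have $x - x_0 \in I$, and the hypothesis applies. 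Moreover, $(1-e)(y-\delta)_+ = 0$ forces $(1-e)^{1/2}(y-\delta)_+ = 0$, so $(y-\delta)_+ \perp x_0$, and in particular $(y-\delta)_+$ annihilates all of $\overline{x_0 \Mult{I} x_0}$. By hypothesis $(y-\delta)_+ \precsim x_0$ in $\Mult{I}$, so R{\o}rdam's lemma combined with right-multiplication by an approximate unit of $I$ yields $c \in I$ and a small $\eta > 0$ with $((y-\delta)_+ - \eta)_+ = c^\ast x_0 c$. Setting $d := x_0^{1/2} c \in I$, we get $d^\ast d = ((y-\delta)_+ - \eta)_+$, which lies within $\delta + \eta < \varepsilon$ of $y$, while $dd^\ast \in \overline{x_0 \Mult{I} x_0}$. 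Because $(y-\delta)_+ \cdot dd^\ast = 0$ and $d^\ast d \in C^\ast((y-\delta)_+)$ vanishes at $0$, functional calculus forces $(dd^\ast)(d^\ast d) = 0$. Taking $w := d^\ast$ as the Hjelmborg--R{\o}rdam witness, $ww^\ast \approx_\varepsilon y$ and $(ww^\ast)(w^\ast w) = 0$, so $I$ is separably stable.

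Stability of $\overline{xIx}$ follows by running the same construction for $z \in \overline{xIx}_+$ in place of $y$: choose $e \in C^\ast(z) \subseteq \overline{xIx}$ and use the variant $\tilde x_0 := x^{1/2}(1-e)x^{1/2}$, for which $x - \tilde x_0 = x^{1/2} e x^{1/2} \in \overline{xIx} \subseteq I$, so the hypothesis again applies. The main obstacle will be verifying that the resulting witness actually lies in $\overline{xIx}$ rather than merely in $I$; this should follow from $d^\ast d \in C^\ast(z) \subseteq \overline{xIx}$ together with $dd^\ast \leq \|c\|^2 \tilde x_0 \leq \|c\|^2 x$, placing $dd^\ast$ in $\overline{x \Mult{I} x} \cap I$, which can be identified with $\overline{xIx}$ via a short approximate unit argument.
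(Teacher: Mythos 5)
Your arguments for separable stability of $I$ and for fullness of $\overline{xIx}$ are correct: the stability argument is essentially the paper's own construction (an orthogonal cut-down $x_0$ built from a cutoff $e\in C^*(y)$, then R{\o}rdam's lemma and the approximate-unit trick to land the conjugator in $I$), and your fullness argument via $\overline{IxI}=\overline{I\cdot\overline{xIx}\cdot I}$ is a legitimate minor variant of the paper's. The genuine gap is in the third part. For stability of $\overline{xIx}$ you replace the cut-down $(1-e)^{1/2}x(1-e)^{1/2}$ by $\tilde x_0:=x^{1/2}(1-e)x^{1/2}$. This does keep $x-\tilde x_0\in I$ and gives $dd^*\le\|c\|^2\tilde x_0\le\|c\|^2x$, but it destroys exactly the feature that drove your second paragraph: since $x^{1/2}$ need not commute with $e$, there is no reason that $(z-\delta)_+\,\tilde x_0=(z-\delta)_+x^{1/2}(1-e)x^{1/2}$ vanishes, so $(z-\delta)_+$ does not annihilate $\overline{\tilde x_0\,\Mult I\,\tilde x_0}$ and the Hjelmborg--R{\o}rdam orthogonality $(d^*d)(dd^*)=0$ is not established for the witness $d=\tilde x_0^{1/2}c$. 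You flag membership of the witness in $\overline{xIx}$ as ``the main obstacle'', but the step that actually fails in your variant is the orthogonality, which you do not address at all.

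The fix is simply not to change the cut-down: for $z\in\overline{xIx}_+$ take $e\in C^*(z)\subseteq\overline{xIx}$ and keep $x_0:=(1-e)^{1/2}x(1-e)^{1/2}$. Writing $k:=1-(1-e)^{1/2}\in C^*(e)\subseteq\overline{xIx}$, one has $x-x_0=kx+xk-kxk\in\overline{xIx}\subseteq I$ (products of elements of $\overline{xIx}$ with $x$ stay in $\overline{xIx}$ because $I$ is an ideal in $\Mult I$), so the hypothesis still applies and the orthogonality $(z-\delta)_+\perp x_0$ holds exactly as before. Moreover $x_0=x-(x-x_0)\in\overline{x\,\Mult I\,x}$, so the domination $dd^*\le\|c\|^2x_0$ still places $dd^*$ in $\overline{x\,\Mult I\,x}\cap I=\overline{xIx}$; combined with $d^*d\in C^*(z)\subseteq\overline{xIx}$ and the standard fact that $d$ lies in a hereditary subalgebra containing both $d^*d$ and $dd^*$, this yields a witness in $\overline{xIx}$ with the required orthogonality. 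This is precisely the route the paper takes for this part of the lemma.
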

\begin{proof}
Suppose $x\in \Mult I_+$ is as in the statement. To prove that $I$ is separably stable, pick $y\in I_+$. Fix $\varepsilon> 0$; we can assume $\varepsilon < \|y\|$ because the statement is trivial if $y=0$. If we let
\begin{equation}
    g:[0,\|b\|]\rightarrow [0,1]: t \mapsto \min\{t/\varepsilon, 1\},
\end{equation}
then the positive element $g(y)\in C^*(y)$ satisfies $g(y)(y-\varepsilon)_+ = (y-\varepsilon)_+$ by functional calculus. Now define the positive element
\begin{equation}
    x_0 := (1-g(y))x(1-g(y)) \in \Mult I,
\end{equation}
and note that
\begin{equation}
    x-x_0 = -g(y)x -xg(y) + g(y)xg(y) \in I
\end{equation}
because $g(y)\in C^*(y) \subseteq I$, so by assumption $y\precsim x_0$ in $\Mult I$. Rørdam's lemma \cite[Proposition 2.6]{KirchbergRordam_NonSimplePurelyInfiniteCalgebras} then gives a $\delta>0$ and $z\in \Mult I$ such that $z^*z = (y-\varepsilon)_+$ and $zz^*\in \overline{(x_0-\delta)_+ \Mult I (x_0-\delta)_+}$. In particular, as $(x_0-\delta)_+\in C^*(x_0) \subseteq \overline{x_0 I x_0}$, we have $\overline{(x_0-\delta)_+ I (x_0-\delta)_+}\subseteq \overline{x_0 I x_0}$, so also $zz^* \in \overline{x_0 I x_0}$. As $x_0 \perp (y-\varepsilon)_+$, it follows that
\begin{equation}
    zz^* \perp z^*z = (y-\varepsilon)_+ \approx_\varepsilon y,
\end{equation}
which proves that $I$ is separably stable.

To show that $\overline{xIx}_+$ is separably stable, apply the exact same argument for $y\in \overline{xIx}_+$. Remark that in this case, $x-x_0\in \overline{xIx}$ because $g(y)\in C^*(y)\subseteq \overline{xIx}$. Hence, we have
\begin{equation}
    x_0 = x-(x-x_0) \in \overline{x\Mult I x} + \overline{xIx} \subseteq \overline{x\Mult I x},
\end{equation}
and since $(y-\varepsilon)_+ \in \overline{xIx}$, it follows that we can arrange the element $z$ we obtain to lie in $\overline{xIx}$. This proves that $\overline{xIx}$ is separably stable.

To show that $\overline{x I x}$ is full in $I$, pick an arbitrary $y\in I_+$ and $\varepsilon> 0$. Applying the previous argument for $x_0=x$, we find a $z\in \Mult I$ such that $z^*z = (y-\varepsilon)_+$ and $zz^*\in \overline{x_0 I x_0} =  \overline{x I x}$. This implies that the ideal in $I$ generated by $\overline{x I x}$ contains $z^*z = (b-\varepsilon)_+$. As $(y-\varepsilon)_+ \approx_\varepsilon y$ and $\varepsilon$ was arbitrary, it follows that $y\in \overline{I\overline{x I x}I}$. Since each $y\in I$ is a linear combination of four positive elements in $I$, we conclude that $\overline{x I x}$ is indeed full.
\end{proof}

Now, we are ready to prove the relation between our notion of pure largeness and that of Elliott and Kucerovsky. This result is also to appear in the follow-up paper to \cite{CarrionGabeSchafhauserTikuisisWhite_ClassifyingHomomorphismsIUnitalSimpleNuclearCalgebras}; we kindly thank the authors for allowing us to include it and its proof in this paper.

\begin{prop}
\label{prop:elku_purelylargeequiv}
    Let
    \begin{equation}
    \begin{tikzcd}
        \mathfrak e: 0 \arrow[r] & I \arrow[r,"j"] & E \arrow[r,"q"] & D \arrow[r] & 0
    \end{tikzcd}
    \end{equation}
    be an extension of \ca s. The following are equivalent:
    \begin{enumerate}
        \item $\mathfrak e$ is purely large;
        \item $I$ is separably stable and for each $x\in E\setminus I$, the $C^*$-subalgebra $\overline{x^*Ix}$ is stable and full in $I$;
        \item $I$ is separably stable and $\mathfrak e$ is purely large in the sense of Elliott and Kucerovsky:\\ for each $x\in E\setminus I$, there exists a $\sigma$-unital stable $C^*$-subalgebra $D\subseteq \overline{x^*Ix}$ which is full in $I$.
    \end{enumerate}
    In particular, if $I$ is $\sigma$-unital and stable, then our definition of pure largeness coincides with Elliott and Kucerovsky's.
\end{prop}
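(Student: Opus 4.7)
The plan is to establish the cyclic chain (1) $\Rightarrow$ (2) $\Rightarrow$ (3) $\Rightarrow$ (1). The implication (2) $\Rightarrow$ (3) is essentially tautological: if $\overline{x^*Ix}$ is itself (separably) stable and full in $I$, then taking $D := \overline{x^*Ix}$ (or a $\sigma$-unital stable hereditary subalgebra thereof, generated by $x^*ux$ for a suitable positive $u\in I$) witnesses (3).

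For (1) $\Rightarrow$ (2), first dispose of the trivial case $I = 0$. Otherwise, Proposition~\ref{prop:sep_purelyinfimpliespurelylarge} gives that $I$ is essential in $E$, so $E$ embeds into $\Mult I$. Fix $x \in E \setminus I$ and apply the preceding lemma to the positive element $x^*x \in \Mult I_+$. Its hypothesis is verified as follows: any $x_0 \in \Mult I_+$ with $x^*x - x_0 \in I$ automatically lies in $E_+$, and fails to lie in $I$ (since $x \notin I$ forces $x^*x \notin I$, via $x = \lim x(x^*x)^{1/n}$); pure largeness of $\mathfrak e$ then gives $y \precsim x_0$ in $E$ for every $y \in I_+$. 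The lemma concludes that $I$ is separably stable and that $\overline{(x^*x)I(x^*x)}$ is separably stable and full in $I$. A brief spectral-calculus computation using the identity $(xx^* + \varepsilon)^{-1} x = x(x^*x + \varepsilon)^{-1}$ identifies $\overline{(x^*x)I(x^*x)}$ with $\overline{x^*Ix}$: for $a \in I$, the element $x^*ax$ is the norm limit of $(x^*x)(x^*x+\varepsilon)^{-1}x^*ax(x^*x+\varepsilon)^{-1}(x^*x) \in (x^*x)I(x^*x)$, so the trivial inclusion $\overline{(x^*x)I(x^*x)} \subseteq \overline{x^*Ix}$ is in fact an equality. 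Hence $\overline{x^*Ix}$ itself is separably stable and full in $I$, giving (2).

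For (3) $\Rightarrow$ (1), fix $x \in E_+ \setminus I$ and $y \in I_+$; the goal is to show $y \precsim x$ in $E$. Condition (3) produces a $\sigma$-unital stable $C^*$-subalgebra $D \subseteq \overline{xIx} = \overline{x^*Ix}$ which is full in $I$. Choose a strictly positive $d \in D_+$. Since $D$ is stable and $\sigma$-unital, the Cuntz class of $d$ is idempotent in $\Cu(D)$, so $\bigoplus_{i=1}^n d \precsim d$ in $M_n(D) \subseteq D \otimes \K \cong D$ for every $n \geq 1$. Fullness of $D$ (hence of $d$) in $I$ lets us approximate each $(y-\varepsilon)_+$ in norm by finite sums $\sum_{i=1}^n r_i^* d r_i$ with $r_i \in I$. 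A standard row-vector manipulation shows $\sum_i r_i^* d r_i \precsim \bigoplus_{i=1}^n d$ in $M_n(I)$, and combined with the preceding observation this yields $\sum_i r_i^* d r_i \precsim d$ in $I$. Taking the limit gives $y \precsim d$ in $I$. Finally, $d \in \overline{xIx}$ means $d$ is a norm limit of elements $x a_m x$ with $a_m \in I$; Rørdam's lemma combined with $x a_m x \precsim x$ (which holds since $x \geq 0$) yields $d \precsim x$ in $E$. Composing the two subequivalences gives $y \precsim x$ in $E$, as required.

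The main obstacle is the (3) $\Rightarrow$ (1) direction, specifically the Cuntz-comparison step $y \precsim d$ in $I$. This requires carefully combining fullness of $D$ (to express $y$ in terms of $d$) with stability of $D$ (to absorb finite Cuntz sums into a single copy of $d$), with the intermediate comparison carried out in $M_n(I)$ before descending back to $I$ itself. The identification $\overline{(x^*x)I(x^*x)} = \overline{x^*Ix}$ in the (1) $\Rightarrow$ (2) step is technically less delicate—essentially a functional-calculus bookkeeping exercise—but it is still the bridge between the lemma's output and the desired conclusion.
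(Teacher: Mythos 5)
Your overall architecture matches the paper's (the cycle (1)$\Rightarrow$(2)$\Rightarrow$(3)$\Rightarrow$(1)), and two of the three legs are sound. In (1)$\Rightarrow$(2) you verify the hypothesis of the preceding lemma exactly as the paper does; your reduction via $x^*x$ together with the functional-calculus identification $\overline{(x^*x)I(x^*x)}=\overline{x^*Ix}$ is a fine substitute for the paper's polar-decomposition remark, and invoking Proposition \ref{prop:sep_purelyinfimpliespurelylarge} to embed $E$ into $\Mult I$ is a legitimate (in fact slightly more careful) way to make sense of applying the lemma. Your (3)$\Rightarrow$(1) is correct but genuinely different from the paper's: the paper takes a strictly positive $h\in D$ and quotes \cite[Propositions 3.5 and 3.7]{KirchbergRordam_NonSimplePurelyInfiniteCalgebras} (stability of $\overline{hDh}$ makes $h$ properly infinite, and a full properly infinite positive element Cuntz-dominates everything in the ideal it generates), whereas you reprove the needed comparison by hand, using $\langle d\rangle+\langle d\rangle=\langle d\rangle$ in $\Cu(D)$ for a strictly positive element of a stable algebra together with the row-vector trick in $M_n(I)$, and then $d\precsim x$ from $d\in\overline{xIx}$. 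Your route is more self-contained; the paper's is shorter.

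There is, however, a genuine gap where your argument meets the precise wording of (2) and (3). The lemma only yields that $\overline{x^*Ix}$ is \emph{separably} stable, while condition (2) asserts it is \emph{stable} --- and the statement deliberately distinguishes the two notions (it claims only separable stability for $I$). You pass from one to the other without comment (``separably stable and full in $I$, giving (2)''). The paper closes this by the further observation that $\overline{x^*Ix}$ is $\sigma$-unital, so that separable stability upgrades to genuine stability by \cite[Theorem 2.1]{HjelmborgRordam_OnStabilityofCalgebras}; the same $\sigma$-unitality is also what makes (2)$\Rightarrow$(3) immediate, since $\overline{x^*Ix}$ itself can then serve as the required $\sigma$-unital stable subalgebra $D$ full in $I$. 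Your proposed witnesses for (3) do not do this job: you never argue $\overline{x^*Ix}$ is $\sigma$-unital, and for the fallback hereditary subalgebra generated by $x^*ux$ you give no reason why it should be stable, let alone full in $I$ (fullness of a small hereditary subalgebra is exactly the kind of property that can fail). So you still need to supply the $\sigma$-unitality/stability bridge --- this is precisely the extra step present in the paper's proof and missing from yours.
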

\begin{proof}
$(a)\implies (b)$: consider for arbitrary $x\in E\setminus I$ the hereditary $C^*$-subalgebra $\overline{x^*Ix}$. First note that we can assume $x$ is positive without loss of generality. Indeed, we have $\overline{x^*Ix} = \overline{x^*xIx^*x}$, and $x^*x$ cannot be in $I$, because otherwise polar decomposition in \ca s would imply that
\begin{equation}
    x = v(x^*x)^{1/4} \in I
\end{equation}
for some $v\in E$. Now, for each $x_0\in \Mult I_+$ such that $x-x_0 \in I$, it follows that $x_0 \notin I$ (because otherwise $x\in I$), so by assumption we have in particular that $y\precsim x_0$ for all $y\in I$. Hence, the previous lemma tells us that $I$ is separably stable and $\overline{xIx}$ is separably stable and full in $I$. As $\overline{xIx}$ is $\sigma$-unital, it is in fact stable by \cite{HjelmborgRordam_OnStabilityofCalgebras}. This is exactly what we needed to show.

$(b)\implies (c)$: immediate.

$(c)\implies (a)$: let $x\in E\setminus I$ be a positive element. By assumption, there exists a $\sigma$-unital, stable $C^*$-subalgebra $D$ of $\overline{xIx}$ which is full in $I$. As $D$ is $\sigma$-unital, it contains a strictly positive element $h$, which satisfies $D =\overline{hDh}$. Since also $h\in \overline{xIx}$, it follows that $h\precsim x$. Now $\overline{hDh}$ is stable, which implies that $h$ is properly infinite (in $D$, so in particular also in $I$) by \cite[Proposition 3.7]{KirchbergRordam_NonSimplePurelyInfiniteCalgebras}. As $\overline{IhI} = I$ by fullness of $\overline{hDh}$, it then follows from \cite[Proposition 3.5]{KirchbergRordam_NonSimplePurelyInfiniteCalgebras} that $y\precsim h \precsim x$ for each $y\in \overline{IhI}_+ = I_+$, as desired.

Finally, the particular consequence follows after noting that stability and separable stability are equivalent in the $\sigma$-unital setting by \cite[Theorem 2.1]{HjelmborgRordam_OnStabilityofCalgebras}.
\end{proof}

We now explicitly state the Elliott--Kucerovsky theorem. For a proof, see \cite{ElliottKucerovsky_AnAbstractVoiculescuBrownDouglasFillmoreAbsorptionTheorem}. Note that the statement below holds with our definition of pure largeness, by the previous proposition.

\begin{thm}[Elliott--Kucerovsky] \label{thm:elliottkucerovsky}
Let $A$ be a separable unital \ca, $I$ a $\sigma$-unital and stable \ca\ and $\mathfrak e$ a unital extension of $A$ by $I$ with associated Busby map $\tau$. Then $\tau$ is unitally nuclearly absorbing if and only if $\mathfrak{e}$ is purely large. In particular, if $A$ is nuclear, then $\tau$ is unitally absorbing if and only if it is purely large.
\end{thm}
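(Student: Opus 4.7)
My plan splits the proof into the two directions of the equivalence. For the easier direction ``absorbing $\Rightarrow$ purely large'', I would reason as follows: given $x \in E_+ \setminus I$ and $y \in I_+$, think of $y$ as lying in $\mathcal{M}(I)$ and build a $*$-homomorphism $\sigma : A \to \mathcal{M}(I)$ whose value on (a suitable positive element above) the class of $x$ is comparable to $y$. Absorption provides a unitary $u \in \mathcal{M}(I)$ such that $u^*(\tau \oplus \sigma)u \equiv \tau \pmod{I}$; conjugating back into the pull-back algebra $E$ turns this into an element $c \in I$ implementing $y \precsim x$ in $E$, as required.

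The real content is the converse ``purely large $\Rightarrow$ absorbing''. Given a unital nuclear CP lift $\sigma : A \to \mathcal{M}(I)$ of any representation into the corona, I would construct isometries $v_1, v_2 \in \mathcal{M}(I)$ with $v_1 v_1^* + v_2 v_2^* = 1_{\mathcal{M}(I)}$ satisfying $v_1^* \tau(a) v_1 - \tau(a) \in I$ and $v_2^* \tau(a) v_2 - \sigma(a) \in I$ for all $a \in A$ (identifying $\tau$ with a Choi--Effros CP lift, which exists in the nuclear setting). These would be built as strict limits of sums $v_i = \sum_n w_i^{(n)}$ of partial isometries in $I$, in the spirit of Weyl--von Neumann--Voiculescu. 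The main ingredients are a dense sequence $(a_n)$ in $A$, a quasi-central approximate unit $(e_n)$ for $I$ relative to $\tau(A) \cup \sigma(A)$ (constructed via Kasparov's technical theorem), and ``window'' elements $f_n = e_n - e_{n-1}$ which carve out mutually orthogonal hereditary chunks of $I$ in which the successive $w_i^{(n)}$ live.

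The main obstacle, and the place where pure largeness is essential, is the inductive step. At stage $n$ one must produce a partial isometry in the hereditary subalgebra $\overline{f_n^{1/2} I f_n^{1/2}}$ whose associated conjugation implements $\sigma(a_n)$ modulo the already-constructed portion and modulo a small error. This is arranged by applying pure largeness of the pull-back extension $\mathfrak{e} \circ \sigma$ (Proposition \ref{prop:sep_pullbackpurelyarge}) to suitable positive elements built from $\tau(a_n)$ and $f_n$, using the norm bound of Proposition \ref{prop:purelylargenormcontrol} to guarantee strict convergence of $\sum_n w_i^{(n)}$ and to keep the cumulative error summable. A standard diagonal argument over $n \to \infty$ then yields genuine isometries with the required intertwining. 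Finally, the ``in particular'' clause is immediate: if $A$ is nuclear, then every CP map out of $A$ is automatically nuclear, so unital absorbing and unital nuclearly absorbing coincide.
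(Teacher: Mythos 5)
You should first be aware that the paper does not prove this theorem at all: it is imported verbatim from Elliott--Kucerovsky's paper, and the only original content surrounding the statement is Proposition \ref{prop:elku_purelylargeequiv}, which verifies that the Cuntz-comparison definition of pure largeness used here agrees with Elliott--Kucerovsky's original one (so that the quoted result applies). Your proposal is therefore an attempt to reprove the cited black box by a Weyl--von Neumann--Voiculescu scheme, which is indeed the general shape of the original argument; the easy direction (absorbing $\Rightarrow$ purely large) is fine in spirit, provided you justify the existence of a unital trivial extension $\psi\colon A\to\Mult I$ all of whose values on nonzero positive elements Cuntz-dominate every element of $I_+$ (this uses stability of $I$) and you track the error terms in $I$ when lifting the corona identity back to $E$.

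The genuine gap is in the hard direction, at exactly the step you flag as the ``main obstacle''. Pure largeness, even with the norm control of Proposition \ref{prop:purelylargenormcontrol} and applied to the pull-back as in Proposition \ref{prop:sep_pullbackpurelyarge}, only produces, for a \emph{single} pair $y\precsim x$, some $c\in I$ with $c^*xc\approx y$. At stage $n$ of your induction you need one element implementing the completely positive map $\sigma$ simultaneously on the finite set $\{a_1,\dots,a_n\}$ (something like $\|c^*\tau(a_j)c - f_n^{1/2}\sigma(a_j)f_n^{1/2}\|$ small for all $j$), and pairwise Cuntz subequivalences give no linear or completely positive coherence across the different $a_j$. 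This is precisely where weak nuclearity of the absorbed map must enter: one approximately factors $\sigma$ through matrix algebras and uses the purely large structure in the Elliott--Kucerovsky form (each $\overline{x^*Ix}$ contains a stable $C^*$-subalgebra full in $I$, cf.\ Proposition \ref{prop:elku_purelylargeequiv}) to copy the finite-dimensional image into a hereditary subalgebra of $I$ and obtain a single compression $c^*\tau(\cdot)c$ implementing $\sigma$ on the finite set. As written, your argument never uses nuclearity of $\sigma$ in this direction, so it would ``prove'' that a purely large unital extension absorbs every unital trivial extension for arbitrary separable $A$ — which is not the theorem (hence the word \emph{nuclearly} absorbing, with plain absorption only in the nuclear case). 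Two smaller points: $\tau$ need not admit a Choi--Effros lift when $A$ is not nuclear, so use the extension algebra $E\subseteq\Mult I$ itself as the source of lifts; and to assemble the final unitary you also need the off-diagonal conditions $v_1^*\tau(a)v_2\in I$, not only the two diagonal ones.
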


We end this subsection by proving that the reduced state-kernel extension is indeed purely large.

\begin{prop} \label{prop:sep_skepurelylarge}
    Let $B$ be a simple and purely infinite \ca\ with a strongly faithful state $\rho$. Then the reduced state-kernel extensions $\mathfrak e_B$ and $\mathfrak e_B^{(\omega)}$ are purely large.
\end{prop}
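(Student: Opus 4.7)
The plan is to verify pure largeness of $\mathfrak e_B$ directly from the definition; the argument for $\mathfrak e_B^{(\omega)}$ will be identical after replacing $n\to\infty$ limits with limits along the ultrafilter $\omega$. Fix $x\in (S_{B,\rho})_+\setminus J_{B,\rho}$, $y\in (J_{B,\rho})_+$ and $\varepsilon>0$; I aim to produce $c\in J_{B,\rho}\subseteq S_{B,\rho}$ with $\|c^\ast xc-y\|<\varepsilon$. Choose positive representatives $(x_n),(y_n)\in\ell^\infty(B)_+$. By Corollary \ref{cor:ske_maptoquotient} together with the strong faithfulness of $\rho$ (which forces $\rho''$ faithful on $\pi_\rho(B)''$), the condition $q_B(x)\neq 0$ yields $\delta := \lim_n \|x_n\|_{\rho,\#} = \|q_B(x)\|_{\rho'',\#} > 0$, so $\|x_n\|\geq\delta/2$ and $(x_n-\delta/3)_+$ is non-zero for $n$ large.

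The crux is to produce, for each such $n$, an element $c_n\in B$ of the \emph{symmetric} form $c_n = y_n^{1/4}b_ny_n^{1/4}$ with $\|b_n\|\leq M$ for some constant $M$ depending only on $\delta,\|y\|,\varepsilon$, and satisfying $\|c_n^\ast x_nc_n-y_n\|<\varepsilon$. Such a factorization is available because $\overline{y_nBy_n}\subseteq B$ is itself a stable, simple, purely infinite $C^\ast$-algebra (a standard consequence of simple pure infiniteness of $B$), so the Cuntz subequivalence $y_n\precsim (x_n-\delta/3)_+$ in $B$ can be realized inside $\overline{y_nBy_n}$ after combining with a uniformly bounded right inverse of $(x_n-\delta/3)_+$ furnished by pure infiniteness of $B$.

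Granting the construction of such $c_n$, the remaining verification is bookkeeping: functional calculus gives $c_n^\ast c_n = y_n^{1/4}b_n^\ast y_n^{1/2}b_ny_n^{1/4}\leq M^2\|y\|^{1/2}\,y_n^{1/2}$ and similarly $c_nc_n^\ast\leq M^2\|y\|^{1/2}\,y_n^{1/2}$. Two applications of the Cauchy--Schwarz inequality for the state $\rho$ yield
\begin{equation*}
    \rho(y_n^{1/2})^2 \leq \rho(y_n) \quad \text{and} \quad \rho(y_n)^2 \leq \rho(y_n^2) = \|y_n\|_\rho^2,
\end{equation*}
so $\rho(y_n^{1/2}) \leq \|y_n\|_\rho^{1/2} \to 0$. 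This drives both $\rho(c_n^\ast c_n)\to 0$ and $\rho(c_nc_n^\ast)\to 0$, hence $\|c_n\|_{\rho,\#}\to 0$, placing $c:=[(c_n)]$ in $J_{B,\rho}$ by Proposition \ref{prop:ske_statekernelhereditary}. The bound $\|c^\ast xc-y\| = \limsup_n\|c_n^\ast x_nc_n-y_n\| \leq \varepsilon$ then witnesses $y\precsim x$ in $S_{B,\rho}$, establishing pure largeness.

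The principal technical obstacle is arranging the symmetric factorization with $\|b_n\|$ bounded independently of $n$. This is subtle in the degenerate case when the supports of $y_n$ and $x_n$ are (nearly) orthogonal, so that $y_n^{1/4}x_ny_n^{1/4}$ becomes small: here one must first replace $y$ by a controlled perturbation inside $J_{B,\rho}$ that overlaps the spectral support of $x$ (for instance, passing from $y$ to an element of the form $f(x)^{1/2}yf(x)^{1/2}$ for a cut-off $f$ supported on the large part of the spectrum of $q_B(x)$, which still lies in $J_{B,\rho}$ since the latter is an ideal in $S_{B,\rho}$). All other steps follow from standard properties of Cuntz comparison in simple purely infinite $C^\ast$-algebras and elementary state estimates.
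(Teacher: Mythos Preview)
The proposal has a genuine gap at precisely the point you flag as the ``principal technical obstacle'': the symmetric factorization $c_n=y_n^{1/4}b_ny_n^{1/4}$ with $\|b_n\|$ bounded independently of $n$ is not available in general, and your suggested fix does not repair it. Concretely, working in the hereditary subalgebra $H_n=\overline{y_nBy_n}$ one would need $b_n^\ast(y_n^{1/4}x_ny_n^{1/4})b_n\approx y_n^{1/2}$; the standard norm control (\cite[Lemma 4.1.7]{Rordam_ClassificationofNuclearCalgebras}) gives $\|b_n\|^2\lesssim \|y_n^{1/2}\|/\|y_n^{1/4}x_ny_n^{1/4}\|$, and the denominator can be arbitrarily small (indeed zero) when $x_n$ and $y_n$ are nearly orthogonal. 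Your proposed remedy, replacing $y$ by $f(x)^{1/2}yf(x)^{1/2}$, does not help: this element is in $J_{B,\rho}$ but it is \emph{not} close to $y$ in norm in the degenerate case (that is exactly the case where $f(x)$ acts almost as zero on $y$), so proving $f(x)^{1/2}yf(x)^{1/2}\precsim x$ does not yield $y\precsim x$. One might hope to salvage this by first showing $y\precsim y'$ for some convenient $y'\in J_{B,\rho}$, but that would require $J_{B,\rho}$ itself to be simple purely infinite, which fails in the sequence-algebra picture (only $J_{B,\rho}^{(\omega)}$ inherits this from $B_\omega$). A minor additional issue: the claim that $\overline{y_nBy_n}$ is automatically stable is false when $y_n$ has a spectral gap at $0$ (e.g.\ if $y_n$ is a projection).

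The paper's proof circumvents the orthogonality obstruction by manufacturing, for each $n$, a norm-one positive element $z_n$ inside the hereditary subalgebra generated by $(x_n-\delta/2)_+$ with $\|z_n\|_{\rho,\#}\leq 1/n$. This is done via a pigeonhole argument: pick $b_n\in\overline{(x_n-\delta/2)_+B(x_n-\delta/2)_+}$ with infinite spectrum, view $\rho|_{C^\ast(b_n)}$ as a Radon probability measure, and choose a bump function supported in one of $n$ disjoint open sets of small measure. The implementing element is then $d_n=z_nc_ny_n^{1/4}$ (not symmetric in $y_n^{1/4}$), with $z_n$ on the left providing the $\rho$-smallness for $d_n^\ast$ and $y_n^{1/4}$ on the right providing it for $d_n$; this decouples the $\rho$-estimate from any interaction between $x_n$ and $y_n$. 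Since $z_n$ already lives under $(x_n-\delta/2)_+$, a suitable $f(x_n)$ acts as the identity on it, giving $d_n^\ast f(x_n)d_n=y_n$ exactly. This is the missing idea in your outline.
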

\begin{proof}
The proof in the ultrapower setting is completely analogous, so let us prove that $\mathfrak e_B$ is purely large. Let $y\in (J_{B,\rho})_+$ and $x\in (S_{B,\rho})_+ \setminus J_{B,\rho}$. Fixing positive lifts $(x_n)_n, (y_n)_n \in \ell^\infty(B)$ of $x$ resp.\ $y$, it follows from Corollary \ref{cor:ske_maptoquotient} that
\begin{equation}
    0 < \|q_B(x)\|_{\rho'',\#} = \lim_{n\to\infty} \|x_n\|_{\rho,\#},
\end{equation}
as $q_B(x)\neq 0$ and $\rho''$ is faithful. Hence, there exist $\delta>0$ and $N\in\mathbb N$ such that $\|x_n\|_{\rho,\#}>\delta$ for all $n\geq N$. By replacing $x_n$ for $n<N$ by an appropriate element of an approximate unit for $B$, we may assume $\|x_n\|_{\rho,\#}>\delta$ for all $n\in\mathbb N$. Note that the resulting sequence will still represent $x$, and that this also implies that $\|x_n\| > \delta$ for all $n\in\mathbb N$ by \eqref{eq:ske_normdominatesphinorm}.

Fix an $n\in\mathbb N$, and define $B_n$ as the hereditary $C^*$-subalgebra of $B$ generated by the positive element $(x_n-\delta/2)_+$. Then $B_n$ is non-zero as $(x_n-\delta/2)_+ \neq 0$, and simple and purely infinite since $B$ is. Hence, it contains a positive norm-$1$ element $b_n$ with infinite spectrum. Considering the commutative $C^*$-subalgebra $C^*(b_n)$, note that the restriction $\rho|_{C^*(b_n)}$ of $\rho$ is induced by some Radon measure $\mu_n$ on $\sigma(b_n)$, in the sense that
\begin{equation}
    \rho(g(b_n)) = \int_{\sigma(b_n)} g(t) \dee \mu_n(t) \quad \big(g\in \mathcal{C}_0(\sigma(b_n)\setminus \{0\})\big).
\end{equation}
Pick $n$ distinct points $\lambda_1, ...,\lambda_n \in \sigma(b_n)$ and let $X_1, ..., X_n$ be mutually disjoint open neighborhoods of the $\lambda_j$. As $\mu_n$ is Radon, it follows that
\begin{equation}
    \sum_{j=1}^n \mu_n(X_j) = \mu_n\left( \bigsqcup_{j=1}^n X_j \right) \leq \mu_n(\sigma(b_n)) = \rho(b_n) \leq \|b_n\| = 1,
\end{equation}
so there exists some $j\in\{1,...,n\}$ such that $\mu_n(X_j) \leq 1/n$. Now let $h_n: \sigma(b_n) \to [0,1]$ be a continuous function supported on this $X_j$ such that $h_n(\lambda_j) = 1$, and note that
\begin{equation}
    \rho(h_n(b_n)) = \int_{\sigma(b_n)} h_n(t) \dee \mu_n(t) \leq \mu_n(X_j) \leq \frac{1}{n}.
\end{equation}
Hence, $z_n := h_n(b_n)^{1/2}$ is a positive norm-$1$ element in $B_n$ with $\|z_n\|_{\rho,\#} = \rho(z_n^2) \leq 1/n$. Moreover, if we let
\begin{equation}
    f: [0,1] \to [0,1]: t\mapsto \max\{2t/\delta,1\},
\end{equation}
then $f(x_n)z_n = z_n$, as $f(x_n)$ acts as the identity on $(x_n-\delta/2)_+$.

The resulting sequence $(z_n)_n$ is then norm-bounded and converges to zero in $(\rho,\#)$-norm, so $(z_n)_n\in J_{B,\rho}$.  Now, use the fact that $B$ is simple and purely infinite to find for each $n\in\mathbb N$ a $c_n\in B$ such that
\begin{equation}
    c_n^*z_n^2c_n = y_n^{1/2}.
\end{equation}
By \cite[Lemma 4.1.7]{Rordam_ClassificationofNuclearCalgebras}, we can arrange that
\begin{equation}
    \|c_n\| \leq \frac{\|y_n^{1/2}\|^{1/2}}{\|z_n^2\|^{1/2}}+1 = \frac{\|y_n\|^{1/4}}{\|z_n\|} + 1 \leq \|y\|^{1/4}+1,
\end{equation}
so the $c_n$ form a bounded sequence in $B$. The sequence $d_n := z_n c_n y_n^{1/4}$ ($n\in\mathbb N$) is then also bounded, and using \eqref{eq:ske_phinormofproduct}, it satisfies
\begin{equation}
    2\|d_n\|_{\rho,\#} \leq \|z_n c_n\| \|y_n^{1/4}\|_\rho + \|y_n^{1/4}c_n^*\| \|z_n\|_\rho \xrightarrow{n\to\infty} 0
\end{equation}
as both $y_n^{1/4}$ and $z_n$ are in $J_{B,\rho}$. Hence, $(d_n)_n$ is a sequence in $J_{B,\rho}$, and
\begin{equation}
    d_n^*f(x_n)d_n = d_n^*d_n = y_n
\end{equation}
for all $n\in\mathbb N$ as $f(x_n)$ acts as the identity on $z_n$, so $y\precsim f(x)$. As $f(x) \sim x$ by \cite[Lemma 2.2]{KirchbergRordam_NonSimplePurelyInfiniteCalgebras}, we can conclude that $y\precsim x$. This finishes the proof.
\end{proof}

\subsection{$KK$-existence and -uniqueness}
\begin{thm}[$KK$-existence]\label{thm:KK-Existence}
    Let $A$ be a separable \ca\ and $I$ a $\sigma$-unital, stable \ca.
    \begin{enumerate}
        \item If $\psi: A \to \Mult I$ is an absorbing \sh\ and $\kappa \in KK(A, I)$, then there is an absorbing \sh\ $\varphi: A \to \Mult I$ such that $(\varphi, \psi)$ is an $(A, I)$-Cuntz pair and $[\varphi,\psi]_{KK(A,I)} = \kappa$. \label{thm:KK-Existence.C1}
        \item Suppose $A$ is also nuclear. If $\theta: A \to \Q I$ is an absorbing \sh, then there is a (necessarily) absorbing \sh\ $\rho: A \to \Mult I$ lifting $\theta$ if and only if $[\theta]_{KK(A,\Q I)}=0$. \label{thm:KK-Existence.C2}
    \end{enumerate}
\end{thm}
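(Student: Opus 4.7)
For part (a), the plan is to represent $\kappa$ by some Cuntz pair $(\varphi_0, \psi_0): A \rightrightarrows \Mult{I} \rhd I$, then use absorption of $\psi$ to rotate the pair into one of the form $(\varphi, \psi)$. Cuntz-summing with the class-zero pair $(\psi, \psi)$ gives $(\psi \oplus \varphi_0, \psi \oplus \psi_0)$, still of class $\kappa$. Absorption of $\psi$ then supplies a unitary $u \in \Mult{I}$ with $(\psi \oplus \psi_0)(\cdot) \equiv u \psi(\cdot) u^* \pmod{I}$, so that $\varphi := \Ad(u^*) \circ (\psi \oplus \varphi_0)$ yields a valid Cuntz pair $(\varphi, \psi)$. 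Standard $KK$-manipulations---using that the unitary group of $\Mult{I}$ is path-connected for stable $\sigma$-unital $I$, so conjugation by unitaries in $\Mult{I}$ preserves Cuntz pair classes---give $[\varphi, \psi]_{KK(A, I)} = \kappa$, and $\varphi$ inherits absorption from $\psi$ via $\varphi \sim_u \psi \oplus \varphi_0$.

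For part (b), the ``only if'' direction follows by functoriality: if $\rho: A \to \Mult{I}$ lifts $\theta$, then $[\theta]_{KK(A, \Q{I})} = KK(A, \pi_I)([\rho]_{KK(A, \Mult{I})})$, and it suffices to show $KK(A, \Mult{I}) = 0$. Stability of $I$ supplies countably many isometries with orthogonal ranges summing to $1$ in $\Mult{I}$, enabling the Eilenberg swindle $\rho^{(\infty)} \sim_u \rho \oplus \rho^{(\infty)}$, which forces $[\rho]_{KK(A, \Mult{I})} = 0$. The ``necessarily absorbing'' assertion is automatic, since the property depends only on $\pi_I \circ \rho = \theta$.

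For the ``if'' direction, I would pass to the pullback extension $\mathfrak{e} \circ \theta^\dagger$ of $\mathfrak{e}: 0 \to I \to \Mult{I} \to \Q{I} \to 0$ by the forced unitization $\theta^\dagger: A^\dagger \to \Q{I}$. Proposition \ref{prop:kkeu_absorbingvsunitization} says $\theta^\dagger$ is unitally absorbing, and Elliott--Kucerovsky (Theorem \ref{thm:elliottkucerovsky}) then upgrades this to pure largeness of the pullback. Nuclearity of $A$ combined with Choi--Effros makes the pullback semi-split, hence invertible in $\Ext_{us}$. The six-term $KK$-exact sequence for $\mathfrak{e}$, together with $KK_\ast(A^\dagger, \Mult{I}) = 0$, yields an isomorphism $KK(A^\dagger, \Q{I}) \xrightarrow{\cong} KK^1(A^\dagger, I)$ via the boundary map, identifying $[\theta^\dagger]$ with the class of the pullback. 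Vanishing of $[\theta]$ and of $[1_{\Q{I}}]$ (from ``only if'' applied to the unit) then forces the pullback class to be zero in $\Ext_{us}^{-1}$.

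The main obstacle is converting ``the pullback has trivial class in $\Ext_{us}^{-1}$'' into ``the pullback itself splits unitally''. A priori, triviality only supplies a splitting after Cuntz-summing the extension with some split one; upgrading to a genuine splitting of the pullback itself is where the unital absorption of $\theta^\dagger$ becomes essential, as it allows the added split summand to be absorbed back into $\theta^\dagger$ via a unitary in $\Q{I}$. The resulting unital lift $A^\dagger \to \Mult{I}$ then restricts to the desired $*$-homomorphic lift of $\theta$.
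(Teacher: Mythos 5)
The decisive step in your part (a) --- the claim $[\varphi,\psi]_{KK(A,I)}=\kappa$ --- is a genuine gap. Path-connectedness of the unitary group of $\Mult I$ shows that conjugating \emph{both} legs of a Cuntz pair by the same unitary preserves its class (the path $(\Ad(u_t)\circ\alpha,\Ad(u_t)\circ\beta)$ stays a Cuntz pair), but your pair $(\Ad(u^*)\circ(\psi\oplus\varphi_0),\,\psi)$ has only one leg rotated: its second entry is $\psi$, not $\Ad(u^*)\circ(\psi\oplus\psi_0)$. Since $[\alpha,\gamma]=[\alpha,\beta]+[\beta,\gamma]$ for Cuntz pairs, what your construction actually gives is
\begin{equation*}
  [\varphi,\psi]_{KK(A,I)}=\kappa+\delta,\qquad \delta:=[\Ad(u^*)\circ(\psi\oplus\psi_0),\,\psi]_{KK(A,I)},
\end{equation*}
and $\delta$ is a generalized essential-codimension class that need not vanish: already for $A=\mathbb C$ and $I=\K$, a pair $(\Ad(w)\circ\beta,\beta)$ with $w\beta(1)w^*-\beta(1)$ compact can have nonzero class even though $w$ is homotopic to $1$. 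Absorption of $\psi$ only asserts that \emph{some} unitary $u$ exists; two admissible choices differ by a unitary of $\Mult I$ whose image lies in the relative commutant $\pi_I(\psi(A))'\cap \Q I$, and multiplying $u$ by such a unitary shifts $\delta$ by the class of a pair $(\Ad(w)\circ\alpha,\alpha)$ with $\pi_I(w)$ in that commutant --- exactly the essential-codimension classes above, which are nonzero in general. So to conclude $[\varphi,\psi]=\kappa$ you would need to show that $u$ can be \emph{chosen} so that $\delta=0$, and that is essentially equivalent to what is being proved (surjectivity of $\varphi\mapsto[\varphi,\psi]$ onto $KK(A,I)$ for a fixed absorbing $\psi$, i.e.\ the Cuntz--Thomsen/Paschke picture of $KK$), which is precisely the content of \cite[Theorem 5.14]{CarrionGabeSchafhauserTikuisisWhite_ClassifyingHomomorphismsIUnitalSimpleNuclearCalgebras} that the paper cites in lieu of a proof. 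Your argument assumes it at the key moment.

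Part (b) is closer to the mark: the swindle handles the ``only if'' direction (you only need, and only prove, vanishing of classes of genuine \sh s into $\Mult I$, which suffices), and the closing mechanism --- trivial extension class plus absorption implies liftability --- is the right one. The weak point is the bridge from $[\theta^\dagger]_{KK(A^\dagger,\Q I)}=0$ to triviality in $\Ext_{us}^{-1}$: the boundary isomorphism identifies $KK(A^\dagger,\Q I)$ with $KK^1(A^\dagger,I)$, i.e.\ with the \emph{non-unital} group $\Ext^{-1}(A^\dagger,I)$, and the comparison map from the unital group $\Ext_{us}^{-1}(A^\dagger,I)$ to $\Ext^{-1}(A^\dagger,I)$ is not injective in general (its kernel is governed by unit/base-point $K_0$-data, cf.\ the Gabe--Ruiz sequence behind Lemma \ref{lem:ExtOinftytrivial}), so weak triviality does not hand you the unital triviality you invoke. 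The simplest repair is to drop the unitization and the Elliott--Kucerovsky/pure-largeness detour altogether (absorption of $\theta$ is already a hypothesis, and a lift of $\theta$ need not be unital): $[\theta]=0$ in $\Ext^{-1}(A,I)$ unwinds to \sh s $h_1,h_2\colon A\to\Mult I$ and a unitary $u\in\Mult I$ with $\theta\oplus(\pi_I\circ h_1)=\Ad(\pi_I(u))\circ(\pi_I\circ h_2)$, while absorption of $\theta$ gives a unitary $v\in\Mult I$ with $\theta\oplus(\pi_I\circ h_1)=\Ad(\pi_I(v))\circ\theta$; hence $\theta=\pi_I\circ\big(\Ad(v^*u)\circ h_2\big)$ lifts. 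Since the paper's own proof is a citation to \cite[Theorem 5.14]{CarrionGabeSchafhauserTikuisisWhite_ClassifyingHomomorphismsIUnitalSimpleNuclearCalgebras}, there is no internal argument to compare routes with; as it stands, part (a) of your proposal does not constitute a proof.
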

\begin{proof}
See \cite[Theorem 5.14]{CarrionGabeSchafhauserTikuisisWhite_ClassifyingHomomorphismsIUnitalSimpleNuclearCalgebras}.
\end{proof}

The proof of $KK$- and $KL$-uniqueness follows the same strategy as in \cite[\S 5.4]{CarrionGabeSchafhauserTikuisisWhite_ClassifyingHomomorphismsIUnitalSimpleNuclearCalgebras}. The main difference is that we aim to obtain uniqueness on the nose, without having to stabilize by $\Z$. In principle, one could just use the $\Z$-stable $KK$-uniqueness theorem, as $\Z$-stability is automatic in our setting. As stated in the introduction, however, we want to avoid doing so, meaning we will have to take a different approach. For simple and purely infinite \ca s, one can appeal to a result by Loreaux and Ng \cite[Theorem 2.5]{LoreauxNg_RemarksOnEssentialCodimension} to obtain $K_1$-injectivity of $\Q I \cap q(\varphi(A))'$ on the nose, without having to stabilize by $\Z$. However, this requires $J_{B,\rho}$ (which, up to separabilization, plays the role of $I$ in this context) to be simple and purely infinite. This is not the case in the sequence algebra setting, as $B_\infty$ is not simple and purely infinite when $B$ is, but it does hold for the ultrapower $B_\omega$ (see \cite[Proposition 6.2.6]{Rordam_ClassificationofNuclearCalgebras}).

\begin{lemma} \label{lem:AsympEquiv}
    Let $A$ be a unital, separable, nuclear $C^*$-algebra, and let $I$ be a separable, simple, purely infinite and stable \ca. Suppose $\varphi, \psi: A \to \Mult I$ are unital, unitally absorbing \sh s such that $\pi_I \circ \varphi = \pi_I \circ\psi$, where $\pi_I:\Mult I \to \Q I$ is the quotient map. Suppose further that there is a continuous path $(u_t)_{t \geq 0}$ of unitaries in $\Mult I$ such that
    \begin{equation}\label{eq:asymp-equiv-lemma}
        \big( t \mapsto u_t \varphi(a) u_t^* - \psi(a) \big) \in C_0([0, \infty), I), \quad a\in A.
    \end{equation}
    If $(u_t)_{t\geq 0}$ can be chosen with $[\pi_I(u_0)]_1 = 0$ in $K_1(\Q I \cap \pi_I(\varphi(A))')$, then we can arrange that $u_t \in I^\dag \subseteq \Mult I$ for all $t \geq 0$.
\end{lemma}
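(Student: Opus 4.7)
The plan is to exploit the $K_1$-triviality of $\pi_I(u_0)$ in the commutant $\Q I \cap \pi_I(\varphi(A))'$ to modify the given path $(u_t)$ into one taking values in $I^\dagger$ while preserving \eqref{eq:asymp-equiv-lemma}.

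First, the relation $u_t\varphi(a)u_t^* - \psi(a) \in I$ together with $\pi_I\circ\varphi = \pi_I\circ\psi$ shows that each $\pi_I(u_t)$ is a unitary in $\Q I \cap \pi_I(\varphi(A))'$ and that $t \mapsto \pi_I(u_t)$ is continuous. By the $K_1$-injectivity of $\Q I \cap \pi_I(\varphi(A))'$ supplied by Loreaux--Ng (valid because $I$ is separable, simple, purely infinite and stable), the hypothesis $[\pi_I(u_0)]_1 = 0$ yields a continuous path $(h_s)_{s \in [0,1]}$ of unitaries in $\Q I \cap \pi_I(\varphi(A))'$ from $1$ to $\pi_I(u_0)$. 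Concatenating $(h_s)$ with $(\pi_I(u_t))_{t \geq 0}$ produces a continuous path $g: [0, \infty) \to \Q I \cap \pi_I(\varphi(A))'$ of unitaries starting at $g(0) = 1$. Since $I$ is stable, $K_1(\Mult I) = 0$ and $\U(\Mult I)$ is path-connected, so this path lifts to a continuous path $\tilde g$ of unitaries in $\Mult I$ with $\tilde g(0) = 1$ and $\pi_I \circ \tilde g = g$.

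The candidate new path is then $\tilde u_t := \tilde g(\sigma(t))^{-1} u_t$ for a suitable reparametrization $\sigma: [0, \infty) \to [1, \infty)$ aligning $\tilde g$ with the original half of the concatenation. By construction $\pi_I(\tilde u_t) = \pi_I(u_t)^{-1}\pi_I(u_t) = 1$, so $\tilde u_t \in 1 + I \subseteq I^\dagger$, and $\tilde u_t$ depends continuously on $t$. The asymptotic conjugation property would then follow from the decomposition
\begin{equation*}
\tilde u_t\varphi(a)\tilde u_t^* - \psi(a) = \tilde g(\sigma(t))^{-1}\bigl(u_t\varphi(a)u_t^* - \psi(a)\bigr)\tilde g(\sigma(t)) + \bigl(\tilde g(\sigma(t))^{-1}\psi(a)\tilde g(\sigma(t)) - \psi(a)\bigr),
\end{equation*}
whose first term lies in $I$ and vanishes in norm by hypothesis.

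\textbf{Main obstacle.} The heart of the argument is controlling the second term: one needs $\tilde g(t)\varphi(a)\tilde g(t)^* - \varphi(a) \to 0$ in norm (equivalently for $\psi$, since $\psi - \varphi \in I$ and one can absorb the resulting commutator term). This \emph{asymptotic centrality} is not automatic from $\pi_I(\tilde g(t))$ lying in the corona commutant—the naive lift typically fails. The resolution uses unital absorption of $\varphi$: since $\varphi \sim_u \varphi \oplus \rho$ for any $\rho$, a standard doubling/cutting argument with isometries in $\Mult I$ (combined with the path-lifting machinery developed for absorbing homomorphisms in \cite{CarrionGabeSchafhauserTikuisisWhite_ClassifyingHomomorphismsIUnitalSimpleNuclearCalgebras}) provides asymptotically central lifts of arbitrary unitaries prescribed in the corona commutant. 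Refining $\tilde g$ through this machinery—while keeping $\pi_I \circ \tilde g = g$ and the lifted path continuous—is the principal technical step; once established, the decomposition above yields $\tilde u_t \varphi(a) \tilde u_t^* - \psi(a) \in C_0([0,\infty), I)$, completing the proof.
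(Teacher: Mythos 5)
Your opening moves match the paper's: you observe that $\pi_I(u_t)$ lands in $\Q I\cap\pi_I(\varphi(A))'$ and you invoke Loreaux--Ng \cite[Theorem 2.5]{LoreauxNg_RemarksOnEssentialCodimension} for $K_1$-injectivity of that corona commutant, exactly as the paper does. But the paper then stops: it feeds $K_1$-injectivity into \cite[Lemma 5.16.i]{CarrionGabeSchafhauserTikuisisWhite_ClassifyingHomomorphismsIUnitalSimpleNuclearCalgebras}, which is precisely the statement that, for unital absorbing $\varphi,\psi$ with $K_1$-injective corona commutant and $[\pi_I(u_0)]_1=0$, the path can be replaced by one in $I^\dagger$. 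You instead attempt to reprove that reduction from scratch, and the attempt has a genuine gap: in your decomposition the second term $\tilde g(\sigma(t))^{-1}\psi(a)\tilde g(\sigma(t))-\psi(a)$ lies in $I$ but there is no reason it tends to $0$ in norm, and you acknowledge this yourself. Your proposed fix --- that unital absorption yields, via a ``standard doubling/cutting argument,'' continuous lifts $\tilde g$ of the corona path that are asymptotically central for $\varphi(A)$ in norm while still satisfying $\pi_I\circ\tilde g=g$ --- is asserted, not proved, and it is not a routine fact: the impossibility of lifting unitaries (or unitary paths) from $\Q I\cap\pi_I(\varphi(A))'$ to approximately commuting unitaries in $\Mult I$ is exactly why the argument in the literature is delicate, working with the homotopy $\pi_I(u_0)\sim_h 1$ written as a product of exponentials and exploiting absorption in a more structured way. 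In other words, the step you defer is the entire technical content of the lemma you would need to cite (or reprove in detail), so as written the proposal does not establish the statement.

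If you want a complete argument along your lines, you should either cite \cite[Lemma 5.16.i]{CarrionGabeSchafhauserTikuisisWhite_ClassifyingHomomorphismsIUnitalSimpleNuclearCalgebras} directly --- in which case your proof collapses to the paper's two-line reduction --- or actually carry out the absorption argument: write $\pi_I(u_0)=\prod_j\exp(ih_j)$ with $h_j$ self-adjoint in $\Q I\cap\pi_I(\varphi(A))'$ (using $K_1$-injectivity), and then use the absorbing property of $\varphi$ to connect $u_0$ to a unitary in $1+I$ through unitaries that conjugate $\varphi$ into itself modulo $I$ with controlled norms; this is several pages of work, not a remark.
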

\begin{proof}
First note that indeed $\pi_I(u_0) \in \Q I \cap \pi_I(\varphi(A))'$, as
\begin{equation}
    \Ad_{\pi_I(u_0)} \circ \pi_I\circ \varphi = \pi_I \circ \Ad_{u_0} \circ \varphi = \pi_I\circ \psi = \pi_I\circ\varphi.
\end{equation}
By \cite[Lemma 5.16.i]{CarrionGabeSchafhauserTikuisisWhite_ClassifyingHomomorphismsIUnitalSimpleNuclearCalgebras}, it suffices to show that $\Q I \cap \pi_I(\varphi(A))'$ is $K_1$-injective. This follows from \cite[Theorem 2.5]{LoreauxNg_RemarksOnEssentialCodimension}.
\end{proof}

\begin{thm}[$KK$- and $KL$-uniqueness] \label{thm:KK-Uniqueness}
    Let $A$ be a unital separable, nuclear $C^*$-algebra, and let $I$ be a separable, simple, purely infinite and stable \ca. Let $(\varphi,\psi): A\rightrightarrows \Mult I \rhd I$ be a Cuntz pair of absorbing \sh s.
    \begin{enumerate}
        \item If $[\varphi, \psi]_{KK(A,I)} = 0$, then there is a norm-continuous path $(u_t)_{t \geq 0}$ of unitaries in $I^\dag$ such that
        \begin{equation}
            \| u_t \varphi(a) u_t^* - \psi(a) \| \rightarrow 0, \quad a \in A.
        \end{equation}
        \item If $[\varphi, \psi]_{KL(A,I)} = 0$, then there is a sequence $(u_n)_{n=1}^\infty$ of unitaries in $I^\dag$ such that
        \begin{equation}
            \| u_n \varphi(a) u_n^* - \psi(a) \| \rightarrow 0, \quad a \in A.
        \end{equation}
    \end{enumerate}
\end{thm}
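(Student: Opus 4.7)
The proof follows the strategy of \cite[\S 5.4]{CarrionGabeSchafhauserTikuisisWhite_ClassifyingHomomorphismsIUnitalSimpleNuclearCalgebras}, with the crucial modification that Lemma \ref{lem:AsympEquiv} replaces the $\Z$-stabilization argument used there. For part (a), I would first reduce to the unital case by passing to the forced unitizations $\varphi^\dagger, \psi^\dagger: A^\dagger \to \Mult{I}$. This preserves the Cuntz pair condition, since $\varphi^\dagger(a+\lambda 1)-\psi^\dagger(a+\lambda 1)=\varphi(a)-\psi(a)\in I$, and preserves unital absorption by Proposition \ref{prop:kkeu_absorbingvsunitization}; moreover, a path of unitaries implementing asymptotic equivalence for the unitizations restricts to one for $(\varphi,\psi)$. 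Next, the standard absorption-based KK-uniqueness argument applied to the vanishing Cuntz pair class produces a norm-continuous path $(u_t)_{t \geq 0}$ of unitaries in $\Mult{I}$ satisfying $(t \mapsto u_t \varphi^\dagger(a) u_t^* - \psi^\dagger(a)) \in C_0([0, \infty), I)$ for every $a \in A^\dagger$. Because this path arises from a Cuntz pair homotopy anchored at a trivial pair (where the intertwining unitary at the trivial end can be taken to be $1$), one can arrange $[\pi_I(u_0)]_1 = 0$ in $K_1(\Q{I} \cap \pi_I(\varphi^\dagger(A^\dagger))')$. Finally, Lemma \ref{lem:AsympEquiv} upgrades this path to one taking values in $I^\dagger$, as required.

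For part (b), the standard reduction is to run a variant of part (a) inside $\mathcal{C}(\overline{\mathbb{N}}, I)$. Given $[\varphi,\psi]_{KL(A,I)} = 0$, unpacking the definition of $KL$ yields a class in $KK(A, \mathcal{C}(\overline{\mathbb{N}}, I))$ whose evaluation at each finite $n$ vanishes and whose evaluation at $\infty$ equals $[\varphi,\psi]_{KK(A,I)}$. Using KK-existence and absorption, this class is realized by an absorbing Cuntz pair into $\Mult{\mathcal{C}(\overline{\mathbb{N}}, I)}$; evaluating at each $n \in \mathbb{N}$ and applying the intertwining argument coordinatewise — where the fibers are the simple purely infinite algebra $I$, so the $K_1$-injectivity underlying Lemma \ref{lem:AsympEquiv} still applies — delivers the required sequence of unitaries $u_n \in I^\dagger$ with $\|u_n \varphi(a) u_n^* - \psi(a)\| \to 0$.

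The main obstacle is the $K_1$-management at the end of part (a): arranging $[\pi_I(u_0)]_1 = 0$ while maintaining the asymptotic intertwining, and then exploiting $K_1$-injectivity of $\Q{I} \cap \pi_I(\varphi^\dagger(A^\dagger))'$. This is exactly where simplicity and pure infiniteness of $I$ enter indispensably, via Loreaux--Ng; in the CGSTW framework for stably finite algebras the analogous injectivity required $\Z$-stability, and bypassing that tensoring step is precisely the point of the present argument. A secondary technical subtlety in part (b) is that $\mathcal{C}(\overline{\mathbb{N}}, I)$ itself is not simple purely infinite, so one cannot apply part (a) as a black box to the coefficient algebra and must instead carefully work fiberwise to preserve the relevant $K_1$-injectivity input.
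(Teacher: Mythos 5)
Your proposal is correct and follows essentially the same route as the paper: the paper's proof simply runs the argument of \cite[Theorem 5.15]{CarrionGabeSchafhauserTikuisisWhite_ClassifyingHomomorphismsIUnitalSimpleNuclearCalgebras} verbatim, with Lemma \ref{lem:AsympEquiv} (i.e.\ Loreaux--Ng $K_1$-injectivity of $\Q I\cap\pi_I(\varphi(A))'$) replacing the $\mathcal Z$-stabilization in the second half of that proof, exactly as you describe. The supplementary details you supply (reduction via forced unitizations, controlling $[\pi_I(u_0)]_1$ from the construction of the path, and the fiberwise $\mathcal C(\overline{\mathbb N},I)$ argument for the $KL$ statement) are consistent with how that argument is carried out.
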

\begin{proof}
By using Lemma \ref{lem:AsympEquiv}, the exact same proof as that of \cite[Theorem 5.15]{CarrionGabeSchafhauserTikuisisWhite_ClassifyingHomomorphismsIUnitalSimpleNuclearCalgebras} works here. The only difference lies in removing the tensorial factor $\Z$ in the computations in the second half of the proof.
\end{proof}

\section{Separabilization of the reduced state-kernel extension} \label{sec:separabilization}
The goal of this section is to set up the technical machinery of separabilization, based on the concept of separably inheritable properties. 

\subsection{Separably inheritable properties}
The concept of separably inheritable properties was introduced by Blackadar; see \cite[\S II.8.5]{Blackadar_OperatorAlgebrasTheoryofCalgebrasandvonNeumannAlgebras}.
\begin{defn}
    A property $(P)$ of \ca s is said to be \emph{separably inheritable} if
    \begin{enumerate}
        \item whenever a \ca\ $A$ satisfies $(P)$ and $A_0$ is a separable $C^*$-subalgebra of $A$, then there exists a separable $C^*$-subalgebra $A_1$ of $A$ which satisfies $(P)$ and contains $A_0$;
        \item whenever $(A_n,\rho_n)_n$ is an inductive system of separable \ca s such that each $A_n$ satisfies $(P)$ and each $\rho_n$ is injective, then $\varinjlim A_n$ also satisfies $(P)$.
    \end{enumerate}
\end{defn}

\begin{defn}
    Let $(P)$ be a property of \ca s. We say a \ca\ $A$ \emph{separably satisfies} $(P)$ if for every separable $C^*$-subalgebra $A_0$ of $A$, there exists a separable $C^*$-subalgebra $A_1$ of $A$ containing $A_0$ and satisfying $(P)$.
\end{defn}

Morally speaking, any property of \ca s which can be formulated as an approximate statement with a control on the norms of relevant elements, is separably inheritable. This fact is exemplified by the following two propositions.

\begin{prop}
    The notion of separable stability is defined unambiguously: an arbitrary \ca\ $A$ satisfies the Hjelmborg-R\o rdam criterion if and only if each separable $C^*$-subalgebra of $A$ is contained in a separable, stable $C^*$-subalgebra.
\end{prop}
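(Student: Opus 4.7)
The plan is to establish both implications directly, with the Hjelmborg--Rørdam theorem (HR criterion $\Leftrightarrow$ stability for $\sigma$-unital \ca s) as the bridge between the two formulations.

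The easier direction -- every separable $C^*$-subalgebra being contained in a separable stable one implies HR for $A$ -- is essentially immediate. Given $x \in A_+$ and $\varepsilon > 0$, take $A_0 := C^*(x)$ and enlarge to a separable stable $A_1 \subseteq A$ by hypothesis. Since $A_1$ is $\sigma$-unital and stable, Hjelmborg--Rørdam produces a witness $y \in A_1 \subseteq A$ satisfying $\|yy^* - x\| < \varepsilon$ and $\|(yy^*)(y^*y)\| < \varepsilon$, verifying HR in $A$.

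For the forward direction, the idea is a standard Blackadar-style bootstrap. Starting from a separable $A_0 \subseteq A$, I would inductively build an ascending chain $A_0 \subseteq A_1 \subseteq A_2 \subseteq \cdots$ of separable $C^*$-subalgebras of $A$, arranging at stage $k$ that $A_{k+1}$ contains, for each $x$ in a chosen countable dense subset $D_k \subseteq (A_k)_+$ and each $n \in \mathbb N$, an HR witness $y_{x,n} \in A$ with $\|y_{x,n}y_{x,n}^* - x\| < 1/n$ and $\|(y_{x,n}y_{x,n}^*)(y_{x,n}^*y_{x,n})\| < 1/n$. At each step only countably many new generators are added, so $A_{k+1}$ remains separable. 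The closure $A_\infty := \overline{\bigcup_k A_k} \subseteq A$ is then separable and contains $A_0$.

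It remains to argue that $A_\infty$ is stable. A routine density/triangle-inequality argument shows that $A_\infty$ satisfies the HR criterion: any $x \in (A_\infty)_+$ can be approximated within $\varepsilon/2$ by some $x' \in D_k$ for $k$ large, and then the witness $y_{x',n} \in A_{k+1} \subseteq A_\infty$ for $n$ with $1/n < \varepsilon/2$ works for the pair $(x,\varepsilon)$. Since $A_\infty$ is separable and hence $\sigma$-unital, Hjelmborg--Rørdam promotes the HR criterion to honest stability, completing the proof.

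There is no real obstacle; the only subtleties are bookkeeping (making sure the inductive step still yields a separable algebra, which it does since at each stage only countably many new generators are introduced) and the observation that the HR criterion passes from a dense subset of positive elements to all positive elements by continuity. The essential input is the $\sigma$-unital case of Hjelmborg--Rørdam, invoked once at the very end.
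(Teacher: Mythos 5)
Your proof is correct: the backward direction is indeed immediate from the Hjelmborg--R\o rdam theorem applied to the separable (hence $\sigma$-unital) stable subalgebra $A_1 \supseteq C^*(x)$, and the forward direction is the standard Blackadar-style bootstrap, with the density argument passing the criterion from $\bigcup_k D_k$ to all of $(A_\infty)_+$ working exactly as you describe. The paper itself gives no argument here --- it simply cites \cite[Proposition 6.10]{CarrionGabeSchafhauserTikuisisWhite_ClassifyingHomomorphismsIUnitalSimpleNuclearCalgebras} --- so your write-up is a valid self-contained proof along precisely the lines one would expect that reference to take.
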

\begin{proof}
See \cite[Proposition 6.10]{CarrionGabeSchafhauserTikuisisWhite_ClassifyingHomomorphismsIUnitalSimpleNuclearCalgebras}.
\end{proof}

\begin{prop} \label{prop:sep_simplepurelyinfsepinh}
    The property of being simple and purely infinite is separably inheritable.
\end{prop}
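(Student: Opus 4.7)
The plan is to verify the two defining axioms of separable inheritability. The key characterisation I will use throughout is the Kirchberg--R{\o}rdam one: a non-zero C*-algebra $A$ is simple and purely infinite if and only if $a \precsim b$ holds in $A$ for every $a \in A_+$ and every non-zero $b \in A_+$. Combined with R{\o}rdam's perturbation lemma and the norm control on Cuntz subequivalence witnesses from \cite[Lemma 4.1.7]{Rordam_ClassificationofNuclearCalgebras} (already invoked in Proposition \ref{prop:sep_skepurelylarge}), this makes the property an approximate, norm-controlled condition, which is exactly what drives separable inheritability.

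For the first axiom (extension), given $A$ simple and purely infinite and a separable $A_0 \subseteq A$, I would build an increasing sequence $A_0 \subseteq A_1 \subseteq A_2 \subseteq \cdots$ of separable C*-subalgebras of $A$ by a standard ``countable witnesses'' bootstrap. At stage $n$, fix a countable dense subset $D_n \subseteq (A_n)_+$. For each pair $(a,b) \in D_n \times (D_n \setminus \{0\})$ and each rational $\varepsilon > 0$, invoke pure infiniteness of $A$ (with the norm control cited above) to pick $x_{a,b,\varepsilon} \in A$ with $\|x_{a,b,\varepsilon}\|$ bounded by $(\|a\|/\|b\|)^{1/2}+1$ and $\|x_{a,b,\varepsilon}^*\, b\, x_{a,b,\varepsilon} - a\| < \varepsilon$. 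Let $A_{n+1}$ be the separable C*-subalgebra generated by $A_n$ together with all these witnesses, and set $A_\infty := \overline{\bigcup_n A_n}$. To verify $A_\infty$ is simple and purely infinite, take non-zero $a, b \in (A_\infty)_+$; for each $\varepsilon > 0$, choose $n$ and $a', b' \in D_n$ with $\|a-a'\|, \|b-b'\| < \varepsilon$ and $b' \neq 0$. By construction there is a bounded witness $x \in A_{n+1} \subseteq A_\infty$ with $x^* b' x \approx_\varepsilon a'$, and three applications of R{\o}rdam's perturbation lemma (one to pass from $b'$ to $b$, one to pass from $a'$ to $a$, one for the witness error), using the uniform norm bound on $x$, yield $(a - C\varepsilon)_+ \precsim b$ in $A_\infty$ for a constant $C$ depending only on $\|x\|$. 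Letting $\varepsilon \to 0$ gives $a \precsim b$.

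For the second axiom (inductive limits), let $(A_n,\rho_n)_n$ be an injective inductive system of separable simple purely infinite C*-algebras with limit $A$. Injectivity lets me identify each $A_n$ with a C*-subalgebra of $A$, so that $A = \overline{\bigcup_n A_n}$. Given non-zero $a,b \in A_+$ and $\varepsilon > 0$, pick $n$ and $a', b' \in (A_n)_+$ with $\|a-a'\|, \|b-b'\| < \varepsilon$ and $b' \neq 0$. Since $A_n$ is simple and purely infinite, $a' \precsim b'$ in $A_n$, hence in $A$. R{\o}rdam's perturbation lemma then gives $(a-\varepsilon)_+ \precsim b$ in $A$ for every $\varepsilon > 0$, so $a \precsim b$.

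The main obstacle is purely bookkeeping: making sure the norms of the witnesses $x_{a,b,\varepsilon}$ stay controlled uniformly enough that when one combines the three approximations ($a \approx a'$, $b \approx b'$, $x^* b' x \approx a'$) with R{\o}rdam's lemma, the accumulated errors can all be absorbed into a single cut-down $(a - C\varepsilon)_+$ that vanishes as $\varepsilon \to 0$. Beyond this careful accounting there is nothing conceptually new, and the proof closely mirrors the template used in the preceding proposition on separable stability.
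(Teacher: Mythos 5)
Your proposal is correct and follows essentially the same route as the paper: an increasing chain of separable subalgebras enlarged by norm-controlled witnesses for Cuntz subequivalence between elements of countable dense subsets, followed by a perturbation estimate in the closure. The only differences are cosmetic — the paper's witnesses implement the subequivalences exactly (via \cite[Lemma 4.1.7]{Rordam_ClassificationofNuclearCalgebras}) and it concludes with a direct triangle-inequality estimate rather than R{\o}rdam's perturbation lemma, while you additionally spell out the inductive-limit axiom, which the paper leaves implicit.
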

\begin{proof}
Suppose $B$ is simple and purely infinite, and $I_0\subseteq B$ is a separable $C^*$-subalgebra. We inductively construct an increasing sequence of separable $C^*$-subalgebras $B_k$ ($k\in\mathbb N$) containing $B_0$ and countable dense subsets $\{x_{k,n}\}_n$ of $(B_k)_+$ such that for all $m,n\in\mathbb N$, there exists a $c\in B_{k+1}$ such that 
\begin{equation} \label{eq:sep_simplepurelyinfsepinh}
    c^*x_{k,n}c = x_{k,m}, \quad \|c\|^2 \leq \frac{\|x_{k,m}\|}{\|x_{k,n}\|} + 1.
\end{equation}
To this end, suppose $B_k$ has been obtained for some $k\geq 0$. Let $\{x_{k,n}\}_n$ be a countable dense subset of $(B_k)_+$. For all $m,n\in\mathbb N$ such that $x_{k,n}\neq 0 \neq x_{k,m}$, use the fact that $B$ is simple and purely infinite and \cite[Lemma 4.1.7]{Rordam_ClassificationofNuclearCalgebras} to find $c_{m,n}\in B$ satisfying \eqref{eq:sep_simplepurelyinfsepinh}. Let $C_k$ be the set consisting of all these $c_{m,n}$, and define $B_{k+1} := C^*(I_k\cup C_k)$. Then $B_{k+1}$ is a separable $C^*$-subalgebra of $B$ satisfying the required property.

Now let $\widetilde B := \overline{\bigcup_{k\in\mathbb N} B_k}$. Then $\widetilde B$ is a separable $C^*$-subalgebra of $B$, and we claim that it is simple and purely infinite. To see this, note that it suffices to show $y\precsim x$ for arbitrary non-zero $x,y\in \widetilde B_+$ by \cite[Lemma 2.4.ii]{KirchbergRordam_InfiniteNonSimpleCalgebrasAbsorbingTheCuntzAlgebrasOinfty}. Thus, we pick non-zero $x,y \in \widetilde B_+$ and $\varepsilon >0$, and take $0<\delta<\|x\|$ such that
\begin{equation}
    \left(\frac{\|y\|+\delta}{\|x\|-\delta} + 2\right)\delta < \varepsilon.
\end{equation}
Take $k,m,n\in\mathbb N$ such that $\|x - x_{k,n}\|, \|y - x_{k,m}\|< \delta$. Letting $c \in B_{k+1}$ be the element as in \eqref{eq:sep_simplepurelyinfsepinh} corresponding to these values of $k$, $m$ and $n$, we then compute
\begin{equation}
\begin{aligned}
\|c^*xc - y\|
    &\leq \|c\|^2\|x-x_{k,n}\| + \|c^* x_{k,n} c - y_{k,m} \| + \|y_{k,m}-y\|\\
    &< \left(\frac{\|x_{k,m}\|}{\|x_{k,n}\|} + 1\right)\delta + 0 + \delta\\
    &< \frac{(\|y\|+\delta)\delta}{\|x\|-\delta} + 2\delta < \varepsilon,
\end{aligned}
\end{equation}
showing that $y\precsim x$. This concludes the proof.
\end{proof}

The final technical ingredient we need is the following lemma. The idea is to mirror the previous result for pure largeness; the proof is essentially also a souped-up version of the proof above. Condition e) tells us that in addition, we can arrange both properties simultaneously.\footnote{In the follow-up paper to \cite{CarrionGabeSchafhauserTikuisisWhite_ClassifyingHomomorphismsIUnitalSimpleNuclearCalgebras}, the notion of separably inheritable properties of extensions will be introduced. In this language, the first four conditions of the lemma imply that pure largeness is separably inheritable, while condition e) implies that the ideal being simple and purely infinite is separably inheritable.}

\begin{lemma} \label{lem:sep_purelylargesepinh}
    Suppose
    \begin{equation}
    \begin{tikzcd}
        \mathfrak e: 0 \arrow[r] & I \arrow[r,"\iota"] & E \arrow[r,"\pi"] & D \arrow[r] & 0
    \end{tikzcd}
    \end{equation}
    is a purely large extension. Then for arbitrary separable $C^*$-subalgebras $I_0\subseteq I$, $E_0\subseteq E$ and $D_0\subseteq D$, there exists a separable subextension
    \begin{equation}
    \begin{tikzcd}
        \mathfrak e_s: 0 \arrow[r] & I_s \arrow[r,"\iota|_{I_s}"] & E_s \arrow[r,"\pi|_{E_s}"] & D_s \arrow[r] & 0
    \end{tikzcd}
    \end{equation}
    of $\mathfrak e$ such that
    \begin{enumerate}
        \item $E_0 \subseteq E_s$,
        \item $I_0 \subseteq I_s = E_s\cap I$,
        \item $D_0 \subseteq D_s = C^*(\pi(E_0)\cup D_0)$,
        \item $\mathfrak e_s$ is purely large,
        \item if additionally $I$ is simple and purely infinite, then we can arrange that the same holds for $I_s$.
    \end{enumerate}
    In particular, if $\pi(E_0) = D_0$, then $D_s = D_0$.
\end{lemma}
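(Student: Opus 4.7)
The strategy mirrors Proposition \ref{prop:sep_simplepurelyinfsepinh}: build $\mathfrak{e}_s$ as the inductive limit of separable subextensions, adjoining at each stage countably many witnesses for the Cuntz subequivalences required by pure largeness. The crucial feature that makes the limit argument close up is the quantitative norm control provided by Proposition \ref{prop:purelylargenormcontrol}; without it, the approximation errors would blow up.

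I would first normalise the initial data. Choose a countable set $L \subseteq E$ whose image under $\pi$ is dense in $D_0$ (possible by surjectivity of $\pi$), and replace $E_0$ by $\tilde E_0 := C^*(E_0 \cup I_0 \cup L)$. This is still separable, contains $E_0$ and $I_0$, and satisfies $\pi(\tilde E_0) = C^*(\pi(E_0) \cup D_0) =: D_s$.

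Next, I would inductively construct separable $C^*$-subalgebras $\tilde E_0 \subseteq E_1 \subseteq E_2 \subseteq \cdots$ of $E$, writing $I_k := E_k \cap I$ throughout. At stage $k$, pick countable dense subsets $\{x_{k,n}\}_n \subseteq (E_k)_+$ and $\{y_{k,m}\}_m \subseteq (I_k)_+$. For each triple $(n,m,j)$ with $j \in \mathbb N$ and $x_{k,n} \notin I$, apply Proposition \ref{prop:purelylargenormcontrol} to choose $c_{n,m,j} \in I$ with
\begin{equation}
\|c_{n,m,j}^* x_{k,n} c_{n,m,j} - y_{k,m}\| < 1/j, \qquad \|c_{n,m,j}\|^2 \leq \|y_{k,m}\|/\|\pi(x_{k,n})\|.
\end{equation}
If in addition $I$ is simple and purely infinite, also pick $c'_{n,m} \in I$ implementing $y_{k,m} \precsim y_{k,n}$ with $(c'_{n,m})^* y_{k,n} c'_{n,m} = y_{k,m}$ and norm bound $\|c'_{n,m}\|^2 \leq \|y_{k,m}\|/\|y_{k,n}\| + 1$, exactly as in Proposition \ref{prop:sep_simplepurelyinfsepinh}. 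Let $E_{k+1}$ be the $C^*$-subalgebra of $E$ generated by $E_k$ together with all these new elements. Since every new element lies in $I$, we have $\pi(E_{k+1}) = \pi(E_k) = D_s$ at every stage.

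Finally, set $E_s := \overline{\bigcup_k E_k}$ and $I_s := \overline{\bigcup_k I_k}$. Condition (a) is immediate, and (c) holds because $\pi(E_s) = \overline{\bigcup_k \pi(E_k)} = D_s$. For (b), taking the inductive limit of the short exact sequences $0 \to I_k \to E_k \to D_s \to 0$ yields $0 \to I_s \to E_s \to D_s \to 0$ in which the quotient is the restriction of $\pi$, so $I_s = \ker(\pi|_{E_s}) = E_s \cap I$. For (d), given $x \in (E_s)_+ \setminus I$ and $y \in (I_s)_+$, we have $\|\pi(x)\| > 0$, so for sufficiently large $k$ we can pick approximants $x_{k,n} \approx x$ and $y_{k,m} \approx y$ with $\|\pi(x_{k,n})\| \geq \|\pi(x)\|/2$; the norm bound $\|c_{n,m,j}\|^2 \leq 2\|y\|/\|\pi(x)\|$ then keeps the propagation of error linear in $\|x-x_{k,n}\|$ and $\|y-y_{k,m}\|$, so $c_{n,m,j}^* x c_{n,m,j}$ can be made as close to $y$ as desired with $c_{n,m,j} \in E_{k+1} \subseteq E_s$. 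For (e), the same scheme with the $c'_{n,m}$ shows $y \precsim x$ for all non-zero $x,y \in (I_s)_+$, equivalent to $I_s$ being simple and purely infinite by \cite[Lemma 2.4.ii]{KirchbergRordam_InfiniteNonSimpleCalgebrasAbsorbingTheCuntzAlgebrasOinfty}. The main obstacle throughout is precisely ensuring that pure largeness survives the passage to the inductive limit; the explicit norm bound in Proposition \ref{prop:purelylargenormcontrol} is what makes this transport possible, and everything else is a direct adaptation of the inductive scheme from Proposition \ref{prop:sep_simplepurelyinfsepinh}.
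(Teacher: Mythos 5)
Your proposal is correct and follows essentially the same route as the paper: an inductive construction of separable subextensions that adjoins, at each stage, countably many Cuntz-subequivalence witnesses with the norm control of Proposition \ref{prop:purelylargenormcontrol}, followed by the same triangle-inequality estimate to verify pure largeness at the inductive limit. The only immaterial difference is in part (e), where the paper interleaves enlargement steps invoking Proposition \ref{prop:sep_simplepurelyinfsepinh} as a black box, while you fold the witnesses for simplicity and pure infiniteness of the ideal directly into the single induction; both yield the same conclusion by the same limiting argument.
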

\begin{proof}
We first arrange conditions a)-d), by inductively constructing a sequence of separable subextensions
\begin{equation}
\begin{tikzcd}
    \mathfrak e_k: 0 \arrow[r] & I_k \arrow[r,"\iota|_{I_k}"] & E_k \arrow[r,"\pi|_{E_k}"] & D_k \arrow[r] & 0
\end{tikzcd}
\end{equation}
of $\mathfrak e$ and countable dense subsets $\{y_{k,m}\}_m$ of $(I_k)_+$ and $\{x_{k,n}\}_n$ of $(E_k)_+$ for $k\in\mathbb N$ such that for all $k\in \mathbb N$, we have $\mathfrak e_k \subseteq \mathfrak e_{k+1}$ and for each $m,n\in\mathbb N$ such that $x_{k,n}\notin I_k$, there exists a $c\in I_{k+1}$ such that
\begin{equation} \label{eq:sep_purelylargesepinh}
    \|c^*x_{k,n}c - y_{k,m}\| < \frac{1}{k+1}, \quad \|c\|^2 \leq \frac{\|y_{k,m}\|}{\|\pi(x_{k,n})\|}.
\end{equation}
Suppose $I_k$, $E_k$ and $D_k$ have been found for some $k\geq 0$. In the case $k=0$, note that we can always enlarge $E_0$ with $I_0$ and a countable set $S$ of elements in $E$ such that $\pi(S)$ is dense in $D_0$, and enlarge $I_0$ by $E_0\cap I$, in order to arrange $I_0 = E_0 \cap I$ and $D_0 \subseteq \pi(E_0)$. These are the only additional assumptions we will use to construct $\mathfrak e_{k+1}$.

Take countable dense subsets $\{y_{k,m}\}_m$ in $(I_k)_+$ and $\{x_{k,n}\}_n$ in $(E_k)_+$, and let $G\subseteq \mathbb N$ be the set of all $n\in \mathbb N$ such that $x_{k,n} \notin I_k$, or equivalently $x_n\notin I$ (as $I_k = E_k\cap I$). Then by Proposition \ref{prop:purelylargenormcontrol}, we know for each $m\in \mathbb N$ and $n\in G$ that there exists an element $c_{m,n}\in I$ such that
\begin{equation}
     \|c_{m,n}^*x_{k,n}c_{m,n} - y_{k,m}\| < \frac{1}{k+1}, \quad \|c_{m,n}\|^2 \leq \frac{\|y_{k,m}\|}{\|\pi(x_{k,n})\|}.
\end{equation}
Then define
\begin{itemize}
    \item $E_{k+1} := C^*\big( E_k \cup \{c_{m,n}\}_{m\in\mathbb N,n\in N}\big) \supseteq E_k$,
    \item $I_{k+1} := E_{k+1}\cap I = E_k \cap I \supseteq I_k$,
    \item $D_{k+1} := \pi(E_{k+1}) = \pi(E_k) \supseteq D_k$.
\end{itemize}
Note that $E_{k+1}$, $I_{k+1}$ and $D_{k+1}$ are also separable, and $\pi(E_{k+1}) = \pi(E_k)$ because we only enlarged $E_k$ by elements of $I$ to obtain $E_{k+1}$. As
\begin{equation}
    \ker \pi|_{E_{k+1}} = \ker \pi \cap E_{k+1} = I \cap E_{k+1} = I_{k+1},
\end{equation}
it follows immediately that we get a short exact sequence
\begin{equation}
    \begin{tikzcd}
    \mathfrak e_{k+1}: 0 \arrow[r] & I_{k+1} \arrow[r,"\iota|_{I_{k+1}}"] & E_k \arrow[r,"\pi|_{E_{k+1}}"] & D_{k+1} \arrow[r] & 0,
    \end{tikzcd}
\end{equation}
which satisfies \eqref{eq:sep_purelylargesepinh} by construction.

Given this sequence of separable subextensions, we now let
\begin{equation}
    \begin{tikzcd}
    \mathfrak e_s: 0 \arrow[r] & I_s \arrow[r,"\iota|_{I_s}"] & E_s \arrow[r,"\pi|_{E_s}"] & D_s \arrow[r] & 0
    \end{tikzcd}
\end{equation}
be the inductive limit $\varinjlim \mathfrak e_n$. This is again a separable subextension of $\mathfrak e$ such that (a), (b) and the inclusion in (c) are satisfied. To show that $D_s = C^*(\pi(E_0)\cup D_0)$, note that in each step of the inductive process aside from $k=0$, we only enlarge $E_k$ with elements of $I$, so $\pi(E_k) = \pi(E_1)$. In the step $k=0$, we also enlarged $E_0$ by a subset $S$ satisfying $\overline{\pi(S)}= D_0$, so indeed
\begin{equation}
    D_s = \overline{\bigcup_{k\in\mathbb N} \pi(E_k)} = \pi(E_1) = \pi\big(C^*(E_0\cup S)\big) = C^*(\pi(E_0)\cup D_0).
\end{equation}
To show that (d) holds, take $x\in (E_s)_+\setminus I_s$, $y\in (I_s)_+$ and $\varepsilon>0$. Note that $\pi(x)\neq 0$ because $x\notin I_s = I\cap E_s$, so there exists $0<\delta < \|\pi(x)\|$ such that
\begin{equation}
    \left(\frac{\|y\|+\delta}{\|\pi(x)\|-\delta} + 2\right)\delta < \varepsilon.
\end{equation}
As $x\notin I_s$ and $I_s$ is closed, there exists $k\in \mathbb N$, $\tilde x \in (E_k)_+\setminus I_s$ and $\tilde y\in I_k$ such that
\begin{equation}
    \frac{1}{k+1}<\delta,\quad \|\tilde x - x\| < \frac{\delta}{2}, \quad \|\tilde y - y\| < \frac{\delta}{2}.
\end{equation}
In particular, we have $\tilde x \notin I_k$, so similarly there exist $n\in N$ and $m\in \mathbb N$ such that
\begin{equation}
    \|\tilde x - x_{k,n}\| < \frac{\delta}{2}, \quad \|\tilde y - y_{k,m}\| < \frac{\delta}{2}.
\end{equation}
By the last property of the sequence of subextensions we constructed, it then follows that there exists a $c\in I_{k+1} \subseteq E_s$ such that
\begin{equation}
\begin{aligned}
\|c^*xc - y\|
    &\leq \|c\|^2\|x-x_{k,n}\| + \|c^* x_{k,n} c - y_{k,m} \| + \|y_{k,m}-y\|\\
    &< \frac{\|y_{k,m}\|}{\|\pi(x_{k,n})\|}\delta + \frac{1}{k+1} + \delta\\
    &< \frac{(\|y\|+\delta)\delta}{\|\pi(x)\|-\delta} + 2\delta < \varepsilon,
\end{aligned}
\end{equation}
as desired.

Now suppose additionally that $I$ is simple and purely infinite. Use the previous part and Proposition \ref{eq:sep_simplepurelyinfsepinh} to inductively construct an increasing sequence of separable subextensions
\begin{equation}
\begin{tikzcd}
    \widetilde{\mathfrak e}_k: 0 \arrow[r] & \widetilde I_k \arrow[r,"\iota|_{\widetilde I_k}"] & \widetilde E_k \arrow[r,"\pi|_{\widetilde E_k}"] & \widetilde D_k \arrow[r] & 0
\end{tikzcd}
\end{equation}
of $\mathfrak e$ such that $\widetilde{\mathfrak e}_k$ is purely large for all odd $k$ and $\widetilde I_k$ is simple and purely infinite for all even $k$. The latter we can arrange as follows. Given $\widetilde{\mathfrak e}_k$, use Proposition \ref{eq:sep_simplepurelyinfsepinh} to find a separable, simple and purely $C^*$-subalgebra $\widetilde I_{k+1}$ of $I$ containing $\widetilde I_k$, and define $\widetilde E_{k+1} := C^*(\widetilde E_k \cup \widetilde I_{k+1})$ and $\widetilde D_{k+1} := \pi(\widetilde E_{k+1}) = \pi(\widetilde E_k)$. We then claim that the resulting inductive limit extension
\begin{equation}
\begin{tikzcd}
    \widetilde{\mathfrak e}_s: 0 \arrow[r] & \widetilde I_s \arrow[r] & \widetilde E_s \arrow[r] & \widetilde D_s \arrow[r] & 0
\end{tikzcd}
\end{equation}
is purely large, and $\widetilde I_s$ is simple and purely infinite. The former follows by applying the same argument from the previous part of the proof to show that $\varinjlim \mathfrak e_n$ is purely large, as all odd-numbered $\widetilde{\mathfrak e}_k$ in particular satisfy the same conditions the $\mathfrak e_k$ do. The latter follows from the fact that
\begin{equation}
    \widetilde I_s = \overline{\bigcup_{k\in\mathbb N} \widetilde I_k} = \overline{\bigcup_{k \text{ even}} \widetilde I_k}
\end{equation}
is purely infinite by Proposition \ref{eq:sep_simplepurelyinfsepinh}. This concludes the proof.
\end{proof}

\subsection{Separabilization of the reduced state-kernel extension}
We are now ready to separabilize the reduced state-kernel extension. The proof follows the same strategy as \cite[Lemma 7.8]{CarrionGabeSchafhauserTikuisisWhite_ClassifyingHomomorphismsIUnitalSimpleNuclearCalgebras}; the main difference lies in how absorption of $\theta$ is obtained. In the reference, one appeals to the so-called corona factorization property to obtain equivalence between absorption and being unitizably full. In this paper, we instead go through the notion of pure largeness, directly making use of the Elliott-Kucerovsky theorem. A major upside of this approach is the fact that we do not need to arrange any $\Z$-stability of the extension algebra, which is required to obtain the corona factorization property. We also obtain stability of the ideal for free from Proposition \ref{prop:elku_purelylargeequiv}.

\begin{lemma}
    Let $A$ be a separable \ca,
    \begin{equation}
    \begin{tikzcd}
        \mathfrak e_S: 0 \arrow[r] & J \arrow[r,"j"] & S \arrow[r,"q"] & Q \arrow[r] & 0
    \end{tikzcd}
    \end{equation}
    a unital extension of \ca s and $\theta: A\to Q$ a nuclear, unitizably full map. Then any unital separable subextension $\mathfrak e_0$ of $\mathfrak e_S$ can be enlarged to a separable unital subextension
    \begin{equation}
    \begin{tikzcd}
        \mathfrak e: 0 \arrow[r] & I \arrow[r,"j_e"] & E \arrow[r,"q_e"] & D \arrow[r] & 0
    \end{tikzcd}
    \end{equation}
    of $\mathfrak e_S$ satisfying all of the following properties.
    \begin{enumerate}
        \item $\theta$ corestricts to a nuclear, unitizably full map $\theta|^D\colon A \to D$. \label{lem:sep_separabilize.A}
        \item Given $\kappa \in KK(A,S)$ such that $KK(A,q)(\kappa) = [\theta]_{KK(A,Q)}$, $\mathfrak{e}$ can be chosen so that $\kappa$ factorizes as $KK(A, \iota_{E\subseteq S})(\kappa')$ with $\kappa' \in KK(A,E)$ and $KK(A, q)(\kappa')=[\theta|^D]_{KK(A, D)}$. \label{lem:sep_separabilize.D}
        \item Given $\psi: A\to S$ and $\kappa \in KK(A,J)$, $\mathfrak{e}$ can be chosen so that $\psi(A)\subseteq E$ and $\kappa$ factorizes as $KK(A, \iota_{I \subseteq J})(\kappa')$ for some $\kappa' \in KK(A,I)$. \label{lem:sep_separabilize.E}
        \item Given two maps $\varphi_1,\varphi_2: A \to S$, $\mathfrak{e}$ can be chosen so that $\varphi_1(A)\cup\varphi_2(A)\subseteq E$. Moreover, if $\varphi_1,\varphi_2: A\rightrightarrows S\rhd J$ is a Cuntz pair such that $[\varphi_1,\varphi_2]_{KL(A,J)}=0$, then $\mathfrak{e}$ can be chosen so that $\big[\varphi_1|^E,\varphi_2|^E\big]_{KL(A, I)} = 0$. \label{lem:sep_separabilize.F}
  \end{enumerate}
  Moreover, any larger subextension of $\mathfrak e$ also satisfies these properties.
\end{lemma}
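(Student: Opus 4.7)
The strategy is a standard iterative enlargement: build an increasing sequence of unital separable subextensions $\mathfrak{e}_0 \subseteq \mathfrak{e}_1 \subseteq \cdots \subseteq \mathfrak{e}_S$ and take the inductive limit $\mathfrak{e} := \varinjlim \mathfrak{e}_k$. A diagonal enumeration of the countably many enlargement tasks arising from (a)--(d) ensures every requirement is eventually addressed, and the limit extension inherits each property.

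For (a), first add a countable generating subset of $\theta(A)$ to $D$ so that $\theta$ corestricts. Nuclearity of $\theta|^D$ is arranged step by step: for each $a$ in a countable dense subset of $A$, pick CP factorizations $A \to M_{k_n} \to Q$ converging pointwise to $\theta(a)$ and absorb the (finite-dimensional) images into $D$. Unitizable fullness of $\theta|^D$ is arranged by, for each $a$ in a countable dense subset of $A^\dagger$, adding to $D$ finitely many elements witnessing that $1_Q$ lies in the closed ideal generated by $\theta^\dagger(a)$ in $Q$. Properties (b)--(d) exploit the inductive limit definition $KK(A,I) = \varinjlim_{I_0 \text{ sep.}} KK(A, I_0)$. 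For (b), $\kappa$ has a representative $\kappa' \in KK(A, E_0)$ for some separable $E_0 \subseteq S$, and the identity $KK(A,q)(\kappa) = [\theta]_{KK(A,Q)}$ descends to $KK(A, D_0)$ for a sufficiently large separable $D_0 \supseteq q(E_0) \cup \theta(A)$; absorb both $E_0$ and $D_0$. For (c), add $\psi(A)$ to $E$ and a separable $KK$-witness for $\kappa$ to $I$. For (d), unfold $[\varphi_1,\varphi_2]_{KL(A,J)}=0$ as $[\varphi_1,\varphi_2]_{KK(A,J)} \in Z_{KK(A,J)}$, i.e.\ as the image under $\ev_\infty$ of some $\tilde\kappa \in KK(A, \mathcal{C}(\overline{\mathbb N}, J))$ that is trivial at every finite evaluation; pass $\tilde\kappa$ to a representative in $KK(A, \mathcal{C}(\overline{\mathbb N}, I_0))$ for a separable $I_0 \subseteq J$, and absorb $I_0$.

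The main subtlety will be the bookkeeping: organizing these countably many enlargements so the limit simultaneously satisfies all four requirements, and verifying upward stability for the ``moreover'' clause. Nuclearity and unitizable fullness of the corestriction persist in any larger unital subextension $\mathfrak{e}' \supseteq \mathfrak{e}$ because CP factorizations into $D$ compose with the inclusion $D \hookrightarrow D'$, and the unit of $Q$ lies in $D \subseteq D'$; the $KK$ and $KL$ identities transport forward by functoriality (Proposition~\ref{prop:KKprops}(c)) and naturality of the inductive-limit structure of $KK$ in its second variable. Preservation under the inductive limit itself is handled similarly, using continuity of $KK$ along separable inductive limits with injective connecting maps as developed in \cite[Appendix~B]{CarrionGabeSchafhauserTikuisisWhite_ClassifyingHomomorphismsIUnitalSimpleNuclearCalgebras}.
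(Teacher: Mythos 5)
Your overall strategy is the same as the paper's: use the inductive-limit description of $KK(A,\cdot)$ and $KL(A,\cdot)$ over separable subalgebras to realize $\kappa$ (resp.\ the vanishing of a $KL$-class) at a separable stage, enlarge the subextension accordingly, and observe that each of the four properties persists in any larger unital subextension. That last observation is worth exploiting more than you do: since there are only finitely many conditions and each survives further enlargement, the paper simply arranges them one after another; your diagonal enumeration of countably many tasks is harmless but unnecessary bookkeeping. For condition (a) the paper does not reprove separabilization of nuclearity and fullness by hand; it cites \cite[Proposition 1.9]{Schafhauser_SubalgebrasOfSimpleAFAlgebras} applied to the unitization $\theta^\dagger$, which produces in one step a separable unital $D_\theta\subseteq Q$ with $\theta^\dagger|^{D_\theta}$ full and nuclear.

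The one place where your sketch, as written, has a genuine gap is the fullness part of (a). Adding, for each $a$ in a countable dense subset of $A^\dagger$, finitely many elements witnessing $1_Q\in\overline{Q\,\theta^\dagger(a)\,Q}$ does not by itself give fullness of $\theta^\dagger|^{D}$ at every non-zero element: for an arbitrary non-zero $x\in A^\dagger$ you must transfer the witnesses from a nearby dense-set element, and this requires quantitative control (the error is of size $\|x-a\|\sum_i\|x_i\|\|y_i\|$, and the witness norms depend on $a$). The standard repair is to work with positive norm-one elements and the functional-calculus/Cuntz-subequivalence trick: if $\|x-a\|<\delta/2$ then $(a-\delta)_+\precsim x$ with a witness inside $A^\dagger$ itself, so it suffices to put into $D$ witnesses for $1\in\overline{D\,\theta^\dagger((a-\delta)_+)\,D}$ for all $a$ in the dense set and all rational $\delta$ (this is the same kind of norm-controlled perturbation argument the paper runs for pure largeness in Proposition \ref{prop:purelylargesepinh}, and is exactly what the citation to Schafhauser packages away). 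Your treatment of (b)--(d) is correct, including the point that identities of $KK$- or $KL$-classes holding in the limit must be pushed down to a sufficiently large separable stage before absorbing it; just make sure in (d) that, after absorbing a separable $I_0$ carrying the class from $\mathcal C(\overline{\mathbb N},J)$, you enlarge once more so that the equation identifying $[\varphi_1|^E,\varphi_2|^E]_{KK(A,I)}$ with the evaluation at $\infty$ already holds at that separable stage, exactly as you do with $D_2$ in (b).
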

\begin{proof}
Fix a unital separable subextension
\begin{equation}
\begin{tikzcd}
    \mathfrak e_0: 0 \arrow[r] & I_0 \arrow[r,"j_0"] & E_0 \arrow[r,"q_0"] & D_0 \arrow[r] & 0
\end{tikzcd}
\end{equation}
of $\mathfrak e_S$. Remark that if a separable unital subextension satisfies the above properties, then indeed the same holds for any larger subextension of $\mathfrak e$. Specifically for \ref{lem:sep_separabilize.A}, this follows from the fact that the composition of a full and nuclear \sh\ with a unital \sh\ is again full and nuclear. Hence, it suffices to arrange each of these properties separately\footnote{For \ref{lem:sep_separabilize.D}, we do need the additional property that $\theta$ maps into $D_0$, but \ref{lem:sep_separabilize.A} shows in particular that we can always enlarge $\mathfrak e_0$ to arrange this.}.

\ref{lem:sep_separabilize.A} As $A^\dagger$ is separable and the unitization $\theta^\dagger$ is nuclear by \cite[Proposition 2.2.4]{BrownOzawa_CalgebrasandFiniteDimensionalApproximations}, it follows from \cite[Proposition 1.9]{Schafhauser_SubalgebrasOfSimpleAFAlgebras} that there exists a separable unital $C^*$-subalgebra $D_\theta$ of $Q$ such that $\theta^\dagger(A^\dagger)\subseteq D_\theta$ and $\theta^\dagger|^{D_\theta}$ is full and nuclear. Enlarging $E_0$ by a countable set $N\subseteq E$ such that the resulting \ca\ $\widetilde E := C^*(E_0\cup N)$ satisfies $D_0 \cup D_\theta \subseteq \widetilde D := q(\widetilde E)$, we get a unital separable subextension
\begin{equation}
\begin{tikzcd}
    \widetilde{\mathfrak e}: 0 \arrow[r] & \widetilde I \arrow[r,"\tilde \jmath"] & \widetilde E \arrow[r,"\tilde q"] & \widetilde D \arrow[r] & 0,
\end{tikzcd}
\end{equation}
where $\widetilde I := J\cap \widetilde E$ and $\tilde \jmath$ and $\tilde q$ are the appropriate restrictions of $j$ and $q$. This extension then satisfies condition \ref{lem:sep_separabilize.A}.

For conditions \ref{lem:sep_separabilize.D}, \ref{lem:sep_separabilize.E}, and \ref{lem:sep_separabilize.F}, we use the characterizations of $KK(A, C)$ and $KL(A,C)$ as inductive limits of $KK(A, C_0)$ and $KL(A,C_0)$ over separable subalgebras $C_0$ of a \ca\ $C$. The approach for each of these conditions is similar; we only describe it in detail for the first.

\ref{lem:sep_separabilize.D} Let $\kappa$ be as given. By \ref{lem:sep_separabilize.A}, we can assume without loss of generality that $\theta(A) \subseteq D_0$. By the characterization of $KK(A,S)$ as an inductive limit, we have $\kappa = KK(A, \iota_{C\subseteq S})(\kappa_C)$ for some separable $C^*$-subalgebra $C$ of $S$ and $\kappa_C \in KK(A,C)$. Defining the $C^*$-subalgebras $E_1 := C^*(E_0\cup C) \subseteq S$ and $D_1 := q(E_1)\subseteq Q$, we now have constructed the first three rows in the commutative diagram (dashed arrows commute on the level of $KK$)
\begin{equation}
\begin{tikzcd}
    &&& A \ar[d,"\theta|^{D_0}"] \ar[dl,dashed,"\kappa_C"] \ar[ddddl,dashed,bend right = 70,"\kappa"'] & \\
    && C \ar[d,hook] & D_0 \ar[d,hook] & \\
    && E_1 \ar[d,hook] & D_1 \ar[d,hook] & \\
    && \widetilde E \ar[r,"\tilde q"] \ar[d,hook] & \widetilde D \ar[d,hook] & \\
    0 \arrow[r] & J \arrow[r,"j"] & S \arrow[r,"q"] & Q \arrow[r] & 0
\end{tikzcd}
\end{equation}
where the hooked arrows are the inclusion maps. To complete the diagram, note that
\begin{equation}
\begin{aligned}
    KK(A,\iota_{D_1 \subseteq Q})\circ KK(A,q_1\circ \iota_{C\subseteq E_1})(\kappa_C)
        &= KK(A,q)(\kappa) \\
        &= KK(A,\iota_{D_1\subseteq Q})([\theta|^{D_1}]_{KK(A,D_1)})
\end{aligned}
\end{equation}
in $KK(A,Q)$, where $q_1 := q|_{E_1}$. Using the inductive limit picture of $KK(A,Q)$ (and basic properties of inductive limits), this implies that there exists some separable $C^*$-subalgebra $D_2$ of $Q$ containing $D_1$ such that
\begin{equation}
    KK(A,\iota_{D_1\subseteq D_2})\circ KK(A,q_1\circ \iota_{C\subseteq E_1})(\kappa_C) = KK(A,\iota_{D_1\subseteq D_2})([\theta|^{D_1}]_{KK(A,D_1)}).
\end{equation}
Enlarge $E_1$ by a countable set such that its image under $q$ contains $D_2$ to obtain a separable unital \ca\ $\widetilde E \subseteq S$. Defining $\widetilde D := q(E_2)$ and $\widetilde I := J \cap \widetilde E$, we get a separable unital subextension
\begin{equation}
\begin{tikzcd}
    \widetilde{\mathfrak e}: 0 \arrow[r] & \widetilde I \arrow[r,"\tilde \jmath"] & \widetilde E \arrow[r,"\tilde q"] & \widetilde D \arrow[r] & 0,
\end{tikzcd}
\end{equation}
with $\tilde \jmath$ and $\tilde q$ the appropriate restrictions of $j$ and $q$. Moreover, the element $\tilde \kappa := KK(A,\iota_{C\subseteq \widetilde E})(\kappa_C) \in KK(A,\widetilde E)$ satisfies
\begin{equation}
    KK(A,\iota_{\widetilde E \subseteq S})(\tilde \kappa) = \kappa
\end{equation}
and
\begin{equation}
\begin{aligned}
    KK(A,\tilde q)(\tilde \kappa)
        &= KK(A,\iota_{D_1\subseteq \widetilde D}\circ q_1\circ \iota_{C\subseteq E_1})(\kappa_C) \\
        &= KK(A,\iota_{D_1\subseteq \widetilde D})([\theta|^{D_1}]_{KK(A,D_1)}) \\
        &= [\theta|^{\widetilde D}]_{KK(A,\widetilde D)},
\end{aligned}
\end{equation}
as $\tilde q \circ \iota_{E_1\subseteq\widetilde E} = \iota_{D_1\subseteq \widetilde D} \circ q_1$. Thus, $\widetilde{\mathfrak e}$ and $\tilde \kappa$ satisfy the desired condition.

\ref{lem:sep_separabilize.E} Given $\psi$ and $\kappa$ as in this condition, there exist a separable $C^*$-subalgebra $C\subseteq J$ and $\kappa_C\in KK(A, C)$ such that $\kappa = KK(A, \iota_{C,J})(\kappa_C)$. Hence, we can enlarge $I_0$ by $C$ and $E_0$ by $C\cup\psi(A)$ to obtain a separable unital extension $\widetilde{\mathfrak e}$ satisfying the desired condition, after defining $\tilde \kappa := KK(A, \iota_{C\subseteq \widetilde E})(\kappa_C)$.

\ref{lem:sep_separabilize.F} Given $\varphi_1$ and $\varphi_2$ as in this condition, we can enlarge $E_0$ by $\varphi_1(A)\cup\varphi_2(A)$ (and hence also $I_0$ and $D_0$) to obtain a separable unital extension $\widetilde{\mathfrak e}$. If additionally $\varphi_1$ and $\varphi_2$ lift $\theta$ and
\begin{equation}
    0 = [\varphi_1,\varphi_2]_{KL(A,J)} = KL(A,\iota_{\widetilde E\subseteq J})[\varphi_1|^{\widetilde E},\varphi_2|^{\widetilde E}]_{KL(A,\widetilde E)}.
\end{equation}
From the inductive limit picture of $KK(A,Q)$ (and basic properties of inductive limits), it follows that we can enlarge $\widetilde E$ separably such that $[\varphi_1|^{\widetilde E},\varphi_2|^{\widetilde E}]_{KL(A,\widetilde E)}=0$. The resulting extension $\widetilde{\mathfrak e}$ then satisfies the desired condition.
\end{proof}

\begin{prop}[Separabilization of the reduced state-kernel extension] \label{prop:sepstatekernel}
    Let $A$ be a separable \ca, $B$ a unital \ca\ with a strongly faithful state $\rho$ and $\theta: A\to \pi_\rho(B)''$ a nuclear, unitizably full map.
    \begin{enumerate}
        \item There exists a separable unital subextension
        \begin{equation}
        \begin{tikzcd}
            \mathfrak e: 0 \arrow[r] & I \arrow[r,"j_e"] & E \arrow[r,"q_e"] & D \arrow[r] & 0
        \end{tikzcd}
        \end{equation}
        of the reduced state-kernel extension $\mathfrak e_B$ satisfying properties \ref{lem:sep_separabilize.A} - \ref{lem:sep_separabilize.F} from the previous lemma, such that $I$ is stable and $\tau_e\circ \theta|^D$ is absorbing, with $\tau_e$ the Busby map associated to $\mathfrak e$. \label{prop:sepstatekernel.infty}
        \item The same holds true for the reduced state-kernel extension $\mathfrak e_B^{(\omega)}$ in the ultrapower, and we can additionally arrange that $I$ is simple and purely infinite. \label{prop:sepstatekernel.omega}
    \end{enumerate}
\end{prop}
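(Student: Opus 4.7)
The plan is to combine the preceding separabilization lemma (which handles conditions \ref{lem:sep_separabilize.A}--\ref{lem:sep_separabilize.F}) with the pure largeness separabilization Lemma \ref{lem:sep_purelylargesepinh} and the Elliott--Kucerovsky theorem. The role of pure largeness is two-fold: on the one hand, by Proposition \ref{prop:elku_purelylargeequiv} it forces separable stability of the (necessarily $\sigma$-unital) ideal $I$, giving stability on the nose; on the other hand, combined with Proposition \ref{prop:sep_pullbackpurelyarge} it yields pure largeness of the pullback extension $\mathfrak{e}\circ \theta^\dagger|^D$, whose Busby map is exactly the unitization $(\tau_e\circ \theta|^D)^\dagger$, to which we may apply Elliott--Kucerovsky.

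Concretely, I would start with an arbitrary unital separable subextension $\mathfrak{e}_0$ of $\mathfrak{e}_B$ and apply the preceding lemma to obtain a larger unital separable subextension $\mathfrak{e}_1$ satisfying \ref{lem:sep_separabilize.A}--\ref{lem:sep_separabilize.F}. Since $\mathfrak{e}_B$ is purely large by Proposition \ref{prop:sep_skepurelylarge}, I would then invoke Lemma \ref{lem:sep_purelylargesepinh} to enlarge $\mathfrak{e}_1$ to a separable purely large subextension $\mathfrak{e}$; the ``moreover'' clause of the preceding lemma ensures that \ref{lem:sep_separabilize.A}--\ref{lem:sep_separabilize.F} are retained. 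Separability of $I$ plus pure largeness then gives stability of $I$ via Proposition \ref{prop:elku_purelylargeequiv} and Hjelmborg--Rørdam.

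Now for absorption: by \ref{lem:sep_separabilize.A}, $\theta|^D$ is unitizably full, so $\theta^\dagger|^D\colon A^\dagger \to D$ is full and in particular injective. By Proposition \ref{prop:sep_pullbackpurelyarge}, the unital extension $\mathfrak{e}\circ \theta^\dagger|^D$ is purely large, and its $\sigma$-unital stable ideal is still $I$. Applying Theorem \ref{thm:elliottkucerovsky} to this pullback yields unital (nuclear) absorption of its Busby map $\tau_e\circ \theta^\dagger|^D = (\tau_e\circ \theta|^D)^\dagger$, where the last equality uses unitality of $\tau_e$. Finally, Proposition \ref{prop:kkeu_absorbingvsunitization} transfers this to absorption of $\tau_e\circ \theta|^D$, proving (a). For (b), the identical argument applies to $\mathfrak{e}_B^{(\omega)}$, which is also purely large by Proposition \ref{prop:sep_skepurelylarge}. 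The additional claim that $I$ can be arranged simple and purely infinite follows from condition (e) of Lemma \ref{lem:sep_purelylargesepinh}, which is available because $J_{B,\rho}^{(\omega)}$, being a hereditary $C^*$-subalgebra of the simple purely infinite $B_\omega$ (using \cite[Proposition 6.2.6]{Rordam_ClassificationofNuclearCalgebras}), is itself simple and purely infinite.

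The main difficulty is bookkeeping: one must ensure that the two rounds of enlargement (the one securing \ref{lem:sep_separabilize.A}--\ref{lem:sep_separabilize.F} and the one securing pure largeness, plus in part (b) simple pure infiniteness of the ideal) are compatible. The ``moreover'' clauses are precisely tailored to make this work, so the only genuinely conceptual step is the pivot from pure largeness of $\mathfrak{e}$ to absorption of a map into the corona, which is exactly the content of the Elliott--Kucerovsky theorem once the pullback is formed.
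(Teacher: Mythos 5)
Your proposal is correct and follows essentially the same route as the paper: first apply the preceding lemma, then enlarge to a purely large (and in part (b), simple purely infinite ideal) subextension via Lemma \ref{lem:sep_purelylargesepinh}, deduce stability of $I$ from Proposition \ref{prop:elku_purelylargeequiv} with Hjelmborg--R{\o}rdam, and obtain absorption by applying Elliott--Kucerovsky to the purely large pullback $\mathfrak e\circ\theta^\dagger|^D$ followed by Proposition \ref{prop:kkeu_absorbingvsunitization}. The bookkeeping via the ``moreover'' clause and the identification of the Busby map of the pullback with $(\tau_e\circ\theta|^D)^\dagger$ match the paper's argument.
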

\begin{proof}
\ref{prop:sepstatekernel.infty} Apply the previous lemma to obtain a separable unital subextension $\mathfrak e_0$
of $\mathfrak e_B$ satisfying the properties in this lemma. Note that $\mathfrak e_B$ is purely large by Proposition \ref{prop:sep_skepurelylarge}, so we can use Lemma \ref{lem:sep_purelylargesepinh} to enlarge $\mathfrak e_0$ to a separable, unital and purely large subextension
\begin{equation}
\begin{tikzcd}
    \mathfrak e: 0 \arrow[r] & I \arrow[r,"j_e"] & E \arrow[r,"q_e"] & D \arrow[r] & 0
\end{tikzcd}
\end{equation}
of $\mathfrak e_B$, which also satisfies conditions \ref{lem:sep_separabilize.A} - \ref{lem:sep_separabilize.F} from the previous lemma. Proposition \ref{prop:sep_pullbackpurelyarge} implies that the pull-back $\mathfrak e \circ \theta^\dagger|^D$ of $\mathfrak e$ by the injective \sh\ $\theta^\dagger$ is purely large. By Proposition \ref{prop:elku_purelylargeequiv}, it follows that the separable \ca\ $I$ is separably stable, so it is in fact stable by \cite[Theorem 2.1]{HjelmborgRordam_OnStabilityofCalgebras}. Now, $A^\dagger$ is also separable, so the Elliott-Kucerovsky theorem (Theorem \ref{thm:elliottkucerovsky}) implies that $\tau_e\circ \theta^\dagger|^D = (\tau_e\circ\theta|^D)^\dagger$ is unitally absorbing. By Proposition \ref{prop:kkeu_absorbingvsunitization}, we can then conclude that $\tau_e\circ\theta|^D$ is absorbing, so $\mathfrak e$ satisfies all the required conditions.

\ref{prop:sepstatekernel.omega} We aim to apply the exact same argument as above, but additionally make use of property e) from Lemma \ref{lem:sep_purelylargesepinh}. To this end, we need to show that $J_{B,\rho}^{(\omega)}$ is simple and purely infinite. This follows from the fact that it is a a hereditary $C^*$-subalgebra of $B_\omega$ by Proposition \ref{prop:ske_statekernelhereditary}, which itself is simple and purely infinite by \cite[Proposition 6.2.6]{Rordam_ClassificationofNuclearCalgebras}. This finishes the proof.
\end{proof}

\section{Classification}
At this point, we have all the main technical ingredients to make the classification machinery work. The classification consists of three major steps. First, we classify lifts of unitizably full maps into the quotient $\pi_\rho(B)''$. The proof is very similar to its stably finite counterpart \cite[Theorem 7.1]{CarrionGabeSchafhauserTikuisisWhite_ClassifyingHomomorphismsIUnitalSimpleNuclearCalgebras}; the main difference lies in the fact that our $KK$-uniqueness result is on the level of the ultrapower, so we will only obtain uniqueness on that level. The second step consists of using the first to obtain a classification result for unital and full maps into $S_{B,\rho}$. Unlike in the tracial setting, obtaining a classification of maps into the middle algebra from the classification of lifts is basically immediate, as the quotient is trivial in $KK$ and does not carry any important (tracial) data that needs to be kept track of. From this viewpoint, the classification of lifts is in fact a stronger result. The main content of the proof is to turn a result in terms of unitizably full maps into a result in terms of unital and full maps (which is done separately in \cite{CarrionGabeSchafhauserTikuisisWhite_ClassifyingHomomorphismsIUnitalSimpleNuclearCalgebras}). This is achieved with the same ``de-unitization'' trick (\cite[Proposition 7.11]{CarrionGabeSchafhauserTikuisisWhite_ClassifyingHomomorphismsIUnitalSimpleNuclearCalgebras}); only the details of the proof are different. In the final step, we prove the desired classification of embeddings $A\hookrightarrow B$ by $\widetilde{KL}(A,B)$, a quotient of $KL$. It is straightforward to see that $\widetilde{KL}$ is equal to $KL$ when $A$ satisfies the UCT, as will be shown in the proof of Theorem \ref{thm:ClassifyingEmbeddingsbyK}. Without this assumption, one can still compare our result with the original Kirchberg-Phillips theorems, and observe that $KL$ and $\widetilde{KL}$ classify the same class of \sh s up to the same notion of equivalence. Hence, they will coincide a posteriori.

\begin{rem}
    One of the core differences between the trace-kernel extension and the reduced state-kernel extension lies in the fact that $S_{B,\rho}$ and $\pi_\rho(B)''$ do not have the sequence algebra properties $B_\infty$ and $B^\infty$ have, in the sense that one cannot perform reindexing arguments in them like \cite[Lemma 3.15, Proposition 9.2]{CarrionGabeSchafhauserTikuisisWhite_ClassifyingHomomorphismsIUnitalSimpleNuclearCalgebras}. In particular, approximate unitary equivalences cannot be reindexed into unitary equivalences, as exemplified by the uniqueness result in Theorem \ref{thm:ClassifyingLifts}. It also contributes to the issue brought up in Remark \ref{rem:whyskipclassifyingunitallifts}. 
\end{rem}

\subsection{Classification of lifts}
The proofs in this section follow the same strategy as those of \cite[Proposition 7.9 \& Theorem 7.1]{CarrionGabeSchafhauserTikuisisWhite_ClassifyingHomomorphismsIUnitalSimpleNuclearCalgebras}. We do not need the analogue of \cite[Theorem 7.1.ii]{CarrionGabeSchafhauserTikuisisWhite_ClassifyingHomomorphismsIUnitalSimpleNuclearCalgebras}, as it is only used to fix the $\Kalg$-class of the \sh\ being constructed in the proof of existence in \cite[Theorem 9.1]{CarrionGabeSchafhauserTikuisisWhite_ClassifyingHomomorphismsIUnitalSimpleNuclearCalgebras}. The only reason we also prove Proposition \ref{prop:KKexistence_forske} is because it is used in the proof of existence in Theorem \ref{thm:ClassifyingLifts}.

\begin{prop}\label{prop:KKexistence_forske}
    Let $A$ be a separable, nuclear \ca, $B$ a unital, simple and purely infinite \ca\ and $\rho$ a strongly faithful state on $B$. Suppose $\theta: A \to \pi_\rho(B)''$ is a unitizably full \sh. If $\psi: A \to S_{B,\rho}$ is a lift of $\theta$ and $\kappa\in KK(A,J_{B,\rho})$, then there exists a \sh\ $\varphi: A \to S_{B,\rho}$ that also lifts $\theta$ and such that $[\varphi,\psi]_{KK(A, J_{B,\rho})}=\kappa$.
\end{prop}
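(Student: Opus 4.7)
The strategy is to separabilize the problem, apply the $KK$-existence theorem inside a suitable separable subextension, and then transport the resulting $\ast$-homomorphism back to $S_{B,\rho}$.

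First, apply Proposition~\ref{prop:sepstatekernel}\ref{prop:sepstatekernel.infty} to produce a separable unital purely large subextension
\begin{equation*}
    \mathfrak{e}: 0 \to I \to E \to D \to 0
\end{equation*}
of $\mathfrak{e}_B$ with $I$ stable, $\psi(A) \subseteq E$, and such that $\theta$ corestricts to a unitizably full nuclear map $\theta|^D: A \to D$ whose composition with the Busby map of $\mathfrak{e}$, namely $\tau_e \circ \theta|^D: A \to \Q I$, is absorbing. Built into this, via condition~\ref{lem:sep_separabilize.E} of the lemma preceding Proposition~\ref{prop:sepstatekernel}, we may additionally arrange that $\kappa = KK(A, \iota_{I \subseteq J_{B,\rho}})(\kappa')$ for some $\kappa' \in KK(A, I)$.

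The key observation is that, with $\sigma_E: E \to \Mult I$ denoting the canonical $\ast$-homomorphism, $\tilde\psi := \sigma_E \circ \psi|^E: A \to \Mult I$ satisfies $\pi_I \circ \tilde\psi = \tau_e \circ \theta|^D$ and is therefore absorbing by definition. Apply Theorem~\ref{thm:KK-Existence}\ref{thm:KK-Existence.C1} to $\tilde\psi$ and $\kappa'$ to produce an absorbing $\ast$-homomorphism $\tilde\varphi: A \to \Mult I$ such that $(\tilde\varphi, \tilde\psi)$ is an $(A, I)$-Cuntz pair with $[\tilde\varphi, \tilde\psi]_{KK(A, I)} = \kappa'$.

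Next, pure largeness of $\mathfrak{e}$ forces $I$ to be essential in $E$ by Proposition~\ref{prop:sep_purelyinfimpliespurelylarge}, so $\sigma_E$ is injective with image $\pi_I^{-1}(\tau_e(D))$. The Cuntz pair condition yields $\pi_I \circ \tilde\varphi = \pi_I \circ \tilde\psi = \tau_e \circ \theta|^D$, whose image lies in $\tau_e(D)$, so $\tilde\varphi$ takes values in $\sigma_E(E)$ and corestricts to a $\ast$-homomorphism $\varphi_0: A \to E$. Since $\sigma_E$ is the identity on $I$, the Cuntz pair condition for $(\tilde\varphi, \tilde\psi)$ descends to one for $(\varphi_0, \psi|^E)$ in $(A, I)$; in particular $\varphi_0$ lifts $\theta|^D$. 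Composing with the inclusion $E \subseteq S_{B,\rho}$ yields a $\ast$-homomorphism $\varphi: A \to S_{B,\rho}$ lifting $\theta$, and functoriality (Proposition~\ref{prop:KKprops}\ref{prop:KKprops.functoriality}) applied to $\sigma_E$ and $\iota_{I \subseteq J_{B,\rho}}$ gives
\begin{equation*}
    [\varphi, \psi]_{KK(A, J_{B,\rho})} = KK(A, \iota_{I \subseteq J_{B,\rho}})[\varphi_0, \psi|^E]_{KK(A, I)} = KK(A, \iota_{I \subseteq J_{B,\rho}})[\tilde\varphi, \tilde\psi]_{KK(A, I)} = \kappa.
\end{equation*}

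The only real subtlety is the separabilization step, which must simultaneously ensure that the Busby map $\tau_e \circ \theta|^D$ is absorbing (which, via the Elliott--Kucerovsky theorem, is arranged by passing to a purely large subextension with stable ideal) and that $\kappa$ is carried by a separable stable ideal; both features are packaged together in Proposition~\ref{prop:sepstatekernel}, after which the rest is a clean application of Theorem~\ref{thm:KK-Existence}\ref{thm:KK-Existence.C1} plus the essentiality of $I$ in $E$.
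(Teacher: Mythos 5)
Your proof is correct and follows essentially the same route as the paper's: separabilize via Proposition \ref{prop:sepstatekernel}, note that $\sigma_e\circ\psi|^E$ is absorbing because it lifts the absorbing map $\tau_e\circ\theta|^D$, apply Theorem \ref{thm:KK-Existence}(a) to realize $\kappa'$ as $[\tilde\varphi,\sigma_e\circ\psi|^E]_{KK(A,I)}$, and conclude by functoriality. The only divergence is minor: where the paper produces $\varphi_0\colon A\to E$ from $\tilde\varphi$ and $\theta|^D$ via the universal property of the pull-back (which needs no essentiality), you corestrict $\tilde\varphi$ using injectivity of $\sigma_E$, which requires $I$ to be essential in $E$; this is legitimate because the subextension furnished by Proposition \ref{prop:sepstatekernel} is purely large (arranged in its proof via Lemma \ref{lem:sep_purelylargesepinh}, though not restated in the proposition's conclusion), exactly the fact the paper itself uses through Proposition \ref{prop:sep_purelyinfimpliespurelylarge} in the uniqueness part of Theorem \ref{thm:ClassifyingLifts}.
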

\begin{proof}
Use Proposition \ref{prop:sepstatekernel}.\ref{prop:sepstatekernel.infty} to find a unital separable subextension $\mathfrak{e}$ of the reduced state-kernel extension such that $I$ is stable, $\psi(A)\subseteq E$, $\theta|^D$ is unitizably full, $\tau_e \circ \theta|^D$ is absorbing and $\kappa=KK(A,\iota_{I\subseteq J_{B,\rho}})(\kappa')$ for some $\kappa'\in KK(A,I)$. Hence, the lift $\sigma_e\circ \psi|^E$ of the absorbing \sh\ $\tau_e\circ \theta|^D$, with $\sigma_e: E\rightarrow \Mult I$ the canonical map, is absorbing. Therefore, by $KK$-existence (Theorem \ref{thm:KK-Existence}), $\kappa'$ can be realized as $[\overline{\varphi}, \sigma_e \circ \psi|^E]_{KK(A,I)}$ for some absorbing \sh\ $\overline{\varphi}: A\to \Mult I$ such that $(\overline{\varphi}, \sigma_e \circ \psi|^E)$ forms a Cuntz pair. As such, we have the following commutative diagram:
\begin{equation}
\begin{tikzcd}
    &&&A\arrow[d,"\theta|^D"] \arrow[dl,"\psi|^E"'] \arrow[dd,"\pi_I \circ \overline{\varphi}", bend left=70]\\
    0 \arrow[r] & I \arrow[r,"j_e"]\arrow[d,equal] & E \arrow[r,"q_e", two heads] \arrow[d,"\sigma_e"] & D \arrow[d,"\tau_e"] \\
    0 \arrow[r] & I \arrow[r,"\iota_I"] & \Mult I \arrow[r,"\pi_I", two heads] & \mathcal{Q}(I) 
\end{tikzcd}
\end{equation}
In particular, we have $\pi_I \circ \overline{\varphi} = \tau_e\circ \theta|^D$, so by the universal property of the pull-back (which $E$ satisfies), there exists a \sh\ $\varphi: A\rightarrow E$ satisfying $\overline{\varphi}= \sigma_e\circ \varphi$ and $\theta|^D = q|_E \circ \varphi$. Then, by definition of the $KK$-class of Cuntz pairs not mapping into $\Mult I$, we have
\begin{equation}
  \begin{aligned}
    [\varphi, \psi]_{KK(A,I)} &=
    [\sigma_e \circ \varphi, \sigma_e \circ \psi|^E]_{KK(A,I)} \\
    &= [\overline{\varphi},\sigma_e \circ \psi^E]_{KK(A,I)} \\ &= \kappa'.
  \end{aligned}
\end{equation}
Therefore, viewing $\varphi$ as a map into $S_{B,\rho}$, we can conclude that $[\varphi,\psi]_{KK(A,J_{B,\rho})}=\kappa$.
\end{proof}

\begin{thm}[Classification of lifts of unitizably full maps]\label{thm:ClassifyingLifts}
    Let $A$ be a separable, nuclear \ca, $B$ a unital, simple and purely infinite \ca\ and $\rho$ a strongly faithful state on $B$. Suppose $\theta: A \to \pi_\rho(B)''$ is a unitizably full \sh.
    \begin{enumerate}
        \item Existence: for any $\kappa \in KK(A,S_{B,\rho})$ satisfying $KK(A,q_B)(\kappa) = [\theta]_{KK(A,\pi_\rho(B)'')}$, there exists a \sh\ $\varphi: A \to S_{B,\rho}$ that lifts $\theta$ and such that $[\varphi]_{KK(A,S_{B,\rho})}=\kappa$;
        \label{thm:ClassifyingLifts.C1}
        \item Uniqueness: if $\psi_1, \psi_2: A\to S_{B,\rho}^{(\omega)}$ are two \sh s lifting $\theta$ such that $[\psi_1,\psi_2]_{KL(A,J_{B,\rho}^{(\omega)})} = 0$, then they are approximately unitarily equivalent.\label{thm:ClassifyingLifts.C3}
\end{enumerate}
\end{thm}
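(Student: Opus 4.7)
My plan is to prove both parts by first separabilizing the reduced state-kernel extension via Proposition \ref{prop:sepstatekernel} and then invoking the $KK$-theoretic machinery from \S\ref{sec:KK}. Since Theorem \ref{thm:KK-Uniqueness} is stated for unital $A$, I will pass to the unitization $A^\dagger$ wherever necessary: $\theta^\dagger$ is full by unitizable fullness of $\theta$, and the $KL$-triviality hypothesis extends to the unitizations since the unit contributes identically to both maps.

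For part \ref{thm:ClassifyingLifts.C1}, I apply Proposition \ref{prop:sepstatekernel}(a) with the data $(\theta, \kappa)$ to obtain a separable unital subextension $\mathfrak{e}\colon 0\to I\to E\to D\to 0$ of $\mathfrak{e}_B$, with $I$ stable, $\tau_e\circ\theta|^D$ absorbing, and a factorization $\kappa = KK(A,\iota_{E\subseteq S_{B,\rho}})(\kappa')$ for some $\kappa'\in KK(A,E)$ satisfying $KK(A,q_e)(\kappa') = [\theta|^D]$. Stability of $I$ gives $KK(A,\Mult I) = 0$ via Cuntz's Eilenberg swindle, so $KK(A,\sigma_e)(\kappa') = 0$, and pushing through $\pi_I$ shows $[\tau_e\circ\theta|^D]_{KK(A,\Q I)} = 0$. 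Theorem \ref{thm:KK-Existence}(b) then furnishes a $\ast$-homomorphic lift $\overline{\varphi}\colon A\to\Mult I$ of $\tau_e\circ\theta|^D$, and the pull-back property of $E$ produces a $\ast$-homomorphism $\psi\colon A\to E\subseteq S_{B,\rho}$ lifting $\theta|^D$. The difference $\kappa' - [\psi|^E]_{KK(A,E)}$ vanishes under $KK(A,q_e)$, hence by the six-term exact sequence in $KK$ equals $KK(A,\iota_{I\subseteq E})(\mu)$ for some $\mu\in KK(A,I)$. Applying Proposition \ref{prop:KKexistence_forske} to the lift $\psi$ with input class $KK(A,\iota_{I\subseteq J_{B,\rho}})(\mu)\in KK(A,J_{B,\rho})$ then yields a lift $\varphi\colon A\to S_{B,\rho}$ of $\theta$ whose class in $KK(A,S_{B,\rho})$ equals $\kappa$ by a direct functoriality chase.

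For part \ref{thm:ClassifyingLifts.C3}, I apply Proposition \ref{prop:sepstatekernel}(b) with the pair $(\psi_1,\psi_2)$ to obtain a separable unital subextension $\mathfrak{e}\colon 0\to I\to E\to D\to 0$ of $\mathfrak{e}_B^{(\omega)}$ in which $I$ is stable, simple and purely infinite, $\psi_1,\psi_2$ corestrict to maps $\psi_1|^E,\psi_2|^E\colon A\to E$ with $[\psi_1|^E,\psi_2|^E]_{KL(A,I)} = 0$, and $\tau_e\circ\theta|^D$ is absorbing. Then $\sigma_e\circ\psi_j|^E\colon A\to\Mult I$ are $\ast$-homomorphic lifts of the absorbing map $\tau_e\circ\theta|^D$, hence themselves absorbing, and they form a Cuntz pair into $\Mult I$ with trivial $KL(A,I)$-class. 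Theorem \ref{thm:KK-Uniqueness}(b), applied to the unitization, then produces a sequence of unitaries $u_n\in I^\dagger$ with $\|u_n\sigma_e(\psi_1|^E(a))u_n^* - \sigma_e(\psi_2|^E(a))\|\to 0$ for all $a\in A$. Writing $u_n = \alpha_n + i_n$ with $|\alpha_n| = 1$ and $i_n\in I\subseteq J_{B,\rho}^{(\omega)}\subseteq B_\omega$, the elements $\tilde u_n := \alpha_n \cdot 1_{B_\omega} + i_n$ are unitaries in $S_{B,\rho}^{(\omega)}\subseteq B_\omega$ that implement the same conjugation on elements of $E$, which yields the desired approximate unitary equivalence of $\psi_1$ and $\psi_2$.

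The main obstacle I anticipate is ensuring that Theorem \ref{thm:KK-Uniqueness}(b) genuinely applies without a tensorial $\Z$-stabilization, which is precisely why we must work in the ultrapower setting $B_\omega$ rather than in $B_\infty$: we need the separabilized ideal $I$ to be simple and purely infinite, obtained via Lemma \ref{lem:sep_purelylargesepinh}(e), so that the Loreaux--Ng $K_1$-injectivity result underlying Lemma \ref{lem:AsympEquiv} is available. This in turn requires the ambient ideal $J_{B,\rho}^{(\omega)}$ to be simple and purely infinite, which holds by hereditariness in $B_\omega$ thanks to \cite[Proposition 6.2.6]{Rordam_ClassificationofNuclearCalgebras}, but fails for $J_{B,\rho}\subseteq B_\infty$---explaining why uniqueness can only be achieved in the ultrapower.
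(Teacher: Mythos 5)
Your proposal is correct and follows essentially the same route as the paper: separabilize via Proposition \ref{prop:sepstatekernel}, use $KK(A,\Mult I)=0$ for stable $I$ together with Theorem \ref{thm:KK-Existence} and the pull-back to produce a lift, correct its class via Proposition \ref{prop:KKexistence_forske}, and for uniqueness combine Proposition \ref{prop:sepstatekernel}(b) with Theorem \ref{thm:KK-Uniqueness} and push the unitaries from $I^\dagger$ into $S_{B,\rho}^{(\omega)}$. The only deviations are cosmetic: you extract the correcting class from the six-term sequence of the separable subextension rather than from half-exactness of $KK(A,\cdot)$ applied to the full reduced state-kernel extension, and you make the unitization step explicit (which the paper leaves implicit); when transferring the norm estimate from $\Mult I$ back to $E$, note that the relevant differences lie in $I$, on which $\sigma_e$ is isometric (the paper instead invokes injectivity of $\sigma_e$ via Proposition \ref{prop:sep_purelyinfimpliespurelylarge}).
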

\begin{proof}
Existence: use Proposition \ref{prop:sepstatekernel}.\ref{prop:sepstatekernel.infty} to produce a unital separable subextension $\mathfrak{e}\subseteq \mathfrak e_B$ with Busby map $\tau_e$ such that $I$ is stable, $\theta(A)\subseteq D$ and its corestriction to $D$ is unitizably full, $\tau_e\circ \theta|^D$ is absorbing, and $\kappa$ factorizes as $\kappa=KK(A,\iota_{E\subseteq S_{B,\rho}})(\kappa')$ for some $\kappa'\in KK(A,E)$ with $KK(A,q_\mathfrak{e})(\kappa') = [\theta|^D]_{KK(A,D)}$. By use of the commutative diagram
\begin{equation}
\begin{tikzcd}
    &&&A\arrow[d,"\theta|^D"] \arrow[dl,dashed,"\kappa'"'] &\\
    0 \arrow[r] & I \arrow[r,"j_e"]\arrow[d,equal] & E \arrow[r,"q_e"] \arrow[d,"\sigma_e"] & D \arrow[d,"\tau_e"]\arrow[r] &0 \\
    0 \arrow[r] & I \arrow[r,"\iota_I"] & \Mult I \arrow[r,"\pi_I"] & \mathcal{Q}(I) \arrow[r] &0
\end{tikzcd}
\end{equation}
with $\sigma_e$ the canonical map and functoriality of $KK$, we find that
\begin{equation}
    [\tau_e \circ \theta|^D]_{KK(A,\mathcal Q(I))} = KK(A,\pi_I\circ \sigma_e)(\kappa'),
\end{equation}
so $[\tau_e \circ \theta|^D]_{KK(A,\mathcal Q(I))}$ factors through $\Mult{I}$. However, as $I$ is stable, we have $KK( A, \Mult I ) = 0$ by \cite[Proposition 4.1]{Dadarlat_ApproximateUnitaryEquivalenceAndTheTopologyOfExt}, so $[\tau_e \circ \theta|^D]_{KK(A,\mathcal Q(I))}=0$. As $\tau_e \circ \theta|^D: A\rightarrow \mathcal{Q}(I)$ is absorbing, it follows from the $KK$-existence theorem (Theorem \ref{thm:KK-Existence}.\ref{thm:KK-Existence.C2}) that it lifts to a \sh\ $\overline{\psi} \colon A \to \Mult I$. By definition of the pull-back, the maps $\bar\psi$ and $\theta|^D$ then induce a \sh\ $\psi\colon A \to E$. Regarding $\psi$ as taking values in $S_{B,\rho}$, it follows that $\psi$ lifts $\theta$. There is however no reason to expect that the $KK$-class of $\psi$ is equal to $\kappa$, so this must be corrected. By construction, we have in $KK(A, \pi_\rho(B)'')$
\begin{equation}
\begin{aligned}
     KK(A,q_B)(\kappa)
        &= [\theta]_{KK(A, \pi_\rho(B)'')} \\
        &= KK(A,q_{B})([\psi]_{KK(A,S_{B,\rho})}),
\end{aligned}
\end{equation}
using functoriality of $KK$. This implies that
\begin{equation}
    \kappa- [\psi]_{KK(A,S_{B,\rho})} \in \ker KK(A, q_B) = \im KK(A,j_B)
\end{equation}
by half-exactness of $KK(A, \,\cdot\,)$ when $A$ is nuclear \cite[Proposition 5.4.v]{CarrionGabeSchafhauserTikuisisWhite_ClassifyingHomomorphismsIUnitalSimpleNuclearCalgebras}. In other words, there exists a $\kappa'\in KK(A,J_{B,\rho})$ with
\begin{equation}
    KK(A, j_B)(\kappa')=\kappa-[\psi]_{KK(A,S_{B,\rho})}.
\end{equation}
Then by Proposition \ref{prop:KKexistence_forske}, there exists $\varphi: A \to S_{B,\rho}$ also lifting $\theta$ such that $[\varphi,\psi]_{KK(A,J_{B,\rho})}=\kappa'$. It follows that
\begin{equation}
      [\varphi]_{KK(A,S_{B,\rho})} - [\psi]_{KK(A,S_{B,\rho})} = KK(A,j_B)(\kappa') = \kappa - [\psi]_{KK(A,S_{B,\rho})},
\end{equation}
so indeed $[\varphi]_{KK(A,S_{B,\rho})} = \kappa$.

Uniqueness: use Proposition \ref{prop:sepstatekernel}.\ref{prop:sepstatekernel.omega} to obtain a unital separable subextension $\mathfrak{e}$ of $\mathfrak e_B^{(\omega)}$ such that $I$ is stable, simple and purely infinite, $\theta(A)\subseteq D$, $\theta|^D$ is unitizably full and its composition with the Busby map $\tau_e$ of $\mathfrak e$ is absorbing, $\psi_1(A)\cup\psi_2(A) \subseteq E$ and $\big[\psi_1|^E,\psi_2|^E\big]_{KL(A, I)} =0$. Then the lifts $\sigma_e \circ \psi_i|^E$ ($i\in\{1,2\}$) of $\tau_e\circ \theta|^D$ are again absorbing, where $\sigma_e: E\to \Mult I$ is the canonical map. Moreover, we have by definition of $KL$-classes of Cuntz pairs with general codomains
\begin{equation}
    [\sigma_e \circ \psi_1|^E, \sigma_e \circ \psi_2|^E]_{KL(A,I)} = [\psi_1|^E,\psi_2|^E]_{KL(A, I)} = 0.
\end{equation}
As $I$ is simple and purely infinite, the $KL$-uniqueness theorem (Theorem \ref{thm:KK-Uniqueness}) gives a sequence $(u_n)_{n=1}^\infty$ of unitaries in $I^\dagger$ such that
\begin{equation}
    \|\Ad(u_n)\circ\sigma_e \circ\psi_1(a) - \sigma_e \circ\psi_2(a)\| \rightarrow 0
\end{equation}
for all $a\in A$. Now $E$, being unital, contains a copy of $I^\dagger$, and by uniqueness of the map $\sigma_a$, it restricts to the identity map on $I^\dagger$. Hence, $\Ad(u_n)$ and $\sigma_e$ commute, so we can rewrite the previous statement as
\begin{equation}
    \|\sigma_e \circ \big( \Ad(u_n)\circ \psi_1(a) - \psi_2(a)\big)\| \rightarrow 0.
\end{equation}
As $\sigma_e$ is injective by Proposition \ref{prop:sep_purelyinfimpliespurelylarge}, it then follows that
\begin{equation}
    \|\Ad(u_n)\circ\psi_1(a) - \psi_2(a)\| \rightarrow 0
\end{equation}
for all $a\in A$, i.e.\ $\psi_1$ and $\psi_2$ are approximately unitarily equivalent as maps into $E\subseteq S_{B,\rho}^{(\omega)}$.
\end{proof}

\subsection{Classification of maps into $S_{B,\rho}$}
The goal of this section is to mold the classification of lifts we just obtained into a classification of unital maps into $S_{B,\rho}$:

\begin{thm}[Classification of unital full maps into $S_{B,\rho}$] \label{thm:ClassifyingApproximateMaps}
    Let $A$ be a unital, separable, nuclear \ca, $B$ a unital, simple and purely infinite \ca\ and $\rho$ a strongly faithful state on $B$ such that $\pi_\rho(B)'' \cong B(\Hb)$ for some separable Hilbert space $\Hb$.
    \begin{enumerate}
        \item Existence: for any $\kappa \in KK(A,S_{B,\rho})$ such that $\Gamma_0^{(A,S_{B,\rho})}(\kappa)[1_A]_0 = [1_{S_{B,\rho}}]_0$, there exists a unital, full \sh\ $\varphi: A\to S_{B,\rho}$ inducing $\kappa$,
        \item Uniqueness: if two unital, full \sh s $\varphi_1,\varphi_2: A\to S_{B,\rho}$ satisfy $[\pi_{\infty\to\omega}\circ\varphi_1]_{KL(A,S_{B,\rho}^{(\omega)})} = [\pi_{\infty\to\omega}\circ\varphi_2]_{KL(A,S_{B,\rho}^{(\omega)})}$, then they are unitarily equivalent as maps into $B_\omega$.
    \end{enumerate}
\end{thm}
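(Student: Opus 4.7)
The plan is to deduce this theorem from the classification of lifts (Theorem \ref{thm:ClassifyingLifts}) via a de-unitization argument, in the spirit of \cite[Proposition 7.11]{CarrionGabeSchafhauserTikuisisWhite_ClassifyingHomomorphismsIUnitalSimpleNuclearCalgebras}. The hypothesis $\pi_\rho(B)'' \cong B(\Hb)$ is crucial, as it allows Voiculescu's theorem to be invoked at the quotient level. The principal obstacle is that a unital \sh\ into a unital target is never unitizably full, so the entire argument must be routed through a Cuntz-sum decomposition into proper corners, with attendant $KK$- and $KL$-bookkeeping.

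For existence, fix Cuntz isometries $s_1, s_2 \in B \subseteq S_{B,\rho}$ with $s_1s_1^* + s_2s_2^* = 1$ (available since $B$ is unital, simple and purely infinite) and set $p_i := s_is_i^*$. I will construct $\varphi$ as $\varphi_1 + \varphi_2$ where each $\varphi_i: A \to p_i S_{B,\rho} p_i$ is unital in its corner. Each such $\varphi_i$, regarded as a \sh\ into all of $S_{B,\rho}$, is not unital but is unitizably full: any nonzero $\varphi_i(a) \in B$ is full in $B$ (since $B$ is simple) and hence in $S_{B,\rho}$ (since $B$ contains $1_{S_{B,\rho}}$), while $1-\varphi_i(1_A) = 1-p_i$ is full by the same token. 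To build each $\varphi_i$, pick a unital \sh\ $\theta_i: A \to q_B(p_i)\pi_\rho(B)''q_B(p_i) \cong B(\Hb)$ via Voiculescu's theorem, view it as a unitizably full \sh\ into $\pi_\rho(B)''$, and apply Theorem \ref{thm:ClassifyingLifts}(a) to obtain a lift $\tilde\varphi_i$. The lifted $KK$-classes can be chosen so that $[\tilde\varphi_1] + [\tilde\varphi_2] = \kappa$ and $[\tilde\varphi_i(1_A)]_0 = [p_i]_0$, with the latter arranged using the hypothesis $\Gamma_0(\kappa)[1_A]_0 = [1_{S_{B,\rho}}]_0$ together with the splitting $[1]_0 = [p_1]_0 + [p_2]_0$. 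A final conjugation by a unitary in $S_{B,\rho}$---available because full, properly infinite projections in $S_{B,\rho}$ with equal $K_0$-class are unitarily equivalent, using Lemma \ref{lem:propertiesliftingfromquotient}(b)---moves each $\tilde\varphi_i(1_A)$ to $p_i$, yielding the desired corner maps $\varphi_i$, and $\varphi := \varphi_1 + \varphi_2$ is unital and full with $[\varphi] = \kappa$.

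For uniqueness, suppose $\varphi_1, \varphi_2$ are unital and full with $[\pi_{\infty\to\omega}\circ\varphi_1]_{KL} = [\pi_{\infty\to\omega}\circ\varphi_2]_{KL}$. First match quotient compositions: the unital \sh s $q_B \circ \varphi_i: A \to B(\Hb)$ are approximately unitarily equivalent by Voiculescu's theorem, and via Kaplansky density the intertwining unitary lifts to a unitary in $B_\omega$; after conjugating $\varphi_2$ we may assume $q_B^{(\omega)}\circ\pi_{\infty\to\omega}\circ\varphi_1 = q_B^{(\omega)}\circ\pi_{\infty\to\omega}\circ\varphi_2$. Decompose both $\varphi_i$ using the same Cuntz isometries $s_1, s_2$ into corner summands $\varphi_i^{(j)}: A \to p_j S_{B,\rho}^{(\omega)} p_j$; each Cuntz pair $(\varphi_1^{(j)}, \varphi_2^{(j)})$ consists of unitizably full lifts of a common unitizably full quotient \sh, with vanishing pairwise $KL$-class $[\varphi_1^{(j)}, \varphi_2^{(j)}]_{KL(A, J_{B,\rho}^{(\omega)})} = 0$ by Cuntz-sum additivity on $KL$. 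Applying Theorem \ref{thm:ClassifyingLifts}(b) to each summand gives approximate unitary equivalence in $B_\omega$; assembling via the $s_j$ and invoking the standard reindexing trick in $B_\omega$ promotes this to an exact unitary equivalence of $\varphi_1$ and $\varphi_2$. The most delicate step throughout is ensuring the right $K$-theoretic bookkeeping on each Cuntz-summand, both in matching $[\tilde\varphi_i(1_A)]_0$ for existence and in propagating the vanishing $KL$-class to each pair for uniqueness.
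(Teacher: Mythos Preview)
Your proposal has two genuine gaps, one in each half, both stemming from the Cuntz-sum decomposition strategy.

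\textbf{Existence.} You assert that $B$ contains $\mathcal{O}_2$-isometries $s_1,s_2$ with $s_1s_1^*+s_2s_2^*=1$ because $B$ is unital, simple and purely infinite. This is false: such isometries force $[1_B]_0=2[1_B]_0$, hence $[1_B]_0=0$ in $K_0(B)$, which fails for instance in $B=\Oinf$. Simple pure infiniteness only yields a unital copy of $\Oinf$, i.e.\ isometries with $s_1s_1^*+s_2s_2^*<1$, and then your $\varphi_1+\varphi_2$ is not unital. Even if you patched this by working in $S_{B,\rho}$ or $B(\Hb)$ (where $\mathcal{O}_2$-isometries do exist), the $K_0$-bookkeeping you sketch would force $\Gamma_0(\kappa)[1_A]_0=[p_1]_0+[p_2]_0=2[1]_0$ rather than $[1]_0$, so the argument still does not close unless $[1]_0=0$.

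\textbf{Uniqueness.} You propose to ``decompose both $\varphi_i$ using the same Cuntz isometries $s_1,s_2$ into corner summands $\varphi_i^{(j)}$''. An arbitrary unital \sh\ $\varphi_i\colon A\to S_{B,\rho}$ does not decompose this way: the compressions $p_j\varphi_i(\cdot)p_j$ are not \sh s unless $\varphi_i(A)$ commutes with the $p_j$, and there is no reason it should. So the input to Theorem~\ref{thm:ClassifyingLifts}\ref{thm:ClassifyingLifts.C3} is not available, and the claimed ``Cuntz-sum additivity on $KL$'' step has no meaning here.

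The paper's de-unitization device goes in the opposite direction: instead of splitting the map into corners, one \emph{embeds} into $M_2$ via the top-left-corner inclusion $\iota^{(2)}_{S_{B,\rho}}$. By Lemma~\ref{lem:deunitizationtrick}, a unital full map becomes unitizably full after composing with $\iota^{(2)}$, so Theorem~\ref{thm:ClassifyingLifts} applies directly to $\iota^{(2)}\circ\varphi$ (existence) and to the pair $(\iota^{(2)}\circ\pi_{\infty\to\omega}\circ\varphi_1,\iota^{(2)}\circ\pi_{\infty\to\omega}\circ\Ad(v)\circ\varphi_2)$ (uniqueness, after first arranging the Cuntz-pair condition via a Voiculescu/$\varepsilon$-test argument as in Lemma~\ref{lem:fullcuntzpairuptounitary}). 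The projection $\varphi^{(2)}(1_A)$ is then compared to $\iota^{(2)}(1)$ via Lemma~\ref{lem:propertiesliftingfromquotient}\ref{lem:propertiesliftingfromquotient.propinf} and one passes to $M_4$ to upgrade Murray--von Neumann to unitary equivalence. This matrix-amplification route avoids both obstacles above.
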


Note that we do not obtain a complete classification of maps into $S_{B,\rho}$ by $KL(A, S_{B,\rho})$, unlike in the stably finite classification; we only get uniqueness up to unitary equivalence by a unitary in $B_\omega$.

\begin{rem}
    Up until Theorem \ref{thm:ClassifyingApproximateMaps}, we had not yet imposed any conditions on $\pi_\rho(B)''$. We now need to do so in order to gain access to a uniqueness theorem for maps into $\pi_\rho(B)''$. By Corollary \ref{cor:stronglyfaithfulstateexists}, we know that we can always arrange $\pi_\rho(B)''\cong B(\Hb)$ in our setting, for which Voiculescu's theorem gives uniqueness of unital and full \sh s up to approximate unitary equivalence. In other words, we do not need the very deep and technical classification of type-III factors to classify their $C^*$-algebraic analogue; the uniqueness result for maps into type-I factors suffices.
\end{rem}

\begin{rem} \label{rem:whyskipclassifyingunitallifts}
    Theorem \ref{thm:ClassifyingApproximateMaps} combines \cite[Theorem 7.10]{CarrionGabeSchafhauserTikuisisWhite_ClassifyingHomomorphismsIUnitalSimpleNuclearCalgebras} and \cite[Theorem 9.1]{CarrionGabeSchafhauserTikuisisWhite_ClassifyingHomomorphismsIUnitalSimpleNuclearCalgebras} into a single step. We do so because our setting comes with some additional subtleties which complicate matters. Our uniqueness result for maps into the quotient is Voiculescu's theorem, i.e.\ only up to approximate unitary equivalence instead of unitary equivalence (as in \cite[Theorem 6.5]{CarrionGabeSchafhauserTikuisisWhite_ClassifyingHomomorphismsIUnitalSimpleNuclearCalgebras}). As one cannot reindex in $S_{B,\rho}$ or $B(\Hb)$, this entails that in the proof of existence, we can no longer conjugate the map $\varphi:A\to S_{B,\rho}$ realizing a given $\kappa\in KK(A,S_{B,\rho})$ by a single unitary to make it lift a given $\theta: A\to \pi_\rho(B)''$. However, we can circumvent that issue by going directly to a classification of maps into $S_{B,\rho}$, because there is no tracial data in the quotient which could obstruct a given $\kappa\in KK(A,S_{B,\rho})$ getting realized by a \sh . Therefore, we can simply pick a map $\theta: A\to \pi_\rho(B)''$ to apply the classification of lifts to and disregard the fact that the resulting $\varphi$ no longer lifts $\theta$.
\end{rem}

In order to prove Theorem \ref{thm:ClassifyingApproximateMaps}, we will need a couple of additional ingredients. First, we need a way to apply our classification of lifts in the context of unital and full maps, even though such maps will never be unitizably full. This is done by use of the following ``de-unitization'' trick from \cite[\S 7.4]{CarrionGabeSchafhauserTikuisisWhite_ClassifyingHomomorphismsIUnitalSimpleNuclearCalgebras}, as mentioned in the beginning of this section. The proof of the ``only if''-statement is the same as \cite[Proposition 7.10]{CarrionGabeSchafhauserTikuisisWhite_ClassifyingHomomorphismsIUnitalSimpleNuclearCalgebras}; we will need the other direction later on.

\begin{lemma} \label{lem:deunitizationtrick}
    Let $\varphi: A\to D$ be a unital \sh\ between unital \ca s $A$ and $D$. Then $\varphi$ is full if and only if the map $\iota^{(k)}_D\circ \varphi$ is unitizably full for some (or all) $2\leq k\in\mathbb N$, where $\iota^{(k)}_D: D\hookrightarrow M_k(D)$ is the top-left corner embedding.
\end{lemma}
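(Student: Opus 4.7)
The plan is to base everything on the standard bijective correspondence between (closed, two-sided) ideals of $D$ and ideals of $M_k(D)$, sending an ideal $J \subseteq D$ to $M_k(J) \subseteq M_k(D)$. Under this correspondence, an element $x \in M_k(D)$ is full if and only if the ideal of $D$ generated by the matrix entries of $x$ is all of $D$. With this in hand, the whole lemma becomes a short bookkeeping argument; I do not expect any real obstacles.

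For the forward direction, I would fix $k \geq 2$ and spell out that
\begin{equation}
(\iota^{(k)}_D \circ \varphi)^\dagger(a + \lambda 1_{A^\dagger}) = \iota^{(k)}_D(\varphi(a)) + \lambda 1_{M_k(D)}
\end{equation}
is the diagonal matrix whose top-left entry is $\varphi(a) + \lambda 1_D$ and whose remaining $k-1$ diagonal entries all equal $\lambda 1_D$. Given a nonzero element $a + \lambda 1_{A^\dagger} \in A^\dagger$, I split into two cases. If $\lambda \neq 0$, then $\lambda 1_D$ appears somewhere on the diagonal (this is where $k \geq 2$ is used), and $\lambda 1_D$ is full in $D$, so the whole matrix is full in $M_k(D)$ by the ideal correspondence. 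If $\lambda = 0$, then $a \neq 0$, so fullness of $\varphi$ gives that $\varphi(a)$ is full in $D$, and again the matrix is full. This argument works uniformly for every $k \geq 2$.

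For the backward direction, I would assume that $\iota^{(k)}_D \circ \varphi$ is unitizably full for some fixed $k \geq 2$ and pick an arbitrary nonzero $a \in A$. Evaluating at $(a, 0) \in A^\dagger$ gives
\begin{equation}
(\iota^{(k)}_D \circ \varphi)^\dagger(a) = \iota^{(k)}_D(\varphi(a)),
\end{equation}
which is full in $M_k(D)$ by hypothesis. By the ideal correspondence, this is equivalent to $\varphi(a)$ being full in $D$, so $\varphi$ is full. Combining the two directions yields the equivalence of fullness of $\varphi$, unitizable fullness of $\iota^{(k)}_D \circ \varphi$ for some $k \geq 2$, and unitizable fullness of $\iota^{(k)}_D \circ \varphi$ for all $k \geq 2$.
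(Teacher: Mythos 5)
Your proof is correct and follows essentially the same route as the paper: the forward direction is the same case analysis on $(\iota^{(k)}_D\circ\varphi)^\dagger$ applied to a general element of $A^\dagger$, and your backward direction via the ideal correspondence $J\mapsto M_k(J)$ is just a repackaging of the paper's compression of $\overline{M_k(D)\,\iota^{(k)}_D(\varphi(a))\,M_k(D)}$ by the corner unit $\iota^{(k)}_D(1_D)$. No gaps.
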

\begin{proof}
First suppose $\varphi$ is full, and pick $k\geq 2$. Any $x\in A^\dagger$ can be written as $x = a + \lambda(1_{A^\dagger} - 1_A)$ for some $a\in A$ and $\lambda \in \mathbb C$, and as such
\begin{equation}
    \big(\iota^{(k)}_D\circ\varphi\big)^\dagger(x) = \theta(a)\oplus \lambda 1_{M_{k-1}(D)}.
\end{equation}
Whenever $a\neq 0$ or $\lambda\neq 0$, the above element generates $M_k(D)$ as an ideal since $\varphi$ is full.

Now suppose $\iota^{(k)}_D\circ \varphi$ is unitizably full for some $k\geq 2$. Applying this statement for an arbitrary non-zero $a\in A \subseteq A^\dagger$, we find
\begin{equation}
    \iota^{(k)}_D(1_D) \in \overline{M_k(D) \iota^{(k)}_D(\varphi(a)) M_k(D)},
\end{equation}
or, after multiplying from both sides by $\iota^{(k)}_D(1_D)$,
\begin{equation}
    \iota^{(k)}_D(1_D) \in \overline{\iota^{(k)}_D(1_D) M_k(D) \iota^{(k)}_D(\varphi(a)) M_k(D)\iota^{(k)}_D(1_D)} = \overline{\iota^{(k)}_D(D\varphi(a)D)} = \iota^{(k)}_D\left(\overline{D\varphi(a)D}\right),
\end{equation}
so $1_D \in \overline{D\varphi(a)D}$.
\end{proof}

\begin{rem} \label{rem:matrixamplifiedske}
    For a \ca\ $D$ and $k\in\mathbb N$, let $\iota_D^{(k)}: D \to M_k(D)$ be the top-left corner embedding. and let $\tr$ be the canonical normalized trace on $M_k(\mathbb C)$. Given $k\in\mathbb N$ and a unital \ca\ $B$ with a strongly faithful state $\rho$, note that $\tr\circ\rho$ (where $\rho$ is applied pointwise) is a faithful state on $M_k(B)$, as for any $x := (x_{ij})_{i,j}\in M_k(B)$, we have
    \begin{equation}
        \tr\circ\rho(x^*x) = \sum_{i,j} \rho(x_{ij}^*x_{ij}).
    \end{equation}
    In fact, we claim that it is strongly faithful. To see this, note that $M_k(\pi_\rho(B)'')$ is a von Neumann algebra with a faithful normal state $\tr\circ\rho''$, containing $M_k(B)$ as a strongly dense $C^*$-subalgebra and $(\tr\circ\rho'')|_{M_k(B)} = \tr\circ\rho$. Therefore, \ref{lem:isoGNSrep} implies that there exists an isomorphism
    \begin{equation}
        \alpha: \pi_{\tr\circ\rho}(M_k(B))'' \to M_k(\pi_\rho(B)'')
    \end{equation}
    satisfying $(\tr\circ\rho)'' = \tr\circ\rho''\circ\alpha$. In particular, $(\tr\circ\rho)''$ is faithful, proving our claim. Hence, we can form the reduced state-kernel extension $\mathfrak e_{M_k(B)}$ with respect to this state, and obtain a commutative diagram
    \begin{equation}\label{eq:matrixamplifiedske}
    \begin{tikzcd}[row sep = large]
        0 \ar[r] & J_{B,\rho} \ar[r, "{j_B}"] \ar[d, "{\iota^{(k)}_{J_{B,\rho}}}"] & S_{B,\rho} \ar[d, "{\iota^{(k)}_{S_{B,\rho}}}"] \ar[r, "{q_B}"] & \pi_\rho(B)'' \ar[d, "{\pi_\rho(B)''}"] \ar[r] & 0\\
        0 \ar[r] & M_k(J_{B,\rho}) \ar[r] \ar[d, "{\cong}"] & M_k(S_{B,\rho}) \ar[d, "\cong"] \ar[r] & M_k(\pi_\rho(B)'') \ar[d, "\alpha^{-1}"] \ar[r] & 0\\
        0 \ar[r] & J_{M_k(B),\tr\circ\rho} \ar[r, "{j_{M_k(B)}}"] & S_{M_k(B),\tr\circ\rho} \ar[r, "{q_{M_k(B)}}"] & \pi_{\tr\circ\rho}(M_k(B))'' \ar[r] & 0
    \end{tikzcd}
    \end{equation}
    where the isomorphisms are the obvious maps.
\end{rem}

The final two lemmas are needed for the proof of uniqueness. The first tells us that one can conjugate any two unital full maps into $S_{B,\rho}$ by a unitary in $B_\infty$ such that they form a Cuntz pair. The second will imply that if these maps have the same $KL$-class, then the corresponding class in $KL(A,J_{B,\rho}^{(\omega)})$ will be zero. This means that (after the de-unitization trick) one can immediately appeal to Theorem \ref{thm:ClassifyingLifts}.\ref{thm:ClassifyingLifts.C3}.

\begin{lemma} \label{lem:fullcuntzpairuptounitary}
    Let $A$ be a separable unital \ca\ and $B$ be a unital \ca\ with a strongly faithful state $\rho$ such that $\pi_\rho(B)'' \cong B(\Hb)$. For any two u.c.p.\ maps $\psi_1,\psi_2: A\to S_{B,\rho}$ such that $q_B\circ \psi_1$ and $q_B\circ \psi_2$ are full \sh s, there exists a unitary $v\in B_\infty$ such that $\Ad(v)\circ\psi_2 - \psi_1 \in J_{B,\rho}$ and $\Ad(v)\circ\psi_2$ maps into $S_{B,\rho}$.
\end{lemma}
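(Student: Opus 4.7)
The plan is to combine Voiculescu's theorem in $\pi_\rho(B)''\cong B(\Hb)$ with Kaplansky's density theorem for unitaries and assemble the pieces into a unitary $v\in B_\infty$ by a diagonal argument. Since $q_B\circ\psi_1$ and $q_B\circ\psi_2$ are unital full \sh s from the separable \ca\ $A$ into $B(\Hb)$, Voiculescu's theorem produces unitaries $U_n\in\pi_\rho(B)''$ with $\|U_n\,q_B(\psi_2(a))\,U_n^* - q_B(\psi_1(a))\|\to 0$ for every $a\in A$. The inclusion $\pi_\rho(B)\subseteq\pi_\rho(B)''$ is unital and strongly dense, so Kaplansky's density theorem for unitaries allows each $U_n$ to be $*$-strongly approximated by unitaries in $B$.

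Fix a countable dense sequence $(a_k)_k\subseteq A$. For each $n$, use Kaplansky to pick a unitary $v_n\in B$ such that $\pi_\rho(v_n)$ is $1/n$-close to $U_n$ in the $*$-strong topology on a carefully chosen finite family of test vectors in $\Hb$; this family will include $\xi_\rho$, $q_B(\psi_i(a_k))\xi_\rho$, $q_B(\psi_i(a_k))^*\xi_\rho$ for $i\in\{1,2\}$ and $k\leq n$, together with the auxiliary vectors $\pi_\rho(\psi_i(a_k)_n)\xi_\rho$ and $\pi_\rho(\psi_i(a_k)_n)^*\xi_\rho$ that arise in the analysis of the $n$th representative. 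The resulting sequence $v:=(v_n)_n$ is automatically a unitary in $B_\infty$.

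To verify $\Ad(v)\circ\psi_2-\psi_1\in J_{B,\rho}$, Proposition~\ref{prop:ske_strongequivphinorm} reduces the task, for each fixed $k$, to showing
\begin{equation*}
\bigl\|\pi_\rho(v_n)\,\pi_\rho(\psi_2(a_k)_n)\,\pi_\rho(v_n)^* - \pi_\rho(\psi_1(a_k)_n)\bigr\|_{\rho'',\#}\xrightarrow{n\to\infty} 0.
\end{equation*}
Telescoping through $\pi_\rho(v_n)\,q_B(\psi_2(a_k))\,\pi_\rho(v_n)^*$, $U_n\,q_B(\psi_2(a_k))\,U_n^*$ and $q_B(\psi_1(a_k))$ splits this into four pieces, three of which vanish in the limit by, respectively, Voiculescu's approximation, the Kaplansky choice of $v_n$ acting on the fixed operator $q_B(\psi_2(a_k))$, and the $(\rho'',\#)$-norm convergence $\pi_\rho(\psi_i(a_k)_n)\to q_B(\psi_i(a_k))$ coming from $\psi_i(a_k)\in S_{B,\rho}$ and Proposition~\ref{prop:ske_strongequivphinorm}. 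Density of $(a_k)_k$ together with the uniform boundedness of the conjugations will then extend the convergence to all $a\in A$, and since $\psi_1$ maps into $S_{B,\rho}$ with $J_{B,\rho}\subseteq S_{B,\rho}$, the map $\Ad(v)\circ\psi_2$ also takes values in $S_{B,\rho}$.

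The main obstacle is the remaining ``conjugation-by-varying-unitary'' piece $\|\pi_\rho(v_n)(\pi_\rho(\psi_2(a_k)_n)-q_B(\psi_2(a_k)))\pi_\rho(v_n)^*\|_{\rho'',\#}$, which equals $\|(\pi_\rho(\psi_2(a_k)_n)-q_B(\psi_2(a_k)))\pi_\rho(v_n)^*\xi_\rho\|_{\Hb}$ (together with its adjoint analogue). The inner operator tends to zero $*$-strongly, but it is tested against a sequence of unit vectors varying with $n$, so bare $*$-strong convergence does not suffice. The auxiliary vectors included in the Kaplansky test family above are precisely what allows this term to be controlled: forcing $\pi_\rho(v_n)^*\xi_\rho$ sufficiently close to $U_n^*\xi_\rho$ reduces the problem, up to an $o(1)$ error, to estimating $\|(\pi_\rho(\psi_2(a_k)_n)-q_B(\psi_2(a_k)))U_n^*\xi_\rho\|$, which can in turn be arranged $o(1)$ by a careful diagonal choice in which the Voiculescu unitaries $U_n$ and the representative sequence $(\psi_2(a_k)_n)_n$ are coordinated stage by stage.
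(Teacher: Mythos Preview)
Your approach is viable in outline but the final step, controlling the ``conjugation-by-varying-unitary'' term, is not fully justified. You write that $\|(\pi_\rho(\psi_2(a_k)_n)-q_B(\psi_2(a_k)))U_n^*\xi_\rho\|$ can be made small ``by a careful diagonal choice in which the Voiculescu unitaries $U_n$ and the representative sequence $(\psi_2(a_k)_n)_n$ are coordinated stage by stage''. But the representative sequence is \emph{fixed} once a lift to $\ell^\infty(B)$ is chosen; it cannot be coordinated with anything. What can be done instead is to repeat each Voiculescu unitary $U_N$ on a long block of indices: having chosen $U_N$, pick $M(N)$ so large that $\|(\pi_\rho(\psi_2(a_k)_m)-q_B(\psi_2(a_k)))U_N^*\xi_\rho\|<1/N$ for all $m\geq M(N)$ and $k\leq N$ (and the adjoint analogues), then set $v_m\approx U_{N(m)}$ where $N(m)$ is the largest $N$ with $M(N)\leq m$. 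This works, but it is more delicate than what you have written, and you have not actually carried it out.

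The paper sidesteps this difficulty entirely with a cleaner idea. Since $\pi_\rho(B)''\cong B(\Hb)$ is a von Neumann algebra, each Voiculescu unitary $v^{(n)}$ is an exponential $e^{ih}$; by Kaplansky one lifts $h$ to a bounded self-adjoint sequence $(h_k)_k$ in $B$, and then $(v^{(n)}_k)_k:=(e^{ih_k})_k$ is a unitary sequence in $\mathcal S_{B,\rho}$ (the $\ell^\infty$-analogue of $S_{B,\rho}$) with $q_B$-image $v^{(n)}$. The point is that now, for each fixed $n$, the entire sequence $\bigl(\Ad(v^{(n)}_k)\psi_{2,k}(a)-\psi_{1,k}(a)\bigr)_k$ lies in $S_{B,\rho}$ because $S_{B,\rho}$ is closed under multiplication, and Corollary~\ref{cor:ske_maptoquotient} computes its $(\rho,\#)$-limit \emph{directly} as $\|\Ad(v^{(n)})q_B(\psi_2(a))-q_B(\psi_1(a))\|_{\rho'',\#}$. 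No varying-unitary term ever appears. The double limit $\lim_n\lim_k=0$ is then flattened to a single sequence $(v_k)_k$ by the $\varepsilon$-test. So whereas your approach picks a single $v_n\in B$ at each step and then struggles to compare $\pi_\rho(v_n)$ against the $n$th representative, the paper lifts each $v^{(n)}$ to a whole sequence and lets the algebra structure of $S_{B,\rho}$ do the work.
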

\begin{proof}
First note that we can construct the following commutative diagram:
\begin{equation}
\begin{tikzcd}[row sep=large, column sep=large]
    & \mathcal{S}_{B,\rho} \ar[r,hook] \ar[d,"\pi_\infty"] & \ell^\infty(B) \ar[d,"\pi_\infty"]\\
    A \ar[ru,"(\psi_{i,k})_k",dashed] \ar[r,"\psi_i"] \ar[rd,"q_B\circ\psi_i"'] & S_{B,\rho} \ar[r,hook] \ar[d,"q_B"] & B_\infty\\
    & \pi_\rho(B)'' &
\end{tikzcd}
\end{equation}
for $i\in\{1,2\}$. Here $\mathcal{S}_{B,\rho}$ is the analogue of $S_{B,\rho}$ in $\ell^\infty(B)$, the hooked arrows are inclusion maps, and the $(\psi_{i,k})_k$ are set-theoretic lifts of the $\psi_i$ to $\mathcal{S}_{B,\rho}$.

As the maps $q_B \circ \psi_1$ and $q_B\circ \psi_2$ are full \sh s and $\pi_\rho(B)''\cong B(\Hb)$, it follows from Voiculescu's theorem (see e.g.\ \cite[Ch.1, Theorem 7.6]{BrownOzawa_CalgebrasandFiniteDimensionalApproximations}) that $q_B \circ \psi_1$ and $q_B\circ \psi_2$ are approximately unitarily equivalent. Using \eqref{eq:ske_normdominatesphinorm}, this implies in particular that there exists a sequence of unitaries $(v^{(n)})_n$ in $\pi_\rho(B)''$ such that
\begin{equation}
    \|\Ad(v^{(n)})\circ q_B\circ\psi_2(a) - q_B \circ \psi_1(a)\|_{\rho'',\#} \xrightarrow{n\rightarrow\infty} 0
\end{equation}
for all $a\in A$. Since $B(\Hb)$ is a von Neumann algebra, all its unitaries are exponentials, so in particular we can lift each $v^{(n)}$ to a unitary sequence $(v^{(n)}_k)_k$ in $\mathcal{S}_{B,\rho}$. By Corollary \ref{cor:ske_maptoquotient}, it follows that
\begin{equation}
\begin{aligned}
    \|\Ad(v^{(n)})&\circ q_B\circ\psi_2(a) - q_B \circ \psi_1(a)\|_{\rho'',\#} \\
        &= \|q_B\circ\pi_\infty\big(\Ad(v^{(n)}_k)\circ \psi_{2,k}(a) - \psi_{1,k}(a)\big)_k\|_{\rho'',\#} \\
        &= \lim_{k\to \infty} \|\Ad(v^{(n)}_k)\circ \psi_{2,k}(a) - \psi_{1,k}(a)\|_{\rho,\#}
\end{aligned}
\end{equation}
for all $a\in A$. Since this expression tends to zero for $n\to\infty$ and $A$ is separable, we can then apply the $\varepsilon$-test to obtain a unitary sequence $(v_k)_k \in \ell^\infty(B)$ such that
\begin{equation}
    \lim_{k\rightarrow\infty} \|\Ad(v_k)\circ \psi_{2,k}(a_j) - \psi_{1,k}(a_j)\|_{\rho,\#} = 0
\end{equation}
for all $j\in \mathbb N$, with $\{a_j\}_{j\in\mathbb N}$ a dense sequence in $A$. By noting that
\begin{equation}
    \|\Ad(v_k)\circ \psi_2(a-a_j)\|_{\rho,\#} \leq \|\Ad(v_k)\circ \psi_2(a-a_j)\| = \|a-a_j\|
\end{equation}
and similarly $\|\psi_1(a-a_j)\|_{\rho,\#} \leq \|a-a_j\|$
for all $a\in A$ and $j,k\in \mathbb N$, it follows that
\begin{equation}
    \lim_{k\rightarrow\infty} \|\Ad(v_k)\circ \psi_{2,k}(a) - \psi_{1,k}(a)\|_{\rho,\#} = 0
\end{equation}
for all $a\in A$. Defining $v := \pi_\infty((v_k)_k) \in B_\infty$, we then obtain for all $a\in A$
\begin{equation}
    \Ad(v)\circ \psi_2(a) - \psi_1(a) = \pi_\infty\big(\Ad(v_k)\circ \psi_{2,k}(a) - \psi_{1,k}(a)\big)_k \in J_{B,\rho},
\end{equation}
and also
\begin{equation}
    \Ad(v)\circ \psi_2(a) = \psi_1(a) + (\Ad(v)\circ \psi_2(a) - \psi_1(a)) \in S_{B,\rho} + J_{B,\rho} \subseteq S_{B,\rho}.
\end{equation}
This concludes the proof.
\end{proof}

\begin{rem}
    Observe that we do not actually need the full strength of Voiculescu's theorem to make the above argument. We only need $\pi_\rho(B)''$ to be a von Neumann algebra with a uniqueness theorem for full maps up to $(\rho,\#)$-approximate unitary equivalence, or equivalently (by Proposition \ref{prop:ske_strongequivphinorm}), approximate unitary equivalence in the strong-$*$ topology.
\end{rem}

\begin{lemma} \label{lem:hereditaryisoinKK}
    Let $A$ and $D$ be \ca s with $A$ separable. For any full, hereditary $C^*$-subalgebra $I$ of $D$, the inclusion map $\iota: I\hookrightarrow D$ induces an isomorphism  $KK(A,\iota): KK(A,I)\to KK(A,D)$, and therefore also an isomorphism on the level of $KL$. In particular, this holds for the inclusion $J_{B,\rho}^{(\omega)} \subseteq B_\omega$ when $B$ is a simple and purely infinite \ca\ and $\rho$ a state on $B$.
\end{lemma}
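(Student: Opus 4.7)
The plan is to reduce to the separable, $\sigma$-unital case by an inductive-limit argument, where the result is a classical consequence of Brown's stable isomorphism theorem. Specifically, Brown's theorem asserts that a full hereditary $\sigma$-unital $C^*$-subalgebra $I_0 \subseteq D_0$ of a $\sigma$-unital $D_0$ is stably isomorphic to $D_0$, via an isomorphism $\alpha: I_0 \otimes \mathcal{K} \xrightarrow{\cong} D_0 \otimes \mathcal{K}$ compatible with the respective top-left corner embeddings. Combined with Proposition \ref{prop:KKprops}.\ref{prop:KKprops.stable} (extended from $M_2$ to $\mathcal{K}$ by passing to the inductive limit $\mathcal{K} = \varinjlim M_n$), which says that the top-left corner embedding induces a $KK$-isomorphism, this forces $KK(A, \iota_0)$ to be an isomorphism in the separable, $\sigma$-unital setting.

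To pass to the general case I would use the inductive-limit description $KK(A, C) = \varinjlim_{C_0 \subseteq C\ \mathrm{sep.}} KK(A, C_0)$ together with the following separable inheritability statement: for any separable $C^*$-subalgebra $D_0 \subseteq D$, there exists a separable $C^*$-subalgebra $D_s \subseteq D$ containing $D_0$ such that $I_s := I \cap D_s$ is a full hereditary $C^*$-subalgebra of $D_s$. Hereditarity of $I_s$ in $D_s$ is automatic from that of $I$ in $D$; fullness is arranged by a bootstrapping procedure in the spirit of Lemma \ref{lem:sep_purelylargesepinh}: given $D_n$, enlarge it by countably many triples $a_{n,i}, x_{n,i}, b_{n,i}$, with $x_{n,i} \in I$ and $a_{n,i}, b_{n,i} \in D$, witnessing the relation $d = \lim \sum_i a_{n,i} x_{n,i} b_{n,i}$ (which holds because $\overline{DID} = D$) for $d$ ranging over a dense sequence of $D_n$, and take $D_s := \overline{\bigcup_n D_n}$.

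Given the inheritability step, surjectivity and injectivity of $KK(A, \iota)$ are routine inductive-limit computations. For surjectivity, any $\kappa \in KK(A, D)$ arises from some $KK(A, D_0)$; enlarge $D_0$ to $D_s$ with $I_s$ full hereditary $\sigma$-unital inside, and apply the separable case to lift $\kappa$ to an element of $KK(A, I_s) \to KK(A, I)$. Injectivity follows analogously: any class in $KK(A, I)$ annihilated by $KK(A, \iota)$ is represented in $KK(A, I_0)$ and trivialized in $KK(A, D_0)$ for some separable $I_0, D_0$; enlarge suitably and invoke the separable isomorphism. The descent to $KL$ is automatic since $KL$ is a natural quotient of $KK$. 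For the ``In particular'' clause, $B_\omega$ is simple and purely infinite by \cite[Proposition 6.2.6]{Rordam_ClassificationofNuclearCalgebras}, the set $J_{B,\rho}^{(\omega)}$ is a non-zero hereditary $C^*$-subalgebra of $B_\omega$ by the ultrapower version of Proposition \ref{prop:ske_statekernelhereditary}, and every non-zero hereditary $C^*$-subalgebra of a simple $C^*$-algebra is automatically full.

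The main technical obstacle is the non-$\sigma$-unital setting: both $B_\omega$ and $J_{B,\rho}^{(\omega)}$ are highly non-separable, so Brown's theorem cannot be invoked directly and one is forced to handle the inductive-limit bookkeeping carefully. A secondary point that needs attention is checking that the isomorphism $\alpha$ in Brown's theorem can be chosen so as to intertwine $\iota_0$ with the top-left corner embeddings at the level of $KK$-classes; this follows from the explicit construction of $\alpha$ from a partial isometry in the multiplier algebra of $D_0 \otimes \mathcal{K}$ combined with Proposition \ref{prop:KKprops}.\ref{prop:KKprops.unitary}.
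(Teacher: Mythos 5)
Your proposal is correct and its skeleton coincides with the paper's: prove the statement in the separable/$\sigma$-unital setting and then transport it to the general case via the inductive-limit definition of $KK(A,\cdot)$, finally deducing the particular case from hereditariness of $J_{B,\rho}^{(\omega)}$ (Proposition \ref{prop:skeinomega}) together with simplicity and pure infiniteness of $B_\omega$, so that this nonzero hereditary subalgebra is automatically full. The difference is one of degree of detail: the paper simply cites \cite[Proposition 12.24]{Gabe_ClassificationofOinftystableCalgebras} (with $X=\{*\}$) for the $\sigma$-unital case and leaves the direct-limit bookkeeping implicit, whereas you re-derive the $\sigma$-unital case from Brown's stable isomorphism theorem and make the separabilization explicit (enlarging a separable $D_0$ to a separable $D_s$ with $I\cap D_s$ full and hereditary in $D_s$); your bootstrapping is exactly in the spirit of Lemma \ref{lem:sep_purelylargesepinh} and works, and the surjectivity/injectivity arguments in the limit are sound. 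One small imprecision to fix: Proposition \ref{prop:KKprops}.\ref{prop:KKprops.unitary} only covers conjugation by unitaries in the unitization of the coefficient algebra, while Brown's theorem hands you an isometry/partial isometry in the multiplier algebra of $D_0\otimes\mathcal{K}$; what you actually need is the standard (but distinct) fact that conjugation by a multiplier isometry induces the identity on $KK(A,\cdot)$ for stable coefficients, obtained e.g. by the usual $2\times 2$ rotation trick, so this step should be argued or cited separately rather than attributed to Proposition \ref{prop:KKprops}.\ref{prop:KKprops.unitary}.
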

\begin{proof}
The first part of the statement is \cite[Proposition 12.24]{Gabe_ClassificationofOinftystableCalgebras} for $X=\{*\}$, in the case where $I$ and $D$ are $\sigma$-unital. The general case follows by using the direct limit definition of $KK$ in the non-separable setting. The particular consequence follows as $J_{B,\rho}^{(\omega)}$ is hereditary in $B_\omega$ by Proposition \ref{prop:skeinomega} and $B_\omega$ is simple by \cite[Proposition 6.2.6]{Rordam_ClassificationofNuclearCalgebras}.
\end{proof}

We are now ready to prove Theorem \ref{thm:ClassifyingApproximateMaps}.

\begin{proof}[Proof of Theorem \ref{thm:ClassifyingApproximateMaps}]
The general strategy of the proof is to apply classification of lifts to $M_2(B)$ and $\tr\circ\rho$ instead of $B$ and $\rho$. Note that the state $\tr\circ\rho$ on $M_2(B)$ is strongly faithful by Remark \ref{rem:matrixamplifiedske}, and that
\begin{equation} \label{eq:classifyingapproximatemaps}
\begin{aligned}
    KK(A,\pi_{\tr\circ\rho}(M_2(B))'')
        &\cong KK(A,M_2(\pi_\rho(B)'')) \\
        &\cong KK(A,\pi_\rho(B)'') \\
        &\cong KK(A,B(\Hb)) = 0,
\end{aligned}
\end{equation}
by the rightmost isomorphism in \eqref{eq:matrixamplifiedske}, Proposition \ref{prop:KKprops}.\ref{prop:KKprops.stable} and \cite[Proposition 4.1]{Dadarlat_ApproximateUnitaryEquivalenceAndTheTopologyOfExt}. Hence, $M_2(B)$ (which is simple and purely infinite by \cite[Proposition 4.1.8.i]{Rordam_ClassificationofNuclearCalgebras}) and $\tr\circ\rho$ do indeed satisfy the required conditions to apply classification of lifts to. We will also use commutativity of \eqref{eq:matrixamplifiedske} throughout the proof.

Existence: pick $\kappa$ as in the statement, and fix\footnote{Remark that such a $\theta$ always exists. Indeed, letting $\tilde\theta: A\to B(\Hb)$ be a unital and full \sh, the map $\theta := \iota^{(2)}_{B(\Hb)}\circ\tilde\theta: A\to M_2(B(\Hb)) \cong M_2(\pi_\rho(B)'')$ is unitizably full by Lemma \ref{lem:deunitizationtrick}.} a unitizably full \sh\ $\theta: A\to M_2(\pi_{\rho}(B)'')$. Note that $KK(A,\iota^{(2)}_{S_{B,\rho}})(\kappa)$ is automatically a $KK$-lift of $\theta$ by \eqref{eq:classifyingapproximatemaps}, so we can apply classification of lifts to this element to find a unitizably full map $\varphi^{(2)}: A\to S_{M_2(B),\tr\circ\rho}\cong M_2(S_{B,\rho})$ lifting $\theta$ such that
\begin{equation}
    [\varphi^{(2)}]_{KK(A,M_2(S_{B,\rho}))} = KK(A,\iota^{(2)}_{S_{B,\rho}})(\kappa).
\end{equation}
By naturality of $\Gamma_0^{(A,\cdot)}$, it then follows that
\begin{equation}
    [\varphi^{(2)}(1_A)]_0 = [\iota^{(2)}_{S_{B,\rho}}(1_{S_{B,\rho}})]_0.
\end{equation}
As $q_{M_2(B)}\circ\varphi^{(2)}(1_A) = \iota^{(2)}_{B(\Hb)}(1_{B(\Hb)}) = q_{M_2(B)}\circ\iota^{(2)}_{S_{B,\rho}}(1_{S_{B,\rho}})$ is full and properly infinite, it follows from Lemma \ref{lem:propertiesliftingfromquotient} that both $\varphi^{(2)}(1_A)$ and $\iota^{(2)}_{S_{B,\rho}}(1_{S_{B,\rho}})$ are full and properly infinite. By \cite[Proposition 4.1.4]{Rordam_ClassificationofNuclearCalgebras}, we then have $\varphi^{(2)}(1_A)\sim\iota^{(2)}_{S_{B,\rho}}(1_{S_{B,\rho}})$, so $\varphi^{(2)}(1_A)\oplus 0 \sim_u\iota^{(4)}_{S_{B,\rho}}(1_{S_{B,\rho}})$. Hence, $\varphi^{(2)}\oplus 0: A\to M_4(S_{B,\rho})$ is unitarily equivalent to $\iota^{(4)}\circ\tilde\varphi$ for some unital \sh\ $\tilde \varphi:A\to S_{B,\rho}$. We now claim that $\varphi$ is the desired \sh. To see this, first note that it is full by Lemma \ref{lem:deunitizationtrick}, as $\iota^{(4)}\circ\varphi\sim_u \varphi^{(2)}\oplus 0$ is unitizably full. To show that $\varphi$ realizes the class of $\kappa$ in $KK$, we compute
\begin{equation}
\begin{aligned}
    KK(A,\iota^{(4)}_{S_{B,\rho}})[\varphi]_{KK(A,S_{B,\rho})}
        &= KK(A,\iota^{(2)}_{M_2(S_{B,\rho})})[\varphi^{(2)}]_{KK(A,M_2(S_{B,\rho}))} \\
        &= KK(A,\iota^{(4)}_{S_{B,\rho}})(\kappa),
\end{aligned}
\end{equation}
using functoriality and Proposition \ref{prop:KKprops}.\ref{prop:KKprops.unitary}. As $KK(A,\iota^{(4)}_{S_{B,\rho}})$ is an isomorphism by Proposition \ref{prop:KKprops}.\ref{prop:KKprops.stable}, this implies that $[\varphi]_{KK(A,S_{B,\rho})} = \kappa$, as desired.

Uniqueness: suppose we are given $\varphi_1$ and $\varphi_2$ as in the statement. By Lemma \ref{lem:fullcuntzpairuptounitary}, there exists a unitary $v\in B_\infty$ such that $\varphi_1, \Ad(v)\circ \varphi_2: A\rightrightarrows S_{B,\rho}\rhd J_{B,\rho}$ forms a Cuntz pair. It then follows that
\begin{equation}
\begin{aligned}
    KL(A,&\iota_{J_{B,\rho}^{(\omega)}\subseteq B_\omega})[\pi_{\infty\to\omega}\circ\varphi_1, \pi_{\infty\to\omega}\circ\Ad(v)\circ\varphi_2]_{KL(A,J_{B,\rho}^{(\omega)})} \\
        &= [\pi_{\infty\to\omega}\circ\varphi_1]_{KL(A,B_\omega)} - [\Ad(\pi_{\infty\to\omega}(v))\circ \pi_{\infty\to\omega}\circ\varphi_2]_{KL(A,B_\omega)} \\
        &= KL(A,\iota_{S_{B,\rho}^{(\omega)}\subseteq B_\omega})\left( [\pi_{\infty\to\omega}\circ\varphi_1]_{KL(A,S_{B,\rho}^{(\omega)})} - [\pi_{\infty\to\omega}\circ\varphi_2]_{KL(A,S_{B,\rho}^{(\omega)})} \right) \\
        &= 0,
\end{aligned}
\end{equation}
by functoriality and Proposition \ref{prop:KKprops}.\ref{prop:KKprops.unitary}. As $KL(A,\iota_{J_{B,\rho}^{(\omega)}\subseteq B_\omega})$ is in particular injective by Lemma \ref{lem:hereditaryisoinKK}, this implies that $[\pi_{\infty\to\omega}\circ\varphi_1, \pi_{\infty\to\omega}\circ\Ad(v)\circ\varphi_2]_{KL(A,J_{B,\rho}^{(\omega)})} = 0$. Hence, the \sh s
\begin{equation}
\begin{aligned}
    \psi_1 &:= \iota^{(2)}_{S_{B,\rho}^{(\omega)}}\circ \pi_{\infty\to\omega}\circ \varphi_1, \\
    \psi_2 &:= \iota^{(2)}_{S_{B,\rho}^{(\omega)}}\circ \pi_{\infty\to\omega}\circ \Ad(v)\circ\varphi_2
\end{aligned}
\end{equation}
form a Cuntz pair and satisfy $[\psi_1, \psi_2]_{KL(A,J_{M_2(B),\tr\circ\rho}^{(\omega)})} = 0$. Classification of lifts (with $\theta := q_{M_2(B)}^{(\omega)} \circ \psi_1 = q_{M_2(B)}^{(\omega)}\circ \psi_2$) then implies that $\psi_1$ and $\psi_2$ are approximately unitarily equivalent. Viewing them as maps into $(M_2(B))_\omega \cong M_2(B_\omega)$ and recalling that $A$ is separable, we can apply the $\varepsilon$-test to find that they are in fact unitarily equivalent by a unitary $u\in M_2(B_\omega)$. In other words, we have
\begin{equation}
    u \begin{pmatrix} \pi_{\infty\to\omega}\circ\varphi_1(a) &0\\ 0&0 \end{pmatrix} u^* = \begin{pmatrix} \pi_{\infty\to\omega}\circ\Ad(v)\circ\varphi_2(a) &0\\ 0&0 \end{pmatrix}.
\end{equation}
By unitality of $\varphi_1$ and $\varphi_2$, this unitary must be of the form $u = u_1\oplus u_2$ for some unitaries $u_1,u_2\in B_\omega$, so $\pi_{\infty\to\omega}\circ\varphi_1 \sim_u \pi_{\infty\to\omega}\circ\varphi_1$ in $B_\omega$, as desired.
\end{proof}

\subsection{Classification of embeddings}
The following lemma and subsequent theorem are an adaptation of \cite[Lemma 4.2 \& Theorem 4.3]{Gabe_ANewProofOfKirchbergsO2StableClassification} to the ultrapower setting, with very similar proofs.
\begin{lemma}
    Let $A$ be a separable and $B$ a unital \ca. Suppose $\rho: A\to B_\infty$ is a \sh\ with the following property: for any map $\eta: \mathbb N \to \mathbb N$ for which $\lim_{n\to\infty} \eta(n) = \infty$, the maps $\pi_{\infty\to\omega}\circ \rho$ and $\pi_{\infty\to\omega}\circ \eta^* \circ \rho$ are approximately unitarily equivalent (in $B_\omega$). Then for any set-theoretic lift $(\rho_n)_n: A\to \ell^\infty(B)$, finite $F\subseteq A$, $\varepsilon> 0$ and $m\in\mathbb N$, there exists $k\geq m$ such that for all $n\geq k$, there exists a unitary $u\in B$ satisfying
    \begin{equation}
        \|u\rho_n(a)u^* - \rho_k(a)\| < \varepsilon
    \end{equation}
    for all $a\in F$.
\end{lemma}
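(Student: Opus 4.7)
The plan is to argue by contradiction, constructing a reparametrization $\eta$ that encodes the failure of the conclusion and then applying the hypothesis to reach an absurdity.

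Suppose the conclusion fails: fix $(\rho_n)_n$, $F$, $\varepsilon > 0$ and $m \in \mathbb{N}$ such that for every $k \geq m$ one can choose $n(k) \geq k$ with the property that no unitary $u \in B$ satisfies $\|u\rho_{n(k)}(a)u^* - \rho_k(a)\| < \varepsilon$ for all $a \in F$. Define $\eta \colon \mathbb{N} \to \mathbb{N}$ by $\eta(k) \coloneqq n(k)$ for $k \geq m$ and $\eta(k) \coloneqq k$ for $k < m$. Since $\eta(k) \geq k$ for $k \geq m$, one has $\eta(k) \to \infty$, so the hypothesis applies and gives that $\pi_{\infty\to\omega} \circ \rho$ and $\pi_{\infty\to\omega} \circ \eta^* \circ \rho$ are approximately unitarily equivalent in $B_\omega$. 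Applied to the finite set $F \subseteq A$ and the tolerance $\varepsilon/4$, this produces a unitary $u \in B_\omega$ with
\begin{equation}
    \bigl\|u \pi_{\infty\to\omega}(\rho(a)) u^* - \pi_{\infty\to\omega}(\eta^*\rho(a))\bigr\| < \varepsilon/4 \qquad (a \in F).
\end{equation}

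Next I would lift $u$ to a bounded sequence $(u_j)_j$ of unitaries in $\ell^\infty(B)$ by a standard polar-decomposition argument: any representative lift $(v_j)_j$ satisfies $v_j^*v_j, v_jv_j^* \to 1$ along $\omega$, so on a set $T \in \omega$ each $v_j$ is invertible, and one may replace $v_j$ by $v_j(v_j^*v_j)^{-1/2}$ on $T$ and set $u_j \coloneqq 1_B$ off $T$. Then $(u_j \rho_j(a) u_j^* - \rho_{\eta(j)}(a))_j$ represents the element whose norm is bounded by $\varepsilon/4$ above, so its norm has $\omega$-limit at most $\varepsilon/4$ for each $a \in F$. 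Consequently the set
\begin{equation}
    S \coloneqq \{j \geq m : \|u_j\rho_j(a)u_j^* - \rho_{\eta(j)}(a)\| < \varepsilon/2 \text{ for all } a \in F\}
\end{equation}
is a finite intersection of members of $\omega$, hence lies in $\omega$ and is unbounded.

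Picking any $j \in S$, we have $j \geq m$, so $\eta(j) = n(j)$. Conjugating the inequality $\|u_j \rho_j(a) u_j^* - \rho_{n(j)}(a)\| < \varepsilon/2$ by the isometry $\mathrm{Ad}(u_j^*)$ gives $\|u_j^* \rho_{n(j)}(a) u_j - \rho_j(a)\| < \varepsilon/2 < \varepsilon$ for all $a \in F$. Thus $u_j^* \in B$ is a unitary of the form forbidden by the defining property of $n(j)$, the desired contradiction. The conceptual heart of the proof is the construction of $\eta$ that packages the witnesses of failure into a single reparametrization so that the hypothesis can be invoked; the only mild technical point is the standard lifting of a unitary in $B_\omega$ to a sequence of unitaries in $B$.
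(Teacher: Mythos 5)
Your proposal is correct and follows essentially the same route as the paper: negate the statement, package the witnessing indices $n(k)$ into a reparametrization $\eta$, apply the hypothesis to get a unitary in $B_\omega$, lift it to a sequence of unitaries, and extract a single index along $\omega$ to contradict the choice of $n(k)$. The only cosmetic differences are that you spell out the polar-decomposition lifting of the unitary (which the paper takes for granted) and conjugate in the opposite direction, fixed by passing to $u_j^*$.
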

\begin{proof}
We will argue by contradiction and suppose the statement does not hold. Then there exist a lift $(\rho_n)_n$ of $\rho$, a finite set $F\subseteq A$, $\varepsilon > 0$ and $m\in \mathbb N$ such that for all $k\geq m$, there exists $n_k\geq k$ such that
\begin{equation} \label{eq:onesidedintertwining}
    \max_{a\in F} \|v\rho_{n_k}(a)v^* - \rho_k(a)\| \geq \varepsilon
\end{equation}
for any unitary $v\in B$. Define the map $\eta:\mathbb N \to \mathbb N$ by $\eta(k) = 1$ for $k < m$ and $\eta(k) = n_k$ for $k\geq m$. As $n_k \geq k$ for all $k\in\mathbb N$, it follows that $\lim_{k\to\infty} \eta(k) = \infty$. By assumption, the maps $\pi_{\infty\to\omega}\circ \rho$ and $\pi_{\infty\to\omega}\circ \eta^* \circ \rho$ are then approximately unitarily equivalent, so there exists a unitary $u\in B_\omega$ such that
\begin{equation}
    \|\Ad(u)\circ \pi_{\infty\to\omega}\circ \eta^* \circ \rho(a) - \pi_{\infty\to\omega}\circ \rho(a)\| < \varepsilon
\end{equation}
for all $a\in F$. Moreover, we have
\begin{equation}
    \pi_\infty(\rho_{n_k}(a))_k = \eta^*\circ \rho(a) \quad (a\in A)
\end{equation}
by definition of $\eta$. Fixing a unitary lift $(u_k)_k \in \ell^\infty(B)$ of $u$, we now note that
\begin{equation}
\begin{aligned}
    \lim_{k\to \omega} \| u_k \rho_{n_k}(a) u_k^* - \rho_k(a)\|
        &= \|\pi_\omega\big(u_k \rho_{n_k}(a) u_k^* - \rho_k(a)\big)_k\| \\
        &= \| u (\pi_{\infty\to\omega}\circ \eta^* \circ \rho(a)) u^* - \pi_{\infty\to\omega}\circ\rho(a)\| < \varepsilon
\end{aligned}
\end{equation}
for all $a\in F$. This implies that there exists a $k\geq m$ such that
\begin{equation}
    \| u_k \rho_{n_k}(a) u_k^* - \rho_k(a)\| < \varepsilon
\end{equation}
for all $a\in F$, contradicting \eqref{eq:onesidedintertwining}. Hence, the statement must be true.
\end{proof}

\begin{thm}[Intertwining by reparametrization] \label{thm:reparametrization}
    Let $A$ be a separable and $B$ a unital \ca, and $\rho: A\to B_\infty$ a \sh. Then $\pi_{\infty\to\omega}\circ\rho$ is unitarily equivalent to a \sh\ factoring through $B$ if for any map $\eta: \mathbb N \to \mathbb N$ for which $\lim_{n\to\infty} \eta(n) = \infty$, the maps $\pi_{\infty\to\omega}\circ \rho$ and $\pi_{\infty\to\omega}\circ \eta^* \circ \rho$ are approximately unitarily equivalent.
\end{thm}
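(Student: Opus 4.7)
The plan is to use the previous lemma to build a $\ast$-homomorphism $\psi\colon A\to B$ out of a suitable reparametrization of $\rho$, and then invoke the hypothesis (with that reparametrization) to connect $\pi_{\infty\to\omega}\circ\rho$ to $\psi$ inside $B_\omega$.

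First I would fix any set-theoretic lift $(\rho_n)_n\in\ell^\infty(B)$ of $\rho$ together with a dense sequence $(a_j)_j$ in $A$. Applying the previous lemma recursively (at the $m$-th stage with $F=\{a_1,\dots,a_m\}$, $\varepsilon=2^{-m}$, and starting index $m$ large enough), I obtain a strictly increasing sequence $k_1<k_2<\cdots$ in $\mathbb N$ and unitaries $w_m\in B$ satisfying
\[
\|w_m\,\rho_{k_{m+1}}(a_j)\,w_m^{\ast}-\rho_{k_m}(a_j)\|<2^{-m},\qquad j\le m.
\]
Setting $V_m:=w_1w_2\cdots w_{m-1}$ (with $V_1:=1_B$), the estimate above gives
\[
\|V_{m+1}\rho_{k_{m+1}}(a_j)V_{m+1}^{\ast}-V_m\rho_{k_m}(a_j)V_m^{\ast}\|<2^{-m}\quad(j\le m),
\]
so $(V_m\rho_{k_m}(a_j)V_m^{\ast})_{m\ge j}$ is norm-Cauchy in $B$. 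Define $\psi(a_j)$ as its limit; a standard uniform-continuity argument then extends $\psi$ to a $\ast$-homomorphism $\psi\colon A\to B$, since each $V_m\rho_{k_m}(\,\cdot\,)V_m^{\ast}\colon A\to B$ is a contractive c.p.\ map and $\psi$ is $\ast$-linear and multiplicative on the dense set.

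Next, define $\eta\colon\mathbb N\to\mathbb N$ by $\eta(m)=k_m$; then $\eta(n)\to\infty$. The bounded unitary sequence $(V_m)_m$ represents a unitary $v\in B_\infty$, and by construction $V_m\rho_{k_m}(a_j)V_m^{\ast}-\psi(a_j)\to 0$ in norm for every $j$. Hence in $B_\infty$ we have $\Ad(v)\circ\eta^{\ast}\circ\rho=\iota_\infty\circ\psi$, where $\iota_\infty\colon B\to B_\infty$ denotes the constant-sequence embedding. Composing with $\pi_{\infty\to\omega}$ shows that $\pi_{\infty\to\omega}\circ\eta^{\ast}\circ\rho$ is unitarily equivalent in $B_\omega$ to the $\ast$-homomorphism $\iota_\omega\circ\psi\colon A\to B_\omega$.

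Finally, by the hypothesis applied to this $\eta$, the maps $\pi_{\infty\to\omega}\circ\rho$ and $\pi_{\infty\to\omega}\circ\eta^{\ast}\circ\rho$ are approximately unitarily equivalent in $B_\omega$, and therefore $\pi_{\infty\to\omega}\circ\rho$ is approximately unitarily equivalent in $B_\omega$ to $\iota_\omega\circ\psi$. Since $A$ is separable, a standard $\varepsilon$-test in the countably saturated ultrapower $B_\omega$ upgrades this to an honest unitary equivalence by a single unitary in $B_\omega$, which is exactly the desired conclusion. The main obstacle is the inductive construction of the $V_m$ in the first step together with the verification that $(V_m\rho_{k_m}V_m^{\ast})_m$ converges in norm; once $\psi$ and $v$ exist, routing everything through $B_\omega$ via the hypothesis and reindexing is essentially formal.
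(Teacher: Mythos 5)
Your proposal is, in outline, the same argument as the paper's: recursive application of the preceding lemma to produce $k_1<k_2<\cdots$ and unitaries, telescoping products $V_m$, the reparametrization $\eta(m)=k_m$, the identity $\Ad(v)\circ\eta^*\circ\rho=\iota_B\circ\psi$ in $B_\infty$, and finally the hypothesis plus the $\varepsilon$-test in $B_\omega$ to turn approximate unitary equivalence into genuine unitary equivalence (the paper asserts this last upgrade more tersely, but it is the same point).

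The one step that is not justified as written is the extension of $\psi$ beyond the dense set. You fixed an \emph{arbitrary} set-theoretic lift $(\rho_n)_n$, so the individual maps $\rho_{k_m}$ carry no linearity, positivity or contractivity whatsoever; hence the claim that each $V_m\rho_{k_m}(\cdot)V_m^*$ is a contractive c.p.\ map is unfounded, and likewise ``$\psi$ is $*$-linear and multiplicative on the dense set'' does not literally make sense (sums and products of the $a_j$ need not lie in the dense set, and the approximating maps are not multiplicative anyway). The repair is exactly what the paper does: since $(\rho_n(a)-\rho_n(b))_n$ is a lift of $\rho(a)-\rho(b)$, one has $\limsup_n\|\rho_n(a)-\rho_n(b)\|=\|\rho(a)-\rho(b)\|\leq\|a-b\|$, which yields the Cauchy estimate for \emph{every} $a\in A$ (so $\psi$ is defined on all of $A$ and is $1$-Lipschitz on the nose in the limit), and the $*$-homomorphism property of $\psi$ then comes for free from the identity $\iota_B\circ\psi=\Ad(v)\circ\eta^*\circ\rho$ together with injectivity of the constant-sequence embedding $\iota_B$, rather than from any structure of the individual $\rho_{k_m}$. (Alternatively you could insist on a c.p.c.\ lift of $\rho$, which exists for separable $A$, but that is an extra ingredient the argument does not need.) With that adjustment the proof is complete and coincides with the paper's.
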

\begin{proof}
Let $(F_n)_n$ be an increasing sequence of finite subsets of $A$ such that their union is dense. Fix an arbitrary lift $(\rho_n)_n: A\to \ell^\infty(B)$ of $\rho$, and recursively apply the previous lemma for $F=F_n$, $\varepsilon = 2^{-n}$ and $m= k_{n-1} + 1$ (defining $k_0 := 1$) to obtain a sequence $1= k_0 < k_1 < k_2 < ...$ and a sequence $(u_n)_n$ of unitaries in $B$ such that
\begin{equation}
    \|u_n \rho_{k_{n+1}}(x) u_n^* - \rho_{k_n}(x) \| < 2^{-n}
\end{equation}
for all $n\in\mathbb N$ and $x\in F_n$. Defining $v_n := u_1 \cdots u_n$, we now claim that
\begin{equation}
    \psi := \lim_{n\to\infty} \Ad(v_n)\circ \rho_{k_{n+1}}
\end{equation}
is a well-defined \sh\ from $A$ to $B$ which, when viewed as a map into $B_\infty$, is approximately unitarily equivalent to $\rho$. To see that $\psi$ is well-defined, it suffices to check $(v_n\rho_{k_{n+1}}(a)v_n^*)_n$ is a Cauchy sequence for any $a\in A$. To this end, pick $\varepsilon>0$, pick $N_0\in\mathbb N$ and $b\in F_{N_0}$ such that $\|a-b\| < \varepsilon/3$. By construction, we then find for all $m\geq n\geq N_0$ that
\begin{equation}
    v_m\rho_{k_{m+1}}(b)v_m^* \approx_{2^{-m}} v_{m-1}\rho_{k_m}(b)v_{m-1}^* \approx_{2^{-m+1}} ... \approx_{2^{-n-1}} v_n\rho_{k_{n+1}}(b)v_n^*,
\end{equation}
so
\begin{equation}
    \| v_m\rho_{k_{m+1}}(b)v_m^* - v_n\rho_{k_{n+1}}(b)v_n^* \| \leq \sum_{j=n+1}^m 2^{-j} \leq \sum_{j=n+1}^\infty 2^{-j} = 2^{-n-1}.
\end{equation}
Hence, there exists $N_1 \geq N_0$ such that the above is bounded by $\varepsilon/3$ for all $m\geq n \geq N_1$. Moreover, we have
\begin{equation}
    \limsup_{k\to\infty} \|\rho_k(a)-\rho_k(b)\| = \|\rho(a)-\rho(b)\| < \frac{\varepsilon}{3},
\end{equation}
so since $(k_n)_n$ is a strictly increasing sequence, there exists $N_2 \in\mathbb N$ such that $\|\rho_{k_n}(a)-\rho_{k_n}(b)\|<\varepsilon/3$ for all $n\geq N_2$. Combining all these estimates, we obtain for all $m\geq n \geq \max\{N_1,N_2\}$
\begin{equation}
    \| v_m\rho_{k_{m+1}}(a)v_m^* - v_n\rho_{k_{n+1}}(a)v_n^* \| < \varepsilon,
\end{equation}
proving that $(v_n\rho_{k_{n+1}}(a)v_n^*)_n$ is Cauchy. Hence, $\psi$ is indeed well-defined.

To show that it is approximately unitarily equivalent to $\rho$ in $B_\infty$, let $v \in B_\infty$ be the unitary induced by $(v_n)_n \in \ell^\infty(B)$, and let
\begin{equation}
    \eta:\mathbb N\to \mathbb N: n\mapsto k_{n+1}.
\end{equation}
By construction of $\psi$, it then follows that
\begin{equation}
    \iota_B\circ\psi = \Ad(v)\circ\eta^*\circ\rho,
\end{equation}
so by assumption, $\pi_{\infty\to\omega}\circ\iota_B\circ\psi$ and $\pi_{\infty\to\omega}\circ\rho$ are approximately unitarily equivalent. Since we are working in $B_\omega$, this means that they are unitarily equivalent. Finally, we observe that the composition of $\psi$ with the embedding $\iota_B$ is a \sh, so the same must hold for $\psi$ itself.
\end{proof}

\begin{lemma} \label{lem:jbisoinKK}
    Let $A$ be a separable, nuclear \ca\ and $B$ a \ca\ with a strongly faithful state $\rho$ such that $\pi_\rho(B)'' \cong B(\Hb)$ for some separable Hilbert space $\Hb$. Then the inclusion $j_B^{(\omega)}: J_{B,\rho}^{(\omega)}\hookrightarrow S_{B,\rho}^{(\omega)}$ induces an isomorphism $KK(A,j_B^{(\omega)}): KK(A,J_{B,\rho}^{(\omega)})\to KK(A,S_{B,\rho}^{(\omega)})$. Subsequently, the same holds for $KL(A,j_B^{(\omega)})$.
\end{lemma}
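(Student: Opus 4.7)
The plan is to apply the six-term exact sequence in $KK$-theory to the ultrapower reduced state-kernel extension $\mathfrak e_B^{(\omega)}$ and show that the terms involving the quotient $\pi_\rho(B)''$ vanish. Since $A$ is separable and nuclear, $KK(A, -)$ is half-exact on $\mathfrak e_B^{(\omega)}$ by \cite[Proposition 5.4.v]{CarrionGabeSchafhauserTikuisisWhite_ClassifyingHomomorphismsIUnitalSimpleNuclearCalgebras}; combined with homotopy invariance and Bott periodicity, this yields the six-term cyclic exact sequence
\begin{equation}
\begin{tikzcd}[column sep = small]
KK(A, J_{B,\rho}^{(\omega)}) \ar[r, "KK(A{,}j_B^{(\omega)})"] & KK(A, S_{B,\rho}^{(\omega)}) \ar[r] & KK(A, \pi_\rho(B)'') \ar[d] \\
KK^1(A, \pi_\rho(B)'') \ar[u] & KK^1(A, S_{B,\rho}^{(\omega)}) \ar[l] & KK^1(A, J_{B,\rho}^{(\omega)}) \ar[l]
\end{tikzcd}
\end{equation}
Showing $KK(A, \pi_\rho(B)'') = 0 = KK^1(A, \pi_\rho(B)'')$ then squeezes $KK(A, j_B^{(\omega)})$ between two zero groups and forces it to be an isomorphism.

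By assumption $\pi_\rho(B)'' \cong B(\Hb)$. The vanishing $KK(A, B(\Hb)) = 0$ is immediate from $B(\Hb) = \Mult{\K(\Hb)}$ together with \cite[Proposition 4.1]{Dadarlat_ApproximateUnitaryEquivalenceAndTheTopologyOfExt}, as was already invoked in the proof of Theorem \ref{thm:ClassifyingLifts}. For the analogous $KK^1(A, B(\Hb)) = KK(A, SB(\Hb)) = 0$, I would use the same Eilenberg-swindle argument underpinning Dadarlat's result: a strictly summing sequence of isometries in $B(\Hb)$ sits, via constant functions, as a strictly summing sequence of isometries in $\Mult{C_0(\mathbb R) \otimes B(\Hb)}$, which is exactly what is needed for the Cuntz-sum swindle to collapse every $KK$-class to zero.

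The descent to $KL$ is then formal: $KL(A, -)$ is defined as a quotient of $KK(A, -)$ by a subgroup that is natural in the second variable, so the $KK$-isomorphism induced by $j_B^{(\omega)}$ automatically descends to a $KL$-isomorphism.

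The main obstacle I foresee is making the $KK^1$-vanishing watertight, since \cite[Proposition 4.1]{Dadarlat_ApproximateUnitaryEquivalenceAndTheTopologyOfExt} is stated only for $KK^0$ of multiplier algebras and the suspension of $B(\Hb)$ is not literally a multiplier algebra. Carefully rewriting the swindle argument, or citing a result directly establishing $KK$-contractibility of $B(\Hb)$ in all degrees, will be the only non-routine step.
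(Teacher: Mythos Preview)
Your approach is essentially the same as the paper's: apply the six-term exact sequence to $\mathfrak e_B^{(\omega)}$ and kill the two $B(\Hb)$-terms via Dadarlat's result. The one point worth mentioning is that the obstacle you flag for $KK^1$ is self-inflicted: instead of writing $KK^1(A,B(\Hb)) = KK(A,SB(\Hb))$ and worrying that $SB(\Hb)$ is not a multiplier algebra, the paper writes $KK^1(A,B(\Hb)) \cong KK(SA,B(\Hb))$, suspending in the first variable, so that \cite[Proposition 4.1]{Dadarlat_ApproximateUnitaryEquivalenceAndTheTopologyOfExt} applies verbatim with $SA$ in place of $A$.
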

\begin{proof}
As $A$ is nuclear, the reduced state-kernel extension induces a six-term exact sequence
\begin{equation}
\begin{tikzcd}[column sep=large]
    KK(A,J_{B,\rho}^{(\omega)}) \ar[r,"{KK(A,j_B^{(\omega)})}"] & KK(A,S_{B,\rho}^{(\omega)}) \ar[r] & KK(A,B(\Hb)) \ar[d]\\
    KK^1(A,B(\Hb)) \ar[u] & KK^1(A,S_{B,\rho}^{(\omega)}) \ar[l] & KK^1(A,J_{B,\rho}^{(\omega)}) \ar[l]
\end{tikzcd}
\end{equation}
in $KK$ (see \cite[Theorem 19.5.7]{Blackadar_KTheoryForOperatorAlgebras}). By \cite[Proposition 4.1]{Dadarlat_ApproximateUnitaryEquivalenceAndTheTopologyOfExt}, it follows that both $KK(A,B(\Hb))$ and $KK^1(A,B(\Hb)) \cong KK(SA,B(\Hb))$ are trivial, which results in $KK(A,j_B^{(\omega)})$ being an isomorphism. The same holds for $KL(A,j_B^{(\omega)})$ by naturality of $KL$.
\end{proof}

\begin{defn}
    Let $A$ and $B$ be \ca s with $A$ separable. We define
    \begin{equation}
        \widetilde{KL}(A,B) := KL(A,B)/\ker KL(A,\iota_B^{(\omega)}),
    \end{equation}
    where $\iota_B^{(\omega)}: B\to B_\omega$ is the canonical inclusion. Given a \sh\ $\varphi:A\to B$, we will denote $[\varphi]_{\widetilde{KL}(A,B)}$ for the element in $\widetilde{KL}(A,B)$ induced by $[\varphi]_{KL(A,B)}$. We will say an element $\tilde\lambda\in \widetilde{KL}(A,B)$ is \emph{unital} if there exists a lift $\lambda \in KL(A,B)$ of $\tilde\lambda$ such that $\overline\Gamma_0^{(A,B)}(\lambda)[1_A]_0 = [1_B]_0$.
\end{defn}

\begin{thm}[Classification of unital embeddings] \label{thm:ClassifyingEmbeddings}
    Let $A$ be a unital, separable, nuclear \ca, and $B$ a unital, simple, separable and purely infinite \ca. For any unital $\tilde \lambda \in \widetilde{KL}(A,B)$, there exists a unital, injective \sh\ $\varphi: A\to B$ inducing $\tilde \lambda$, and such a $\varphi$ is unique up to approximate unitary equivalence.
\end{thm}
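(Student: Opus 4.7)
The plan is to use Corollary~\ref{cor:stronglyfaithfulstateexists} to fix a strongly faithful state $\rho$ on $B$ with $\pi_\rho(B)''\cong B(\Hb)$, and then work with the constant-sequence inclusions $\iota_B\colon B\hookrightarrow S_{B,\rho}$ and $\iota_B^{(\omega)}\colon B\hookrightarrow B_\omega$. Lemmas~\ref{lem:jbisoinKK} and~\ref{lem:hereditaryisoinKK}, combined through the factorisation $\iota_{J_{B,\rho}^{(\omega)}\subseteq B_\omega}=\iota_{S_{B,\rho}^{(\omega)}\subseteq B_\omega}\circ j_B^{(\omega)}$, will produce an isomorphism $KL(A,S_{B,\rho}^{(\omega)})\cong KL(A,B_\omega)$ induced by the inclusion; this is the main bookkeeping device. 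For uniqueness I would take $\varphi_1,\varphi_2\colon A\to B$ unital and injective with equal classes in $\widetilde{KL}(A,B)$: by definition $[\iota_B^{(\omega)}\circ\varphi_1]_{KL(A,B_\omega)}=[\iota_B^{(\omega)}\circ\varphi_2]_{KL(A,B_\omega)}$, and the above isomorphism transfers this to equality of $[\pi_{\infty\to\omega}\circ\iota_B\circ\varphi_i]_{KL(A,S_{B,\rho}^{(\omega)})}$. Each $\iota_B\circ\varphi_i$ is unital and full in $S_{B,\rho}$ (simplicity of $B$ plus injectivity), so Theorem~\ref{thm:ClassifyingApproximateMaps}(b) gives unitary equivalence of $\iota_B^{(\omega)}\circ\varphi_1$ and $\iota_B^{(\omega)}\circ\varphi_2$ in $B_\omega$, which by a standard $\varepsilon$-test descends to approximate unitary equivalence of $\varphi_1$ and $\varphi_2$ in $B$.

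For existence I would first lift $\tilde\lambda$ to a unital representative $\lambda\in KL(A,B)$ and then further to $\hat\lambda\in KK(A,B)$ (using that $KL$ is a quotient of $KK$), and set $\hat\kappa\coloneqq KK(A,\iota_B)(\hat\lambda)\in KK(A,S_{B,\rho})$, which preserves the $K_0$-class of the unit because $\iota_B$ is unital. Theorem~\ref{thm:ClassifyingApproximateMaps}(a) then produces a unital full \sh\ $\varphi_0\colon A\to S_{B,\rho}$ with $[\varphi_0]_{KK(A,S_{B,\rho})}=\hat\kappa$. Viewing $\varphi_0$ as a \sh\ into $B_\infty$, the plan is to invoke the intertwining-by-reparametrization theorem (Theorem~\ref{thm:reparametrization}) to replace it by a \sh\ $\varphi\colon A\to B$ satisfying $\iota_B^{(\omega)}\circ\varphi\sim_u\pi_{\infty\to\omega}\circ\varphi_0$ in $B_\omega$.

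The hard part will be the reparametrization hypothesis, which requires, for every $\eta\colon\mathbb N\to\mathbb N$ with $\eta(n)\to\infty$, approximate unitary equivalence in $B_\omega$ of $\pi_{\infty\to\omega}\circ\varphi_0$ and $\pi_{\infty\to\omega}\circ\eta^*\circ\varphi_0$. Since $\eta^*$ is unital and preserves the $(\rho,\#)$-Cauchy condition, $\eta^*\circ\varphi_0\colon A\to S_{B,\rho}$ is again unital and full (fullness is preserved by any unital \sh), so by Theorem~\ref{thm:ClassifyingApproximateMaps}(b) it will suffice to verify equality of the induced $KL$-classes in $KL(A,S_{B,\rho}^{(\omega)})$. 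The crucial elementary observation---and the reason that $\widetilde{KL}$ rather than $KL$ is the correct invariant---is that $\eta^*$ fixes the constant-sequence subalgebra, i.e.\ $\eta^*\circ\iota_B=\iota_B$; by functoriality this gives
\begin{equation*}
[\pi_{\infty\to\omega}\circ\varphi_0]_{KL(A,B_\omega)}=KL(A,\iota_B^{(\omega)})(\lambda)=[\pi_{\infty\to\omega}\circ\eta^*\circ\varphi_0]_{KL(A,B_\omega)},
\end{equation*}
and the isomorphism from the first paragraph transfers this equality to $KL(A,S_{B,\rho}^{(\omega)})$. With reparametrization then applicable, the resulting $\varphi\colon A\to B$ will be unital (since $\pi_{\infty\to\omega}\circ\varphi_0$ is and $\iota_B^{(\omega)}$ is injective) and injective (since $B_\omega$ is simple and $\iota_B^{(\omega)}\circ\varphi$ is nonzero), and the identity $[\iota_B^{(\omega)}\circ\varphi]_{KL(A,B_\omega)}=KL(A,\iota_B^{(\omega)})(\lambda)$ will give $[\varphi]_{\widetilde{KL}(A,B)}=\tilde\lambda$ on reading it through the definition of $\widetilde{KL}$.
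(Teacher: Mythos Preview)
Your proposal follows essentially the same route as the paper: fix $\rho$ via Corollary~\ref{cor:stronglyfaithfulstateexists}, lift $\tilde\lambda$ through $KL$ to $KK$, apply Theorem~\ref{thm:ClassifyingApproximateMaps}(a) to produce a map into $S_{B,\rho}$, verify the reparametrization hypothesis via the identity $\eta^*\circ\iota_B=\iota_B$, and for uniqueness push through the isomorphism $KL(A,S_{B,\rho}^{(\omega)})\cong KL(A,B_\omega)$ coming from Lemmas~\ref{lem:hereditaryisoinKK} and~\ref{lem:jbisoinKK}. Two small points: (i) in the reparametrization step you take a detour through $KL(A,B_\omega)$ and back via the isomorphism, whereas the paper stays in $KL(A,S_{B,\rho}^{(\omega)})$ directly, since $[\pi_{\infty\to\omega}\circ\eta^*\circ\varphi_0]$ and $[\pi_{\infty\to\omega}\circ\varphi_0]$ already both equal $KL(A,\pi_{\infty\to\omega}\circ\iota_B)(\lambda)$ there---your route is longer but correct; (ii) your justification that $\varphi$ is injective (``since $B_\omega$ is simple and $\iota_B^{(\omega)}\circ\varphi$ is nonzero'') does not work, as $A$ is not assumed simple. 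The correct argument is that $\pi_{\infty\to\omega}\circ\varphi_0$ is full (fullness passes along the unital map $\pi_{\infty\to\omega}$), hence injective, and so $\iota_B^{(\omega)}\circ\varphi\sim_u\pi_{\infty\to\omega}\circ\varphi_0$ forces $\varphi$ injective.
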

\begin{proof}
As $B$ is simple, it has a faithful irreducible representation. From Corollary \ref{cor:stronglyfaithfulstateexists}, we then obtain a strongly faithful state $\rho$ on $B$ such that $\pi_\rho(B)''\cong B(\Hb)$ for some separable Hilbert space $\Hb$. Hence, we can apply classification of maps into $S_{B,\rho}$ for this state.

Existence: pick $\tilde\lambda$ as in the statement, and let $\lambda$ be a lift of $\tilde\lambda$ which preserves the $K_0$-class of the unit. Fix a lift $\kappa \in KK(A,B)$ of this $\lambda$ and let $\iota: B\hookrightarrow S_{B,\rho}$ be the corestriction of the canonical inclusion of $B$ into $B_\infty$ to $S_{B,\rho}$. As $\Gamma_\Lambda^{(A,B)}$ factors through $KL(A,B)$ by Theorem \ref{thm:KKactiononK}, we also have $\Gamma_0^{(A,B)}(\kappa)[1_A]_0 = [1_B]_0$, so by naturality of $\Gamma_\Lambda^{(A,\cdot)}$, we obtain
\begin{equation}
    \Gamma_0^{(A,S_{B,\rho})}\big(KK(A,\iota)(\kappa)\big)[1_A]_0 = [1_{S_{B,\rho}}]_0.
\end{equation}
By the existence part of Theorem \ref{thm:ClassifyingApproximateMaps}, there exists a unital, full \sh\ $\varphi_\infty: A\to S_{B,\rho}$ inducing the class $KK(A,\iota)(\kappa)$ in $KK$. Descending to $KL$, it then satisfies
\begin{equation} \label{eq:classifyingembeddings}
    [\varphi_\infty]_{KL(A,S_{B,\rho})} = KL(A,\iota)(\lambda).
\end{equation}
Now, we aim to apply intertwining by reparametrization (Theorem \ref{thm:reparametrization}). To this end, pick $\eta$ as in the statement of Theorem \ref{thm:reparametrization}, and note that $\eta^*\circ\varphi_\infty$ still maps into $S_{B,\rho}$. Moreover, we have
\begin{equation}
\begin{aligned}
    [\pi_{\infty\to\omega}\circ \eta^*\circ\varphi_\infty]_{KK(A,S_{B,\rho}^{(\omega)})}
        &= KK(A, \pi_{\infty\to\omega}\circ\eta^*\circ \iota)(\kappa)\\
        &= KK(A, \pi_{\infty\to\omega}\circ \iota)(\kappa)\\
        &=  [\pi_{\infty\to\omega}\circ\varphi_\infty]_{KK(A,S_{B,\rho}^{(\omega)})},
\end{aligned}
\end{equation}
by functoriality and $\eta^*\circ \iota = \iota$. Hence, the classification of maps into $S_{B,\rho}$ implies that $\pi_{\infty\to\omega}\circ \eta^*\circ\varphi_\infty$ and $\pi_{\infty\to\omega}\circ\varphi_\infty$ are unitarily equivalent as maps into $B_\omega$. By Theorem \ref{thm:reparametrization}, we then obtain a \sh\ $\varphi: A\to B$ such that $\pi_{\infty\to\omega}\circ\varphi_\infty \sim_u \iota_B^{(\omega)}\circ \varphi$ in $B_\omega$. We now claim that $\varphi$ is the desired map. To see this, note that it is unital because $\pi_{\infty\to\omega}\circ\varphi_\infty$ and $\iota_B^{(\omega)}$ are, and injective because $\pi_{\infty\to\omega}\circ\varphi_\infty$ is full. We also have
\begin{equation}
\begin{aligned}
    KL(A, &\iota_B^{(\omega)})([\varphi]_{KL(A,B)} -\lambda) \\
        &= [\pi_{\infty\to\omega}\circ\varphi_\infty]_{KL(A,B_\omega)} - KL(A,\pi_{\infty\to\omega}|_{S_{B,\rho}}\circ\iota)(\lambda) \\
        &= KL(A,\pi_{\infty\to\omega}|_{S_{B,\rho}})\big([\varphi_\infty]_{KL(A,S_{B,\rho})} - KL(A,\iota)(\lambda)\big) = 0
\end{aligned}
\end{equation}
by functoriality, \eqref{eq:classifyingembeddings} and $\iota_B^{(\omega)} = \pi_{\infty\to\omega}|_{S_{B,\rho}}\circ\iota$ (which follows from \eqref{eq:ske_inftyvsomegacommdiagram}). This tells us exactly that $[\varphi]_{\widetilde{KL}(A,B)} = \tilde\lambda$ in $\widetilde{KL}(A,B)$, as desired.

Uniqueness: suppose two unital, injective (and therefore full, by simplicity of $B$) \sh s $\varphi_1,\varphi_2: A \to B$ satisfy $[\varphi_1]_{\widetilde{KL}(A,B)} = [\varphi_2]_{\widetilde{KL}(A,B)}$. Observe that $KL(A,\iota_B^{(\omega)})$ naturally induces a monomorphism
\begin{equation}
    \widetilde{KL}(A,B) \to KL(A,B_\omega): [\lambda] \mapsto KL(A,\iota_B^{(\omega)})(\lambda)
\end{equation}
by the first isomorphism theorem for groups, so in fact
\begin{equation}
    [\iota_B^{(\omega)}\circ\varphi_1]_{KL(A,B_\omega)} = [\iota_B^{(\omega)}\circ\varphi_2]_{KL(A,B_\omega)}.
\end{equation}
Decomposing $\iota_B^{(\omega)}$ as $\iota_{S_{B,\rho}^{(\omega)}\subseteq B_\omega}\circ \pi_{\infty\to\omega}\circ\iota$ (using \eqref{eq:ske_inftyvsomegacommdiagram}), we can use functoriality to rewrite this as
\begin{equation} \label{eq:classifyingembeddings2}
    KL(A,\iota_{S_{B,\rho}^{(\omega)}\subseteq B_\omega}) \left( [\pi_{\infty\to\omega}\circ\iota\circ\varphi_1]_{KL(A,S_{B,\rho}^{(\omega)})} -[\pi_{\infty\to\omega}\circ\iota\circ\varphi_2]_{KL(A,S_{B,\rho}^{(\omega)})} \right) = 0.
\end{equation}
Now note that the map $KL(A,\iota_{S_{B,\rho}^{(\omega)}\subseteq B_\omega})$ is an isomorphism, as both $KL(A,j_B^{(\omega)})$ and
\begin{equation}
    KL(A, \iota_{J_B^{(\omega)}\subseteq B_\omega}) = KL(A,\iota_{S_{B,\rho}^{(\omega)}\subseteq B_\omega}) \circ KL(A,j_B^{(\omega)})
\end{equation}
are isomorphisms by Lemmas \ref{lem:jbisoinKK} and \ref{lem:hereditaryisoinKK}. Hence, \eqref{eq:classifyingembeddings2} implies that
\begin{equation}
    [\pi_{\infty\to\omega}\circ\iota\circ\varphi_1]_{KL(A,S_{B,\rho}^{(\omega)})} = [\pi_{\infty\to\omega}\circ\iota\circ\varphi_2]_{KL(A,S_{B,\rho}^{(\omega)})},
\end{equation}
so the maps $\iota \circ \varphi_1$ and $\iota \circ \varphi_2$ satisfy the uniqueness criterion in classification of maps into $S_{B,\rho}$. Therefore, $\pi_{\infty\to\omega}\circ\iota \circ \varphi_1$ and $\pi_{\infty\to\omega}\circ\iota \circ \varphi_2$ are unitarily equivalent as maps into $B_\omega$. In other words, we have $\iota_B^{(\omega)}\circ\varphi_1 \sim_u \iota_B^{(\omega)}\circ\varphi_2$, from which we can conclude that $\varphi_1$ and $\varphi_2$ are approximately unitarily equivalent by \cite[Lemma 6.2.5]{Rordam_ClassificationofNuclearCalgebras}.
\end{proof}

\subsection{Corollaries}
\begin{thm} \label{thm:ClassifyingEmbeddingsbyK}
    Let $A$ be a unital, separable, nuclear \ca\ satisfying the UCT, and $B$ a unital, simple, separable and purely infinite \ca. Then each element of $\Hom_\Lambda(\totalK(A), \totalK(B))$ preserving the $K_0$-class of the unit is induced by a unital, injective \sh\ from $A$ to $B$, and uniquely so up to approximate unitary equivalence.
\end{thm}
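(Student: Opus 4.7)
The plan is to reduce this directly to Theorem \ref{thm:ClassifyingEmbeddings} by translating between $\widetilde{KL}(A,B)$ and $\Hom_\Lambda(\totalK(A),\totalK(B))$ via the universal multicoefficient theorem (Theorem \ref{thm:KKactiononK}). The UCT assumption on $A$ yields that
\begin{equation}
    \overline{\Gamma}_\Lambda^{(A,B)}: KL(A,B) \to \Hom_\Lambda(\totalK(A),\totalK(B))
\end{equation}
is a natural isomorphism, and for any \sh\ $\varphi: A\to B$ we have $\overline{\Gamma}_\Lambda^{(A,B)}([\varphi]_{KL(A,B)}) = \totalK(\varphi)$. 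Since the restriction of $\totalK(\varphi)$ to $K_0$ is $K_0(\varphi)$, unital \sh s correspond precisely to $KL$-classes whose image in $\Hom_\Lambda$ preserves the $K_0$-class of the unit.

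For existence, start with $\alpha \in \Hom_\Lambda(\totalK(A),\totalK(B))$ preserving the $K_0$-class of the unit. By the UCT, there is a unique lift $\lambda \in KL(A,B)$ with $\overline{\Gamma}_\Lambda^{(A,B)}(\lambda) = \alpha$; since $\alpha$ preserves $[1_A]_0$, this $\lambda$ satisfies $\overline{\Gamma}_0^{(A,B)}(\lambda)[1_A]_0 = [1_B]_0$, so its image $\tilde{\lambda} \in \widetilde{KL}(A,B)$ is unital in the sense of the definition preceding Theorem \ref{thm:ClassifyingEmbeddings}. That theorem then produces a unital, injective \sh\ $\varphi: A\to B$ with $[\varphi]_{\widetilde{KL}(A,B)} = \tilde{\lambda}$. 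Tracing through the maps gives $\totalK(\varphi) = \alpha$, as required.

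For uniqueness, suppose $\varphi_1,\varphi_2: A\to B$ are two unital, injective \sh s with $\totalK(\varphi_1) = \totalK(\varphi_2)$. Injectivity of $\overline{\Gamma}_\Lambda^{(A,B)}$ under the UCT forces $[\varphi_1]_{KL(A,B)} = [\varphi_2]_{KL(A,B)}$, and passing to the quotient gives $[\varphi_1]_{\widetilde{KL}(A,B)} = [\varphi_2]_{\widetilde{KL}(A,B)}$. The uniqueness half of Theorem \ref{thm:ClassifyingEmbeddings} then yields that $\varphi_1$ and $\varphi_2$ are approximately unitarily equivalent.

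There is essentially no obstacle given the preceding results: the theorem is a clean corollary of Theorem \ref{thm:ClassifyingEmbeddings} together with the UCT. The only point worth flagging is that $\widetilde{KL}(A,B)$ is a priori a (possibly strict) quotient of $KL(A,B)$, but this causes no trouble: the quotient map $KL(A,B) \to \widetilde{KL}(A,B)$ is surjective (yielding existence) and respects equalities (yielding uniqueness), so one never needs to decide whether it is also injective, even though the classification a posteriori implies this is the case under the UCT.
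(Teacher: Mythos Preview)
Your uniqueness argument is fine, but the existence argument has a real gap at the step ``Tracing through the maps gives $\totalK(\varphi) = \alpha$.'' From Theorem~\ref{thm:ClassifyingEmbeddings} you only obtain $[\varphi]_{\widetilde{KL}(A,B)} = \tilde\lambda$, i.e.\ $[\varphi]_{KL(A,B)} - \lambda \in \ker KL(A,\iota_B^{(\omega)})$. Since $\totalK(\varphi) = \overline{\Gamma}_\Lambda^{(A,B)}([\varphi]_{KL(A,B)})$ and $\alpha = \overline{\Gamma}_\Lambda^{(A,B)}(\lambda)$, concluding $\totalK(\varphi) = \alpha$ requires $[\varphi]_{KL(A,B)} = \lambda$ (as $\overline{\Gamma}_\Lambda^{(A,B)}$ is injective under the UCT), which in turn forces $\ker KL(A,\iota_B^{(\omega)}) = 0$. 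So, contrary to your closing remark, you \emph{do} need to know the quotient map $KL(A,B)\to\widetilde{KL}(A,B)$ is injective.

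This is exactly what the paper establishes: by naturality of $\overline{\Gamma}_\Lambda$, the map $KL(A,\iota_B^{(\omega)})$ corresponds under the UCT isomorphisms to post-composition with $\totalK(\iota_B^{(\omega)})$, and the paper invokes the argument of \cite[Lemma~3.16]{CarrionGabeSchafhauserTikuisisWhite_ClassifyingHomomorphismsIUnitalSimpleNuclearCalgebras} to see that $\totalK(\iota_B^{(\omega)})$ is injective. Once $\widetilde{KL}(A,B) = KL(A,B)$ is established, the rest of your reduction to Theorem~\ref{thm:ClassifyingEmbeddings} goes through as written.
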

\begin{proof}
If $A$ satisfies the UCT, then Theorem \ref{thm:KKactiononK} tells us that the natural map $\overline{\Gamma}_\Lambda^{(A,B)}: KL(A,B)\to \Hom_\Lambda(\totalK(A), \totalK(B))$ is an isomorphism. Note that the map $KL(A,\iota_B^{(\omega)}): KL(A,B)\to KL(A,B_\omega)$ then corresponds to
\begin{equation}
    \Hom_\Lambda(\totalK(A), \totalK(B)) \to \Hom_\Lambda(\totalK(A), \totalK(B_\omega)): \alpha \mapsto \totalK(\iota_B^{\omega)})\circ\alpha.
\end{equation}
By using the same argument as in \cite[Lemma 3.16]{CarrionGabeSchafhauserTikuisisWhite_ClassifyingHomomorphismsIUnitalSimpleNuclearCalgebras} for $\iota_B^{(\omega)}$, it follows that $\totalK(\iota_B^{(\omega)})$ is injective, so the same must hold for the corresponding map $KL(A,\iota_B^{(\omega)})$. Therefore, we have $\widetilde{KL}(A,B) = KL(A,B)$, which is naturally isomorphic with $\Hom_\Lambda(\totalK(A), \totalK(B))$ as mentioned earlier. The result now follows from Theorem \ref{thm:ClassifyingEmbeddings}.
\end{proof}

\begin{thm}\label{thm:KPKtheory}
    Let $A$ and $B$ be two unital Kirchberg algebras satisfying the UCT. Then $A\cong B$ if and only if $(K_*(A),[1_A]_0) \cong (K_*(B),[1_B]_0)$. Moreover, for each isomorphism $\alpha_*: (K_*(A),[1_A]_0) \to (K_*(B),[1_B]_0)$, there exists an isomorphism $\varphi: A\to B$ with $K_*(\varphi) = \alpha_*$.
\end{thm}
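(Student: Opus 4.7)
The forward direction is trivial by functoriality of $K$-theory, so the plan concerns the converse and the moreover statement. Fix an isomorphism $\alpha_*: (K_*(A),[1_A]_0) \to (K_*(B),[1_B]_0)$. The strategy is the standard two-sided approximate intertwining, fed by Theorem \ref{thm:ClassifyingEmbeddingsbyK}; the only real work is promoting $\alpha_*$ to a morphism of total $K$-theory so that Theorem \ref{thm:ClassifyingEmbeddingsbyK} becomes applicable.

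First I would lift $\alpha_*$ to an isomorphism $\alpha_\Lambda \in \Hom_\Lambda(\totalK(A),\totalK(B))$ with $\alpha_\Lambda([1_A]_0) = [1_B]_0$. By the UCT for $KK$ and Theorem \ref{thm:KKactiononK}, there is $\lambda \in KL(A,B)$ with $\overline{\Gamma}^{(A,B)}_*(\lambda) = \alpha_*$; naturality of the universal coefficient short exact sequences
\begin{equation}
0 \to K_*(\,\cdot\,) \otimes \mathbb{Z}/n \to K_*(\,\cdot\,;\mathbb{Z}/n) \to \Tor(K_{*-1}(\,\cdot\,),\mathbb{Z}/n) \to 0
\end{equation}
combined with the five lemma then forces $\overline{\Gamma}^{(A,B)}_\Lambda(\lambda)$ to be a $\Lambda$-isomorphism (since $\alpha_*$ is an iso on $K_*$ it induces isomorphisms on both outer terms). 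Call this isomorphism $\alpha_\Lambda$, and likewise obtain its inverse $\alpha_\Lambda^{-1} \in \Hom_\Lambda(\totalK(B),\totalK(A))$. Both are unital in the sense of preserving the $K_0$-class of the unit.

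Next, apply Theorem \ref{thm:ClassifyingEmbeddingsbyK} twice: since $A,B$ are both unital separable nuclear (Kirchberg) \ca s satisfying the UCT and are simple and purely infinite, there exist unital, injective \sh s $\varphi: A \to B$ and $\psi: B \to A$ with $\totalK(\varphi) = \alpha_\Lambda$ and $\totalK(\psi) = \alpha_\Lambda^{-1}$. Then $\totalK(\psi \circ \varphi) = \id_{\totalK(A)} = \totalK(\id_A)$ and symmetrically for $\varphi \circ \psi$, so the uniqueness part of Theorem \ref{thm:ClassifyingEmbeddingsbyK} yields
\begin{equation}
    \psi \circ \varphi \sim_u \id_A \quad \text{and} \quad \varphi \circ \psi \sim_u \id_B.
\end{equation}

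Finally, I would invoke Elliott's approximate intertwining argument (as in \cite[Corollary 2.3.4]{Rordam_ClassificationofNuclearCalgebras}) to convert this pair of one-sided approximate inverses into an honest isomorphism $\Phi: A \to B$ which is approximately unitarily equivalent to $\varphi$. In particular $K_*(\Phi) = K_*(\varphi) = \alpha_*$, giving both the existence of the isomorphism and the moreover statement. The only step with any real content is the first paragraph (the five-lemma argument that a $K_*$-isomorphism automatically upgrades to a $\Lambda$-isomorphism on $\totalK$); everything else is a direct application of Theorem \ref{thm:ClassifyingEmbeddingsbyK} and a standard intertwining.
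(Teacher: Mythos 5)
Your proof is correct and follows essentially the same route as the paper: lift $\alpha_*$ to a unital isomorphism of total $K$-theory, apply Theorem \ref{thm:ClassifyingEmbeddingsbyK} twice, and conclude by uniqueness plus a two-sided approximate intertwining. The only difference is that you establish the lift to $\Hom_\Lambda(\totalK(A),\totalK(B))$ by hand (UCT lift plus the five lemma on the coefficient sequences), whereas the paper simply cites \cite[Proposition 2.15]{CarrionGabeSchafhauserTikuisisWhite_ClassifyingHomomorphismsIUnitalSimpleNuclearCalgebras}; your argument for that step is sound.
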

\begin{proof}
The ``only if''-statement is immediate. For the ``if''-statement, suppose $\alpha_*: (K_*(A),[1_A]_0) \to (K_*(B),[1_B]_0)$ is an isomorphism. By \cite[Proposition 2.15]{CarrionGabeSchafhauserTikuisisWhite_ClassifyingHomomorphismsIUnitalSimpleNuclearCalgebras}, there exists an isomorphism $\underline{\alpha}: \totalK(A)\to \totalK(B)$ extending $\alpha_*$. The existence part of previous corollary then implies that there exist unital \sh s $\varphi_0: A\to B$ and $\psi_0: B\to A$ such that $\totalK(\varphi_0) = \underline{\alpha}$ and $\totalK(\psi_0) = \underline{\alpha}^{-1}$. Subsequently, we have $\totalK(\psi_0\circ\varphi_0) = \id_{\totalK(A)} = \totalK(\id_A)$ and similarly $\totalK(\varphi_0\circ\psi_0) = \totalK(\id_B)$. The uniqueness part then tells us that $\psi_0\circ\varphi_0\approx_u\id_A$ and $\varphi_0\circ\psi_0\approx_u\id_B$, so by two-sided intertwining (\cite[Corollary 2.3.4]{Rordam_ClassificationofNuclearCalgebras}), there exists an isomorphism $\varphi: A\to B$ with $\varphi\approx_u \varphi_0$ and $\varphi^{-1}\approx_u \psi_0$. As approximately unitarily equivalent \sh s agree on the level of $K_*$, it follows that
\begin{equation}
    K_*(\varphi) = K_*(\varphi_0) = \alpha_*,
\end{equation}
as desired.
\end{proof}

As an immediate consequence we get the following nuclear version of Kirchberg's $\mathcal O_2$-embedding theorem.

\begin{cor} \label{cor:Otwoembedding}
    Any unital, separable and nuclear \ca\ embeds unitally into $\Otwo$.
\end{cor}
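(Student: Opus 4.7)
The plan is to apply Theorem \ref{thm:ClassifyingEmbeddings} directly with $B = \Otwo$. The Cuntz algebra $\Otwo$ is a prototypical Kirchberg algebra: it is unital, simple, separable, nuclear and purely infinite, so it satisfies all the hypotheses on $B$ in that theorem. The hypotheses on $A$ imposed there (unital, separable, nuclear) match those of the corollary exactly; in particular, no UCT assumption on $A$ is needed at this stage.

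The key observation is that $K_0(\Otwo) = 0$, so $[1_{\Otwo}]_0 = 0$. Consequently, the unitality condition in the definition of a \emph{unital} element of $\widetilde{KL}(A, \Otwo)$ is automatic: for any $\lambda \in KL(A, \Otwo)$ one trivially has
\begin{equation}
\overline\Gamma_0^{(A,\Otwo)}(\lambda)[1_A]_0 = 0 = [1_{\Otwo}]_0,
\end{equation}
since the target group of $\overline\Gamma_0^{(A,\Otwo)}(\lambda)$ is zero. Hence every element of $\widetilde{KL}(A, \Otwo)$ is unital; in particular the zero element is.

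By the existence part of Theorem \ref{thm:ClassifyingEmbeddings} applied to $\tilde\lambda = 0 \in \widetilde{KL}(A, \Otwo)$, there exists a unital, injective $*$-homomorphism $\varphi : A \to \Otwo$, which is precisely the desired unital embedding. There is no real obstacle here: all the substance lies in Theorem \ref{thm:ClassifyingEmbeddings}, and the vanishing of the $K$-theory of $\Otwo$ reduces the $K$-theoretic hypothesis of that theorem to a triviality.
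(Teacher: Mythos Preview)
Your proof is correct and follows essentially the same approach as the paper: apply Theorem \ref{thm:ClassifyingEmbeddings} with $B=\Otwo$, using $K_0(\Otwo)=0$ to see that the zero class in $\widetilde{KL}(A,\Otwo)$ is unital, and conclude existence of a unital injective $*$-homomorphism $A\to\Otwo$.
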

\begin{proof}
Note that $0\in KL(A,\Otwo)$ satisfies
\begin{equation}
    \overline\Gamma_0^{(A,\Otwo)}(0)[1_A]_0 = 0 = [1_{\Otwo}]_0
\end{equation}
as $K_0(\Otwo)=0$ (\cite[Theorem 3.7]{Cuntz_KTheoryforCertainCalgebras}). Hence, Theorem \ref{thm:ClassifyingEmbeddings} implies that there exists a unital, injective \sh\ $\iota: A\to \Otwo$ inducing $0$.
\end{proof}

\begin{cor} \label{cor:Otwoabsorption}
Let $A$ be a unital, separable, unital, simple $C^\ast$-algebra. Then $\mathcal O_2 \cong A \otimes \mathcal O_2$. Moreover, any such isomorphism is approximately unitarily equivalent to the first factor inclusion $\mathrm{id}_{\mathcal O_2} \otimes 1_A \colon \mathcal O_2 \to \mathcal O_2 \otimes A$. In particular, $\mathcal O_2$ is strongly self-absorbing. 
\end{cor}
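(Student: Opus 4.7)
First, $\mathcal O_2\otimes A$ is a unital Kirchberg algebra: unital, separable and nuclear are immediate; simplicity follows from simplicity of $A$ together with simplicity and nuclearity of $\mathcal O_2$; pure infiniteness follows from simplicity together with the presence of a unital copy of $\mathcal O_2$ via the first factor inclusion. To construct a unital embedding $\psi\colon \mathcal O_2\otimes A\to \mathcal O_2$, I begin with the special case $\mathcal O_2\otimes \mathcal O_2\cong \mathcal O_2$: both are unital Kirchberg algebras satisfying the UCT, and both have trivial pointed $K_0$, so Theorem \ref{thm:KPKtheory} yields an isomorphism. I then use Corollary \ref{cor:Otwoembedding} to obtain a unital embedding $\iota_A\colon A\hookrightarrow \mathcal O_2$, and take $\psi$ to be $\mathrm{id}_{\mathcal O_2}\otimes \iota_A\colon \mathcal O_2\otimes A\to \mathcal O_2\otimes \mathcal O_2$ composed with the isomorphism $\mathcal O_2\otimes \mathcal O_2\cong \mathcal O_2$.

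\textbf{Step 2 (approximate intertwining).} Let $\iota_1 := \mathrm{id}_{\mathcal O_2}\otimes 1_A\colon \mathcal O_2\to \mathcal O_2\otimes A$ be the first factor inclusion. The composition $\psi\circ \iota_1\colon \mathcal O_2\to \mathcal O_2$ is a unital injective $\ast$-homomorphism; since $\totalK(\mathcal O_2) = 0$, the group $\Hom_\Lambda(\totalK(\mathcal O_2),\totalK(\mathcal O_2))$ is trivial, so Theorem \ref{thm:ClassifyingEmbeddingsbyK} yields $\psi\circ \iota_1\approx_u \mathrm{id}_{\mathcal O_2}$. For the reverse composition, I argue $KK$-theoretically: the UCT applied to $\mathcal O_2$, together with $K_*(\mathcal O_2) = 0$, makes both the $\Hom$ and $\Ext$ terms vanish, so $KK(\mathcal O_2,\mathcal O_2) = 0$ and $[\mathrm{id}_{\mathcal O_2}]_{KK} = 0$. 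By bilinearity of the external product in $KK$, $[\mathrm{id}_{\mathcal O_2\otimes A}] = [\mathrm{id}_{\mathcal O_2}]\times [\mathrm{id}_A] = 0$ in $KK(\mathcal O_2\otimes A,\mathcal O_2\otimes A)$. Taking Kasparov product with the identity on the left then forces $KK(\mathcal O_2\otimes A,D) = 0$ for every $D$, so in particular $\widetilde{KL}(\mathcal O_2\otimes A,\mathcal O_2\otimes A) = 0$. Both $\iota_1\circ \psi$ and $\mathrm{id}_{\mathcal O_2\otimes A}$ are unital and therefore induce the same (unique) unital class in $\widetilde{KL}$, and Theorem \ref{thm:ClassifyingEmbeddings}'s uniqueness statement gives $\iota_1\circ \psi\approx_u \mathrm{id}_{\mathcal O_2\otimes A}$.

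\textbf{Step 3 (conclusion).} Two-sided approximate intertwining (\cite[Corollary 2.3.4]{Rordam_ClassificationofNuclearCalgebras}) produces an isomorphism $\varphi\colon \mathcal O_2\to \mathcal O_2\otimes A$ with $\varphi\approx_u \iota_1$, establishing $\mathcal O_2\cong \mathcal O_2\otimes A$. For the moreover-clause, any unital injective $\ast$-homomorphism $\mathcal O_2\to \mathcal O_2\otimes A$ (in particular any isomorphism) is approximately unitarily equivalent to $\iota_1$ directly by Theorem \ref{thm:ClassifyingEmbeddingsbyK}, again using $\totalK(\mathcal O_2) = 0$. Strong self-absorption of $\mathcal O_2$ is then the special case $A = \mathcal O_2$. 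The central obstacle is the $KK$-triviality argument in Step 2 showing $[\mathrm{id}_{\mathcal O_2\otimes A}] = 0$ via external products; once this is available the classification theorems collapse all unital self-maps of $\mathcal O_2\otimes A$ into a single approximate unitary equivalence class, and the remainder of the argument is formal.
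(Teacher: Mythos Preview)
Your proof is correct and arrives at the result by essentially the same mechanism as the paper, though you unpack the argument more explicitly. The paper's proof is two lines: it asserts that both $\Otwo$ and $\Otwo\otimes A$ are unital Kirchberg algebras which are $KK$-equivalent to $0$, and then invokes Theorems~\ref{thm:KPKtheory} and~\ref{thm:ClassifyingEmbeddingsbyK}. The key fact behind this --- that $[\id_{\Otwo\otimes A}]_{KK}=0$ via the external product and $KK(\Otwo,\Otwo)=0$ --- is exactly your Step~2 computation. Where you differ is in the packaging: rather than citing Theorem~\ref{thm:KPKtheory} (whose proof \emph{is} the two-sided intertwining you write out), you construct $\psi$ explicitly through Corollary~\ref{cor:Otwoembedding} and the isomorphism $\Otwo\otimes\Otwo\cong\Otwo$, and then run the intertwining by hand. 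You also use Theorem~\ref{thm:ClassifyingEmbeddings} (no UCT) for the $\iota_1\circ\psi$ direction instead of first observing that $KK$-equivalence to $0$ gives the UCT for $\Otwo\otimes A$; this is a legitimate alternative, since your external-product argument yields $KK(\Otwo\otimes A,\,\cdot\,)=0$ directly.

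One small point: your justification that $\Otwo\otimes A$ is purely infinite (``simplicity together with the presence of a unital copy of $\Otwo$'') is not the standard argument, and it is not obvious that a simple unital \ca\ with properly infinite unit must be purely infinite in general. The clean reference here is \cite[Proposition~4.1.8(ii)]{Rordam_ClassificationofNuclearCalgebras}: a minimal tensor product of a simple purely infinite \ca\ with any simple \ca\ is again simple purely infinite. Your conclusion is correct, but the reasoning should point to the tensor structure rather than merely the embedded copy of $\Otwo$.
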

\begin{proof}
    Since both $\mathcal O_2$ and $\mathcal O_2 \otimes A$ are unital Kirchberg algebras which are $KK$-equivalent to $0$, this follows from Theorems \ref{thm:KPKtheory} and \ref{thm:ClassifyingEmbeddingsbyK}. The final part follows by applying the result to $A= \mathcal O_2$.
\end{proof}

The classical proof of the Kirchberg--Phillips theorem implies that if two unital Kirchberg algebras are unitally $KK$-equivalent, then they are isomorphic. While the methods we have used does not imply this (since we do not know that the Kasparov product preserves $\widetilde{KL}$) we draw inspiration from Schafhauser's recent theorem \cite{Schafhauser_KKRigidityOfSimpleNuclearCalgebras} for classification of stably finite $C^\ast$-algebras without assuming the UCT, and prove this result provided there exists a $\ast$-homomorphism inducing the $KK$-equivalence. As proved above, we do not need this additional assumption under UCT assumptions.

\begin{prop}\label{p:class}
    Let $A$ and $B$ be unital Kirchberg algebras and suppose $\phi \colon A \to B$ is a unital $\ast$-homomorphism which is a $KK$-equivalence. Then $\phi$ is approximately unitarily equivalent to an isomorphism.
\end{prop}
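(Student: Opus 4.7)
The plan is to mirror the strategy used to prove Theorem \ref{thm:KPKtheory}: construct a unital \sh\ $\psi\colon B\to A$ that is an approximate two-sided inverse of $\phi$, and then apply Rørdam's two-sided approximate intertwining (\cite[Corollary 2.3.4]{Rordam_ClassificationofNuclearCalgebras}) to upgrade $\phi$ itself to an isomorphism.

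First I would let $\kappa\in KK(B,A)$ be the $KK$-inverse of $[\phi]_{KK(A,B)}$, which exists by hypothesis. Since $\phi$ is unital, the identity $[\phi]\cdot\kappa=[\id_A]$ in $KK(A,A)$ (where $\cdot$ denotes the Kasparov product) combined with $K_0(\phi)[1_A]_0=[1_B]_0$ yields $\Gamma_0^{(B,A)}(\kappa)[1_B]_0=[1_A]_0$. Thus the image $\tilde\kappa\in\widetilde{KL}(B,A)$ of $\kappa$ is a unital class, so by the existence part of Theorem \ref{thm:ClassifyingEmbeddings} there is a unital, injective \sh\ $\psi\colon B\to A$ with $[\psi]_{\widetilde{KL}(B,A)}=\tilde\kappa$.

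Second, I would check the two approximate identities $\psi\circ\phi\approx_u\id_A$ and $\phi\circ\psi\approx_u\id_B$ by invoking the uniqueness part of Theorem \ref{thm:ClassifyingEmbeddings}, i.e.\ by showing that the relevant $\widetilde{KL}$-classes coincide. For $\psi\circ\phi$, the computation I plan to run, carried out at the level of $KL(A,A_\omega)$, is
\begin{equation}
\begin{aligned}
    KL(A,\iota_A^{(\omega)})[\psi\circ\phi]_{KL(A,A)}
        &=[\phi]\cdot KL(B,\iota_A^{(\omega)})[\psi]_{KL(B,A)} \\
        &=[\phi]\cdot KL(B,\iota_A^{(\omega)})(\kappa) \\
        &=KL(A,\iota_A^{(\omega)})([\phi]\cdot\kappa) \\
        &=KL(A,\iota_A^{(\omega)})[\id_A]_{KL(A,A)},
\end{aligned}
\end{equation}
using associativity of the Kasparov product together with the fact that $[\psi]$ and $\kappa$ have the same image under $KL(B,\iota_A^{(\omega)})$ by the definition of $\widetilde{KL}$. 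For $\phi\circ\psi$ the same idea works, but one must first rewrite $[\phi]\cdot[\iota_B^{(\omega)}]=[\iota_A^{(\omega)}]\cdot[\phi_\omega]$ via the commutative square $\iota_B^{(\omega)}\circ\phi=\phi_\omega\circ\iota_A^{(\omega)}$, where $\phi_\omega\colon A_\omega\to B_\omega$ is the induced map on ultrapowers; this lets one move the calculation into $KL(B,A_\omega)$, where $[\psi]$ can be replaced by $\kappa$, and then back.

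Third, with both compositions shown to be approximately unitarily equivalent to the identities, two-sided approximate intertwining produces an isomorphism $\varphi\colon A\to B$ with $\varphi\approx_u\phi$, exactly as in the proof of Theorem \ref{thm:KPKtheory}. The main obstacle I anticipate is the second step: as the paragraph preceding the proposition explicitly warns, the Kasparov product is not known to descend to an operation $\widetilde{KL}\times\widetilde{KL}\to\widetilde{KL}$, so it is essential that the calculations above be executed at the level of $KL$ and that naturality of the Kasparov product with respect to $\iota_A^{(\omega)}$ and $\iota_B^{(\omega)}$ be exploited, rather than composing classes directly in $\widetilde{KL}$.
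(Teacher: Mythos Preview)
Your proposal is correct and follows essentially the same argument as the paper: obtain $\psi$ from the existence half of Theorem \ref{thm:ClassifyingEmbeddings} realizing the $KK$-inverse of $[\phi]$, verify both approximate identities by computing in $KL(\,\cdot\,,\,\cdot_\omega)$ using naturality of the Kasparov product (and, for $\phi\circ\psi$, the identity $\iota_B^{(\omega)}\circ\phi=\phi_\omega\circ\iota_A^{(\omega)}$), then conclude by two-sided intertwining. You are even explicit about the point the paper flags in its footnote, namely that the $\phi\circ\psi$ step is where one genuinely uses that $\phi$ is a \sh.
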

\begin{proof}
By Theorem \ref{thm:ClassifyingEmbeddings} there is a unital $\ast$-homomorphism $\psi \colon B \to A$ such that $(\iota_A^{(\omega)})_\ast([\psi]_{KL}) = (\iota_A^{(\omega)})_\ast([\phi]^{-1}_{KL})$. By \cite[Proposition 18.7.1]{Blackadar_KTheoryForOperatorAlgebras}, we get that
\begin{equation}
\begin{aligned}
    (\iota_A^{(\omega)})_\ast([\psi \circ \phi]_{KL})
        &= (\iota_A^{(\omega)})_\ast([\psi]_{KL}) \circ [\phi]_{KL} = (\iota_A^{(\omega)})_\ast([\phi]^{-1}_{KL}) \circ [\phi]_{KL}\\
        &= (\iota_A^{(\omega)})_\ast([\phi]_{KL}^{-1} \circ [\phi]_{KL}) = (\iota_A^{(\omega)})_\ast([\mathrm{id}_A]_{KL}). 
\end{aligned}
\end{equation}
Hence $\psi \circ \phi \approx_u \mathrm{id}_A$ by Theorem \ref{thm:ClassifyingEmbeddings}. Moreover,\footnote{This is where we use that $\phi$ is a $\ast$-homomorphism.} since $\iota_B^{(\omega)} \circ \phi = \phi_\omega \circ \iota_A^{(\omega)}$, we get that
\begin{equation}
\begin{aligned}
    (\iota_B^{(\omega)})_\ast([\phi \circ \psi]_{KL})
        &= (\phi_\omega)_\ast((\iota_A^{(\omega)})_\ast([\psi]_{KL})) = (\phi_\omega)_\ast((\iota_A^{(\omega)})_\ast([\phi]^{-1}_{KL})) \\
        &= (\iota_B^{(\omega)})_\ast([\phi]_{KL} \circ [\phi]_{KL}^{-1}) = (\iota_B^{(\omega)})_\ast([\mathrm{id}_B), 
\end{aligned}
\end{equation}
so that $\phi \circ \psi \approx_u \mathrm{id}_B$. By a two-sided intertwining argument, $\phi$ is approximately unitarily equivalent to an isomorphism. 
\end{proof}

As an immediate consequence we get the following.

\begin{cor}
    Any two unital Kirchberg algebras which are homotopic are isomorphic.
\end{cor}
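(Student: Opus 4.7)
The plan is to reduce the statement to Proposition \ref{p:class}. Recall that a homotopy equivalence between unital $C^\ast$-algebras $A$ and $B$ consists of unital $*$-homomorphisms $\phi\colon A\to B$ and $\psi\colon B\to A$ together with homotopies through unital $*$-homomorphisms from $\psi\circ\phi$ to $\id_A$ and from $\phi\circ\psi$ to $\id_B$. Since homotopic $*$-homomorphisms induce the same class in $KK$ (by the homotopy invariance of $KK$), the Kasparov products satisfy
\begin{equation}
    [\psi]_{KK}\cdot[\phi]_{KK} = [\id_A]_{KK}, \qquad [\phi]_{KK}\cdot[\psi]_{KK} = [\id_B]_{KK}.
\end{equation}
In particular, $\phi\colon A\to B$ is a unital $*$-homomorphism which is simultaneously a $KK$-equivalence.

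Given this, Proposition \ref{p:class} immediately applies: $\phi$ is approximately unitarily equivalent to an isomorphism $A\to B$, so in particular $A\cong B$. There is essentially no obstacle; the corollary is a direct application of the proposition, the only point to observe being that homotopy of $*$-homomorphisms implies equality of $KK$-classes, which is a standard property of $KK$-theory.
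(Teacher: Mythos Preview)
Your proof is correct and follows exactly the approach the paper intends: the paper states the corollary with the phrase ``As an immediate consequence we get the following'' and gives no further argument, so your reduction to Proposition~\ref{p:class} via homotopy invariance of $KK$ is precisely what is meant. One minor remark: the standard notion of homotopy equivalence of $C^*$-algebras does not a priori require the maps or the homotopies to be unital, so your ``recall'' is really an implicit assumption (likely the one the authors have in mind as well); if one wanted to cover the non-unital case, a short additional argument using fullness of $\phi(1_A)$ and Proposition~\ref{p:class} applied to $\phi\colon A\to \phi(1_A)B\phi(1_A)$ would be needed.
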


Finally, we obtain Kirchberg's Geneva Theorem stating that Kirchberg algebras are $\mathcal O_\infty$-stable. At the same time, we get that $\mathcal O_\infty$ is strongly self-absorbing.

\begin{cor} \label{cor:Oinfabsorption}
    Any unital Kirchberg algebra $A$ is $\Oinf$-stable, i.e.\ $A\cong A\otimes \Oinf$. Moreover, the isomorphism can be chosen such that it is approximately unitarily equivalent with the first-factor embedding $\id_A\otimes 1_{\Oinf}: A\to A\otimes \Oinf$. In particular, $\Oinf$ is strongly self-absorbing.
\end{cor}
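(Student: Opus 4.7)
The plan is to apply Proposition~\ref{p:class} to the first-factor embedding
\begin{equation}
\phi := \id_A \otimes 1_{\Oinf} \colon A \to A \otimes \Oinf.
\end{equation}
Two things must be checked: that $A \otimes \Oinf$ is itself a unital Kirchberg algebra, and that $\phi$ is a $KK$-equivalence. Once both are in place, Proposition~\ref{p:class} immediately delivers an isomorphism $A \cong A \otimes \Oinf$ approximately unitarily equivalent to $\phi$, which is precisely the statement.

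For the first point, $A \otimes \Oinf$ is unital, separable and nuclear because $A$ and $\Oinf$ are. Simplicity follows from the standard fact that the minimal tensor product of two simple $C^*$-algebras, at least one of which is nuclear, is simple. For pure infiniteness, note that $1_A \otimes \Oinf \subseteq A \otimes \Oinf$ is a unital copy of $\Oinf$ sitting in the centralizer of $1_{A \otimes \Oinf}$; in particular $1_{A\otimes \Oinf}$ is properly infinite, and together with simplicity one concludes that $A \otimes \Oinf$ is a unital Kirchberg algebra (this does not rely circularly on any of the Geneva theorems we are deriving).

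For the second point, I would first observe that the unital embedding $1_{\Oinf}\colon \mathbb C \to \Oinf$ is a $KK$-equivalence. Indeed, both $\mathbb C$ and $\Oinf$ lie in the UCT class, and the induced map on $K$-theory is the identification $\mathbb Z \to K_0(\Oinf) = \mathbb Z$ sending $1 \mapsto [1_{\Oinf}]_0$ (a generator), with $K_1$ trivial on both sides; the UCT then promotes this $K$-theoretic isomorphism to a $KK$-equivalence. Since $A$ is nuclear, the external Kasparov product with $[\id_A] \in KK(A,A)$ preserves $KK$-equivalences, so $\phi = \id_A \otimes 1_{\Oinf}$ is a $KK$-equivalence $A \to A \otimes \Oinf$.

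With the two hypotheses verified, Proposition~\ref{p:class} gives the required isomorphism. For the final assertion, specialize to $A = \Oinf$: the conclusion is exactly that the first-factor embedding $\id_{\Oinf} \otimes 1_{\Oinf}\colon \Oinf \to \Oinf \otimes \Oinf$ is approximately unitarily equivalent to an isomorphism, which (together with $\Oinf \neq \mathbb C$) is the definition of strong self-absorption. The only mildly delicate step is the verification that $A \otimes \Oinf$ is a Kirchberg algebra without the UCT assumption on $A$; everything else is a direct application of the classification machinery already developed, with the crucial leverage coming from the fact that Proposition~\ref{p:class} requires only the existence of a $KK$-equivalence realized by a $*$-homomorphism, which tensoring with the UCT object $\Oinf$ supplies for free.
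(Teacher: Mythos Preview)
Your proof follows the same strategy as the paper's: apply Proposition~\ref{p:class} to $\id_A \otimes 1_{\Oinf}$, establish that this map is a $KK$-equivalence via the unit map $\mathbb C \to \Oinf$ (you argue via the UCT, the paper cites \cite[Corollary~3.11]{Cuntz_KTheoryforCertainCalgebras} directly), and specialize to $A = \Oinf$ for strong self-absorption. The paper leaves the verification that $A \otimes \Oinf$ is a Kirchberg algebra implicit; you try to spell it out, which is commendable.

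That verification, however, contains a genuine gap. The implication ``simple unital with properly infinite unit $\Rightarrow$ purely infinite'' is false in general. R{\o}rdam's example \cite{Rordam_ASimpleCalgebraWithAFiniteAndAnInfiniteProjection} is a simple unital nuclear \ca\ containing both a non-zero finite projection $p$ and an infinite projection $q$. An infinite subprojection of the unit forces the unit itself to be infinite: if $q \sim q' < q$ then $1 = q + (1-q) \sim q' + (1-q) < 1$. Hence $1$ is infinite, and therefore properly infinite by Cuntz's result for simple \ca s, yet the algebra is not purely infinite because $p$ is finite. So ``properly infinite unit plus simplicity'' is not enough, and your remark that the centralizer of $1_{A\otimes\Oinf}$ contains $1_A\otimes\Oinf$ adds nothing (that centralizer is the whole algebra). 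The fix is immediate in your setting: $A$ is already purely infinite, being a Kirchberg algebra, and the minimal tensor product of a purely infinite simple \ca\ with any simple nuclear \ca\ is again purely infinite simple (see \cite[Proposition~4.1.8]{Rordam_ClassificationofNuclearCalgebras}); alternatively one can cite \cite{KirchbergRordam_InfiniteNonSimpleCalgebrasAbsorbingTheCuntzAlgebrasOinfty} for the fact that $\Oinf$-absorbing \ca s are purely infinite, which is the easy direction and does not rely on the Geneva theorems.
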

\begin{proof}
By \cite[Corollary 3.11]{Cuntz_KTheoryforCertainCalgebras}, the canonical unital map $\iota: \mathbb C \to \Oinf$ induces a $KK$-equivalence. Hence, $\id_A\otimes 1_{\Oinf}$ induces a $KK$-equivalence, as
\begin{equation}
    \id_A\otimes 1_{\Oinf} = (\id_A\otimes \iota) \circ (\id_A\otimes 1_{\mathbb C})
\end{equation}
and $\id_A\otimes 1_{\mathbb C}: A\to A\otimes \mathbb C$ is an isomorphism. The result then follows from Proposition \ref{p:class}. The particular consequence is obtained by applying the result to $A=\Oinf$.
\end{proof}

\end{document}